\newtheorem{theorem}{Theorem}[section]
\newtheorem{lemma}{Lemma}[section]
\newtheorem{definition}{Definition}[section]
\newtheorem{corollary}{Corollary}[section]
\newtheorem{proposition}{Proposition}[section]
\newtheorem{remark}{Remark}[section]
\newtheorem{example}{Example}[section]
\newtheorem*{nnthm}{Theorem}
\newcommand{\mC}{\mathbb{C}}
\newcommand{\mN}{\mathbb{N}}
\newcommand{\mZ}{\mathbb{Z}}
\newcommand{\mF}{\mathbb{F}}
\newcommand{\mV}{\mathbb{V}}
\newcommand{\mW}{\mathbb{W}}
\newcommand{\mU}{\mathbb{U}}
\newcommand{\mg}{\mathfrak{g}}
\newcommand{\ma}{\mathfrak{a}}
\newcommand{\Mp}{\mathfrak{p}}
\newcommand{\mn}{\mathfrak{n}}
\newcommand{\mb}{\mathfrak{b}}
\newcommand{\mh}{\mathfrak{h}}
\newcommand{\ml}{\mathfrak{l}}
\newcommand{\mc}{\mathfrak{c}}
\newcommand{\cC}{\mathcal{C}}
\newcommand{\cU}{\mathcal{U}}
\newcommand{\cJ}{\mathcal{J}}
\newcommand{\tto}{\twoheadrightarrow}
\newcommand{\osp}{\mathfrak{osp}(m|2n)}
\begin{document}
\title{Bernstein-Gelfand-Gelfand resolutions for basic classical Lie superalgebras}

\author{Kevin\ Coulembier\thanks{Postdoctoral Fellow of the Research Foundation - Flanders (FWO), E-mail: {\tt Coulembier@cage.ugent.be}} }

\date{\small{Department of Mathematical Analysis\\
Faculty of Engineering and Architecture -- Ghent University\\ Krijgslaan 281, 9000 Gent,
Belgium\\
\vspace{2mm}
School of Mathematics and Statistics\\ 
University of Sydney \\
Sydney, New South Wales 2006, Australia}
}

\maketitle

\begin{abstract}
We study Kostant cohomology and Bernstein-Gelfand-Gelfand resolutions for finite dimensional representations of basic classical Lie superalgebras. For each choice of parabolic subalgebra and representation of such a Lie superalgebra, there is a natural definition of the boundary and coboundary operator, which define (co)homology of the nilradical of the parabolic subalgebra. We prove that complete reducibility of the homology groups is a necessary condition to have a resolution of an irreducible module in terms of (generalised) Verma modules. Every such a resolution is then given by modules induced by these homology groups. We also prove that if these homology groups are completely reducible, a sufficient condition for the existence of this resolution is the property that these groups are isomorphic to the kernel of the Kostant quabla operator, which is equivalent with disjointness of the boundary and coboundary operator. Then we use these results to derive very explicit conditions under which BGG resolutions exist, which are particularly useful for the superalgebras of type I. For the unitarisable representations of $\mathfrak{gl}(m|n)$ and $\mathfrak{osp}(2|2n)$ we derive conditions on the parabolic subalgebra under which the BGG resolutions exist. We also apply the obtained theory to construct specific examples of BGG resolutions for $\mathfrak{osp}(m|2n)$.
\end{abstract}

\textbf{MSC 2010 :} 17B55, 58J10, 18G10, 17B10\\
\noindent
\textbf{Keywords : }  strong BGG resolutions, Kostant cohomology, basic classical Lie superalgebra, generalised Verma module, star representation, coinduced module

\section{Introduction and main results}
\label{intro}

The strong Bernstein-Gelfand-Gelfand resolutions are resolutions of finite dimensional irreducible representations of semisimple Lie algebras in terms of direct sums of (generalised) Verma modules, see \cite{MR0578996, lepowsky}. One of the original motivations to study such resolutions is the connection with algebra (co)homology as studied in \cite{Bott, MR0142696}. The BGG resolutions and their corresponding morphisms between generalised Verma modules possess an interesting dual side as invariant differential operators, see \cite{MR1856258, Cap, MR2861216}. These differential operators have applications in many areas, see references in \cite{MR1856258, MR2861216} or e.g. \cite{MR2180410} for a concrete application. In Section \ref{classical} we give a brief historical overview of the development of the classical BGG resolutions and the corresponding differential operators, which is relevant to explain the approach taken for Lie superalgebras in the current paper. 

In \cite{Gabber} BGG resolutions for certain infinite dimensional highest weight representations of Lie algebras were studied. The BGG resolutions have also already been extended to Kac-Moody algebras in \cite{MR066169} and to some infinite dimensional Lie superalgebras in \cite{MR2646304}. They are also known to exist for certain unitary infinite dimensional representations for orthosymplectic superalgebras, see \cite{ChLW}. In \cite{MR1314151} it was studied for which quasi-hereditary algebras (with strong exact Borel subalgebras) all irreducible modules have a BGG resolution of finite length.

The first results on BGG resolutions for finite dimensional modules of Lie superalgebras were obtained in \cite{Cheng}. It was proved that these do not exist in full generality for basic classical Lie superalgebras, but they were obtained for the tensor modules of $\mg=\mathfrak{gl}(m|n)$ and for the parabolic subalgebra such that the Levi subalgebra is equal to the underlying Lie algebra $\mg_{\overline{0}}=\mathfrak{gl}(m)\oplus\mathfrak{gl}(n)$. These resolutions in \cite{Cheng}, which are in terms of Kac modules, were extended in \cite{MR2600694} from tensor modules to so-called Kostant modules. The results were also extended in \cite{MR2670923} by a powerful equivalence of categories between subcategories of the parabolic BGG categories of $\mathfrak{gl}(m+\infty)$ and $\mathfrak{gl}(m|\infty)$, known as super duality. This led to the result that the tensor modules of $\mathfrak{gl}(m|n)$ have BGG resolutions for each parabolic subalgebra which contains $\mathfrak{gl}(n)$. In \cite{osp12n} it was proved that every finite dimensional module of $\mathfrak{osp}(1|2n)$ can be resolved in terms of Verma modules. This result can be extended to typical blocks for basic classical Lie superalgebras. Another way to obtain the BGG resolutions for typical simple modules is through the Morita equivalence in \cite{MR1862800}.

In this paper $\mg=\mg_{\overline{0}}\,+\,\mg_{\overline{1}}$ will always stand for a basic classical Lie superalgebra. The classification of basic classical Lie superalgebras restricts to $\mathfrak{sl}(m|n)$ (for $m\not=n$), $\mathfrak{psl}(n|n)$, $\osp$, $F(4)$, $G(3)$ and $D(2,1;\alpha)$, see \cite{MR0486011}. The obtained results also hold for $\mathfrak{gl}(m|n)$ and for convenience we will not always make an explicit distinction between $\mathfrak{gl}(m|n)$ and $\mathfrak{sl}(m|n)$. The notation $\Mp$ is used for a parabolic subalgebra, i.e. a subalgebra containing a Borel subalgebra of $\mg$. The maximal contragedient subalgebra of $\Mp$, also known as the Levi subalgebra of $\Mp$, is denoted by $\ml$. The parabolic subalgebra then has a vector space decomposition $\Mp=\ml+\mn$ and $\mg$ decomposes likewise as $\mg=\overline{\mn}+\ml+\mn$. We also use the notation $\Mp^\ast=\overline{\mn}+\ml$. The symbols $\mV$ and $\mW$ will be used for finite dimensional irreducible $\mg$-modules. If we want to mention explicitly the highest weight $\lambda$ of the representation, it is denoted by $\mV_\lambda$ or $\mW_\lambda$. The notation $[\cdot,\cdot]_\ma$ will always stand for the projection of the Lie superbracket onto a subspace $\ma\subset\mg$ with respect to a naturally defined complement space $\mc$, $\mg=\ma+\mc$.

We take a new systematic, direct approach to construct BGG resolutions for basic classical Lie superalgebras, based on considerations about Kostant (co)homology. This corresponds to (co)homology of the nilradical $\mn$ of the parabolic subalgebra and its dual in irreducible representations of $\mg$. The invariant bilinear form (usually the Killing form) on these algebras allows to connect homology in $\mn$ and cohomology in $\overline{\mn}$ as in \cite{MR0142696}. As in the classical case there is an intimate relation between the Kostant cohomology and BGG resolutions. We prove the fact that a proper BGG resolution yields the homology groups, which is a well-known homological property for Lie algebras. But we also prove how, under certain assumptions, BGG resolutions can be obtained starting from the properties of the cohomology.

 The first result we obtain on BGG resolutions for Lie superalgebras is the following consequence of Theorem \ref{onlyHk} and Corollary \ref{onlyHk2}. This theorem follows from homological arguments and the fact that a resolution in terms of generalised Verma modules is a projective resolution in the category of $\overline{\mn}$-modules.
\begin{nnthm}
Consider a basic classical Lie superalgebra $\mg$ with parabolic subalgebra $\Mp=\ml+\mn$. If the irreducible finite dimensional $\mg$-module $\mW$ has a resolution in terms of direct sums of generalised Verma modules, the homology groups $H_k(\overline{\mn},\mW)$ are completely reducible as $\ml$-modules and there is a resolution of the form
\begin{eqnarray}
\label{onlyform}
\cdots \to V^{H_k(\overline{\mn},\mW)}\to\cdots\to V^{H_1(\overline{\mn},\mW)}\to V^{H_0(\overline{\mn},\mW)}\to\mW\to0
\end{eqnarray}
with $V^{H_k(\overline{\mn},\mW)}=\cU(\mg)\otimes_{\cU(\Mp)}H_k(\overline{\mn},\mW)$. This resolution does not decompose into a non-trivial direct sum of complexes and every resolution of $\mW$ in terms of generalised Verma modules, which does not decompose, is of the form above.
\end{nnthm}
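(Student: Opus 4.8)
The plan is to recognize that a resolution in terms of direct sums of generalised Verma modules is a special kind of projective resolution, and to extract the homology groups by applying a suitable tensor functor. First I would observe that each generalised Verma module $\cU(\mg)\otimes_{\cU(\Mp)}S$ with $S$ an irreducible $\ml$-module is, when restricted to $\overline{\mn}$, a free $\cU(\overline{\mn})$-module: indeed, by the PBW theorem $\cU(\mg)\cong\cU(\overline{\mn})\otimes\cU(\Mp)$ as a right $\cU(\Mp)$-module, so $\cU(\mg)\otimes_{\cU(\Mp)}S\cong\cU(\overline{\mn})\otimes S$ as a $\overline{\mn}$-module. Hence a resolution $\cdots\to C_1\to C_0\to\mW\to 0$ with each $C_k$ a direct sum of generalised Verma modules is a free, in particular projective, resolution of $\mW$ in the category of $\cU(\overline{\mn})$-modules. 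Applying the right-exact functor $\mC\otimes_{\cU(\overline{\mn})}-$ (where $\mC$ is the trivial module, so this computes Lie superalgebra homology) and taking homology, the homology of the resulting complex is by definition $H_k(\overline{\mn},\mW)$.

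Next I would compute $\mC\otimes_{\cU(\overline{\mn})}C_k$ explicitly. With $C_k=\bigoplus_i \cU(\mg)\otimes_{\cU(\Mp)}S_i^{(k)}$ one gets $\mC\otimes_{\cU(\overline{\mn})}C_k\cong\bigoplus_i S_i^{(k)}$ as $\ml$-modules, since $\mC\otimes_{\cU(\overline{\mn})}(\cU(\overline{\mn})\otimes S)\cong S$. The key point, already available from Theorem \ref{onlyHk} and Corollary \ref{onlyHk2}, is that the induced differentials in this complex of $\ml$-modules vanish: the reduced complex $\big(\mC\otimes_{\cU(\overline{\mn})}C_\bullet\big)$ has zero boundary maps, so its homology in degree $k$ is just $\bigoplus_i S_i^{(k)}$ itself. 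Comparing with the previous paragraph yields $H_k(\overline{\mn},\mW)\cong\bigoplus_i S_i^{(k)}$, which is manifestly completely reducible as an $\ml$-module. This simultaneously proves complete reducibility of the homology and identifies the terms: $C_k\cong V^{H_k(\overline{\mn},\mW)}=\cU(\mg)\otimes_{\cU(\Mp)}H_k(\overline{\mn},\mW)$, so the given resolution has the form \eqref{onlyform}. This also shows \eqref{onlyform} is itself genuinely a resolution whenever any such resolution exists.

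To prove indecomposability of \eqref{onlyform}, suppose it were a direct sum of two nonzero subcomplexes $A_\bullet\oplus B_\bullet$. Since $\mW$ is irreducible and sits in homological degree $-1$ (as the object being resolved, or equivalently is the cokernel $H_0$ of the augmentation), $\mW$ must lie entirely in one summand, say $A_\bullet$; then $B_\bullet$ is an exact complex of projective $\cU(\overline{\mn})$-modules, i.e.\ a direct summand subcomplex that is acyclic. But $B_0=\cU(\mg)\otimes_{\cU(\Mp)}(\text{some }\ml\text{-submodule of }H_0)$ maps onto its image in $\mW$; since the total map $V^{H_0}\to\mW$ is surjective with the whole of $H_0$ needed to generate $\mW$ (as $H_0(\overline{\mn},\mW)$ equals the $\overline{\mn}$-coinvariants $\mW/\overline{\mn}\mW$, which is an irreducible $\ml$-module and generates $\mW$ under $\cU(\mg)$), no nontrivial proper $\ml$-submodule of $H_0$ can be split off. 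Hence $B_0=0$, and then exactness forces $B_\bullet=0$ by induction up the complex (each $B_k$ surjects onto $\ker$ inside $B_{k-1}=0$, but one must be slightly careful: run the argument using that $\mC\otimes_{\cU(\overline{\mn})}B_\bullet$ would be both exact, since $B_\bullet$ is a summand of an exact complex of free modules hence itself exact and split, and equal to its own homology, forcing all its terms to vanish). Thus \eqref{onlyform} is indecomposable, and any non-decomposing Verma resolution of $\mW$, being of the form \eqref{onlyform} by the identification above, coincides with it.

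The main obstacle I anticipate is the last step: carefully justifying that a direct-summand subcomplex of an exact complex of free $\cU(\overline{\mn})$-modules which contains none of $\mW$ must be zero, rather than merely contractible. This requires using that the terms are \emph{induced} from $\ml$-modules and that $H_0(\overline{\mn},\mW)$ is irreducible (hence has no nontrivial decomposition to peel off), together with the fact that over $\cU(\overline{\mn})$ an acyclic bounded-below complex of projectives is split exact, so a direct summand of it is again split exact — and then a split exact complex built from induced modules whose degree-$0$ piece is forced to be $0$ must vanish entirely by descending through the splitting. Packaging this cleanly, without circular appeal to the very classification being proved, is where the real care lies.
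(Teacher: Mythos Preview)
Your overall architecture---restrict to $\overline{\mn}$, recognize generalised Verma modules as free $\cU(\overline{\mn})$-modules via PBW, and compute $H_\bullet(\overline{\mn},\mW)$ by applying coinvariants to the resolution---matches the paper's approach (the paper uses $\mbox{Hom}_{\overline{\mn}}(-,\mC)$ and Ext instead of $\mC\otimes_{\cU(\overline{\mn})}-$ and Tor, which is the dual formulation). The identification $\mC\otimes_{\cU(\overline{\mn})}C_k\cong\bigoplus_i S_i^{(k)}$ is also correct.

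The genuine gap is in your second paragraph. You assert that the induced differentials on $\mC\otimes_{\cU(\overline{\mn})}C_\bullet$ vanish, citing Theorem~\ref{onlyHk} and Corollary~\ref{onlyHk2} for this. This is circular: those results \emph{are} the content of the statement you are proving. More importantly, the claim is simply false for an arbitrary resolution by generalised Verma modules. Take any BGG resolution and add an exact piece $V^{M}\to V^{M}$ in consecutive degrees; the induced map on coinvariants is the identity $M\to M$, not zero. So you cannot conclude $\bigoplus_i S_i^{(k)}\cong H_k(\overline{\mn},\mW)$ for the \emph{given} resolution, only that $H_k(\overline{\mn},\mW)$ is a \emph{subquotient} of $\bigoplus_i S_i^{(k)}$. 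That subquotient conclusion already gives complete reducibility of the homology groups (this is exactly Proposition~\ref{BottfromBGG} in the paper), but it does not give $C_k\cong V^{H_k}$.

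What the paper does instead is the missing step: if some induced differential $\phi_j$ is nonzero, then some Verma summand $V^{S_j}\subset C_j$ has its highest weight vector mapped to the highest weight vector of an isomorphic summand $V^{S_{j-1}}\subset C_{j-1}$. Then $V^{S_j}$ meets $\ker m_j$ trivially and $V^{S_{j-1}}\subset\mbox{im}\,m_j$, so these two summands form a contractible subcomplex that can be cancelled. Iterating this shrinking procedure until all induced differentials vanish produces the resolution~\eqref{onlyform}. Indecomposability then follows immediately: by the subquotient statement, a resolution by $V^{H_\bullet}$ cannot be shrunk further, so no nontrivial summand can be split off. Your attempted indecomposability argument via an acyclic summand $B_\bullet$ ultimately relies on the same unproven vanishing of induced differentials and does not close the gap.
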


Therefore the next aim is to derive conditions under which a resolution of the form \eqref{onlyform} exists; knowing that if the homology groups are completely reducible, this is the only proper generalisation of the BGG resolutions and if they are not completely reducible, a proper BGG resolutions does not exist. The next central result we obtain is that if the boundary and coboundary operator on $C_\bullet(\overline{\mn},\mV)=C^\bullet({\mn},\mV)$ are disjoint (as always holds in the Lie algebra case), then a resolution in terms of modules induced by the (not necessarily completely reducible) homology groups $H_{\bullet}(\overline{\mn},\mV)$ exists, in Theorem \ref{finalthm}. Therefore we derive practical criteria for this disjointness in Theorem \ref{Boxcohom2}. These results lead to the following theorem.
\begin{nnthm}
Consider $\mg$, $\Mp=\ml+\mn$ and $\mW$ as in the previous theorem. The image of the boundary operator on $C_\bullet(\overline{\mn},\mV)$ has trivial intersection with the kernel of the Kostant quabla operator $\Box$ if and only if the homology groups $H_{\bullet}(\overline{\mn},\mW)$ are isomorphic to the generalised zero eigenspace of the quabla operator on $C_\bullet(\overline{\mn},\mV)$. If one of these conditions holds, there exists a resolution of $\mW$ as in equation \eqref{onlyform}.

Consequently, if $H_{\bullet}(\overline{\mn},\mW)$ is completely reducible and isomorphic to $\ker\Box$, there exists a BGG resolution for $\mW$.
\end{nnthm}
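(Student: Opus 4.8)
The plan is to assemble the three assertions from Theorem~\ref{finalthm}, Theorem~\ref{Boxcohom2}, and the first theorem of this introduction (Theorem~\ref{onlyHk} together with Corollary~\ref{onlyHk2}), adding a little homological bookkeeping to glue the pieces. Throughout, write $\partial$ and $\delta$ for the boundary and coboundary operators on the common space $C_\bullet(\overline{\mn},\mW)=C^\bullet(\mn,\mW)$ and $\Box=\partial\delta+\delta\partial$; these are $\ml$-module morphisms with $\partial^2=\delta^2=0$, and $\Box$ commutes with $\partial$ and with $\delta$. Since $C_\bullet(\overline{\mn},\mW)$ is finite dimensional it splits $\ml$-equivariantly as $C_\bullet(\overline{\mn},\mW)=\cH\oplus R$, with $\cH=\bigcup_{k\ge1}\ker\Box^{k}$ the generalised zero eigenspace and $R$ the sum of the remaining generalised eigenspaces; both summands are stable under $\partial$, $\delta$ and $\Box$, and $\Box$ acts invertibly on $R$.

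The key homological input I would record is that $(R,\partial)$ is acyclic: for $x\in R$ with $\partial x=0$, the element $\partial(\Box^{-1}x)$ lies in $R$ and in $\ker\Box$ (as $\Box\partial(\Box^{-1}x)=\partial x=0$), hence vanishes because $R\cap\ker\Box=0$, so $x=\Box(\Box^{-1}x)=\partial\delta(\Box^{-1}x)$. Therefore $H_\bullet(\overline{\mn},\mW)\cong H_\bullet(\cH,\partial|_{\cH})$ as graded $\ml$-modules, and a dimension count degree by degree shows this is isomorphic to $\cH$ exactly when $\partial|_{\cH}=0$. From this the equivalence in the first statement follows once Theorem~\ref{Boxcohom2} is invoked. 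If $H_\bullet(\overline{\mn},\mW)\cong\cH$, then $\partial|_{\cH}=0$, whence $\Box|_{\cH}=\partial\circ(\delta|_{\cH})=0$ (using $\delta\cH\subseteq\cH$), so $\cH=\ker\Box$; then for $z=\partial w\in\ker\Box$, decomposing $w=w_{\cH}+w_{R}$ gives $z=\partial w_{R}\in R$, while $z\in\ker\Box=\cH$, so $z\in R\cap\cH=0$, i.e. $\mathrm{im}\,\partial\cap\ker\Box=0$. Conversely, $\mathrm{im}\,\partial\cap\ker\Box=0$ yields $\partial(\ker\Box)\subseteq\mathrm{im}\,\partial\cap\ker\Box=0$; here I would invoke from Theorem~\ref{Boxcohom2} that this vanishing is equivalent to the disjointness of $\partial$ and $\delta$ and forces $\ker\Box=\cH$, so that $\partial|_{\cH}=0$ and hence $H_\bullet(\overline{\mn},\mW)\cong H_\bullet(\cH,0)=\cH$ as $\ml$-modules.

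For the second statement, whenever either condition holds $\partial$ and $\delta$ are disjoint, so Theorem~\ref{finalthm} applies and produces an exact complex of modules induced from the homology groups, $\cU(\mg)\otimes_{\cU(\Mp)}H_k(\overline{\mn},\mW)$ with $\mn$ acting by zero on each $H_k(\overline{\mn},\mW)$; this is precisely the complex \eqref{onlyform}, and by the first theorem, if the homology groups are completely reducible it is in addition the unique non-decomposable resolution of $\mW$ by direct sums of generalised Verma modules. For the closing ``consequently'', suppose $H_\bullet(\overline{\mn},\mW)$ is completely reducible and $H_\bullet(\overline{\mn},\mW)\cong\ker\Box$. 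By Theorem~\ref{Boxcohom2} --- in the completely reducible case this isomorphism is equivalent to disjointness of $\partial$ and $\delta$, and in particular forces $\ker\Box=\cH$ --- the first condition holds, so \eqref{onlyform} is a resolution of $\mW$; writing each completely reducible $\ml$-module $H_k(\overline{\mn},\mW)=\bigoplus_i L_i$ with the $L_i$ irreducible gives $V^{H_k(\overline{\mn},\mW)}=\bigoplus_i\cU(\mg)\otimes_{\cU(\Mp)}L_i$, a direct sum of generalised Verma modules, so \eqref{onlyform} is a strong BGG resolution of $\mW$.

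The genuinely super-algebraic content, and the main obstacle, is imported from Theorem~\ref{Boxcohom2}: over a semisimple Lie algebra the invariant positive definite inner product makes $\Box$ self-adjoint (indeed positive semidefinite), so $\ker\Box=\cH$ automatically; over a basic classical Lie superalgebra the relevant form on $C_\bullet(\overline{\mn},\mW)$ is only non-degenerate, $\Box$ may fail to be semisimple, and showing that disjointness of $\partial$ and $\delta$ --- equivalently $\mathrm{im}\,\partial\cap\ker\Box=0$ --- is exactly what restores $\ker\Box=\cH$ is the crux. What remains is routine bookkeeping: carrying the $\ml$-module structure through each isomorphism so that the induced modules are well defined, matching the ``modules induced by $H_\bullet(\overline{\mn},\mW)$'' of Theorem~\ref{finalthm} with the generalised Verma modules of \eqref{onlyform}, and, in the last statement, using the completely reducible case of Theorem~\ref{Boxcohom2} to pass from $H_\bullet\cong\ker\Box$ to $H_\bullet\cong\cH$.
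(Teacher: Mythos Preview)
Your proposal is correct and follows essentially the same route as the paper: the introduction theorem is assembled from Theorem~\ref{Boxcohom2} (for the equivalence), Theorem~\ref{finalthm} (for the resolution), and Corollary~\ref{finalthmCR}/Corollary~\ref{Boxcohom4} (for the ``consequently''). Your extra homological interlude---splitting into $\cH\oplus R$, showing $(R,\partial)$ is acyclic, and reducing to $\partial|_{\cH}=0$---re-derives the content of Corollary~\ref{Boxcohom3} and part of the proof of Theorem~\ref{Boxcohom2}; this is fine but redundant, since you could simply cite $(1)\Leftrightarrow(4)$ of Theorem~\ref{Boxcohom2} (in its $\delta,\delta^\ast$ version) for the entire first equivalence, as the paper does.
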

The way we obtain this result is through the dual side in terms of coinduced modules, where inspiration is drawn from the approach to curved parabolic geometries in \cite{Cap}. The coinduced module can be identified with the module inducing the infinite jet prolongation of the homogeneous vector bundle $G\times_p\mV\to G/P$. This approach also has the advantage that the resolutions we obtain are ready to be expressed in terms of their dual side as differential operators, which can be of importance regardless of complete reducibility of the homology groups. We also provide a simple counterexample to show that the obtained sufficient condition $H_{\bullet}(\overline{\mn},\mW)=\ker\Box$ (with $H_{\bullet}(\overline{\mn},\mW)$ completely reducible), is not a necessary condition for a module to be resolved by generalised Verma modules, in Example \ref{counter}.

Our construction of BGG resolutions is particularly applicable to basic classical Lie superalgebras of type $I$, which possess large classes of unitarisable representations, also called star representations, see \cite{MR1075732, MR0424886, MR1067631}. This leads to the following results in Theorem \ref{mainthmgl} and Theorem \ref{mainthmC}.
\begin{nnthm}
For $\mg=\mathfrak{gl}(m|n)$ with a parabolic subalgebra $\Mp$, the $\mathfrak{g}$-module $\mV$ can be resolved in terms of direct sums of $\Mp$-Verma modules if
\begin{itemize}
\item $\mV$ is a star representation of type $(1)$ and $\Mp$ contains $\mathfrak{gl}(n)$
\item $\mV$ is a star representation of type $(2)$ and $\Mp$ contains $\mathfrak{gl}(m)$.
\end{itemize}
For $\mg=\mathfrak{osp}(2|2n)$ with a parabolic subalgebra $\Mp$, the $\mathfrak{g}$-module $\mV$ can be resolved in terms of direct sums of $\Mp$-Verma modules if
\begin{itemize}
\item $\mV$ is a star representation of type $(1)$ and $\Mp$ contains $\mathfrak{sp}(2n)$
\item $\mV$ is a star representation of type $(2)$.
\end{itemize}
\end{nnthm}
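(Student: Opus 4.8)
The plan is to derive all four cases from Theorem~\ref{finalthm}, fed through the practical criteria of Theorem~\ref{Boxcohom2}. The key observation is that each listed situation admits a \emph{compatible star operation}: a conjugate-linear anti-automorphism $\theta$ of $\mg$ (one of the two star operations carried by a type~$I$ superalgebra) with $\theta(\ml)=\ml$ and $\theta(\mn)=\nbar$, for which $\mV$ is unitarisable, and such that the Hermitian form induced on $\nbar$ by the Killing form is positive-definite --- including, crucially, on the odd part $\nbar_{\overline 1}$. Granting such a $\theta$, I would equip the common space $C_\bullet(\nbar,\mV)=C^\bullet(\mn,\mV)\cong\Lambda^\bullet\nbar\otimes\mV$ (the identification $\mn^\ast\cong\nbar$ coming from the Killing form) with the tensor--exterior Hermitian form built from the positive-definite contravariant form on $\mV$ and the positive-definite form on $\nbar$; since $\nbar_{\overline 1}$ carries a positive-definite form, so does $S^\bullet(\nbar_{\overline 1})$, hence $\Lambda^\bullet\nbar$, so the total form is positive-definite and $\ml$-contravariant. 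Imitating the classical Kostant computation, one then obtains $\delta=\pm\,\partial^{\star}$ with respect to this form, where $\partial$ is the boundary and $\delta$ the coboundary operator.

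\textbf{From the adjointness to the resolution.} Once $\langle\partial\alpha,\beta\rangle=\pm\langle\alpha,\delta\beta\rangle$ holds for a positive-definite form, $\operatorname{im}\partial$ is orthogonal to $\ker\delta$, hence to $\ker\Box$, so $\operatorname{im}\partial\cap\ker\Box=0$; this is precisely the disjointness hypothesis, and Theorem~\ref{finalthm} (equivalently the second theorem of the introduction) then produces a resolution of $\mV$ of the form \eqref{onlyform}, with $H_\bullet(\nbar,\mV)$ isomorphic to the generalised zero eigenspace of $\Box$, which moreover equals $\ker\Box$ because $\Box=\partial\delta+\delta\partial$ is self-adjoint, hence diagonalisable. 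To upgrade this to a bona fide BGG resolution I would note that $C_\bullet(\nbar,\mV)$ is a finite-dimensional $\ml$-module carrying a positive-definite $\ml$-contravariant Hermitian form, hence a unitarisable and therefore completely reducible $\ml$-module, so its subquotient $H_k(\nbar,\mV)$ is completely reducible, and $V^{H_k(\nbar,\mV)}=\cU(\mg)\otimes_{\cU(\Mp)}H_k(\nbar,\mV)$ is a direct sum of $\Mp$-Verma modules.

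\textbf{Producing $\theta$ in each case.} The only remaining work is to exhibit the compatible $\theta$, and this is exactly where the conditions on $\Mp$ enter. For $\mg=\mathfrak{gl}(m|n)$ the two star operations $\theta_{(1)},\theta_{(2)}$ agree on $\mg_{\overline 0}=\mathfrak{gl}(m)\oplus\mathfrak{gl}(n)$ (where both are the compact-form conjugation, so the even part of the Hodge argument is classical) and differ by a sign on the odd blocks $\mg_{\pm 1}$; from the classification of star representations, $\mV$ is of type $(1)$ (resp. $(2)$) precisely when it is unitarisable for $\theta_{(1)}$ (resp. $\theta_{(2)}$), and for that $\theta$ the induced form on $\nbar_{\overline 1}$ is positive-definite exactly when the odd root spaces lying in $\mn$ are the ``compact-type'' ones for $\theta$ --- which for a parabolic containing the standard Borel is guaranteed by $\mathfrak{gl}(n)\subseteq\ml$ (resp. $\mathfrak{gl}(m)\subseteq\ml$). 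For $\mg=\mathfrak{osp}(2|2n)$ the same scheme applies with $\mathfrak{so}(2)$ in the role of $\mathfrak{gl}(m)$ and $\mathfrak{sp}(2n)$ in the role of $\mathfrak{gl}(n)$: type $(1)$ requires $\mathfrak{sp}(2n)\subseteq\ml$, whereas for type $(2)$ the relevant star operation is compatible with every parabolic of $\mathfrak{osp}(2|2n)$, which is why no hypothesis on $\Mp$ appears in that last case.

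\textbf{Main obstacle.} The hard part will be the case analysis of the previous paragraph, and inside it the \emph{positivity} of the induced Hermitian form on $\nbar_{\overline 1}$ (equivalently on $S^\bullet(\nbar_{\overline 1})$). One can always arrange $\theta(\mn)=\nbar$ with a Chevalley-type involution, but the form it induces on the odd directions fails to be positive-definite for parabolics not aligned with the chosen star operation; it is exactly this failure that obstructs BGG resolutions in general, so the genuine content of the theorem is the case-by-case verification that, under the stated hypotheses on $\Mp$, the alignment does hold. A secondary technical point is tracking the super-signs in $\delta=\pm\,\partial^{\star}$ accurately enough to be sure that $\Box$ is genuinely self-adjoint (so that its generalised zero eigenspace collapses to $\ker\Box$); this, together with the positivity criterion, is what Theorem~\ref{Boxcohom2} is designed to supply.
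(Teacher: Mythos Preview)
Your proposal is correct and follows essentially the same approach as the paper: the paper packages your ``compatible star operation with positive-definite form on $\nbar$'' as the criterion $\xi_a^\dagger=(-1)^{|\xi_a|}\xi_a^\ddagger$ in Theorem~\ref{thmstardisjoint}, then verifies this criterion in each of the four cases by an explicit realisation of $\mathfrak{gl}(m|n)$ via differential operators $x_i\partial_{x_j}$ (computing both the adjoint operation and the Killing-form dual and matching them up to the required sign for the appropriate normalisation of the Killing form). The adjointness statement you call $\delta=\pm\,\partial^\star$ is exactly Theorem~\ref{formonchains}, and your deduction of disjointness and complete reducibility from positive-definiteness is the content of the proof of Theorem~\ref{thmstardisjoint}.
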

Since the tensor modules of $\mathfrak{gl}(m|n)$ are included in the class of star representations of type $(1)$, see \cite{MR1075732}, this extends the results on BGG resolutions in \cite{Cheng} and \cite{MR2670923}, the resolutions for $\mathfrak{osp}(2|2n)$ are new. The star representations of type $(2)$ are in both cases the dual representations of those of type $(1)$.

Also non-unitarisable cases can be studied. From our general construction of resolutions, the following result in Theorem \ref{criterion} and Theorem \ref{Boxcohom2} can be derived.
\begin{nnthm}
Consider a basic classical Lie superalgebra $\mg$, with parabolic subalgebra $\Mp=\ml+\mn$ and a finite dimensional irreducible representation $\mW$. Assume the $\ml$-module $\ker\Box\subset C^\bullet(\mn,\mW)$ is completely reducible. If for each $k\in\mN$ it holds that
\begin{eqnarray*}
[\ker\Box_k:M][\ker\Box_{k+1}:M]&\le &1
\end{eqnarray*}
for all irreducible $\ml$-modules $M$, then $\mW$ has a resolution in terms of direct sums of generalised Verma modules. Moreover, in this case $H_{\bullet}(\overline{\mn},\mW)\cong H^\bullet(\mn,\mW)\cong \ker \Box$ holds.
\end{nnthm}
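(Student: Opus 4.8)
The plan is to localise the whole computation to the generalised zero-eigenspace of the Kostant quabla operator $\Box$, to feed the multiplicity hypothesis into the disjointness criterion of Theorem~\ref{Boxcohom2}, and then to read off the resolution from Theorem~\ref{finalthm} (equivalently Theorem~\ref{criterion}).

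First I would record the standard reduction. The operator $\Box=\partial\partial^{\ast}+\partial^{\ast}\partial$ on $C_\bullet(\overline{\mn},\mW)=C^\bullet(\mn,\mW)$ commutes with $\partial$, with $\partial^{\ast}$ and with the $\ml$-action, hence preserves the $\mZ$-grading and the decomposition $C_\bullet=\bigoplus_\lambda C^{(\lambda)}_\bullet$ into generalised $\Box$-eigenspaces. On $C^{(\lambda)}_\bullet$ with $\lambda\neq 0$ the operator $\Box$ is invertible, so $\partial^{\ast}\Box^{-1}$ (respectively $\partial\Box^{-1}$) is a contracting homotopy and $C^{(\lambda)}_\bullet$ is acyclic both for $\partial$ and for $\partial^{\ast}$; hence $H_\bullet(\overline{\mn},\mW)=H_\bullet(C^{(0)}_\bullet,\partial)$ and $H^\bullet(\mn,\mW)=H^\bullet(C^{(0)}_\bullet,\partial^{\ast})$, with $\ker\Box\subseteq C^{(0)}_\bullet$ and $\Box$ nilpotent and $\ml$-linear on each $C^{(0)}_k$. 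The statement thus reduces to proving that, under the hypotheses, $\partial$ and $\partial^{\ast}$ are disjoint; recall from the analysis of disjointness above that this is equivalent to $\operatorname{im}\partial\cap\ker\Box=0$, and hence to $C^{(0)}_\bullet=\ker\Box$ together with the vanishing of $\partial$ on $\ker\Box_\bullet$. Granting this, $H_\bullet(\overline{\mn},\mW)=H_\bullet(\ker\Box_\bullet,0)=\ker\Box$, and then $H^k(\mn,\mW)\cong H_k(\overline{\mn},\mW)\cong\ker\Box_k$ for every $k$ by the degree-preserving identification of $H_\bullet(\overline{\mn},-)$ with $H^\bullet(\mn,-)$ through the invariant bilinear form as in the introduction; moreover Theorem~\ref{finalthm} supplies a resolution of the form \eqref{onlyform}.

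The heart of the argument is the passage from the numerical hypothesis to disjointness, which is precisely what Theorem~\ref{Boxcohom2} provides. The mechanism I expect is this. Decompose the completely reducible $\ml$-module $\ker\Box_\bullet$ into isotypic components. If disjointness failed, there would be a nontrivial $\Box$-Jordan string in some $C^{(0)}_k$ or a nonzero restriction of $\partial$ or $\partial^{\ast}$ to $\ker\Box_\bullet$, and by Schur's lemma this would link a fixed irreducible $\ml$-module $M$ to two consecutive homological degrees $k$ and $k+1$ through maps described by elements of $\operatorname{End}_{\ml}(M)$ constrained by $\partial^2=(\partial^{\ast})^2=0$ and $\Box=\partial\partial^{\ast}+\partial^{\ast}\partial$. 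The extra input that closes the argument is that $\partial$ and $\partial^{\ast}$ compute the same (co)homology in each degree, $H_k(\overline{\mn},\mW)\cong H^k(\mn,\mW)$, so the $M$-multiplicities they remove in degrees $k$ and $k+1$ must match; one then checks that no such nonzero configuration along the thread of copies of $M$ is compatible with $[\ker\Box_k:M]\,[\ker\Box_{k+1}:M]\le 1$ — two consecutive degrees simply do not leave enough room. Making this bookkeeping watertight is the main obstacle and is exactly the content of Theorem~\ref{Boxcohom2}; under the stated bound it yields disjointness.

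It remains to assemble the conclusion. By disjointness, Theorem~\ref{finalthm} (or Theorem~\ref{criterion}, or the fact recalled above that $\operatorname{im}\partial\cap\ker\Box=0$ forces a resolution of the form \eqref{onlyform}) provides a resolution of $\mW$ with $k$-th term $V^{H_k(\overline{\mn},\mW)}=\cU(\mg)\otimes_{\cU(\Mp)}H_k(\overline{\mn},\mW)$. Since $\ker\Box$ is completely reducible, so is $H_k(\overline{\mn},\mW)\cong\ker\Box_k$ as an $\ml$-module, viewed as a $\Mp$-module with $\mn$ acting trivially; writing it as a finite direct sum of irreducibles and using additivity of the induction functor $\cU(\mg)\otimes_{\cU(\Mp)}-$ shows that $V^{H_k(\overline{\mn},\mW)}$ is a direct sum of generalised Verma modules, which is the asserted resolution. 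The isomorphisms $H_\bullet(\overline{\mn},\mW)\cong H^\bullet(\mn,\mW)\cong\ker\Box$ were obtained along the way. I expect the only genuinely delicate step to be the derivation of disjointness from the multiplicity bound, i.e.\ Theorem~\ref{Boxcohom2}.
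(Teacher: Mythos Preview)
There is a genuine gap. You correctly reduce to the generalised zero eigenspace and identify that the crux is establishing disjointness of $\delta$ and $\delta^\ast$; your assembly of the conclusion from disjointness via Theorem~\ref{finalthm} is also fine. However, you then attribute the passage from the multiplicity bound to disjointness to Theorem~\ref{Boxcohom2}, and this is wrong: Theorem~\ref{Boxcohom2} merely lists reformulations equivalent to disjointness and contains nothing about multiplicities. Likewise, appealing to Theorem~\ref{criterion} is circular, since that \emph{is} the statement being proved. Your heuristic bookkeeping does not close the argument either: with $[\ker\Box_k:M]=[\ker\Box_{k+1}:M]=1$ and no further input, nothing forbids $\delta|_{M_k}:M_k\to M_{k+1}$ being an isomorphism while $\delta^\ast|_{M_{k+1}}=0$ (this is consistent with $\delta^2=(\delta^\ast)^2=0$ and $\Box=0$), which already violates disjointness. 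The equality $H_k(\overline{\mn},\mW)\cong H^k(\mn,\mW)$ you invoke is a consequence of complete reducibility of the homology, not something available a priori.

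The ingredient you are missing is the non-degenerate contravariant hermitian form of Theorem~\ref{formonchains}, for which $-\delta$ and $\delta^\ast$ are adjoint. Since $\Box$ is symmetric for this form, distinct generalised eigenspaces are orthogonal, so the form restricts non-degenerately to $\ker\Box$ (which equals $C^\bullet(\mn,\mW)_0$ by Lemma~\ref{compred} under the complete reducibility hypothesis). Now the multiplicity bound enters: the image of $\delta^\ast$ inside $\ker\Box$ is an $\ml$-submodule whose irreducible constituents each occur with multiplicity one in $\ker\Box$; since non-isomorphic irreducibles are orthogonal for a contravariant form, such a submodule is automatically non-degenerate. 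Hence for $0\neq\delta^\ast f\in\ker\Box$ there exists $\delta^\ast g\in\ker\Box$ with $\langle\delta^\ast f,\delta^\ast g\rangle\neq 0$, and adjointness gives $\delta\delta^\ast f\neq 0$. Symmetrically for $\delta$. This yields disjointness on $\ker\Box$; Lemma~\ref{lemmaKostant} applied there forces both operators to vanish on $\ker\Box$, whence $\mbox{im}\delta^\ast\cap\ker\Box=0$ and global disjointness follows from Theorem~\ref{Boxcohom2}.
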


These results are applied to construct BGG resolutions motivated by the study in \cite{CSS} of the super Laplace operator as an intertwining operator between principal series representations for $\osp$. This operator and its symmetries appear in certain quantum mechanical problems in superspace, see \cite{MR2852289, MR2395482}. In \cite{CSS} the quotient of the universal enveloping algebra of $\osp$ with respect to a Joseph-like ideal was identified as an algebra of symmetries of the super Laplace operator. In the classical case for $\mathfrak{so}(m)$, the completeness of this algebra of symmetries follows from certain BGG resolutions, see \cite{MR2180410}. In the current paper we approach this problem for $\osp$. Therefore we take the Lie superalgebra $\mathfrak{osp}(m|2n)$ with maximal parabolic subalgebra as in \cite{MR2395482}. In particular it follows that the natural representation of $\osp$ can be resolved in terms of generalised Verma modules if the condition $m-2n\le 1$ holds.

We also state some results on BGG resolutions for Kac modules of basic classical Lie superalgebras of
type I, which are parabolically induced modules. This rederives the existence of certain BGG resolutions
for typical highest weight modules. Finally we note how singly atypical modules, see \cite{MR1063989}, can be resolved
by Kac modules.

In the current paper we do not focus on the weak BGG resolutions. Often the strong resolutions are derived from the weak resolutions, see \cite{MR0578996, Cheng, osp12n, MR2428237, lepowsky, MR066169}, but in the current paper we take a different approach.

The remainder of the paper is organised as follows. In Section \ref{classical} we give a brief review of the BGG resolutions, Kostant cohomology and BGG sequences of invariant differential operators on parabolic geometries, for Lie algebras. In Section \ref{KillingStar} we derive some results on non-degenerate invariant bilinear forms and adjoint operations on basic classical Lie superalgebras and the corresponding contravariant hermitian forms on representations. In Section \ref{Kostant} we introduce the (co)boundary operator on $C_\bullet(\overline{\mn},\mV)$ and $C_\bullet({\mn},\mW)$. Then we derive some properties, which are well-known for Lie algebras and work out some formulae that will be useful for the sequel. The main difference with the classical case is the fact that the boundary and coboundary operator are not necessarily disjoint operators. Some results on how to handle this new feature are derived, in particular we obtain several criteria for this disjointness. In Section \ref{BGG} we prove that any proper BGG resolution is given in terms of modules induced by the homology groups, which implies that the homology groups need to be completely reducible in order to allow BGG resolutions. Then we introduce an invariant operator $d$ acting between the $\mg$-modules coinduced by the $\Mp$-modules $C_\bullet({\mn},\mV)$. This leads to a coresolution of $\mV$. If the boundary and coboundary operator are disjoint we can introduce a splitting operator, which is applied to construct a coresolution of $\mV$ in terms of $\mg$-modules coinduced by the homology groups $H_\bullet(\mn,\mV)$. By dualising this result we obtain the desired resolution of $\mV^\ast$ in terms of induced modules in Theorem \ref{finalthm}, generalising the BGG resolutions for Lie algebras. In Section \ref{ExamGL} we focus on the unitarisable representations of $\mathfrak{gl}(m|n)$ and $\mathfrak{osp}(2|2n)$, which leads to extensive classes of cases where BGG resolutions exist. In Section \ref{ExamOSp} we derive alternative and practical sufficient conditions for the disjointness of the boundary and coboundary operator, that do not require unitarisability or even complete reducibility of $C_\bullet(\overline{\mn},\mV)$. Using this, we focus on $\osp$ and a maximal parabolic subalgebra. Then we briefly pay attention to typical highest weight modules of basic classical Lie superalgebras of type $I$. Finally, in the Appendix we study the operator $d$ using Hopf algebraic techniques and show that it corresponds to a twisted exterior derivative. This result is needed to prove the exactness of the operator $d$ in Section \ref{BGG}.

\section{Lepowsky BGG resolutions and invariant differential operators}
\label{classical}
In \cite{MR0578996}, Bernstein, Gel'fand and Gel'fand proved that each finite dimensional irreducible representation $\mV_\lambda$ of a complex semisimple Lie algebra $\mg$ has a resolution in terms of Verma modules. In \cite{lepowsky} Lepowsky extended this result to the parabolic setting. To state his result we need to introduce the notation $M(\lambda)$ for the irreducible $\Mp$-representation with highest weight $\lambda\in\mathfrak{h}^\ast$ with $\mathfrak{h}$ the Cartan subalgebra of $\mg=\overline{\mn}+\ml+\mn$. For the irreducible $\mg$-representation $\mV_\lambda$, there is an exact complex 
\begin{eqnarray}
\label{resLep}
0\rightarrow\bigoplus_{w\in W^1(\dim\mn)}V^{M(w\cdot\lambda)} \to\cdots\to \bigoplus_{w\in W^1(j)}V^{M(w\cdot\lambda)}\to\cdots\to \bigoplus_{w\in W^1(1)}V^{M(w\cdot\lambda)}\to V^{M(\lambda)} \to \mV_\lambda\to0
\end{eqnarray}
with $W^1$ a subset of the Weyl group corresponding to the quotient of the Weyl group with the Weyl group of the Levi algebra (with $\rho$-shifted action on $\mh^\ast$) and $V^M=\cU(\mg)\otimes_{\cU(\Mp)}M$ the parabolic (or generalised) Verma module induced by the irreducible $\Mp$-module $M$, see \cite{lepowsky} for details.

Since such a resolution is a projective resolution in the category of finitely generated $\overline{\mn}$-modules, this allows to compute the homology of $\overline{\mn}$ in the module $\mV_\lambda$. This homology was  already studied by Kostant in \cite{MR0142696}, the homology groups satisfy
\begin{eqnarray*}
H_{j}(\overline{\mn},\mV_\lambda)&\cong& \bigoplus_{w\in W^1(j)}M(w\cdot\lambda).
\end{eqnarray*}
The result of Lepowsky can therefore be rewritten in terms of these homology groups.

There is a well-known correspondence between generalised Verma module morphisms and differential operators acting between the principal series representations, corresponding to vector bundles on the generalised flag manifolds $G/P$ with $G$ and $P$ groups with Lie algebras $\mg$ and $\Mp$, see e.g. \cite{Dobrev, MR0430166}. This implies that there is a locally exact complex
\begin{eqnarray*}
&&0\quad\to\quad \mV^\ast\quad\to\quad\Gamma(G/P,G\times_PH^0(\overline{\mn},\mV^\ast))\quad\to\quad\Gamma(G/P,G\times_PH^1(\overline{\mn},\mV^\ast))\quad\to\,\cdots\\
&&\cdots\,\to\quad\Gamma(G/P,G\times_PH^{\dim \mn}(\overline{\mn},\mV^\ast))\quad\to \quad0.
\end{eqnarray*}
One of the interesting features of this result is that each irreducible representation $\mV^\ast$ can be explicitly realised as the kernel of some set of differential operators
\[\Gamma(G/P,G\times_PH^0(\overline{\mn},\mV^\ast))\quad\to\quad\Gamma(G/P,G\times_PH^1(\overline{\mn},\mV^\ast)).\] 
Since this only depends on the last part of the BGG resolution (or the first part of its dual differential operator sequence) we will pay special attention to this last part in Section \ref{ExamOSp}. There we find some cases in which the BGG resolutions might not exist, but this last part still has an analogue. This realisation of $\mV^\ast$ in $\Gamma(G/P,G\times_PH^0(\overline{\mn},\mV^\ast))$ is similar to the Borel-Weil theorem. The Borel-Weil(-Bott) theorem of \cite{Bott} has been studied for Lie supergroups in \cite{MR2734963, MR0957752, MR1036335, MR2059616}.

In \cite{Cap} \v{C}ap, Slov\'ak and Sou{\v{c}}ek proved that the differential operators in the sequence above extend to curved Cartan geometries based on $G/P$, known as parabolic geometries, even though there the sequences are no longer complexes. In doing so, they also provided a new proof of the BGG differential operators for the flat model $G/P$, which gives a new proof of the result of Lepowsky. It turns out that the ideas in \cite{Cap}, using differential operators, extends more easily to the supersetting than the direct proof of the BGG resolutions in \cite{lepowsky}. In this paper we use methods inspired by the simplification of \cite{Cap} provided in \cite{MR1856258} by Calderbank and Diemer, to prove BGG resolutions for basic classical Lie superalgebras. Even though some of the machinery is inspired by the differential operator side we will formulate and prove everything in a purely algebraic setting in the current paper.

\section{Killing form and adjoint operations}
\label{KillingStar}

In this paper we will use two types of forms. One type takes the $\mZ_2$-gradations into account and the other ignores that structure. It will always hold that the relevant notions for bilinear forms are supersymmetry and graded invariance. The relevant notions for sesquilinear forms are hermicity (not superhermicity) and contravariance. This different behaviour for the two types of forms is also reflected in the extension to tensor products.

We recall the notions of adjoint operation and star representation of a complex Lie superalgebra, see \cite{MR0424886}. A star Lie superalgebra is equipped with a map $\dagger:\mg\to\mg$ which is antilinear, even and satisfies $[A,B]^\dagger=[B^\dagger,A^\dagger]$ and $\left(A^\dagger\right)^\dagger=A$ for $A,B\in\mg$. Such a map is called an adjoint operation. A star representation (or unitarisable representation) of such an algebra is a representation with a (positive definite, hermitian) inner product $\langle\cdot,\cdot,\rangle$ on $\mV$ which satisfies $\langle Av,w\rangle=\langle v,A^\dagger w\rangle$ for $v,w\in\mV$ and $A\in\mg$. An inner product satisfying this last property is called contravariant.

For every adjoint operation on a basic classical Lie superalgebra, the Cartan element corresponding to a certain simple root can be considered to be mapped onto itself under the adjoint operation. Other adjoint operations correspond to combinations of such adjoint representations with Lie superalgebra isomorphisms.
\begin{lemma}
\label{existadj}
Every basic classical Lie superalgebra has an adjoint operation.
\end{lemma}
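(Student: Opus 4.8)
The plan is to produce the adjoint operation explicitly, using the classification of basic classical Lie superalgebras recalled earlier ($\mathfrak{gl}(m|n)$, $\mathfrak{sl}(m|n)$ for $m\neq n$, $\mathfrak{psl}(n|n)$, $\osp$, $F(4)$, $G(3)$, $D(2,1;\alpha)$). First I would treat the matrix superalgebras $\mathfrak{gl}(m|n)$ and $\osp$ directly: on $\mathfrak{gl}(m|n)$ one checks that the map $X\mapsto X^\dagger := \overline{X}^{st}$, the composition of entrywise complex conjugation with the supertranspose $X^{st}$, is antilinear, even, involutive (using $(X^{st})^{st}$ differing from $X$ only by a sign on the off-diagonal blocks, which the conjugation-involution corrects for an appropriate choice), and reverses the superbracket in the required order $[A,B]^\dagger=[B^\dagger,A^\dagger]$. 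This restricts to $\mathfrak{sl}(m|n)$ since the supertrace condition is preserved, and descends to $\mathfrak{psl}(n|n)$ since the centre is preserved. For $\osp$, realised as the subalgebra of $\mathfrak{gl}(m|2n)$ preserving a supersymmetric bilinear form, one verifies that the same $X\mapsto\overline{X}^{st}$ (possibly after conjugating by a fixed element of the group, to land back inside $\osp$) gives an adjoint operation, using the compact real form of $\mathfrak{so}(m)\oplus\mathfrak{sp}(2n)$ on the even part.

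For the three exceptional families $F(4)$, $G(3)$, $D(2,1;\alpha)$ I would argue more structurally rather than matrix-wise. Each is built from a root space decomposition $\mg=\mh\oplus\bigoplus_\alpha\mg_\alpha$ with a nondegenerate even invariant supersymmetric form (discussed in Section \ref{KillingStar}); fix a Chevalley-type basis $\{h_i\}\cup\{e_\alpha\}$. The candidate adjoint operation is the antilinear even map determined by $h_i^\dagger=h_i$ and $e_\alpha^\dagger = c_\alpha e_{-\alpha}$ for suitable real constants $c_\alpha$, mimicking the construction of a compact real form in the Lie algebra setting. One has to check $(A^\dagger)^\dagger=A$, which fixes $c_\alpha c_{-\alpha}=1$, and the bracket-reversal identity $[A,B]^\dagger=[B^\dagger,A^\dagger]$, which imposes compatibility conditions $c_{\alpha+\beta} = \pm c_\alpha c_\beta$ on the structure constants $N_{\alpha,\beta}$ (with signs governed by the parities of $\alpha,\beta$). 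I expect this to be solvable by the usual inductive choice of the $c_\alpha$ along a chain of positive roots, exactly as one constructs the compact involution from the normalisation of structure constants; the superalgebra signs only change which generators are symmetric versus skew, not the solvability.

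The main obstacle I anticipate is the sign bookkeeping forced by the $\mZ_2$-grading: the requirement is the ordinary (non-graded) identity $[A,B]^\dagger=[B^\dagger,A^\dagger]$ rather than a superized version, so for two odd elements $A,B$ one is asserting $[A,B]^\dagger = [B^\dagger,A^\dagger]$ while $[A,B]=[B,A]$ is symmetric in the odd part — this must be reconciled with the corresponding relation among the $c_\alpha$, and it is the point where a naive guess can fail and one may need to insert an extra factor of $i$ on the odd root vectors or compose with a diagram automorphism. Concretely, for the exceptional cases I would verify the odd-odd bracket relations on a small set of generators and invoke the fact (stated at the end of the excerpt) that any adjoint operation differs from a fixed reference one by composition with a Lie superalgebra automorphism, so it suffices to exhibit one. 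I would also remark that, once the matrix cases are settled, for $\mathfrak{gl}(m|n)$ and $\osp$ the associated contravariant hermitian forms are precisely the ones entering the notion of star representation used later, so the construction here is exactly the one needed in Sections \ref{ExamGL} and \ref{ExamOSp}.
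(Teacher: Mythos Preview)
Your proposal is correct in spirit but takes a genuinely different route from the paper. The paper's proof consists of two citations: for all basic classical Lie superalgebras other than $D(2,1;\alpha)$ it invokes the constructions in \cite{MR0424886} (Scheunert--Nahm--Rittenberg), and for $D(2,1;\alpha)$ it cites \cite{MR0787332} (Van der Jeugt). You instead sketch the explicit constructions that those references carry out --- conjugate transpose for the matrix families, and a Chevalley-basis/compact-form argument with carefully chosen real scalars $c_\alpha$ for the exceptional ones. What your approach buys is self-containment and a clearer picture of where the sign subtleties lie; what the paper's approach buys is brevity, since the result is already in the literature and is only needed as input.

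One technical point to watch: for $\mathfrak{gl}(m|n)$ the clean choice is the \emph{ordinary} conjugate transpose $X\mapsto \overline{X}^{\,T}$, not the supertranspose. With the ordinary transpose one has $(AB)^\dagger=B^\dagger A^\dagger$ on the nose, whence $[A,B]^\dagger=[B^\dagger,A^\dagger]$ follows immediately from the super bracket, and the map is manifestly involutive. The supertranspose fails $(X^{st})^{st}=X$ on the odd blocks and complex conjugation does not repair that sign (the sign is real), so your parenthetical ``the conjugation-involution corrects for an appropriate choice'' would need an extra ingredient such as conjugation by a fixed diagonal matrix. This is exactly the kind of adjustment you anticipate later when you mention inserting factors of $i$ on odd generators, so the plan survives, but the cleanest route is to use the ordinary transpose from the start.
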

\begin{proof}
For all basic classical Lie superalgebras except $D(2,1;\alpha)$ this follows from \cite{MR0424886}. For the Lie superalgebra $D(2,1;\alpha)$ this follows from \cite{MR0787332}.
\end{proof}

Such an adjoint operation can always be used to construct a non-degenerate contravariant hermitian form on a finite dimensional irreducible representation of a basic classical Lie superalgebra, this is known as the Shapovalov form. For completeness we prove this fact in the following lemma. In case this form is positive definite, the representation is a star representation. For finite dimensional basic classical Lie superalgebras, this positive definiteness is only possible for those of type I, $A(m|n)=\mathfrak{sl}(m|n)$ and $C(n)=\mathfrak{osp}(2|2n)$, see \cite{MR0424886, MR0787332}.
\begin{lemma}
\label{contraform}
Every simple highest weight module $\mV$ of a basic classical Lie superalgebra $\mg$, has a non-degenerate contravariant hermitian form. This is a non-degenerate hermitian form $\langle\cdot,\cdot,\rangle$ satisfying
\begin{eqnarray*}
\langle A v,w\rangle&=&\langle v,A^\dagger w\rangle,
\end{eqnarray*}
for $A\in\mg$, $v,w\in\mV$ and $\dagger$ an adjoint operation on $\mg$.
\end{lemma}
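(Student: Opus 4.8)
The plan is to reproduce, in the super setting, the classical construction of the contravariant (Shapovalov) form on a Verma module and then pass to its simple quotient. Fix a triangular decomposition $\mg=\overline{\mn}+\mh+\mn$ with $\mb=\mh+\mn$ a Borel subalgebra, so that $\mV=\mV_\lambda$ is the irreducible quotient of $M(\lambda)=\cU(\mg)\otimes_{\cU(\mb)}\mC_\lambda$, generated by a highest weight vector $v_\lambda$. By Lemma~\ref{existadj} and the remark preceding it we may choose the adjoint operation $\dagger$ to be the identity on the real span $\mh_{\mR}$ of the coroots (in the $\mathfrak{gl}$ case one also keeps the centre fixed); then, computing $[H,X_\alpha^\dagger]$ for $H\in\mh$ and a root vector $X_\alpha$ and using that roots are real on $\mh_{\mR}$, one gets $\dagger(\mg_\alpha)=\mg_{-\alpha}$ for every root $\alpha$, hence $\dagger(\mn)=\overline{\mn}$.

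Next I would extend $\dagger$ to an antilinear anti-automorphism $\tau$ of $\cU(\mg)$ with $\tau(xy)=\tau(y)\tau(x)$ and $\tau^2=\mathrm{id}$: the reversal map $X_1\otimes\cdots\otimes X_k\mapsto X_k^\dagger\otimes\cdots\otimes X_1^\dagger$ on the tensor superalgebra carries each defining relation $XY-(-1)^{|X||Y|}YX-[X,Y]$ back into the relation ideal, thanks to $[X,Y]^\dagger=[Y^\dagger,X^\dagger]$ and graded antisymmetry, so it descends. Since $\tau$ interchanges $\overline{\mn}\,\cU(\mg)$ with $\cU(\mg)\mn$, it preserves the kernel $\overline{\mn}\,\cU(\mg)+\cU(\mg)\mn$ of the Harish--Chandra projection $\varpi\colon\cU(\mg)\to S(\mh)$ coming from the PBW decomposition $\cU(\mg)=\cU(\overline{\mn})\otimes S(\mh)\otimes\cU(\mn)$, and on $S(\mh)$ it is just complex conjugation of coefficients (because $\dagger$ fixes $\mh_{\mR}$); thus $\varpi\circ\tau=\overline{(\,\cdot\,)}\circ\varpi$.

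Because $u\mapsto uv_\lambda$ is a bijection $\cU(\overline{\mn})\to M(\lambda)$, I would then define
\[
\langle u_1v_\lambda,u_2v_\lambda\rangle\ :=\ \lambda\big(\varpi(\tau(u_1)\,u_2)\big),\qquad u_1,u_2\in\cU(\overline{\mn}),
\]
equivalently the coefficient of $v_\lambda$ in $\tau(u_1)(u_2v_\lambda)$; by the bijection this is well defined and sesquilinear. Contravariance $\langle Av,w\rangle=\langle v,A^\dagger w\rangle$ for $A\in\mg$ follows by writing $Au_1=u_1'+r$ with $u_1'\in\cU(\overline{\mn})$ and $r$ in the left ideal annihilating $v_\lambda$, using $\tau(u_1')=\tau(u_1)A^\dagger-\tau(r)$, and noting that $\tau(r)\in\overline{\mn}\,\cU(\mg)+\sum_H(H-\lambda(H))\cU(\mg)$ has vanishing $v_\lambda$-coefficient on every vector (the first summand lowers weight, the second annihilates the $\lambda$-weight space). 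Since $\lambda$ is real on $\mh_{\mR}$ (as $\mV$ is finite dimensional), hermicity $\overline{\langle w,v\rangle}=\langle v,w\rangle$ then follows from $\tau(\tau(u_1)u_2)=\tau(u_2)u_1$ together with $\varpi\circ\tau=\overline{(\,\cdot\,)}\circ\varpi$. One also has $\langle v_\lambda,v_\lambda\rangle=1$, and weight spaces of distinct weight (or parity) are orthogonal.

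Finally, by contravariance the radical of $\langle\cdot,\cdot\rangle$ is a $\mg$-submodule of $M(\lambda)$ not containing $v_\lambda$, hence contained in the maximal proper submodule $N(\lambda)$; conversely $N(\lambda)\cap M(\lambda)_\lambda=0$, so iterating contravariance shows that every weight vector of $N(\lambda)$ is orthogonal to all of $M(\lambda)$, whence $N(\lambda)$ is exactly the radical. Therefore $\langle\cdot,\cdot\rangle$ descends to a non-degenerate contravariant hermitian form on $\mV=M(\lambda)/N(\lambda)$. The step that needs genuine care is the super-sign bookkeeping in the second paragraph — that the reversal map really descends to $\cU(\mg)$ as an \emph{ungraded} anti-involution, and that $\varpi$ intertwines $\tau$ with coefficientwise conjugation — since it is exactly this compatibility that yields a hermitian, rather than skew-hermitian, form; everything else is formally identical to the semisimple Lie algebra case.
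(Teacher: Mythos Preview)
Your proof is correct and follows essentially the same Shapovalov-form construction as the paper: build the contravariant form on the Verma module by extending $\dagger$ to an antilinear anti-involution of $\cU(\mg)$, identify the radical with the maximal submodule, and descend to the simple quotient. The paper's argument is considerably terser (it defines the form directly by $\langle Y_1\cdots Y_k v_+, Y_1'\cdots Y_l' v_+\rangle=\langle v_+, Y_k^\dagger\cdots Y_1^\dagger Y_1'\cdots Y_l' v_+\rangle$ and asserts hermicity ``from the knowledge of the action of $\dagger$ on the Cartan subalgebra''), whereas you unpack the same computation through the Harish--Chandra projection $\varpi$ and explicitly verify that the reversal map descends to $\cU(\mg)$ and intertwines $\varpi$ with conjugation; the two formulations are equivalent, as you yourself note. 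One small remark: you invoke finite-dimensionality of $\mV$ to get $\lambda$ real on $\mh_{\mR}$, but the lemma as stated does not assume this; the paper is equally silent on this point, and in the applications only finite-dimensional modules are used, so this is not a defect in your argument relative to the original.
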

\begin{proof}
First we define a contravariant hermitian form on the corresponding generalised Verma module. If the highest weight vector of $\mV$ is of weight $\lambda$, we consider the generalised Verma module $V^\lambda=\cU(\mg)\otimes_{\cU(\mb)}\mC_\lambda$. The highest weight vector of the generalised Verma module will be denoted by $\widetilde{v}_+$.

Consider an adjoint operation, which exists by Lemma \ref{existadj}. We define the form $\langle\cdot,\cdot\rangle$ on $V^\lambda$ by putting $\langle \widetilde{v}_+,\widetilde{v}_+\rangle=1$ and $\langle \widetilde{v}_+,w\rangle=0$ for every weight vector $w\in V^\lambda$ of lower weight and
\begin{eqnarray*}
\langle Y_1\cdots Y_k\widetilde{v}_+,Y_1'\cdots Y_l'\widetilde{v}_+\rangle=\langle \widetilde{v}_+,Y_k^\dagger\cdots Y_1^\dagger Y_1'\cdots Y_l'\widetilde{v}_+\rangle
\end{eqnarray*} 
for $Y_1,\cdots, Y_k$ and $Y_1',\cdots,Y_l'$ negative root vectors. This is consistently defined by the properties of an adjoint operation. The adjoint operation can be naturally extended to $\cU(\mg)$, by the relation $(A_1\cdots A_p)^\dagger=A_p^\dagger\cdots A_1^\dagger$ for $A_1,\cdots,A_p\in\mg$. From this approach and the knowledge of the action of $\dagger$ on the Cartan subalgebra, it follows easily that the proposed form on $V^\lambda$ is hermitian. 

Clearly the maximal submodule of the generalised Verma module consists of vectors which are orthogonal on all vectors. Therefore the form descends to $\mV$. Since $\mV$ is simple and every degenerate subspace is an ideal, this bilinear form has to be non-degenerate.
\end{proof}

By definition, every basic classical Lie superalgebra $\mg$ admits a non-degenerate supersymmetric consistent invariant bilinear form, denoted by $(\cdot,\cdot):\mg\times\mg\to\mC$. This form is unique up to a normalisation. Invariant means that
\begin{eqnarray*}
([B,A],C)=-(-1)^{|A||B|}(A,[B,C])&\mbox{or equivalently}&([A,B],C)=(A,[B,C])\quad\mbox{holds}\quad\mbox{for }A,B,C\in\mg.
\end{eqnarray*}
Consistent means $(\mg_{\overline{0}},\mg_{\overline{1}})=0$ and supersymmetric means $(A,B)=(-1)^{|A||B|}(B,A)$.

For most basic classical Lie superalgebras this form is given by the Killing form, see \cite{MR0486011}. Every other invariant form is proportional to the Killing form. For $\mathfrak{osp}(2n+2|2n)$ and $D(2,1;\alpha)$ the Killing form is identically zero but another such form can be constructed. For simplicity we refer to this form as the Killing form for all basic classical Lie superalgebras. In the following proposition we prove the well-known fact that this Killing form always exists in way that will be useful for the sequel.
\begin{proposition}
\label{Killingfromstar}
\label{Killingexist}
There exists a non-degenerate supersymmetric consistent invariant bilinear form on every basic classical Lie superalgebra. 
\end{proposition}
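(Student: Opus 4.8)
The plan is to construct the desired bilinear form case-by-case along the classification of basic classical Lie superalgebras, relying on the fact (Lemma \ref{existadj}) that every such $\mg$ carries an adjoint operation $\dagger$. First I would recall that for the series $\mathfrak{sl}(m|n)$ with $m\neq n$, $\mathfrak{psl}(n|n)$, $\osp$, $F(4)$ and $G(3)$ the Killing form $(A,B)=\mathrm{str}(\mathrm{ad}A\,\mathrm{ad}B)$ is already non-degenerate, supersymmetric, consistent and invariant — these are standard computations found in \cite{MR0486011}, so for these algebras there is nothing to do beyond citing the reference. The invariance identity $([A,B],C)=(A,[B,C])$ follows formally from the (anti)cyclicity of the supertrace, supersymmetry follows from $\mathrm{str}(XY)=\mathrm{str}(YX)$ together with the sign bookkeeping, and consistency follows because $\mathrm{ad}$ preserves the $\mZ_2$-grading so that $\mathrm{ad}A\,\mathrm{ad}B$ is odd whenever exactly one of $A,B$ is odd, forcing zero supertrace.

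The genuine work concerns the cases where the Killing form degenerates, namely $\mathfrak{osp}(2n+2|2n)$ and $D(2,1;\alpha)$. Here I would invoke the standard construction: realise $\mg$ concretely — $\mathfrak{osp}(m|2n)$ as the subalgebra of $\mathfrak{gl}(m|2n)$ preserving a fixed even non-degenerate supersymmetric form on $\mC^{m|2n}$, and $D(2,1;\alpha)$ via its explicit presentation as in \cite{MR0486011} — and then take the restriction of the (normalised) supertrace form $(A,B)=\mathrm{str}(AB)$ from the defining representation rather than the adjoint one. For $\osp$ this restricted form is visibly even, consistent, invariant (cyclicity of the supertrace again), and one checks non-degeneracy by exhibiting a dual basis in terms of the root vectors and the Cartan part; for $D(2,1;\alpha)$ one uses instead the one-parameter family of invariant forms supplied by the three $\mathfrak{sl}(2)$-factors, normalised compatibly with $\alpha$, which is exactly the content alluded to in \cite{MR0787332}. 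In all these cases the non-degeneracy can alternatively be deduced from simplicity of $\mg$: any invariant bilinear form has radical an ideal, so a nonzero invariant form on a simple Lie superalgebra is automatically non-degenerate, which reduces the task to checking that the constructed form is not identically zero.

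An alternative, more uniform route — which I would mention as it ties in with the rest of the paper — is to build the form directly from the adjoint operation $\dagger$ of Lemma \ref{existadj} together with the non-degenerate contravariant hermitian form on the adjoint representation provided by Lemma \ref{contraform} applied to $\mg$ itself (when $\mg$ is its own highest weight module up to the obvious modifications), or more safely from a faithful finite-dimensional representation: the composition of a contravariant hermitian form with the antilinear involution $\dagger$ yields a bilinear form whose invariance and (super)symmetry properties can be read off from those of $\dagger$. I expect the main obstacle to be the bookkeeping for $D(2,1;\alpha)$: unlike the classical series it has no natural "defining representation" with a supertrace form that does the job for all $\alpha$, so one must either work through its explicit structure constants or carefully assemble the form from the invariant forms on the three $\mathfrak{sl}(2)$ pieces with the correct $\alpha$-dependent weights, and verify supersymmetry and invariance on the mixed (odd-odd) components where the $\alpha$-dependence is concentrated. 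Everything else is routine sign-chasing with the supertrace.
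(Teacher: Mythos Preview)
Your case-by-case argument is correct and entirely standard, but the paper takes the uniform route you sketch as an alternative: it applies Lemma~\ref{contraform} to the adjoint representation to get a non-degenerate contravariant hermitian form $\langle\cdot,\cdot\rangle$ on $\mg$, sets $(A,B)_1=\langle A^\dagger,B\rangle$, and then \emph{supersymmetrises} by defining $(A,B)=(A,B)_1+(-1)^{|A||B|}(B,A)_1$. The point you gloss over is that $(A,B)_1$ itself need not be supersymmetric, so this averaging step is essential; non-degeneracy of the sum then follows from simplicity of $\mg$ (the radical is an ideal) together with the observation that the restriction of $(\cdot,\cdot)_1$ to $\mg_{\overline{0}}$ is a nonzero invariant form on a reductive Lie algebra, hence symmetric, so the supersymmetrisation cannot vanish identically. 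No case distinction for $\mathfrak{osp}(2n+2|2n)$ or $D(2,1;\alpha)$ is needed.

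The practical difference is that the paper's construction is not merely an existence proof: Lemma~\ref{starwithKilling} (the compatibility $(A^\dagger,B^\dagger)=\overline{(B,A)}$) is proved by direct appeal to this specific construction, and that compatibility is used repeatedly downstream (Theorem~\ref{formonchains}, Proposition~\ref{disjointiff}). Your supertrace-based forms would of course satisfy the same identity, but you would have to verify it separately in each case, whereas here it is automatic from the definition via $\langle\cdot,\cdot\rangle$ and $\dagger$.
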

\begin{proof}
Since $\mg$ is simple, the adjoint representation has a non-degenerate contravariant hermitian form $\langle\cdot,\cdot\rangle$ as in Lemma \ref{contraform} for an adjoint operation $\dagger$. The bilinear forms $(\cdot,\cdot)_1$ and $(\cdot,\cdot)_2$ defined by
\begin{eqnarray*}
(A,B)_1=\langle A^\dagger, B\rangle&\mbox{and}&(A,B)_2=(-1)^{|A||B|} \langle B^\dagger, A\rangle=(-1)^{|A||B|}(B,A)_1
\end{eqnarray*}
are non-degenerate, consistent and invariant. We can construct a supersymmetric one by adding these two together, $(\cdot,\cdot)=(\cdot,\cdot)_1+(\cdot,\cdot)_2$. This is still consistent and invariant.

Since $\mg$ is simple an invariant bilinear form is either zero or non-degenerate, because a degenerate subspace would be an ideal. In order for $(\cdot,\cdot)$ to be zero, $(\cdot,\cdot)_1$ needs to be super skew symmetric. The restriction of $(\cdot,\cdot)_1$ to $\mg_{\overline{0}}$ is proportional to the Killing form of this reductive Lie algebra (and non-zero) and therefore symmetric, so $(\cdot,\cdot)_1$ can not be super skew symmetric and $(\cdot,\cdot)$ is non-degenerate.
\end{proof}

The quadratic Casimir operator is given by
\begin{eqnarray*}
\cC_2&=&\sum_{i=1}^{\dim\mg}A_i A_i^\ddagger\quad\in\cU(\mg)
\end{eqnarray*}
for $\{A_i\}$ a basis of $\mg$ and $\{A^\ddagger_i\}$ its dual basis, $(A_i^\ddagger,A_j)=\delta_{ij}$. This operator is central in $\cU(\mg)$, which is a direct consequence of the invariance of the Killing form.

In the following lemma we prove that an arbitrary adjoint operation on a basic classical Lie superalgebra has a special relation with the Killing form. In fact, it is known that there are only two adjoint operations on each basic classical Lie superalgebra, see \cite{MR0424886}, one can be derived from the other as in the subsequent Proposition \ref{stardual}.
\begin{lemma}
\label{starwithKilling}
For an adjoint operation $\dagger$ on a basic classical Lie superalgebra $\mg$, it holds that
\begin{eqnarray*}
(A^\dagger,B^\dagger)&=&\overline{(B,A)}
\end{eqnarray*}
with $A,B\in\mg$ and $(\cdot,\cdot)$ the Killing form on $\mg$ of Proposition \ref{Killingexist}.
\end{lemma}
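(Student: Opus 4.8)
The plan is to reduce the identity $(A^\dagger,B^\dagger)=\overline{(B,A)}$ to a statement about a single bilinear form on $\mg$ and then invoke uniqueness of the invariant form up to scalar. First I would introduce the sesquilinear datum coming from the adjoint operation: define $F(A,B):=\overline{(A^\dagger,B^\dagger)}$ for $A,B\in\mg$. Since $\dagger$ is antilinear and $(\cdot,\cdot)$ is bilinear, $F$ is again a bilinear form on $\mg$ (the two conjugations cancel). I would then check that $F$ is invariant: using $[A,B]^\dagger=[B^\dagger,A^\dagger]$ together with invariance of the Killing form, one computes
\begin{eqnarray*}
F([A,B],C)&=&\overline{([A,B]^\dagger,C^\dagger)}\;=\;\overline{([B^\dagger,A^\dagger],C^\dagger)}\;=\;\overline{(-1)^{|A||B|}([A^\dagger,B^\dagger],C^\dagger)}\\
&=&\overline{(-1)^{|A||B|}(A^\dagger,[B^\dagger,C^\dagger])}\;=\;\overline{(A^\dagger,[C,B]^\dagger)}\;=\;(-1)^{|B||C|}F(A,[B,C]),
\end{eqnarray*}
so after tracking the sign conventions $F$ satisfies the invariance axiom for the Killing form (one must be careful here with the graded signs; the cleanest route is to use the equivalent form $([A,B],C)=(A,[B,C])$ stated in the excerpt). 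It is also consistent, because $\dagger$ is even, so it preserves the $\mZ_2$-grading and hence $F(\mg_{\overline 0},\mg_{\overline 1})=\overline{(\mg_{\overline 0},\mg_{\overline 1})}=0$.

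Next, since $\mg$ is simple (or reductive, handled componentwise) and $F$ is a nonzero invariant bilinear form — nonzero because $\dagger$ is a bijection and $(\cdot,\cdot)$ is non-degenerate, so $F$ is non-degenerate — uniqueness of the invariant form up to normalisation (Proposition \ref{Killingexist} and the remarks following it) gives a scalar $c\in\mC$ with $F(A,B)=c\,(A,B)$ for all $A,B$, i.e. $(A^\dagger,B^\dagger)=\overline{c}\,\overline{(A,B)}$. It remains to show $c=1$. Here I would restrict to a well-chosen element: by the discussion preceding the lemma, one may take $\dagger$ so that the Cartan generator $H_\alpha$ attached to a simple root $\alpha$ satisfies $H_\alpha^\dagger=H_\alpha$. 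Then $F(H_\alpha,H_\alpha)=\overline{(H_\alpha,H_\alpha)}=(H_\alpha,H_\alpha)$ since $(H_\alpha,H_\alpha)$ is a real number (the restriction of the Killing form to the real span of the coroots is real — indeed it equals the Killing form of $\mg_{\overline 0}$ up to scale, which is real on this real form). Comparing with $F(H_\alpha,H_\alpha)=\overline c\,(H_\alpha,H_\alpha)$ and noting $(H_\alpha,H_\alpha)\neq 0$ forces $\overline c=1$, hence $c=1$. For the general adjoint operation, which by the cited classification differs from this one by a Lie superalgebra automorphism $\phi$, one has $(\cdot)^\dagger=\phi\circ(\cdot)^{\dagger_0}\circ\phi^{-1}$ up to the automorphism being an isometry of the Killing form (automorphisms of a simple Lie superalgebra preserve the invariant form up to scalar, and that scalar is $1$ after checking on $\mg_{\overline 0}$), so the identity propagates.

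The main obstacle I anticipate is not the abstract reduction but the bookkeeping of graded signs in verifying invariance of $F$, and — more subtly — pinning down the scalar $c$ cleanly: one has to be sure that the distinguished adjoint operation really does fix some Cartan element on which the Killing form takes a nonzero real value, and that this is enough to rule out $c=-1$ (or a non-real $c$, which the consistency/non-degeneracy argument already excludes once we know $c$ is real, but reality of $c$ itself needs the real-value observation). An alternative to the Cartan-element trick, if one wants to avoid case-checking across the classification, is to evaluate $F$ on a root vector and its image $E_\alpha^\dagger$, which for a star operation is a positive multiple of $E_{-\alpha}$, together with $(E_\alpha,E_{-\alpha})\neq 0$; this again gives $c$ real and positive, and normalisation of $\dagger$ then gives $c=1$. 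Either way the heart of the proof is: build the invariant bilinear form $F$ out of $\dagger$, use uniqueness, and fix the constant on one explicit pair.
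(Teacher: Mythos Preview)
Your approach is genuinely different from the paper's and contains a real error. The paper does not use uniqueness of the invariant form at all: it simply recalls that in the proof of Proposition~\ref{Killingexist} the form is \emph{constructed} from the contravariant hermitian form $\langle\cdot,\cdot\rangle$ on the adjoint representation as $(A,B)=\langle A^\dagger,B\rangle+(-1)^{|A||B|}\langle B^\dagger,A\rangle$. The identity $(A^\dagger,B^\dagger)=\overline{(B,A)}$ is then a two-line computation using only hermiticity of $\langle\cdot,\cdot\rangle$ and $(A^\dagger)^\dagger=A$. No normalisation constant ever appears.

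The gap in your argument is the invariance check for $F(A,B)=\overline{(A^\dagger,B^\dagger)}$. You dropped a sign: from $[B^\dagger,A^\dagger]=-(-1)^{|A||B|}[A^\dagger,B^\dagger]$ one gets
\[
F([A,B],C)=(-1)^{|A||B|+|B||C|}\,F(A,[B,C]),
\]
and after imposing consistency ($|C|=|A|+|B|$ else both sides vanish) this sign reduces to $(-1)^{|B|}$, not $1$. So $F$ is \emph{not} invariant, and indeed it cannot be a scalar multiple of $(\cdot,\cdot)$: the desired identity is $F(A,B)=(B,A)$, and $(B,A)=(-1)^{|A||B|}(A,B)$ differs from $(A,B)$ by a sign on $\mg_{\overline 1}\times\mg_{\overline 1}$. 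Your uniqueness argument can be repaired by instead setting $\widetilde F(A,B):=\overline{(B^\dagger,A^\dagger)}$; this form \emph{is} invariant (the same computation now closes up), and uniqueness gives $\widetilde F=c\,(\cdot,\cdot)$, equivalently $(A^\dagger,B^\dagger)=\bar c\,\overline{(B,A)}$. Pinning down $c=1$ then still requires knowing something real-valued about the specific normalisation from Proposition~\ref{Killingexist}, which brings you back to its construction anyway; so even the corrected route is longer and less self-contained than the paper's direct verification.
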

\begin{proof}
The property follows immediately from the fact that the form can be defined as in the proof of Proposition \ref{Killingfromstar}.
\end{proof}

The dual representation of a $\mg$-representation $\mV$ is defined on the space of linear functionals $\mV^\ast$. This space becomes a $\mg$-module with action given by $(A\alpha)(v)=-(-1)^{|A||\alpha|}\alpha(Av)$ for $A\in\mg$, $\alpha\in\mV^\ast$ and $v\in\mV$. The following anti-linear bijection between $\mV$ and $\mV^\ast$ will be of importance in Section \ref{Kostant}. This lemma is equivalent to the statement that an irreducible representation is isomorphic to its own twisted dual.

\begin{lemma}
\label{bijection}
Consider a finite dimensional irreducible representation $\mV$ of a basic classical Lie superalgebra $\mg$, with dual representation $\mV^\ast$. There is an anti-linear bijection, $\psi_0:\mV\to\mV^\ast$ that satisfies $\psi_0(Av)=-(-1)^{|A||v|}A^\dagger\psi_0(v)$ for $A\in\mg$, $v\in\mV$ and $\dagger$ an adjoint operation on $\mg$.
\end{lemma}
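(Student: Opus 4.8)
The plan is to construct $\psi_0$ directly from the non-degenerate contravariant hermitian form guaranteed by Lemma \ref{contraform} and to verify the intertwining property. So first I would fix an adjoint operation $\dagger$ on $\mg$ (which exists by Lemma \ref{existadj}) and take the non-degenerate hermitian form $\langle\cdot,\cdot\rangle$ on $\mV$ from Lemma \ref{contraform}, satisfying $\langle Av,w\rangle = \langle v, A^\dagger w\rangle$. Then I would define $\psi_0:\mV\to\mV^\ast$ by $\psi_0(v)(w) = \overline{\langle w, v\rangle}$ (or, equivalently up to conventions, $\psi_0(v)=\langle\cdot,v\rangle$ viewed appropriately), and set the parity of $\psi_0(v)$ equal to the parity of $v$, which is consistent since the form pairs vectors of equal weight and hence equal parity. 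Anti-linearity of $\psi_0$ is then immediate from the sesquilinearity of $\langle\cdot,\cdot\rangle$, and bijectivity follows because the form is non-degenerate and $\mV$ is finite dimensional.

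The second step is the intertwining identity $\psi_0(Av) = -(-1)^{|A||v|} A^\dagger\psi_0(v)$. Here I would just unwind both sides against an arbitrary $w\in\mV$. On one side, $\psi_0(Av)(w) = \overline{\langle w, Av\rangle}$; using contravariance in the second slot (i.e. $\langle w, Av\rangle = \langle A^\dagger w, v\rangle$, which is the conjugate-linear-in-second-variable version of the stated identity, obtained by hermicity) this becomes $\overline{\langle A^\dagger w, v\rangle}= \psi_0(v)(A^\dagger w)$. On the other side, by the definition of the dual action, $\big(A^\dagger\psi_0(v)\big)(w) = -(-1)^{|A^\dagger||\psi_0(v)|}\psi_0(v)(A^\dagger w)$, and since $\dagger$ is even, $|A^\dagger|=|A|$, and $|\psi_0(v)|=|v|$, so this equals $-(-1)^{|A||v|}\psi_0(v)(A^\dagger w)$. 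Comparing the two computations gives exactly $\psi_0(Av) = -(-1)^{|A||v|}A^\dagger\psi_0(v)$, as desired. Care is needed with sign bookkeeping and with which slot of the hermitian form is conjugate-linear, but these are routine once the conventions are pinned down.

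The main obstacle — really the only subtle point — is matching the parity conventions in the definition of $\psi_0$ with the twisted dual action $(A\alpha)(v) = -(-1)^{|A||\alpha|}\alpha(Av)$ so that the factor $-(-1)^{|A||v|}$ comes out precisely, rather than with a spurious extra sign; this is controlled by the fact that $\langle\cdot,\cdot\rangle$ is hermitian (not superhermitian) and that $\mg$ acts by even and odd operators respecting the $\mZ_2$-grading. I would also remark that the construction is canonical up to the scalar freedom in $\langle\cdot,\cdot\rangle$ and that it realises $\mV$ as isomorphic to its own twisted dual, which is the interpretation quoted before the lemma; no further structural input beyond Lemmas \ref{existadj} and \ref{contraform} is needed.
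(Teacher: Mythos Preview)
Your proposal is correct and essentially identical to the paper's proof: the paper also defines $\psi_0(v)(w)=\langle v,w\rangle$ (which equals your $\overline{\langle w,v\rangle}$ by hermicity) using the form from Lemma~\ref{contraform}, and verifies the intertwining identity by the same one-line computation unwinding both sides against $w$. Your extra remarks on parity conventions and the twisted-dual interpretation are accurate and simply make explicit what the paper leaves implicit.
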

\begin{proof}
This can be deduced from the hermitian form in Lemma \ref{contraform}. For $v\in\mV$ we define $\psi_0(v)\in\mV^\ast$ as $\left(\psi_0(v)\right)(w)=\langle v,w\rangle$ for all $w\in\mV$. This is a bijection because of the non-degeneracy of $\langle \cdot,\cdot\rangle$. Then it follows that
\begin{eqnarray*}
\left(\psi_0(Av)\right)(w)&=&\langle Av,w\rangle=\langle v,A^\dagger w\rangle\\
&=&\left(\psi_0(v)\right)(A^\dagger w)=-(-1)^{|A||v|}\left(A^\dagger\psi_0(v)\right)(w),
\end{eqnarray*}
which proves the lemma.
\end{proof}

The dual of a star representation is again a star representation, but with different adjoint operation, as is stated in the following proposition. This generalizes Proposition 1 in \cite{MR1067631}.

\begin{proposition}
\label{stardual}
If the irreducible $\mg$-module $\mV$ is a star representation for adjoint operation $\dagger$, then $\mV^\ast$ is also a star representation for adjoint operation
\begin{eqnarray*}
A&\to& (-1)^{|A|}A^\dagger\qquad\mbox{ for }A\in\mg.
\end{eqnarray*}
\end{proposition}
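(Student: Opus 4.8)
The plan is to verify directly that the map $\dagger':A\mapsto(-1)^{|A|}A^\dagger$ is an adjoint operation on $\mg$ and that the hermitian form on $\mV^\ast$ naturally induced from $\mV$ is contravariant and positive definite with respect to $\dagger'$. First I would check the algebraic axioms for $\dagger'$: antilinearity and evenness are immediate since $\dagger$ is antilinear and even and the sign $(-1)^{|A|}$ depends only on the parity; the involutivity $(A^{\dagger'})^{\dagger'} = (-1)^{|A|}(-1)^{|A^\dagger|}(A^\dagger)^\dagger = A$ uses that $\dagger$ preserves parity and $(A^\dagger)^\dagger=A$; and the bracket identity $[A,B]^{\dagger'} = [B^{\dagger'},A^{\dagger'}]$ follows by comparing signs, using $|[A,B]|=|A|+|B|$ and $[A,B]^\dagger=[B^\dagger,A^\dagger]$, so that $[A,B]^{\dagger'}=(-1)^{|A|+|B|}[B^\dagger,A^\dagger]$ while $[B^{\dagger'},A^{\dagger'}]=(-1)^{|A|+|B|}[B^\dagger,A^\dagger]$ — these agree.

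Next I would produce the contravariant form on $\mV^\ast$. Using the anti-linear bijection $\psi_0:\mV\to\mV^\ast$ of Lemma \ref{bijection}, which satisfies $\psi_0(Av)=-(-1)^{|A||v|}A^\dagger\psi_0(v)$, I transport the inner product: for $\alpha,\beta\in\mV^\ast$ define $\langle\alpha,\beta\rangle^\ast = \overline{\langle\psi_0^{-1}(\alpha),\psi_0^{-1}(\beta)\rangle}$, which is again a positive definite hermitian form because $\psi_0$ is an anti-linear bijection. The main computation is then to show $\langle A\alpha,\beta\rangle^\ast = \langle\alpha, A^{\dagger'}\beta\rangle^\ast$. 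Writing $\alpha=\psi_0(v)$, $\beta=\psi_0(w)$ and using that $\psi_0^{-1}$ intertwines the $\mg$-actions up to the sign and $\dagger$ as in Lemma \ref{bijection}, one finds $\psi_0^{-1}(A\psi_0(v)) = -(-1)^{|A||v|}A^\dagger v$ up to the correct sign bookkeeping, so that $\langle A\alpha,\beta\rangle^\ast$ unwinds (after taking conjugates and using contravariance of $\langle\cdot,\cdot\rangle$ on $\mV$ with respect to $\dagger$) to an expression matching $\langle\alpha,(-1)^{|A|}A^\dagger\beta\rangle^\ast$.

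The step I expect to be the main obstacle is the careful sign accounting in this last computation: several Koszul-type signs $(-1)^{|A||v|}$, $(-1)^{|A||\alpha|}$, the extra $(-1)^{|A|}$ in $\dagger'$, and the conjugation of scalars all interact, and one must be sure that on weight vectors (so that parities are well-defined) they combine to exactly cancel. A clean way to organize this is to restrict attention to homogeneous $A$ and to weight vectors $v,w$, compute both sides as scalars, and compare; the identity $|\alpha|=|\psi_0^{-1}(\alpha)|$ (the bijection $\psi_0$ is even, being built from the even form $\langle\cdot,\cdot\rangle$) is what makes the signs line up. Positive definiteness is then automatic since conjugation and the anti-linear bijection preserve it, and the proposition follows; this also recovers Proposition 1 of \cite{MR1067631} as the purely even special case.
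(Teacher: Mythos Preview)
Your proof is correct. The verification that $\dagger'$ is an adjoint operation is exactly what the paper calls ``straightforward'', and your sign checks are fine.

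The only difference from the paper is in how the inner product on $\mV^\ast$ is produced. The paper writes it down directly in coordinates: choosing an orthonormal basis $\{v_i\}$ of $\mV$, it sets $\langle\alpha,\beta\rangle=\sum_i\overline{\alpha(v_i)}\beta(v_i)$ and then computes $\langle A\alpha,\beta\rangle=(-1)^{|A|}\langle\alpha,A^\dagger\beta\rangle$ by hand. You instead transport the form through the anti-linear bijection $\psi_0$ of Lemma~\ref{bijection}. These give the same form: if $\alpha=\psi_0(v)$ with $v=\sum_i c_i v_i$, then $\alpha(v_j)=\langle v,v_j\rangle=\overline{c_j}$, so the paper's $\sum_i\overline{\alpha(v_i)}\beta(v_i)$ equals your $\overline{\langle\psi_0^{-1}(\alpha),\psi_0^{-1}(\beta)\rangle}$. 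The paper's route is shorter because the orthonormal basis makes positive definiteness and the contravariance computation immediate, whereas your route trades that for the intertwining property of $\psi_0$ and then has to manage the Koszul signs you flag. Either way works; your concern about the sign bookkeeping is well placed but the computation does close, precisely because $\psi_0$ is even.
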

\begin{proof}
The inner product on $\mV^\ast$ is defined as $\langle \alpha,\beta\rangle=\sum_i\overline{\alpha(v_i)}\beta(v_i)$ for $\{v_i\}$ an orthonormal basis of $\mV$. The fact that $\langle A\alpha,\beta\rangle=(-1)^{|A|}\langle \alpha,A^\dagger \beta\rangle$ follows from a direct calculation. Also the fact that the proposed mapping yields an adjoint operation follows in a straightforward manner.
\end{proof}

Finally we define the twisted dual representation of a finite dimensional representation.
\begin{definition}{\rm
\label{twisteddual}
For any finite dimensional representation $\mU$ of a star Lie superalgebra with adjoint operation $\dagger$ the twisted dual representation $\mU^\vee$ with respect to $\dagger$ is defined on $\mU^\ast$, the space of linear functionals on $\mU$, with action given by
\[(A \alpha) (x)=\alpha(A^\dagger\cdot x) \]
for $\alpha\in \mU^\ast$, $x\in\mU$ and $A$ an element of the Lie superalgebra.}
\end{definition}
Note that an irreducible finite dimensional representation is its own twisted dual, which follows from the fact that a finite dimensional irreducible representation is completely determined by its highest weight and the parity of the corresponding vector.

\section{Kostant cohomology}
\label{Kostant}

The type of algebra (co)homology we consider was originally studied by Kostant for Lie algebras in \cite{MR0142696} and by Bott for the case where the parabolic subalgebra is a Borel subalgebra in \cite{Bott}. For Lie superalgebras, results have been obtained in e.g. \cite{Cheng, MR2646304, MR2037723, MR2036954, MR2746029}. However, all these results are for the case of unitarisable representations, while in this section we do not impose this very restrictive condition. The closely related sheaf cohomology on the generalised flag supermanifold $G/P$ (as a generalisation of \cite{Bott}) has been studied in \cite{MR2734963, MR1036335, MR2059616}.

We consider a basic classical Lie superalgebra $\mg=\overline{\mn}+\ml+\mn$ as in the introduction, with parabolic subalgebra $\Mp=\ml+\Mp$ and a finite dimensional irreducible representation $\mV$. We also use the notation $|\cdot|\in\mZ_2$ which maps a homogeneous element of a super vector space to $\overline{0}$ or $\overline{1}$ depending on whether it is even or odd. The summation $\sum_a$ will always be used to denote a summation $\sum_{a=1}^{\dim\mn}$ related to a basis $\{\xi_a\}$ of $\mn$. The notation $A\cdot v$ stands for the action of $A\in\mg$ on $v$ an element of a $\mg$-module $\mV$.

Usually we will consider bases $\{A_i\}$ of $\mg$ which split into a basis $\{\xi_a\}$ of $\mn$ and a basis $\{T_\kappa\}$ of $\Mp^\ast$.

\subsection{Definitions and basic properties for $C_\bullet({\mn},\mV)$}

Homology for $\mn$ and cohomology for $\overline{\mn}$ is defined by introducing the space of $k$-chains $C_k(\mn,\mV)=\Lambda^k\mn\otimes\mV$ and the space of $k$-cochains $C^k(\overline{\mn},\mV)=\mbox{Hom}(\Lambda^k\overline{\mn},\mV)$. Through the Killing form $(\cdot,\cdot)$ from Proposition \ref{Killingexist} we have the identification 
\begin{eqnarray*}
C_k(\mn,\mV)=\Lambda^k\mn\otimes\mV\cong\mbox{Hom}(\Lambda^k\overline{\mn},\mV)=C^k(\overline{\mn},\mV).
\end{eqnarray*}

The explicit evaluation of $\Lambda^k\mn\otimes\mV$ on $\Lambda^k\overline{\mn}$ in the identification above is inherited from the evaluation of $\otimes^k\mn\,\otimes\,\mV$ on $\otimes^k\overline{\mn}$ defined by induction through
\begin{eqnarray*}
\left(X_1\otimes \cdots\otimes X_k\otimes v\right)\left(Y_1\otimes\cdots\otimes Y_k\right)=(-1)^{|X_1|(|Y_2|+\cdots+|Y_k|)}(Y_1,X_1)\left(X_2\otimes \cdots\otimes X_k\otimes v\right)\left(Y_2\otimes\cdots\otimes Y_k\right).
\end{eqnarray*}

The spaces $C_k(\mn,\mV)$ are naturally $\Mp$-modules. We also introduce the notation $[A,V]$ and $[V,A]$ for the left and right adjoint action of $A\in\mg$ on $V\in\otimes^k\mg$, induced from the Lie superbracket which captures the adjoint action of $\mg$ on itself.

The coboundary operator $\partial: C^{\bullet}(\overline{\mn},\mV)\to C^{\bullet}(\overline{\mn},\mV)$ linearly maps elements from $C^k(\overline{\mn},\mV)$ to $C^{k+1}(\overline{\mn},\mV)$ for each $k$. Sometimes we will use $\partial_k$ to denote we consider the restriction of $\partial$ to $C^k(\overline{\mn},\mV)$ and similarly for the boundary operator $\partial^\ast$.
\begin{definition}{\rm
\label{defder}
The coboundary operator is given by $\partial_k: C^k(\overline{\mn},\mV)\to C^{k+1}(\overline{\mn},\mV)$
\begin{eqnarray*}
\left(\partial_k \alpha\right) (Y_0\wedge Y_1\wedge\cdots Y_k)&=&\sum_{i=0}^k(-1)^{i}(-1)^{|Y_i|(|Y_{0}|+\cdots +|Y_{i-1}|)}Y_i\cdot  \left(\alpha (Y_0\wedge\cdots^{\hat{i}}\cdot Y_k)\right)\\
&&+\frac{1}{2}\sum_{i=0}^k(-1)^i(-1)^{|Y_i|(|Y_{i+1}|+\cdots |Y_{k}|)} \alpha([Y_0\wedge\cdots^{\hat{i}}\cdot Y_k,Y_i])
\end{eqnarray*}
for {\rm $\alpha\in\mbox{Hom}(\Lambda^k\overline{\mn},\mV)$.} The boundary operator $\partial^\ast_k: C_k({\mn},\mV)\to C_{k-1}({\mn},\mV)$ is given by
\begin{equation}
\label{defcodiff}
 \partial^\ast_k(X\wedge f)=-X\cdot f- X\wedge \partial_{k-1}^\ast (f),
\end{equation}
for $f\in C^{k-1}(\overline{\mn},\mV)$, where we set $\partial_0^\ast=0$.}
\end{definition}

Usually the definition of $\partial$ is given by a different but equivalent expression, see \cite{MR0142696}. Here we choose a different form that leads to less arguments in the sign $(-1)$ originating from switching elements of $\mn$.

\begin{lemma}
\label{coder1}
The boundary operator $\partial^\ast: C_{\bullet}({\mn},\mV)\to C_{\bullet}({\mn},\mV)$ satisfies $\partial^\ast\circ\partial^\ast=0$ and is a $\Mp$-module morphism.
\end{lemma}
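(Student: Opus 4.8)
The plan is to establish both statements directly from Definition \ref{defder}, exploiting the recursive nature of $\partial^\ast_k(X\wedge f) = -X\cdot f - X\wedge\partial^\ast_{k-1}(f)$ and using induction on the homological degree $k$. First I would check that $\partial^\ast$ is well-defined on $C_\bullet(\mn,\mV) = \Lambda^\bullet\mn\otimes\mV$, i.e.\ that the right-hand side of the defining formula is compatible with the relations in $\Lambda^\bullet\mn$; this is a routine but necessary verification that the sign conventions in the wedge product are respected. Then for the $\Mp$-equivariance I would proceed by induction: $\partial^\ast_0 = 0$ is trivially a morphism, and for the inductive step I would write, for $T\in\Mp$, the action on $X\wedge f$ using the Leibniz rule $T\cdot(X\wedge f) = [T,X]_\mn\wedge f + (-1)^{|T||X|}X\wedge(T\cdot f)$ (where the bracket is the projection to $\mn$, which is legitimate since $\mn$ is an $\Mp$-submodule of $\mg$), apply $\partial^\ast$, and compare with $T\cdot\partial^\ast(X\wedge f)$. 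The terms involving $T$ acting on $\mV$ and on the $\mn$-factors should match up after using the module axioms $T\cdot(X\cdot f) - (-1)^{|T||X|}X\cdot(T\cdot f) = [T,X]\cdot f$ and the inductive hypothesis that $\partial^\ast_{k-1}$ is an $\Mp$-morphism; the fact that the bracket $[\Mp,\mn]\subseteq\mn$ ensures no $\Mp^\ast$-components leak in.

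For $\partial^\ast\circ\partial^\ast = 0$ I would again induct on $k$. Applying the definition twice:
\begin{eqnarray*}
\partial^\ast_{k-1}\partial^\ast_k(X\wedge f) &=& \partial^\ast_{k-1}\bigl(-X\cdot f - X\wedge\partial^\ast_{k-1}(f)\bigr)\\
&=& -\partial^\ast_{k-1}(X\cdot f) + X\cdot\partial^\ast_{k-1}(f) + X\wedge\partial^\ast_{k-2}\partial^\ast_{k-1}(f).
\end{eqnarray*}
The last term vanishes by the inductive hypothesis, so it remains to show $\partial^\ast_{k-1}(X\cdot f) = X\cdot\partial^\ast_{k-1}(f)$, i.e.\ that $\partial^\ast_{k-1}$ commutes with the action of an element $X\in\mn$. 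This is an interior-multiplication/Lie-derivative type identity: writing $f = Y\wedge g$ and expanding both sides via the recursion, one reduces to the statement that $X\cdot(Y\cdot g) - (-1)^{|X||Y|}Y\cdot(X\cdot g) = [X,Y]\cdot g$ on $\mV$ together with $[X,Y]_\mn = [X,Y]$ since $[\mn,\mn]\subseteq\mn$ ($\mn$ being a subalgebra). The bookkeeping of signs when $X$ is pulled past $Y$ in $Y\wedge g$ is the delicate point but is forced by the wedge conventions.

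The main obstacle I expect is the sign management in the second part: showing $X\in\mn$ acts as a "derivation commuting with $\partial^\ast$" requires carefully tracking the Koszul signs as $X$ is inserted into and extracted from wedge products of varying parities, and one must be attentive to the factor $(-1)^{|X|\cdot(\deg)}$ that appears when $X\wedge(-)$ is applied after the module action. An alternative, possibly cleaner route would be to pass through the Killing-form identification $C_k(\mn,\mV)\cong C^k(\overline{\mn},\mV)$ and transport the classical identity $\partial\circ\partial=0$ for the Chevalley--Eilenberg coboundary (Definition \ref{defder}, first formula) — the graded Jacobi identity for $\mg$ restricted to $\overline{\mn}$ gives $\partial^2=0$ there by the usual argument, and $\Mp$-equivariance of $\partial$ follows because $\overline{\mn}$ is $\ml$-stable and the form is invariant. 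I would likely present the induction as the primary argument since it stays purely in $C_\bullet(\mn,\mV)$ and avoids invoking the pairing prematurely, but mention the dual viewpoint as the conceptual reason the identity holds.
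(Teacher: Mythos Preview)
Your primary argument by induction on $k$ from the recursive definition \eqref{defcodiff} is correct and is exactly the approach the paper takes (the paper's proof is a single sentence: ``These properties follow by induction, using the definition of $\partial^\ast$ in equation \eqref{defcodiff}''). One small efficiency you could exploit: once you have established $\Mp$-equivariance, the identity $\partial^\ast_{k-1}(X\cdot f)=X\cdot\partial^\ast_{k-1}(f)$ you isolate in the $(\partial^\ast)^2=0$ step is just that equivariance specialised to $X\in\mn\subset\Mp$, so there is no need to redo the Lie-derivative computation.

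Your suggested alternative route contains a slip: $\partial$ on $C^\bullet(\overline{\mn},\mV)$ is \emph{not} a $\Mp$-module morphism, only an $\ml$-module morphism (this is precisely the content of Lemma \ref{pactionder}, where the obstruction term $\sum_a\xi_a\wedge[\xi_a^\ddagger,Z]_{\Mp}\cdot f$ appears for $Z\in\Mp$). So transporting properties of $\partial$ through the Killing-form pairing would give $\ml$-equivariance of $\partial^\ast$ but not the full $\Mp$-equivariance claimed in the lemma; the direct inductive argument is the right one here.
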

\begin{proof}
These properties follow by induction, using the definition of $\partial^\ast$ in equation \eqref{defcodiff}.
\end{proof}

Contrary to $\partial^\ast$, $\partial$ is not a $\Mp$-module morphism. In order to construct BGG resolutions we need the following lemma, which describes how far $\partial$ is from being a $\Mp$-module morphism.
\begin{lemma}
\label{pactionder}
For $Z\in\Mp$ and $f\in C^k(\overline{\mn},\mV)$ the following relation holds:
\begin{eqnarray*}
\partial (Z \cdot f)&=&Z\cdot \partial (f)+\sum_a \xi_a\wedge[\xi_a^\ddagger,Z]_{\Mp}\cdot f,
\end{eqnarray*}
for any basis $\{\xi_a\}$ of $\mn$ with dual basis $\{\xi_a^\ddagger\}$ of $\overline{\mn}$ with respect to the Killing form. In particular, $\partial$ is an $\ml$-module morphism. The coboundary operator also satisfies $\partial\circ\partial=0$.
\end{lemma}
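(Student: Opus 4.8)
The plan is to prove the three assertions in turn, the first being the crux. To compute $\partial(Z\cdot f)$ from Definition \ref{defder} I first need the $\Mp$-module structure on $C^k(\overline{\mn},\mV)$ in a usable form. Since $\overline{\mn}$ fails to be $\Mp$-stable, this structure is the one inherited from $C^k(\overline{\mn},\mV)\cong\Lambda^k\mn\otimes\mV$, on which $\Mp$ acts by the iterated adjoint bracket on $\Lambda^k\mn$ (legitimate because $[\Mp,\mn]\subseteq\mn$) and by the module action on $\mV$. A short calculation using invariance of the Killing form shows that, read off on $\mbox{Hom}(\Lambda^k\overline{\mn},\mV)$, an element $Z\in\Mp$ acts on the arguments of a cochain through $Y\mapsto[Z,Y]_{\overline{\mn}}$, the projection of the bracket onto $\overline{\mn}$, rather than through the full bracket. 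Substituting this into the expression for $\partial(Z\cdot f)$ and subtracting $Z\cdot\partial f$, all terms cancel by the usual equivariance argument for the Chevalley--Eilenberg differential except those arising from $[Z,Y_i]-[Z,Y_i]_{\overline{\mn}}=[Z,Y_i]_\Mp$; reindexing the basis sum over $\{\xi_a\}$ once more by invariance of the Killing form collects these into exactly $\sum_a\xi_a\wedge[\xi_a^\ddagger,Z]_\Mp\cdot f$.

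Equivalently, one can organise this as an induction on $k$. The base case $k=0$ uses $\partial_0 v=\sum_a\xi_a\otimes(\xi_a^\ddagger\cdot v)$: writing $\xi_a^\ddagger Z=(-1)^{|Z||\xi_a|}Z\xi_a^\ddagger+[\xi_a^\ddagger,Z]$ in $\cU(\mg)$, splitting $[\xi_a^\ddagger,Z]$ along $\mg=\overline{\mn}\oplus\Mp$, and rewriting $\sum_a[Z,\xi_a]\otimes(\xi_a^\ddagger\cdot v)$ via $([Z,\xi_a],\xi_b^\ddagger)=\pm([\xi_b^\ddagger,Z],\xi_a)$ gives the identity directly. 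For the inductive step one writes $f=X\wedge g$ with $X\in\mn$ and $g\in C^{k-1}(\overline{\mn},\mV)$, which is possible because $\mn\otimes C^{k-1}(\overline{\mn},\mV)\to C^k(\overline{\mn},\mV)$ is surjective for $k\ge1$; then one uses the graded Leibniz rule for $\partial$ on such a product together with $Z\cdot(X\wedge g)=[Z,X]\wedge g+(-1)^{|Z||X|}X\wedge(Z\cdot g)$ and the induction hypothesis applied to $\partial(Z\cdot g)$, after which the correction terms match up.

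Given the formula, the remaining two assertions are short. For the $\ml$-equivariance, note that $[\overline{\mn},\ml]\subseteq\overline{\mn}$, so for $Z\in\ml$ one has $[\xi_a^\ddagger,Z]_\Mp=0$ and the correction term drops, leaving $\partial(Z\cdot f)=Z\cdot\partial f$. For $\partial\circ\partial=0$: since $\overline{\mn}$ is a subalgebra of $\mg$ and $\mV$ restricts to an $\overline{\mn}$-module, $\partial$ is simply the Chevalley--Eilenberg cochain differential of $\overline{\mn}$ with values in $\mV$, so $\partial\circ\partial=0$ is the standard fact, following from the Jacobi identity for $\overline{\mn}$ and the module axiom; the sign conventions in Definition \ref{defder} are chosen precisely so that this verification carries over to the super case with minimal extra signs. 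Alternatively, it can be deduced from $\partial^\ast\circ\partial^\ast=0$ (Lemma \ref{coder1}) by dualising, once $\partial$ and $\partial^\ast$ are recognised as mutually adjoint for the contravariant hermitian form of Lemma \ref{contraform} transported through the Killing-form identification $C_\bullet(\mn,\mV)\cong C^\bullet(\overline{\mn},\mV)$.

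The main obstacle I anticipate is the bookkeeping: the Koszul signs from permuting homogeneous elements of $\mn$ and $\overline{\mn}$, combined with the fact that $\overline{\mn}$ is only an $\ml$-submodule of $\mg$, so that the $\Mp$-action on $C^\bullet(\overline{\mn},\mV)$ must be handled through the Killing-form identification and produces the projected action $[Z,\cdot]_{\overline{\mn}}$ on arguments. Getting this right, and then reindexing the basis sum by invariance of the Killing form so that precisely $\sum_a\xi_a\wedge[\xi_a^\ddagger,Z]_\Mp\cdot f$ survives, is where the real care is needed; the rest is essentially formal.
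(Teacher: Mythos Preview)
Your proposal is correct and follows essentially the same approach as the paper: identify the $\Mp$-action on $\mbox{Hom}(\Lambda^k\overline{\mn},\mV)$ via the Killing-form isomorphism as acting on arguments through $Y\mapsto[Y,Z]_{\overline{\mn}}$, compute $\partial(Z\cdot\alpha)-Z\cdot\partial\alpha$ directly from Definition~\ref{defder}, and observe that the surviving terms are precisely those coming from the $\Mp$-component $[Y_i,Z]_\Mp$, which reassemble into $\sum_a\xi_a\wedge[\xi_a^\ddagger,Z]_\Mp\cdot f$. The paper carries out this direct computation explicitly (finding three residual terms, two of which combine), and handles the $\ml$-equivariance and $\partial^2=0$ exactly as you do, including the alternative derivation of $\partial^2=0$ via adjointness with $\partial^\ast$. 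Your additional inductive argument on $k$ using the product formula for $\partial(X\wedge g)$ is a legitimate variant not pursued in the paper; note only that this product formula is the content of the subsequent Lemma~\ref{propstandder}, whose proof however depends only on Definition~\ref{defder}, so no circularity arises.
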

\begin{proof}
Through the identification $\Lambda^k\mn\otimes\mV=\mbox{Hom}(\Lambda^k\overline{\mn},\mV)$ the $\Mp$-action on $\alpha\in \mbox{Hom}(\Lambda^k\overline{\mn},\mV)$ is given by
\begin{eqnarray*}
(Z\cdot \alpha)(Y_1\wedge\cdots \wedge Y_k)&=&(-1)^{|Z|(|Y_1|+\cdots+|Y_k|)}Z\cdot\left(\alpha(Y_1\wedge\cdots \wedge Y_k)\right)+\alpha([Y_1\wedge\cdots\wedge Y_k,Z]_{\overline{\mn}}).
\end{eqnarray*}
Application of this and Definition \ref{defder} yields after some straightforward calculations
\begin{eqnarray*}
&&\left(\partial\cdot Z\cdot \alpha\right)(Y_0\wedge\cdots\wedge Y_k)-\left(Z\cdot \partial\cdot \alpha\right)(Y_0\wedge\cdots\wedge Y_k)\\
&=&\sum_{i=0}^k(-1)^i(-1)^{|Y_i|(|Y_0|+\cdots+|Y_{i-1}|)}(-1)^{|Z|(|Y_0|+\cdots^{\hat{i}}\cdot+|Y_k|)}[Y_i,Z|_{\Mp}\cdot\left(\alpha(Y_0\wedge\cdots^{\hat{i}}\cdot\wedge Y_k)\right)\\
&&+\frac{1}{2}\sum_{i=1}^k(-1)^i(-1)^{|Y_i|(|Y_{i+1}+\cdots+|Y_k|)}\alpha([Y_0\wedge\cdots^{\hat{i}}\cdot Y_k,[Y_i,Z]_\Mp]_{\overline{\mn}})\\
&&+\frac{1}{2}\sum_{i=1}^k(-1)^i(-1)^{|Y_i|(|Y_{i+1}+\cdots+|Y_k|)}(-1)^{|Y_i||Z|}\alpha([Y_0\wedge\cdots^{\hat{i}}\cdot Y_k,Z]_\Mp,Y_i]_{\overline{\mn}}).
\end{eqnarray*}
Explicitly expanding the last term yields an expression equivalent to the second to last term, which shows the entire right-hand side above can be reduced to
\begin{eqnarray*}
\sum_{i=0}^k(-1)^i(-1)^{|Y_i|(|Y_{i+1}|+|Y_k|)}\left([Y_i,Z|_{\Mp}\cdot\alpha\right)(Y_0\wedge\cdots^{\hat{i}}\cdot\wedge Y_k)=\left(\sum_a \xi_a\wedge[\xi_a^\ddagger,Z]_{\Mp}\cdot \alpha\right)\left(Y_0\wedge\cdots\wedge Y_k\right),
\end{eqnarray*}
from which the desired relation follows.

Finally the property $\partial\circ\partial=0$ also follows from an immediate calculation. It could also be proved from the corresponding property of $\partial^\ast$ in Lemma \ref{coder1} and the fact $\partial$ and $\partial^\ast$ are adjoint operators with respect to a non-degenerate bilinear form on $C^k(\overline{\mn},\mV)$, such as the one on $C^k(\mn,\mW)$ in the subsequent Theorem \ref{formonchains}.
\end{proof}

\begin{definition}{\rm
The homology groups are the $\Mp$-modules defined as
{\rm$H_k(\mn,\mV)= \ker(\partial_k^\ast)/\mbox{im}(\partial_{k+1}^\ast).$} The cohomology groups are the $\ml$-modules defined as
{\rm $H^k(\overline{\mn},\mV)= \ker(\partial_k)/\mbox{im}(\partial_{k-1})$. }}
\end{definition}

BGG resolutions are always expressed in terms of such (co)homology groups, see the subsequent Theorem \ref{onlyHk}. Only if they are completely reducible as $\Mp$-modules this constitutes a resolution in terms of generalised Verma modules. The following Lemma follows immediately from equation \eqref{defcodiff}.

\begin{lemma}
\label{Hcompred}
The action of $\mn$ on the $\Mp$-module $H_k(\mn,\mV)$ is trivial, consequently $H_k(\mn,\mV)$ is completely reducible as a $\Mp$-module if it is completely reducible as an $\ml$-module.
\end{lemma}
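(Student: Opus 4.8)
The plan is to read both assertions off directly from the recursive definition \eqref{defcodiff} of $\partial^\ast$, together with the fact (Lemma \ref{coder1}) that $\partial^\ast\circ\partial^\ast=0$. Fix a homogeneous element $\xi\in\mn$ and a cycle $z\in\ker(\partial_k^\ast)\subseteq C_k(\mn,\mV)$; the goal is to show that the class of $z$ in $H_k(\mn,\mV)$ is killed by $\xi$. Since $\xi\wedge z$ is an element of $C_{k+1}(\mn,\mV)$, equation \eqref{defcodiff} applies with $f=z$ and gives $\partial_{k+1}^\ast(\xi\wedge z)=-\xi\cdot z-\xi\wedge\partial_k^\ast(z)$, where $\xi\cdot z$ denotes the $\Mp$-module action on $C_k(\mn,\mV)$ restricted to $\xi\in\mn\subseteq\Mp$. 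As $z$ is a cycle the last term vanishes, so $\xi\cdot z=-\partial_{k+1}^\ast(\xi\wedge z)\in\mathrm{im}(\partial_{k+1}^\ast)$; moreover $\xi\cdot z$ is itself a cycle, either because $\partial_k^\ast\partial_{k+1}^\ast=0$ or because $\partial^\ast$ is a $\Mp$-morphism. Hence $\xi\cdot[z]=0$ in $H_k(\mn,\mV)$, and since $\mn$ is spanned by such homogeneous elements, $\mn$ acts trivially on $H_k(\mn,\mV)$.

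For the ``consequently'' part I would use that $\mn$ is an ideal of $\Mp$ (it is the nilradical) with $\Mp/\mn\cong\ml$, so a $\Mp$-module on which $\mn$ acts by zero is precisely the datum of an $\ml$-module, and a subspace of it is a $\Mp$-submodule if and only if it is an $\ml$-submodule. Therefore, if $H_k(\mn,\mV)$ decomposes as a direct sum of irreducible $\ml$-modules, the very same decomposition exhibits it as a direct sum of irreducible $\Mp$-modules, which is the claimed implication.

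There is no genuine obstacle here; the only points that need a moment's care are bookkeeping: interpreting the symbol $\xi\cdot z$ in \eqref{defcodiff} as the action coming from the $\Mp$-module structure on $C_\bullet(\mn,\mV)$, tracking the super (Koszul) signs when writing a general $(k+1)$-chain in the form $\xi\wedge f$, and noting that the Killing-form identification $C_k(\mn,\mV)=C^k(\overline{\mn},\mV)$ is a $\Mp$-module isomorphism so that the two descriptions of the homology are compatible. None of these affects the structure of the argument.
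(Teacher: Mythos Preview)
Your argument is correct and is precisely the one the paper has in mind: the text simply says the lemma ``follows immediately from equation \eqref{defcodiff}'', and what you wrote is the unpacking of that remark. The second part, via $\Mp/\mn\cong\ml$, is the obvious reduction and matches the paper's implicit reasoning.
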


In the following lemma, we derive the explicit form of the coboundary operator $\partial$ on the $C_\bullet({\mn},\mV)$-interpretation of $C^\bullet(\overline{\mn},\mV)$.
\begin{lemma}
\label{propstandder}
For any basis $\{\xi_a\}$ of $\mn$ with dual basis $\{\xi_a^\ddagger\}$ of $\overline{\mn}$ with respect to the Killing form, the coboundary operator satisfies
\begin{eqnarray*}
\partial(X\wedge f)&=&\frac{1}{2}\sum_a\xi_a\wedge [\xi_a^\ddagger,X]_{\mn}\wedge f-X\wedge\partial f
\end{eqnarray*}
for $X\in\mn$ and $f\in C_{k-1}({\mn},\mV)$ and $\partial v=\sum_a \xi_a\otimes \xi_a^\ddagger\cdot v$ for $v\in\mV=C_0({\mn},\mV)$.
\end{lemma}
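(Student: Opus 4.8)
The plan is to compute the coboundary operator $\partial$ directly using its definition on cochains together with the explicit pairing between $C_k(\mn,\mV)=\Lambda^k\mn\otimes\mV$ and $C^k(\overline{\mn},\mV)=\mathrm{Hom}(\Lambda^k\overline{\mn},\mV)$. First I would handle the base case: for $v\in\mV=C_0(\mn,\mV)$, the element $\partial v\in C^1(\overline{\mn},\mV)$ is the linear map $Y\mapsto Y\cdot v$ (only the first sum in Definition \ref{defder} survives, since there is no bracket term in degree zero). Evaluating $\sum_a\xi_a\otimes\xi_a^\ddagger\cdot v$ on an arbitrary $Y\in\overline{\mn}$ via the pairing formula gives $\sum_a(Y,\xi_a)\,\xi_a^\ddagger\cdot v=Y\cdot v$ by the dual basis property. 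This establishes $\partial v=\sum_a\xi_a\otimes\xi_a^\ddagger\cdot v$.

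For the recursion, I would take $f\in C_{k-1}(\mn,\mV)$ and $X\in\mn$ and compute $\partial(X\wedge f)$ as an element of $\mathrm{Hom}(\Lambda^{k+1}\overline{\mn},\mV)$ by evaluating it on $Y_0\wedge\cdots\wedge Y_k$ using Definition \ref{defder}. The key technical device is the pairing identity: for $W\in\Lambda^k\mn$, $v\in\mV$, and $Y\in\overline{\mn}$, one has $(Y\wedge W\otimes v)$ evaluated appropriately on tuples reduces, via the inductive definition of the evaluation, to a sum over ``which $Y_i$ is contracted against $X$'' of terms $(Y_i,X)$ times the remaining evaluation of $W\otimes v$ — this is exactly the contraction operation dual to wedging with $X$. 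Splitting the two sums in the definition of $\partial$ and reorganizing the signs, the first (action) sum should produce both $-X\wedge\partial f$ (the terms where the $\mg$-action lands on $f$, with the $X\wedge$ in front) and part of the bracket contribution, while the second (bracket) sum involving $[Y_0\wedge\cdots^{\hat i}\cdot Y_k,Y_i]$ will, upon isolating the summands in which $Y_i$ pairs with $X$, yield $\frac12\sum_a\xi_a\wedge[\xi_a^\ddagger,X]_{\mn}\wedge f$. The appearance of the projection $[\cdot,\cdot]_{\mn}$ rather than the full bracket comes from the fact that the evaluation is against elements of $\overline{\mn}$ only, so only the $\mn$-component of $[\xi_a^\ddagger,X]$ contributes.

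The main obstacle will be bookkeeping the Koszul signs: the factors $(-1)^{i}$, $(-1)^{|Y_i|(|Y_0|+\cdots+|Y_{i-1}|)}$, and the various parity shifts coming from moving $Y_i$ past the other arguments and past $X$ must be shown to match the signs implicit in the wedge products $\xi_a\wedge[\xi_a^\ddagger,X]_{\mn}\wedge f$ and $X\wedge\partial f$ on the right-hand side. I expect this is the same kind of computation as in the classical case (cf. \cite{MR0142696}), made only mildly more delicate by the $\mZ_2$-grading, and I would verify the signs by first checking $k=1$ explicitly and then propagating through the inductive step. As the lemma's statement and the remark preceding it indicate, the chosen form of $\partial$ in Definition \ref{defder} was designed precisely to keep these signs manageable, so no genuinely new difficulty arises beyond careful tracking; the identity $\partial\circ\partial=0$ referenced in Lemma \ref{pactionder} can also be re-derived as a consistency check from the formula just obtained.
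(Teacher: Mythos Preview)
Your approach is correct and would go through, but the paper takes a shorter route. Rather than proving the recursion directly by evaluating on $Y_0\wedge\cdots\wedge Y_k$, the paper first writes down the closed (non-recursive) formula
\[
\partial(X_1\wedge\cdots\wedge X_k\otimes v)=\tfrac{1}{2}\sum_a\xi_a\wedge[\xi_a^\ddagger,X_1\wedge\cdots\wedge X_k]_{\mn}\otimes v+(-1)^k\sum_a X_1\wedge\cdots\wedge X_k\wedge\xi_a\otimes\xi_a^\ddagger\cdot v
\]
and checks that it agrees with Definition~\ref{defder}; the recursion in the lemma then drops out immediately. This closed form is also reused later (in the proof of Theorem~\ref{Thmquabla}), which is the real payoff of organising it this way. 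Your direct evaluation yields the same result with a bit more bookkeeping.

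One correction to your sketch: the first (action) sum in Definition~\ref{defder} produces \emph{only} the action part of $-X\wedge\partial f$ and no bracket contribution. It is the second (bracket) sum that splits into two pieces after you expand $(X\wedge f)$ on $[Y_0\wedge\cdots^{\hat i}\cdots Y_k,Y_i]$: when $X$ contracts against one of the untouched $Y_m$ you get the bracket part of $-X\wedge\partial f$, and when $X$ contracts against the bracket $[Y_l,Y_i]$ itself you get, via invariance $([Y_l,Y_i],X)=(Y_l,[Y_i,X])=(Y_l,[Y_i,X]_{\mn})$, the term $\tfrac{1}{2}\sum_a\xi_a\wedge[\xi_a^\ddagger,X]_{\mn}\wedge f$. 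With this reorganisation the signs line up as you anticipated.
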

\begin{proof}
It is readily checked that the expression
\begin{eqnarray*}
\partial\left(X_1\wedge\cdots\wedge X_k\otimes v\right)&=&\frac{1}{2}\sum_a\xi_a\wedge [\xi_a^\ddagger,X_1\wedge\cdots\wedge X_k]_{\mn}\otimes v+(-1)^k\sum_a X_1\wedge\cdots\wedge X_k \wedge \xi_a\otimes \xi_a^\ddagger\cdot v
\end{eqnarray*}
is equivalent with the one in Definition \ref{defder}. The result then follows easily.
\end{proof}

\subsection{Relation with $C^\bullet(\mn,\mW)$}

The super vector spaces of chains or cochains 
\[C_k(\overline{\mn},\mV^\ast)=\Lambda^k\overline{\mn}\otimes \mV^\ast=\mbox{Hom}(\Lambda^k\mn,\mV^\ast)=C^k(\mn,\mV^\ast)\] can be identified with the dual space of $C^k(\overline{\mn},\mV)=\Lambda^k\mn\otimes \mV$, where the pairing is induced from the Killing form and the pairing between $\mV$ and $\mV^\ast$. This is the subject of the following definition. 

\begin{definition}{\rm
\label{pairingchain}
The bilinear form $(\cdot,\cdot):\mV^\ast\times\mV\to\mC$ corresponds to the evaluation of $\mV^\ast$ on $\mV$: $(\alpha,v)=\alpha(v)$ for $\alpha\in\mV^\ast$ and $v\in\mV$. The bilinear form 
\begin{eqnarray*}
(\cdot,\cdot):\left(\otimes^k\overline{\mn}\otimes\mV^\ast\right)\times\left(\otimes^k\mn\otimes\mV\right)\to\mC
\end{eqnarray*}
is defined inductively by 
\begin{eqnarray*}
(Y\otimes q,X\otimes p)&=&(-1)^{|q||X|}(Y,X)(q,p)
\end{eqnarray*}
for $Y\in\overline{\mn}$, $X\in\mn$, $q\in\otimes^{k-1}\overline{\mn}\otimes\mV^\ast$, $p\in\otimes^{k-1}{\mn}\otimes\mV$ and $(Y,X)$ the evaluation of the Killing form in Proposition \ref{Killingexist}. The non-degenerate bilinear form $C^k(\mn,\mV^\ast)\times C^k(\overline{\mn},\mV)\to\mC$ is the restriction of the form above.}
\end{definition}
Just as the spaces $C^k(\overline{\mn},\mV)$ are naturally $\Mp$-modules, $C^k(\mn,\mW)$ are naturally $\Mp^\ast$-modules, so in particular $\ml$-modules. It is easily seen that the form in Definition \ref{pairingchain} is $\ml$-invariant, to formulate this invariance more broadly we need to introduce the following action.

For $A\in\mg$ and $f\in C_k({\mn},\mV)$, $A[f]\in C_k({\mn},\mV)$ is defined by induction as
\begin{eqnarray}
\label{gactionchain}
A[X\wedge g]=[A,X]_{\mn}\wedge g+(-1)^{|A||X|}X\wedge A[g]\quad\mbox{for }X\in\mn,\, g\in C_{k-1}({\mn},\mV)
\end{eqnarray}
and $A[v]=A\cdot v$ for $v\in\mV$. Note that for $A\in\Mp$, $A[f]=A\cdot f$ holds, but for general $A\in\mg$ this does not constitute a representation. If we restrict $A$ to be an element of $\Mp^\ast$, this introduces a $\Mp^\ast$-representation structure on $C_k({\mn},\mV)$, which corresponds to the identification of vector spaces given by
\[\left(\Lambda^k\mg\otimes\mV\right)/\left(\Lambda^k\Mp^\ast\otimes\mV\right)\cong\Lambda^k\mn\otimes\mV=C_k({\mn},\mV).\]
A similar definition of the action of $\mg$ on $C_k(\overline{\mn},\mV^\ast)$ follows immediately:
\begin{eqnarray}
\label{gactionchain2}
A[Y\wedge f]=[A,Y]_{\overline{\mn}}\wedge f+(-1)^{|A||Y|}Y\wedge A[f]\quad\mbox{for }Y\in\overline{\mn},\, f\in C^{k-1}(\mn,\mV^\ast)
\end{eqnarray}
and $A[w]=A\cdot w$ for $v\in\mV^\ast$.

The bilinear form in Definition \ref{pairingchain} satisfies 
\begin{equation}\label{contravarianceofform}(A[q],p)=-(-1)^{|A||q|}(q,A[p])\end{equation}
for $A\in\mg$, $q\in C^k(\mn,\mV^\ast)$ and $p\in C^k(\overline{\mn},\mV)$, which follows immediately form the invariance of the Killing form. In particular the bilinear pairing between the $\ml$-modules $C^\bullet(\mn,\mV^\ast)$ and $C^\bullet(\overline{\mn},\mV)$ is $\ml$-invariant.

Then we can define the operators $\delta^\ast: C_k(\overline{\mn},\mV^\ast)\to C_{k-1}(\overline{\mn},\mV^\ast)$ and $\delta:C^k(\mn,\mV^\ast)\to C^{k+1}(\mn,\mV^\ast)$ as the adjoint operators with respect to $(\cdot,\cdot)$ in Definition \ref{pairingchain} of respectively $\partial$ and $\partial^\ast$. By definition, $\delta$ and $\delta^\ast$ are $\ml$-module morphisms and satisfy $\delta\circ\delta=0=\delta^\ast\circ\delta^\ast$.

\begin{lemma}
\label{exprdelta}
The operator $\delta^\ast: C_k(\overline{\mn},\mV^\ast)\to C_{k-1}(\overline{\mn},\mV^\ast)$ satisfies
\begin{eqnarray*}
\delta^\ast(Y\wedge f)&=& -Y\cdot f-Y\wedge\delta^\ast(f)
\end{eqnarray*}
for $Y\in\overline{\mn}$ and $f\in C^{k-1}(\mn,\mV^\ast)$ and this is a $\Mp^\ast$-module morphism. The operator $\delta: C^k(\mn,\mV^\ast)\to C^{k+1}(\mn,\mV^\ast)$ satisfies
\begin{eqnarray*}
\delta\left(Y_1\wedge\cdots\wedge Y_k\otimes u\right)&=&\frac{1}{2}\sum_a(-1)^{|\xi_a|}\xi_a^\ddagger\wedge [\xi_a,Y_1\wedge\cdots\wedge Y_k]_{\overline{\mn}}\otimes u\\
&&+(-1)^k\sum_a (-1)^{|\xi_a|} Y_1\wedge\cdots\wedge Y_k \wedge \xi_a^\ddagger\otimes \xi_a\cdot u
\end{eqnarray*}
for $Y_j\in\overline{\mn}$ and $u\in\mV^\ast$.
\end{lemma}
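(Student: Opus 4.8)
The plan is to recognise the two operators as, respectively, the boundary operator $\partial^\ast$ and the coboundary operator $\partial$ attached to the \emph{opposite} parabolic subalgebra $\Mp^\ast=\ml+\overline{\mn}$ — which is again a parabolic subalgebra, with Levi subalgebra $\ml$ and nilradical $\overline{\mn}$, the complementary decomposition now reading $\mg=\mn+\ml+\overline{\mn}$ — applied to the module $\mV^\ast$. Granting this identification, the two displayed formulas are nothing but equation \eqref{defcodiff} and the expression of Lemma \ref{propstandder}, read with $\mn$, $\mV$ and the dual basis pair $\{\xi_a\}\subset\mn$, $\{\xi_a^\ddagger\}\subset\overline{\mn}$ replaced throughout by $\overline{\mn}$, $\mV^\ast$ and a dual basis pair for the Killing form based on $\overline{\mn}$. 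The only bookkeeping point is that the basis of $\mn$ Killing-dual to $\{\xi_a^\ddagger\}$ (now viewed as a basis of $\overline{\mn}$) is $\{(-1)^{|\xi_a|}\xi_a\}$: since the Killing form is consistent one has $|\xi_a^\ddagger|=|\xi_a|$, and supersymmetry then gives $(\xi_a,\xi_b^\ddagger)=(-1)^{|\xi_a|}(\xi_b^\ddagger,\xi_a)=(-1)^{|\xi_a|}\delta_{ab}$. Substituting $\{\xi_a^\ddagger\}$ together with $\{(-1)^{|\xi_a|}\xi_a\}$ into Lemma \ref{propstandder} produces precisely the stated formula for $\delta$, the signs $(-1)^{|\xi_a|}$ included, whereas \eqref{defcodiff} involves no auxiliary basis and reproduces the stated recursion for $\delta^\ast$ verbatim.

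What remains is to justify the identification, i.e.\ to check that the operators $\partial^\ast$ and $\partial$ of $(\Mp^\ast,\mV^\ast)$ are the adjoints, with respect to the pairing of Definition \ref{pairingchain}, of the operators $\partial$ and $\partial^\ast$ of $(\Mp,\mV)$. For $\delta^\ast$ I would proceed as follows. Let $D$ be the (unique, by an obvious induction on degree) operator on $C_\bullet(\overline{\mn},\mV^\ast)$ with $D|_{C_0}=0$ and $D(Y\wedge f)=-Y\cdot f-Y\wedge Df$ for $Y\in\overline{\mn}$; it suffices to prove $(Dq,p)=(q,\partial p)$ for all $q\in C_k(\overline{\mn},\mV^\ast)$ and $p\in C_{k-1}(\overline{\mn},\mV)$, since this identifies $D$ with the adjoint of $\partial$, hence with $\delta^\ast$, and at the same time delivers the recursion. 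I would establish this identity by induction on $k$: the case $k=1$ reduces to the defining relation of the dual representation together with the expansion $Y=\sum_a(Y,\xi_a)\xi_a^\ddagger$, and for the step one writes $q=Y\wedge q'$, applies the recursion for $D$, expands $\partial p$ by Lemma \ref{propstandder}, and collapses everything using the inductive definition of the pairing, equations \eqref{gactionchain}--\eqref{gactionchain2} for the $[\,\cdot\,]$-action, and the contravariance property \eqref{contravarianceofform}, invoking the induction hypothesis on the surviving lower-degree remainders. The argument for $\delta$ is the mirror image: one verifies that the operator given by the displayed formula is the adjoint of $\partial^\ast$, now starting from the recursive shape \eqref{defcodiff} of $\partial^\ast$ and again using \eqref{contravarianceofform} to transfer the $\mn$-action from the $\mV$-slot to the $\mV^\ast$-slot — it is precisely this transfer that converts the module-action term of $\partial^\ast$ into the second term of the formula, while the Lie-bracket term of $\partial^\ast$ yields the first. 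Finally, once the recursion for $\delta^\ast$ is in place, the assertion that $\delta^\ast$ is a $\Mp^\ast$-module morphism follows from it by exactly the induction used for $\partial^\ast$ in Lemma \ref{coder1}, now carried out with $\Mp^\ast$, $\overline{\mn}$, $\mV^\ast$ in place of $\Mp$, $\mn$, $\mV$.

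The main obstacle is not conceptual but lies in the sign accounting: the Koszul signs picked up when permuting elements of $\overline{\mn}$ past one another and past elements of $\mn$ in the inductively defined pairing, the sign $(-1)^{|\xi_a|}$ coming from supersymmetry of the Killing form, and — most delicately — the factor $\tfrac{1}{2}$, which (as in the classical case) records that each Lie-bracket contribution is generated twice when an adjoint is evaluated against an antisymmetrised (wedge) argument. Organising the computation around the recursive definitions of the pairing, of $\partial^\ast$, and of the $[\,\cdot\,]$-action, rather than around fully expanded sums over permutations, is what keeps this sign accounting tractable.
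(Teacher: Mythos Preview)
Your proposal is correct. Both your argument and the paper's compute $\delta^\ast$ and $\delta$ as the adjoints of $\partial$ and $\partial^\ast$ under the pairing of Definition~\ref{pairingchain}, so the underlying content is the same. The organisation differs: the paper first rewrites the pairing as an evaluation,
\[
(Y_1\wedge\cdots\wedge Y_k\otimes u,\beta)=(-1)^{|u|(|Y_1|+\cdots+|Y_k|)}\bigl(u,\beta(Y_1\wedge\cdots\wedge Y_k)\bigr),
\]
then plugs in the closed-form sum of Definition~\ref{defder} and reads off a closed-form expression for $\delta^\ast$, which it then collapses to the stated recursion; the identification of $\delta,\delta^\ast$ with the (co)boundary operators for the opposite parabolic is recorded only as a remark \emph{after} the lemma. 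You instead take that identification as the organising principle and verify adjointness inductively via the recursive forms \eqref{defcodiff} and Lemma~\ref{propstandder}. Your route makes the $\Mp^\ast$-morphism claim and the provenance of the signs $(-1)^{|\xi_a|}$ transparent by symmetry; the paper's route is a single direct computation with no induction to set up. Either is fine.
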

\begin{proof}
The definition of the bilinear form in Definition \ref{pairingchain} implies that for $\beta\in C^k(\overline{\mn},\mV)=\mbox{ Hom}(\Lambda^k\overline{\mn},\mV)$ and for $Y_1\wedge\cdots\wedge Y_k\otimes u\in C^k(\mn,\mV^\ast)=\Lambda^k\overline{\mn}\otimes\mV^\ast$ it holds that
\begin{eqnarray*}
(Y_1\wedge\cdots\wedge Y_k\otimes u,\beta)&=&(-1)^{|u|(|Y_1|+\cdots+|Y_k|)}(u,\beta(Y_1\wedge\cdots\wedge Y_k)).
\end{eqnarray*}
From this and Definition \ref{defder} it follows that
\begin{eqnarray*}
\delta^\ast(Y_1\wedge\cdots\wedge Y_k\otimes u)&=&\sum_{i=1}^k(-1)^i(-1)^{|Y_i|(|Y_{i+1}+\cdots+ |Y_{k}|)}Y_1\wedge\cdots^{\hat{i}}\cdot Y_k\otimes Y_i\cdot u\\
&&-\frac{1}{2}\sum_{i=1}^k(-1)^{i}(-1)^{|Y_i|(|Y_{i+1}+\cdots+ |Y_{k}|)}[Y_1\wedge\cdots^{\hat{i}}\cdot Y_k,Y_i]\otimes u
\end{eqnarray*}
holds. Expanding the adjoint action in the second term yields the proposed formula.

The operator $\delta$ can be calculated similarly from the definition of $\partial^\ast$.
\end{proof}
The basis $\{(-1)^{\xi_a}\xi_a\}$ is a dual basis for $\mn$ of the basis $\{\xi^\ddagger_a\}$ for $\overline{\mn}$, with respect to the Killing form in Proposition \ref{Killingexist}, since this form is supersymmetric. Comparison with Definition \ref{defder} and Theorem \ref{propstandder} shows that $\delta$ and $\delta^\ast$ are naturally the coboundary operator and boundary operator for Lie superalgebra (co)homology of $\mn$ in irreducible $\mg$-modules. Next we prove that these operators are adjoint operators with respect to a hermitian form on $C^\bullet(\mn,\mV^\ast)$. For $\mg$ a Lie algebra, this form can always be chosen to be positive definite, see Section 3.5 in \cite{MR0142696}. For Lie superalgebras this is very exceptional, all such cases are classified in Section \ref{ExamGL}.
\begin{theorem}
\label{formonchains}
Consider a basic classical Lie superalgebra $\mg$, with parabolic subalgebra $\Mp=\ml+\mn$ and finite dimensional representation $\mW$. There is a non-degenerate hermitian form $\langle\cdot,\cdot\rangle$ on $C^\bullet(\mn,\mW)$ which is contravariant with respect to $\ml$, such that
\begin{eqnarray*}
\langle\delta f,g\rangle=-\langle f,\delta^\ast g\rangle
\end{eqnarray*}
holds, for $f\in C^{k-1}(\mn,\mW)$ and $g\in C^k(\mn,\mW)$.
\end{theorem}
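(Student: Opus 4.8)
The plan is to build the hermitian form on $C^\bullet(\mn,\mW)$ by combining the contravariant hermitian form on $\mW$ from Lemma \ref{contraform} with a suitable contravariant form on $\Lambda^\bullet\overline{\mn}$, and then to verify the adjointness relation $\langle\delta f,g\rangle=-\langle f,\delta^\ast g\rangle$ by comparing it with the bilinear adjointness already built into the definitions of $\delta$ and $\delta^\ast$. Concretely: $C^k(\mn,\mW)=\Lambda^k\overline{\mn}\otimes\mW$, and one has the anti-linear bijection $\psi_0:\mW\to\mW^\ast$ of Lemma \ref{bijection} satisfying $\psi_0(Av)=-(-1)^{|A||v|}A^\dagger\psi_0(v)$. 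Dualizing $\overline{\mn}$ through the Killing form $(\cdot,\cdot)$ of Proposition \ref{Killingexist}, together with the relation $(A^\dagger,B^\dagger)=\overline{(B,A)}$ of Lemma \ref{starwithKilling}, one gets an anti-linear map $\Psi:\Lambda^k\overline{\mn}\to\Lambda^k\mn$ (extending $Y\mapsto$ the Killing-dual of $Y^\dagger$). Combining these, $\Phi:=\Psi\otimes\psi_0:C^k(\mn,\mW)\to\Lambda^k\mn\otimes\mW^\ast=C^k(\overline{\mn},\mW^\ast)$ is an anti-linear bijection, and the form $\langle f,g\rangle:=(\Phi(f),g)$ (using the non-degenerate bilinear pairing of Definition \ref{pairingchain}, after a $\dagger$-twist to land in $\mW^\ast$ rather than $\mW^\vee$) is the candidate.

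The first main step is to check this form is hermitian and non-degenerate. Non-degeneracy is immediate from non-degeneracy of the bilinear pairing and bijectivity of $\Phi$. Hermicity should reduce, by the inductive structure of the forms, to the identities $(A^\dagger,B^\dagger)=\overline{(B,A)}$ on $\overline{\mn}$ (Lemma \ref{starwithKilling}) and the hermicity of $\langle\cdot,\cdot\rangle$ on $\mW$ (Lemma \ref{contraform}); one chases the signs through the inductive definition in Definition \ref{pairingchain}. The second main step is $\ml$-contravariance: for $Z\in\ml$ one must show $\langle Z\cdot f,g\rangle=\langle f,Z^\dagger\cdot g\rangle$. Here one uses that $\delta,\delta^\ast$ and the bilinear pairing are $\ml$-invariant (stated after Definition \ref{pairingchain}), that $\psi_0$ intertwines the $\ml$-action with $\dagger$, and that $\Psi$ does likewise on $\overline{\mn}$; the key point is that $\ml$ is stable under $\dagger$ (true for the adjoint operations on basic classical Lie superalgebras, which act diagonally on the Cartan, combined with the fact that the parabolic is determined by a subset of simple roots).

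The third step — and the one I expect to be the real content — is the adjointness $\langle\delta f,g\rangle=-\langle f,\delta^\ast g\rangle$. Since $\delta$ is by definition the bilinear-adjoint of $\partial^\ast$ and $\delta^\ast$ the bilinear-adjoint of $\partial$ (with respect to the pairing of Definition \ref{pairingchain} between $C^\bullet(\mn,\mV^\ast)$ and $C^\bullet(\overline{\mn},\mV)$), the statement will follow once one shows that the anti-linear map $\Phi$ intertwines $\delta$ on $C^\bullet(\mn,\mW)$ with (a sign times) $\delta^\ast$ on $C^\bullet(\overline{\mn},\mW^\ast)$ — equivalently, that conjugation by $\Phi$ turns the coboundary operator into the boundary operator. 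This in turn follows from the explicit formulas: $\delta$ on $C^\bullet(\mn,\mV^\ast)$ is given in Lemma \ref{exprdelta} by $\delta(Y_1\wedge\cdots\wedge Y_k\otimes u)=\tfrac12\sum_a(-1)^{|\xi_a|}\xi_a^\ddagger\wedge[\xi_a,Y_1\wedge\cdots\wedge Y_k]_{\overline{\mn}}\otimes u+(-1)^k\sum_a(-1)^{|\xi_a|}Y_1\wedge\cdots\wedge Y_k\wedge\xi_a^\ddagger\otimes\xi_a\cdot u$, while $\delta^\ast$ is the ``wedge with $\overline{\mn}$, then act'' operator of the same lemma; applying $\Phi$ converts a wedge-and-act into an act-and-contract because $\psi_0$ turns the module action into the $\dagger$-action and $\Psi$ turns wedging with $\xi_a^\ddagger\in\overline{\mn}$ into contracting with its Killing-dual $(-1)^{|\xi_a|}\xi_a\in\mn$ (this is exactly the remark, just before the theorem, that $\{(-1)^{|\xi_a|}\xi_a\}$ is the dual basis of $\{\xi_a^\ddagger\}$), while $\dagger$ swaps $\mn$ and $\overline{\mn}$ consistently with these dualities. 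The bookkeeping of the $\mZ_2$-signs arising from moving odd elements of $\overline{\mn}$ past each other and past $u$ — and checking that the single overall sign $-1$ in $\langle\delta f,g\rangle=-\langle f,\delta^\ast g\rangle$ comes out correctly — is where the argument must be done carefully; I would organize it by first proving the intertwining $\Phi\circ\delta=\pm\,\delta^\ast\circ\Phi$ on the generators $Y_1\wedge\cdots\wedge Y_k\otimes u$ by induction on $k$, using the inductive defining relations \eqref{gactionchain}, \eqref{gactionchain2}, and then reading off the adjointness for the hermitian form from the bilinear adjointness defining $\delta,\delta^\ast$.
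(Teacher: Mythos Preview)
Your proposal is correct and follows essentially the same route as the paper. The paper defines the hermitian form directly by $\langle Y_1,Y_2\rangle=(Y_1^\dagger,Y_2)$ on $\overline{\mn}$ and then inductively $\langle Y_1\wedge f,Y_2\wedge g\rangle=\langle Y_1,Y_2\rangle\langle f,g\rangle$, whereas you package the same construction via the anti-linear bijection $\Phi$ and the bilinear pairing of Definition~\ref{pairingchain}; unwinding your $\langle f,g\rangle=(\Phi(f),g)$ gives exactly the paper's form. Your adjointness argument, factoring through the intertwining $\Phi\circ\delta=\pm\partial\circ\Phi$ and the defining bilinear adjointness of $\delta,\delta^\ast$, is a mild reorganisation of the paper's ``direct calculation, similar to the one in the proof of Lemma~\ref{exprdelta}''; it is in fact the computation carried out later in the proof of Proposition~\ref{disjointiff}. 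One small slip: on $C^\bullet(\overline{\mn},\mW^\ast)=\Lambda^\bullet\mn\otimes\mW^\ast$ the operators are $\partial,\partial^\ast$, not $\delta,\delta^\ast$; your intertwining statement should read $\Phi\circ\delta=\pm\partial\circ\Phi$.
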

\begin{proof}
First we consider a contravariant hermitian form $\langle\cdot,\cdot\rangle$ on $\mW$ as in Lemma \ref{contraform} for an adjoint operation $\dagger$. Next we define a nondegenerate hermitian form $\langle\cdot,\cdot\rangle$ on $\overline{\mn}$ by
\begin{eqnarray*}
\langle Y_1,Y_2\rangle=(Y_1^\dagger,Y_2)
\end{eqnarray*}
for $Y_1,Y_2\in\mn$ and $(\cdot,\cdot)$ the Killing form of Proposition \ref{Killingexist}. The fact that the form is hermitian follows from Lemma \ref{starwithKilling}. The invariance of the Killing form then implies that for $A\in\mg$,
\begin{eqnarray*}
\langle [A,Y_1]_{\overline{\mn}},Y_2\rangle=([Y_1^\dagger,A^\dagger],Y_2)=(Y_1^\dagger,[A^\dagger,Y_2])=\langle Y_1,[A^\dagger,Y_2]_{\overline{\mn}}\rangle.
\end{eqnarray*}
In particular this form is $\ml$-contravariant, $\langle [B,Y_1],Y_2\rangle=\langle Y_1,[B^\dagger,Y_2]\rangle$ for $B\in\ml$. Note that since $\mg$ is simple this hermitian form could also be constructed through using Lemma \ref{contraform}.

The combination of the two hermitian forms gives a hermitian form on each $C^k(\mn,\mW)$ which we denote by the same notation $\langle\cdot,\cdot\rangle$ and is defined inductively by
\begin{eqnarray*}
\langle Y_1\wedge f,Y_2\wedge g\rangle=\langle Y_1,Y_2\rangle \langle f,g\rangle.
\end{eqnarray*}
It then follows from a direct calculation, similar to the one in the proof of Lemma \ref{exprdelta} that $\delta$ and $-\delta^\ast$ are adjoint operators with respect to these hermitian forms. 
\end{proof}

The operators $\delta$ and $\delta^\ast$ also define (co)homology groups, $H^k(\mn,\mV^\ast)=\ker\delta_k/\mbox{im}\delta_{k-1}$ and $H_k(\overline{\mn},\mV^\ast)=\ker\delta^\ast_k/\mbox{im}\delta_{k+1}^\ast$. There exist clear relations between the four (co)homology groups, see also Theorem 17.6.1 in \cite{MR2906817}.
\begin{lemma}
\label{4groups}
As $\ml$-modules, the isomorphisms
\[ H^k({\mn},\mV^\ast)\cong \left(H_k(\overline{\mn},\mV^\ast)\right)^\vee \cong \left(\left(H^k(\overline{\mn},\mV)\right)^\ast \right)^\vee\cong \left(H_k(\mn,\mV)\right)^\ast  \]
hold with $M^\ast$ the dual representation and $M^\vee$ the twisted dual representation of the $\ml$-representation $M$ with respect to $\dagger$ as in Definition \ref{twisteddual}.
\end{lemma}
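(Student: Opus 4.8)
The plan is to establish the four isomorphisms one link at a time, using the three pairings/forms introduced above: the Killing form on $\overline{\mn}$ and $\mn$, the evaluation pairing between $\mV$ and $\mV^\ast$, and the hermitian form of Theorem \ref{formonchains}. The key general principle is that if two complexes of $\ml$-modules are related by a nondegenerate $\ml$-invariant (bi)linear or sesquilinear pairing under which the differentials are, up to sign, mutually adjoint, then their (co)homology spaces are dual (resp. twisted-dual) as $\ml$-modules. So the whole proof is an assembly of such standard duality statements, once the correct pairings and adjointness relations have been identified from the earlier sections.

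First I would treat the middle isomorphism $\left(H_k(\overline{\mn},\mV^\ast)\right)^\vee\cong\left(\left(H^k(\overline{\mn},\mV)\right)^\ast\right)^\vee$. Since $(\cdot)^\vee$ is an exact contravariant functor on finite dimensional $\ml$-modules (it is $(\cdot)^\ast$ composed with a twist by the automorphism determined by $\dagger$, see Definition \ref{twisteddual}), it suffices to show $H_k(\overline{\mn},\mV^\ast)\cong\left(H^k(\overline{\mn},\mV)\right)^\ast$ as $\ml$-modules. This is precisely what the pairing in Definition \ref{pairingchain} gives: it is a nondegenerate $\ml$-invariant bilinear form between $C_k(\overline{\mn},\mV^\ast)=C^k(\mn,\mV^\ast)$ and $C^k(\overline{\mn},\mV)$, and by construction (the operators $\delta,\delta^\ast$ being defined as the adjoints of $\partial^\ast,\partial$ via \eqref{contravarianceofform}) the differential $\delta^\ast$ on the first complex is adjoint to the differential $\partial$ on the second. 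A nondegenerate pairing between two complexes under which the differentials are mutually adjoint identifies the homology of one with the cohomology of the (linear) dual of the other, i.e. $H_k(\overline{\mn},\mV^\ast)\cong\left(H^k(\overline{\mn},\mV)\right)^\ast$; applying $(\cdot)^\vee$ gives the middle link. Similarly, the outer-left link $H^k(\mn,\mV^\ast)\cong\left(H_k(\overline{\mn},\mV^\ast)\right)^\vee$ follows from the hermitian form of Theorem \ref{formonchains} applied to $\mW=\mV^\ast$: that form is nondegenerate, $\ml$-contravariant, and makes $\delta$ adjoint to $-\delta^\ast$; a nondegenerate contravariant hermitian form between two complexes whose differentials are (up to sign) mutually adjoint identifies the cohomology of one with the twisted dual of the homology of the other. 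For the right-hand link $\left(H_k(\mn,\mV)\right)^\ast\cong\left(\left(H^k(\overline{\mn},\mV)\right)^\ast\right)^\vee$, one again uses the bilinear pairing of Definition \ref{pairingchain}, now read between $C_k(\mn,\mV)=C^k(\overline{\mn},\mV)$ and $C^k(\mn,\mV^\ast)=C_k(\overline{\mn},\mV^\ast)$, together with the identification of $\delta$ with the coboundary operator and $\partial^\ast$ with the boundary operator on $C_\bullet(\mn,\mV)$ noted after Lemma \ref{exprdelta}; this yields $H_k(\mn,\mV)\cong\left(H^k(\mn,\mV^\ast)\right)^\ast$, and composing with the already-established first link (after applying $(\cdot)^\ast$, which undoes the $\vee$ up to the harmless twist) closes the chain. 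In fact, once any three of the four links are proved the fourth is automatic by composition.

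The one genuine point of care — the main obstacle — is bookkeeping of the twists: the functor $(\cdot)^\vee$ differs from $(\cdot)^\ast$ by an $\ml$-module automorphism coming from $\dagger$, and $(\cdot)^\ast\circ(\cdot)^\vee$ is not the identity on the nose but equal to the twist functor, so one must be scrupulous about where a plain dual versus a twisted dual appears and verify that the automorphisms cancel correctly around the cycle of four maps. I would handle this by checking compatibility on highest weights and parities (using that a finite dimensional irreducible $\ml$-module is determined by highest weight and parity, and is its own twisted dual, as remarked after Definition \ref{twisteddual}), which reduces the twist-tracking to a finite combinatorial check rather than an explicit construction of each isomorphism. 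The remaining ingredients — nondegeneracy of the three pairings and the adjointness relations for the differentials — are all supplied verbatim by Definition \ref{pairingchain}, equation \eqref{contravarianceofform}, the definitions of $\delta,\delta^\ast$, and Theorem \ref{formonchains}, so no new computation beyond the sign bookkeeping of Lemma \ref{exprdelta}'s type is needed.
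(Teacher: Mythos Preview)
Your proposal is correct and follows essentially the same route as the paper: the first isomorphism comes from the $\ml$-contravariant hermitian form of Theorem \ref{formonchains} (with $\delta$ and $-\delta^\ast$ adjoint), the middle one from the bilinear pairing of Definition \ref{pairingchain} (with $\delta^\ast$ and $-\partial$ adjoint), and the remaining link likewise from that bilinear pairing; the paper states these same three ingredients in two sentences and omits the general duality principle and twist-bookkeeping that you spell out.
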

\begin{proof}
The first equality follows from the fact that $\delta^\ast$ and $-\delta$ are adjoint operators with respect to an $\ml$-contravariant hermitian form in Theorem \ref{formonchains}.

The property $H_k(\overline{\mn},\mV^\ast)\cong  \left(H^k(\overline{\mn},\mV)\right)^\ast$ follows from the fact that $\delta^\ast$ and $-\partial$ are adjoint operators with respect to a non-degenerate $\ml$-invariant bilinear form, in Definition \ref{pairingchain} and equation \eqref{contravarianceofform}. The equality of the first and fourth representation follows similarly.
\end{proof}
In particular this shows that in case $H_k(\mn,\mV)$ is completely reducible as an $\ml$-representation, it is isomorphic to the corresponding cohomology group $H^k(\overline{\mn},\mV)$.

\begin{remark}
If $\mV$ is not considered to be irreducible, the formulae above need to be adjusted to
\[ H_k(\mn,\mV^\vee)=H^k(\overline{\mn},\mV)^\vee. \]
\end{remark}

To end this section we briefly show that the homology groups are identical to certain Ext functors, which is a standard homological fact and explains the connection of Kostant cohomology with BGG resolutions.
\begin{lemma}
\label{cohomExt}
The cohomology group {\rm$H^k(\overline{\mn},\mV)=\ker\partial_k/\mbox{im}\partial_{k-1}$\rm} satisfies
{\rm\begin{eqnarray*}
H^k(\overline{\mn},\mV)&\cong& \mbox{Ext}^k_{\overline{\mn}}(\mC,\mV)
\end{eqnarray*}}as $\ml$-modules, with {\rm$\mbox{Ext}^k_{\overline{\mn}}(-,\mV)$} the $k$-th right derived functor of the left exact contravariant functor {\rm$\mbox{Hom}_{\overline{\mn}}(-,\mV)$}, see Section 2.5 and Chapter 3 in \cite{MR1269324}. 
\end{lemma}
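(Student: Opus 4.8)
The plan is to reduce the claim to a standard homological-algebra computation of $\mathrm{Ext}$-groups via a suitable projective (or acyclic) resolution. The key observation is that the Koszul-type complex $C_\bullet(\overline{\mn},\mV)=\Lambda^\bullet\overline{\mn}\otimes\mV$ equipped with the coboundary operator $\partial$ (which by Lemma \ref{pactionder} satisfies $\partial\circ\partial=0$) is, after reindexing, precisely the complex computing $H^k(\overline{\mn},\mV)$, and simultaneously it is the tensor product of the standard Koszul resolution of the trivial $\overline{\mn}$-module $\mC$ with $\mV$. So the first step is to recall that $\Lambda^\bullet\overline{\mn}\otimes\cU(\overline{\mn})$ (with the usual Koszul differential) is a projective resolution of $\mC$ as a $\cU(\overline{\mn})$-module; this is entirely standard for a finite-dimensional Lie superalgebra, but one should note the super sign conventions match those adopted in Definition \ref{defder}.

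Next I would apply the functor $\mathrm{Hom}_{\overline{\mn}}(-,\mV)$ to this projective resolution of $\mC$. By definition $\mathrm{Ext}^k_{\overline{\mn}}(\mC,\mV)$ is the $k$-th cohomology of the resulting cochain complex. The identification $\mathrm{Hom}_{\overline{\mn}}(\Lambda^k\overline{\mn}\otimes\cU(\overline{\mn}),\mV)\cong\mathrm{Hom}(\Lambda^k\overline{\mn},\mV)=C^k(\overline{\mn},\mV)$ is the usual adjunction/free-module computation, and under this identification the induced differential is exactly the Chevalley–Eilenberg-type coboundary operator $\partial$ of Definition \ref{defder}. This last point requires a short but genuine check: one expands the differential of the Koszul resolution, dualizes, and compares term by term with the two sums in the formula for $\partial_k$; the first sum (the $Y_i\cdot\alpha(\cdots)$ terms) comes from the $\cU(\overline{\mn})$-module action, the second (the $\alpha([\cdots,Y_i])$ terms) from the internal Koszul differential on $\Lambda^\bullet\overline{\mn}$. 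Hence the cohomology of $(C^\bullet(\overline{\mn},\mV),\partial)$, which is $H^k(\overline{\mn},\mV)$ by definition, agrees with $\mathrm{Ext}^k_{\overline{\mn}}(\mC,\mV)$.

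Finally I would verify the $\ml$-equivariance of the isomorphism. Since $\ml$ normalizes $\overline{\mn}$, it acts on everything in sight: on the Koszul resolution of $\mC$, on $\mV$, on the Hom-complex, and by Lemma \ref{pactionder} it commutes with $\partial$ (indeed $\partial$ is an $\ml$-module morphism). All the identifications above are manifestly $\ml$-linear, so the induced isomorphism on cohomology is an isomorphism of $\ml$-modules, as claimed. The only mild subtlety — and the step I expect to require the most care — is bookkeeping the super signs in matching the dualized Koszul differential with the sign-laden formula for $\partial_k$; once the conventions of Definition \ref{defder} are reconciled with the standard Koszul conventions this is routine. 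Everything else is a direct invocation of the definition of right derived functors together with the fact, recalled from Section 2.5 of \cite{MR1269324}, that $\mathrm{Ext}$ may be computed from a projective resolution of the first variable.
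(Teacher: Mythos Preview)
Your proposal is correct and follows essentially the same approach as the paper: the paper's proof is simply a one-sentence reference to the Koszul/Chevalley--Eilenberg complex (citing Section 16.3 in \cite{MR2906817} and Section 7.7 in \cite{MR1269324}) and the observation that the classical argument carries over to the super setting. Your outline spells out exactly this standard argument in detail, including the verification of $\ml$-equivariance and the sign bookkeeping, which the paper leaves implicit.
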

\begin{proof}
The proof does not change substantially from the proof of the corresponding well-known fact for Lie algebra homology. It is based on the analog of the Koszul complex or Chevallay-Eilenberg complex, see Section 16.3 in \cite{MR2906817} or Section 7.7 in \cite{MR1269324}. 
\end{proof}

\subsection{The Kostant quabla operator for Lie superalgebras}

The Kostant quabla operator on $C^\bullet(\overline{\mn},\mV)$ is defined as
\begin{eqnarray}
\label{Kosquabla}
\Box=\partial\circ\partial^\ast+\partial^\ast\circ\partial&:& C^k(\overline{\mn},\mV)\to C^k(\overline{\mn},\mV).
\end{eqnarray}
The corresponding operator $\delta\delta^\ast+\delta^\ast\delta$ on $C^\bullet(\mn,\mW)$ is denoted by the same symbol $\Box$.

For Lie algebras, this operator is a central tool in the study of the (co)homology, see \cite{MR0142696}. For Lie superalgebras the role of this operator in the theory is not always as decisive, in this subsection we explore some useful properties of this operator in the setting of Lie superalgebras.

First we show that this operator can be expressed in terms of Casimir operators, as in Theorem 4.4 in \cite{MR0142696} for Lie algebras, or Lemma 4.4 in \cite{MR2036954} for $\mg=\mathfrak{gl}(m|n)$ and $\mathfrak{g}_0=\mg_{\overline{0}}=\mathfrak{gl}(m)\oplus\mathfrak{gl}(n)$.

\begin{theorem}
\label{Thmquabla}
Consider a basic classical Lie superalgebra $\mg$, with parabolic subalgebra $\Mp=\ml+\mn$ and representation $\mV$. The Kostant quabla operator on $C^k(\overline{\mn},\mV)$ takes the form
\begin{eqnarray*}
\iota(\Box f)=-\frac{1}{2}\left(\cC_2(\mV)+\sum_a \xi_a \xi_a^\ddagger-\sum_\kappa T_\kappa T_\kappa^\ddagger\right) \iota(f)=-\frac{1}{2}\left(\cC_2(\mV)-\cC_2+2\sum_a \xi_a \xi_a^\ddagger\right) \iota(f)
\end{eqnarray*}
where $\{\xi_a\}$ is a basis of $\mn$, $\{T_\kappa\}$ a basis of $\Mp^\ast$, $\iota$ is the natural embedding $\iota:\Lambda^k\mn\otimes\mV\,\hookrightarrow \,\Lambda^k\mg\otimes\mV$ and $\cC_2(\mV)$ the value of the quadratic Casimir operator $\cC_2=\sum_a\xi_a\xi^\ddagger_a +\sum_\kappa T_\kappa T^\ddagger_\kappa\in\cU(\mg)$ on $\mV$.
\end{theorem}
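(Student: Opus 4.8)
The plan is to compute $\Box = \partial\partial^\ast + \partial^\ast\partial$ directly on an element $X_1\wedge\cdots\wedge X_k\otimes v$ of $C_\bullet(\mn,\mV)$ (equivalently on $\iota(f)\in\Lambda^k\mg\otimes\mV$), using the explicit formulas for $\partial^\ast$ from equation \eqref{defcodiff} and for $\partial$ from Lemma \ref{propstandder}. First I would fix a homogeneous basis $\{A_i\} = \{\xi_a\}\cup\{T_\kappa\}$ of $\mg$ with dual basis $\{A_i^\ddagger\} = \{\xi_a^\ddagger\}\cup\{T_\kappa^\ddagger\}$ with respect to the Killing form, noting that $\{\xi_a^\ddagger\}$ is the dual basis of $\overline{\mn}$ and $\{T_\kappa^\ddagger\}$ the dual basis of $\ml$ inside $\Mp^\ast$ (this uses consistency and nondegeneracy of the form, which hold by Proposition \ref{Killingexist}). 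The Casimir $\cC_2 = \sum_a \xi_a\xi_a^\ddagger + \sum_\kappa T_\kappa T_\kappa^\ddagger$ is central in $\cU(\mg)$, so it acts by the scalar $\cC_2(\mV)$ on $\mV$; the point of the theorem is that $\Box$ lifts this scalar action to the whole complex, but twisted by a ``partial Casimir'' $\sum_a\xi_a\xi_a^\ddagger$ acting on the $\mn$-slots via the adjoint (bracket) action, not the multiplication action.

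The key computational steps, in order, are: (i) apply $\partial^\ast$ then $\partial$ to $X_1\wedge\cdots\wedge X_k\otimes v$, and separately $\partial$ then $\partial^\ast$, expanding each using the recursive definitions; (ii) observe that the cross-terms where the newly created $\xi_a$ (from $\partial$) is immediately contracted back (by $\partial^\ast$) produce exactly the multiplication-action term $-\tfrac12\cC_2(\mV)\,\iota(f)$ plus a term involving $\sum_a \xi_a^\ddagger\cdot(\xi_a\cdot v)$ versus $\sum_a\xi_a\cdot(\xi_a^\ddagger\cdot v)$; (iii) the remaining terms reassemble, after relabelling the summation index and using the invariance $([\xi_a^\ddagger,A],B)=(\xi_a^\ddagger,[A,B])$ of the Killing form together with its supersymmetry, into $-\tfrac12\big(\sum_a\xi_a\xi_a^\ddagger - \sum_\kappa T_\kappa T_\kappa^\ddagger\big)$ acting by the natural action $A[\cdot]$ on $\Lambda^k\mg$ from equation \eqref{gactionchain}; (iv) finally rewrite $\sum_a\xi_a\xi_a^\ddagger - \sum_\kappa T_\kappa T_\kappa^\ddagger = -\cC_2 + 2\sum_a\xi_a\xi_a^\ddagger$ to get the second displayed form. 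Throughout, the embedding $\iota$ is needed because the output of intermediate brackets $[\xi_a^\ddagger, X_j]$ lands in all of $\mg$, not just $\mn$, and only after summing does the $\Mp^\ast$-part cancel appropriately against the $T_\kappa$ contribution.

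The main obstacle is bookkeeping the Koszul signs: because we work over a Lie superalgebra, every transposition of a $\xi_a$ past the $X_j$'s, past $v$, and the parity of $\xi_a$ itself all contribute signs $(-1)^{|\xi_a||X_j|}$, $(-1)^{|\xi_a||v|}$, $(-1)^{|\xi_a|}$, and one must verify these conspire so that (a) the purely-algebraic cross terms telescope correctly and (b) the coefficient of $\xi_a\otimes\xi_a^\ddagger$ versus $\xi_a^\ddagger\otimes\xi_a$ matches after using that $\{(-1)^{|\xi_a|}\xi_a\}$ is dual to $\{\xi_a^\ddagger\}$. The choice of the sign-economical form of $\partial$ in Definition \ref{defder} is designed precisely to keep this manageable, so I would lean on it rather than the classical formula. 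A useful sanity check is to first do the case $k=0$: there $\partial^\ast=0$, so $\Box v = \partial^\ast\partial v = \partial^\ast(\sum_a\xi_a\otimes\xi_a^\ddagger\cdot v) = -\sum_a\xi_a^\ddagger\cdot(\xi_a\cdot v)$, and one checks this equals $-\tfrac12(\cC_2(\mV) - \cC_2 + 2\sum_a\xi_a\xi_a^\ddagger)v$ acting on $C_0=\mV$, where $\sum_a\xi_a\xi_a^\ddagger$ acts by multiplication since there are no $\mn$-slots; this reduces to the identity $\sum_a\xi_a\xi_a^\ddagger v - \sum_\kappa T_\kappa T_\kappa^\ddagger v = -\cC_2 v + 2\sum_a\xi_a\xi_a^\ddagger v$ together with $\sum_a\xi_a^\ddagger\xi_a v = \sum_a\xi_a\xi_a^\ddagger v - \sum_a[\xi_a,\xi_a^\ddagger]v$, and the bracket sum $\sum_a[\xi_a,\xi_a^\ddagger]$ is an element of $\ml$ that one identifies explicitly. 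Once the signs are pinned down in this base case and in the recursion step $X\wedge f$, the induction on $k$ closes.
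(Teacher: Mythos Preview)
Your approach is sound and would go through, but it is organized differently from the paper's proof. You propose a direct inductive computation on $k$ using the recursive formulae for $\partial$ and $\partial^\ast$, tracking all Koszul signs by hand. The paper instead avoids the induction entirely: it first rewrites $\partial$ on the larger space $\Lambda^\bullet\mg\otimes\mV$ in the compact form
\[
\partial = \tfrac12\sum_a \xi_a\wedge \xi_a^\ddagger - \tfrac12\sum_\kappa T_\kappa\wedge T_\kappa^\ddagger + \tfrac12\sum_j A_j\wedge (A_j^\ddagger)^\mV,
\]
using the identity $\sum_a \xi_a\wedge[\xi_a^\ddagger,X]_{\Mp^\ast} = \sum_\kappa T_\kappa\wedge[T_\kappa^\ddagger,X]$, and extends $\partial^\ast$ to $\Lambda^\bullet\mg\otimes\mV$ by the same rule $\partial^\ast(A\wedge f)=-A\cdot f - A\wedge\partial^\ast f$, which is then a $\mg$-module morphism. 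With these two ingredients the anticommutator $\partial\partial^\ast+\partial^\ast\partial$ collapses in one shot to the stated Casimir expression plus a residual operator $O$, and $O=0$ follows from a single application of Killing-form invariance. What your route buys is conceptual transparency (one sees exactly which cross-terms produce which pieces), at the cost of heavier sign bookkeeping; what the paper's route buys is that the entire sign management is absorbed into the reformulated $\partial$ and the $\mg$-equivariance of the extended $\partial^\ast$, so the final cancellation is a one-line identity rather than an induction.

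One small slip to fix in your $k=0$ sanity check: from $\partial v=\sum_a\xi_a\otimes\xi_a^\ddagger\cdot v$ and $\partial^\ast(X\wedge f)=-X\cdot f-\cdots$ you get $\Box v = -\sum_a \xi_a\cdot(\xi_a^\ddagger\cdot v)$, not $-\sum_a\xi_a^\ddagger\cdot(\xi_a\cdot v)$ as you wrote. With the correct order the verification against the right-hand side is immediate, since on $C_0=\mV$ the action of $\sum_a\xi_a\xi_a^\ddagger - \sum_\kappa T_\kappa T_\kappa^\ddagger$ is just multiplication and $\cC_2(\mV)=\cC_2$ there.
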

Before we prove this relation, we note that even though the right hand side is defined as an element of $\Lambda^k\mg\otimes\mV$, it follows from the theorem that it is contained in the subspace $C^k(\overline{\mn},\mV)$. It can also be directly checked that the expression of the element of $\cU(\mg)$ on the right hand side (which is equivalent to $\Box$) can be rewritten as an element of $\cU(\ml)$:
\begin{equation}\label{quablacenter}-\frac{1}{2}\left(\cC_2(\mV)+\sum_a[\xi_a,\xi_a^\ddagger]-\sum_bB_bB_b^{\ddagger}\right)\end{equation}
for $\{B_b\}$ a basis of $\ml$. This shows that in fact the quabla operator on the $\Mp$-module $C^\bullet(\overline{\mn},\mV)$ is equivalent to an element of $Z(\ml)$, the centre of $\cU(\ml)$.
\begin{proof}
We start by observing that the operator $\partial:C^k(\overline{\mn},\mV)\to C^{k+1}(\overline{\mn},\mV)$ can be rewritten as
\begin{eqnarray*}
\iota\left(\partial (X_1\wedge\cdots\wedge X_k\otimes v)\right)&=&\frac{1}{2}\sum_a\xi_a\wedge [\xi^\ddagger_a,X_1\wedge\cdots\wedge X_k]\otimes v-\frac{1}{2}\sum_\kappa T_\kappa\wedge [T^\ddagger_\kappa,X_1\wedge\cdots\wedge X_k]\otimes v\\
&&+\sum_a(-1)^{|\xi_a|(|X_1|+\cdots+|X_k|)} \xi_a\wedge X_1\wedge\cdots\wedge X_k\otimes \xi_a^\ddagger\cdot v,
\end{eqnarray*}
which follows from the proof of Lemma \ref{propstandder} and the relation
\begin{eqnarray*}
\sum_a \xi_a\wedge [\xi_a^\ddagger,X]_{\Mp^\ast}=\sum_\kappa T_\kappa\wedge [T_\kappa^{\ddagger},X]
\end{eqnarray*}
for $X\in\mn$ which follows from the completeness relations of dual bases and the invariance of the Killing form. 

Therefore we can rewrite the expression for $\partial$ (which is now considered to be acting on the $\mg$-module $\Lambda^\bullet \mg\otimes \mV$) further to
\begin{eqnarray*}
\partial&=&\frac{1}{2}\sum_a \xi_a\wedge\xi^\ddagger_a-\frac{1}{2}\sum_\kappa T_\kappa\wedge T_\kappa^\ddagger+\frac{1}{2}\sum_j A_j\wedge \left(A_j^\ddagger\right)^\mV
\end{eqnarray*}
with $\{A_j\}$ a basis of $\mg$. The notation $A^\mV$ is used for the action of $\mg$ on $\Lambda^\bullet \mg\otimes \mV$ which comes from the tensor product action on $\mV$ with trivial action on $\mg$, while the ordinary action of $\mg$ on $\Lambda\mg\otimes\mV$ is the tensor product action with the adjoint action on $\mg$. Thus we have obtained a new expression for an operator on $\Lambda^\bullet \mg\otimes \mV$ which corresponds to $\partial$ when restricted to $C^k(\overline{\mn},\mV)$.

The operator $\partial^\ast$ can be trivially extended to $\Lambda^\bullet \mg\otimes \mV$ by applying equation \eqref{defcodiff} for $A\in\mg$: $\partial^\ast_k(A\wedge f)=-A\cdot f- A\wedge \partial_{k-1}^\ast (f)$ for $f\in\Lambda^{k-1} \mg\otimes \mV$. Then it follows that $\partial^\ast$ is a $\mg$-module morphism. These observations lead to
\begin{eqnarray*}
\partial\partial^\ast+\partial^\ast\partial&=&-\frac{1}{2}\sum_a \xi_a\xi^\ddagger_a+\frac{1}{2}\sum_\kappa T_\kappa T_\kappa^\ddagger-\frac{1}{2}\sum_j A_j \left(A_j^\ddagger\right)^\mV\\
&&-\frac{1}{2}\sum_j A_j\wedge \partial^\ast\left(A_j^\ddagger\right)^\mV+\frac{1}{2}\sum_j A_j\wedge\left(A_j^\ddagger\right)^\mV \partial^\ast\\
&=&-\frac{1}{2}\left(\sum_a \xi_a\xi^\ddagger_a-\sum_\kappa T_\kappa T_\kappa^\ddagger+\cC_2(\mV)\right)+O
\end{eqnarray*}
with $O$ the operator on $\Lambda^\bullet \mg\otimes \mV$ which is defined by $O(B_1\wedge\cdots\wedge B_k\otimes v)=$
\begin{eqnarray*}
&&-\frac{1}{2}\sum_j(-1)^{|B||A_j|}[A_j,B_1\wedge\cdots\wedge B_k]\otimes A_j^\ddagger\cdot v\\
&&-\frac{1}{2}\sum_j(-1)^{|B||A_j|}\sum_i(-1)^{i+|B_i|(|B_{i+1}|+\cdots+|B_k|)}A_j\wedge B_1\wedge\cdots^{\hat{i}}\cdot\wedge B_k\otimes B_iA_j^\ddagger v\\
&&+\frac{1}{2}\sum_i(-1)^{i+|B_i|(|B_{i+1}|+\cdots+|B_k|)}\sum_j(-1)^{|B||A_j|+|B_i||A_j|}A_j\wedge B_1\wedge\cdots^{\hat{i}}\cdot\wedge B_k\otimes A_j^\ddagger B_iv
\end{eqnarray*} 
for $B_i\in \mg$, $v\in\mV$ and $|B|=|B_1|+\cdots+|B_k|$.

The property 
\begin{eqnarray*}
\sum_j\left( (-1)^{|A_j||X|}[A_j,X]\otimes A_j^\ddagger\cdot v+ A_j\otimes [A_j^\ddagger,X]\cdot v\right)=0
\end{eqnarray*}
for $X\in\mg$ follows from the invariance of the Killing form and shows that the operator $O$ is zero.
\end{proof}

We introduce the notation $\Box_k$ for the restriction of $\Box$ to $C^k(\overline{\mn},\mV)$. Denote by $C^k(\overline{\mn},\mV)_0$ the generalised eigenspace of $\Box_k$, with eigenvalue zero. For certain questions it will be important whether this $\ml$-module is completely reducible. Since $\Box\in Z(\ml)$, complete reducibility of $C^\bullet(\overline{\mn},\mV)_0$ implies that $C^\bullet(\overline{\mn},\mV)_0=\ker\Box$ holds, but we also prove that complete reducibility of $\ker\Box$ already implies $C^\bullet(\overline{\mn},\mV)_0=\ker\Box$ and therefore that $C^\bullet(\overline{\mn},\mV)_0$ is completely reducible.

\begin{lemma}
\label{compred}
If $\ker\Box$ on $C^k(\overline{\mn},\mV)$ is completely reducible as an $\ml$-module, then $C^k(\overline{\mn},\mV)_0=\ker\Box_k$.
\end{lemma}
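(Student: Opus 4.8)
The statement to prove is Lemma \ref{compred}: if $\ker\Box$ on $C^k(\overline{\mn},\mV)$ is completely reducible as an $\ml$-module, then the generalised zero eigenspace $C^k(\overline{\mn},\mV)_0$ coincides with $\ker\Box_k$. Since $C^k(\overline{\mn},\mV)_0\supseteq\ker\Box_k$ is automatic, the content is the reverse inclusion; equivalently, one must show that $\Box_k$ has no non-trivial nilpotent part, i.e. that on the generalised zero eigenspace $\Box_k$ acts as zero rather than as a genuinely nilpotent operator. The key structural fact I would exploit is the one highlighted just before the lemma: by Theorem \ref{Thmquabla} (and equation \eqref{quablacenter}) the operator $\Box_k$ is the action of a central element of $\cU(\ml)$ on the $\ml$-module $C^k(\overline{\mn},\mV)$. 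In particular $\Box_k$ is an $\ml$-module endomorphism of $C^k(\overline{\mn},\mV)$.

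First I would decompose the generalised zero eigenspace $N:=C^k(\overline{\mn},\mV)_0$ as an $\ml$-module; this is legitimate because $\Box_k\in Z(\ml)$ commutes with the $\ml$-action, so $N$ is an $\ml$-submodule of $C^k(\overline{\mn},\mV)$, and it contains $K:=\ker\Box_k$ as an $\ml$-submodule. By hypothesis $K$ is completely reducible. The plan is to show $N=K$. Suppose not; then since $\Box_k$ is nilpotent on the finite-dimensional space $N$, pick $f\in N\setminus K$ with $\Box_k f\neq 0$ but $\Box_k^2 f=0$ chosen so that moreover $\Box_k f$ lies in a ``fresh'' part — more carefully, consider the $\ml$-submodule $N_1:=\{f\in N: \Box_k^2 f=0\}$, which strictly contains $K=N_1\cap\ker\Box_k$ in our supposed situation, and on which $\Box_k$ restricts to an $\ml$-module map $N_1\to K$ with kernel $K$. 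This induces an injective $\ml$-module map $N_1/K\hookrightarrow K$, so $N_1/K$ is completely reducible as well. Hence the $\ml$-module $N_1$ is an extension of the completely reducible module $N_1/K$ by the completely reducible module $K$.

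The heart of the argument is then to upgrade this to: $N_1$ itself splits, $N_1\cong K\oplus (N_1/K)$ as $\ml$-modules, so that $\Box_k$ maps the complement $N_1/K$ isomorphically onto a submodule of $K$ contained in $\ker\Box_k$ — but every element of that complement is then killed by $\Box_k^2$ while mapping to something nonzero in $\ker\Box_k$, which would be fine; the contradiction must instead come from eigenvalue/weight considerations. Here is the clean way I would run it: because $\Box_k$ is (the action of) a single element $z\in Z(\ml)$, on any finite-dimensional $\ml$-module it decomposes according to the generalised $z$-eigenspaces, and on an \emph{irreducible} $\ml$-module $z$ acts as a scalar (Schur's lemma, since $z$ is central). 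Therefore on a \emph{completely reducible} $\ml$-module $z$ is \emph{semisimple} (diagonalisable), so its generalised $0$-eigenspace equals its $0$-eigenspace. Now apply this to the $\ml$-module $\ker\Box_k=K$: trivially $z$ acts as $0$ there. The subtlety is that $N$ need not be completely reducible a priori. So instead I would argue: let $f\in N$; then $z\cdot f$ lies in $\ker\Box_k^{m-1}$ if $z^m f=0$, and by downward induction on the nilpotency index it suffices to treat the case $z^2 f=0$. Then $z\cdot f\in K$, and $K$ is completely reducible, so write $z\cdot f=\sum_j g_j$ with $g_j$ in irreducible $\ml$-constituents $M_j$ of $K$ on each of which $z$ acts by a scalar $c_j$; since $g_j\in\ker\Box_k$ we get $c_j g_j = z\cdot g_j = 0$, hence $c_j=0$ whenever $g_j\neq 0$. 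But also consider the $\ml$-submodule $U\subseteq C^k(\overline{\mn},\mV)$ generated by $f$: it is finite-dimensional, $z$ acts on $U$ as an $\ml$-endomorphism with $z^2|_U$ having image a submodule of $K$, and the submodule $z\cdot U$ of $U$ consists of vectors killed by $z$. I would then invoke that $z\cdot U$, being a submodule of the completely reducible $K$, is itself completely reducible, and that $U/(\text{a complement issue})$ — the cleanest finish is: the composition factors of $U$ on which $z$ acts by the scalar $0$ (all of them lying in $\ker z$ by semisimplicity considerations) force, via the fact that $z$ is semisimple on completely reducible modules and $U$'s only relevant composition factors are those of $K$ together with those appearing in $z\cdot f$, that $z$ acts semisimply on $U$, whence $z^2 f = 0 \Rightarrow z f = 0$, i.e. $f\in K$.

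\textbf{Main obstacle.} The genuine difficulty, and the step I expect to require the most care, is exactly the passage from ``$z$ acts as a scalar on each irreducible'' to ``$z$ acts semisimply on the possibly-non-completely-reducible module $N$'' — one cannot simply say $N$ is completely reducible, that is what fails in the super setting. The resolution has to use that the composition factors of $N$ that can occur in $\mathrm{im}(z)$ are \emph{the same} irreducible $\ml$-modules that occur in $\ker z = K$, and on those $z$ acts by $0$ (scalar $0$); so $z^2 = 0$ on $N$ forces $z(zN)=0$ automatically, but to conclude $zN=0$ one needs that no non-split self-extension of an irreducible $M\subseteq K$ by a trivial-$z$-action module can support a nonzero action of $z$ compatible with $z$ being central — i.e. that $z$ acting as a non-zero nilpotent on an indecomposable length-two $\ml$-module whose both factors have $z$-scalar $0$ is impossible, which is true precisely because $z\in Z(\ml)$ acts by the \emph{same} scalar ($=0$) on both composition factors, and a central element acting with equal scalars on the two factors of a length-two module and nonzero nilpotently between them is a contradiction with centrality only if... — actually this last point is the crux and is false in general for arbitrary algebras, so the real proof must instead use the specific fact that $C^k(\overline{\mn},\mV)$ is a \emph{subquotient-free} weight module and $z$ is built from the Casimir (eq. \eqref{quablacenter}), so its generalised eigenvalues are read off from $\ml$-infinitesimal characters; two composition factors with the same $\Box$-eigenvalue $0$ then have related infinitesimal characters, and the cleanest rigorous route is to note that $C^\bullet(\overline{\mn},\mV)_0$ is, by Theorem \ref{Thmquabla}, exactly the sum of $\ml$-weight components whose Casimir eigenvalue matches, and within the completely reducible hypothesis on $\ker\Box$ this generalised eigenspace collapses. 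I would present the argument via the semisimplicity of a central element on a module all of whose composition factors lie in a fixed completely reducible module, making precise the claim that $z$ then acts semisimply, which is where the writing effort should be concentrated.
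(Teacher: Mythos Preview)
Your proposal correctly identifies the central difficulty but does not resolve it. You explicitly recognise that ``$z$ central'' is not enough, yet your final stated route --- proving that a central element acts semisimply on any module whose composition factors all occur in a fixed completely reducible module --- is simply false: take $\ml=\mC z$ and $N=\mC^2$ with $z$ acting as a nonzero nilpotent; both composition factors are the trivial module, which is completely reducible, but $z$ is not semisimple on $N$. Your intermediate remarks about infinitesimal characters and Casimir eigenvalues do not close this gap either, since two copies of the same irreducible share the same infinitesimal character and nothing you have written forbids a nontrivial length-two self-extension on which $\Box$ acts nilpotently. So as it stands the argument has a genuine hole at exactly the place you flagged as requiring effort.

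The paper's proof fills this by using not just that $\Box\in Z(\ml)$ but its explicit shape from equation \eqref{quablacenter}: on a weight vector of weight $\mu$ one has $\Box = c(\mu) + \sum_\alpha Y_\alpha X_\alpha$ with $c(\mu)$ a scalar depending only on $\mu$. Choose $x\in C^k(\overline{\mn},\mV)_0$ so that $x+\ker\Box_k$ is a nonzero highest weight vector in the quotient; then $X_\alpha x\in\ker\Box_k$ for every positive root $\alpha$, hence $\Box x=c(\mu)x+y$ with $y=\sum_\alpha Y_\alpha X_\alpha x\in\ker\Box_k$. Nilpotence of $\Box$ on $x$ forces $c(\mu)=0$, so $\Box x=y$. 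Now $y$ is a highest weight vector in $\ker\Box_k$ (because $X_\beta y=\Box X_\beta x=0$), while at the same time $y$ is obtained by applying negative root vectors $Y_\alpha$ to elements $X_\alpha x\in\ker\Box_k$ of strictly higher weight. Projecting onto any irreducible summand of the completely reducible $\ker\Box_k$, the component of $y$ would be the highest weight vector of that summand, whereas the components of each $X_\alpha x$ would have weight above the highest weight and hence vanish; thus $y=0$, contradicting $x\notin\ker\Box_k$. The point you were missing is this ``raising--lowering'' decomposition of $\Box$, which converts the problem into the elementary fact that in a completely reducible module a highest weight vector cannot lie in $\sum_\alpha Y_\alpha\cdot(\text{module})$.
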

\begin{proof}
If $C^k(\overline{\mn},\mV)_0\not=\ker\Box_k$, there is a non-zero highest weight vector in the $\ml$-module $C^k(\overline{\mn},\mV)_0/\ker\Box_k$, of the form $x+\ker\Box_k$ for an $x\in C^k(\overline{\mn},\mV)_0$. We denote the positive root vector of $\ml$ corresponding to root $\alpha$ by $X_\alpha$ and the negative root vector $(-1)^{|X_\alpha|}X_\alpha^\ddagger$ by $Y_\alpha$. It holds that $X_\alpha x\in \ker\Box_k$ for every positive root $\alpha$.

Equation \eqref{quablacenter} implies that the action of $\Box$ on a certain weight vector is given by $c+\sum_\alpha Y_\alpha X_\alpha$ for a constant $c$ depending on the weight of $x$. By definition of $x$, $\Box x= cx+y$ for $y=\sum_\alpha Y_\alpha X_\alpha x\in \ker\Box$. Since $x\in C^k(\overline{\mn},\mV)_0$ it follows quickly that $c=0$ and therefore $\Box^2x=\Box y=0$.

Now the vector $y=\sum_\alpha Y_\alpha X_\alpha x=\Box x$ is a highest weight vector in $\ker\Box$, by the relation $X_\beta y = \Box X_\beta x=0$ for every positive root $\beta$, but $y$ is also generated by negative root vectors. Therefore complete reducibility of $\ker\Box_k$ implies that $C^k(\overline{\mn},\mV)_0/\ker\Box_k=0$.
\end{proof}

As will be mentioned in the subsequent Lemma \ref{lemmaKostant}, in case $\partial$ and $\partial^\ast$ are disjoint operators there is an $\ml$-isomorphism between $H_k(\mn,\mV)$ and $\ker \Box_k$. In the following lemma we prove a weaker statement without making the disjointness assumption.

\begin{lemma}
\label{Boxcohom}
If $\ker\Box_k$ is a completely reducible $\ml$-module, there is an injective $\ml$-module morphism from $H_k(\mn,\mV)$ to $\ker \Box_k$.
\end{lemma}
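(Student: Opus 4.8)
The plan is to produce the injective $\ml$-morphism $H_k(\mn,\mV)\to\ker\Box_k$ by constructing a suitable $\ml$-equivariant splitting of the eigenspace decomposition of $\Box_k$. Since $\Box\in Z(\ml)$ by equation \eqref{quablacenter} (acting via $-\frac12(\cC_2(\mV)+\sum_a[\xi_a,\xi_a^\ddagger]-\sum_b B_bB_b^\ddagger)$), the operator $\Box_k$ commutes with the $\ml$-action on $C^k(\mn,\mV)$, hence each generalised eigenspace $C^k(\mn,\mV)_c$ (eigenvalue $c\in\mC$) is an $\ml$-submodule and we get an $\ml$-module direct sum $C^k(\mn,\mV)=\bigoplus_c C^k(\mn,\mV)_c$. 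Working with the $C^\bullet(\mn,\mV)$-picture, recall $\delta$ and $\delta^\ast$ are $\ml$-module morphisms, they commute with $\Box$, and they each map $C^k(\mn,\mV)_c$ into $C^{k\pm1}(\mn,\mV)_c$ respectively. The first observation I would record is the standard one: on the generalised zero eigenspace $\mathrm{ker}\,\partial^\ast_k\cap C^k\,/\,(\mathrm{im}\,\partial^\ast_{k+1}\cap C^k)$ already computes all of $H_k$; more precisely $\mathrm{im}\,\partial^\ast_{k+1}$ lies in the sum of nonzero eigenspaces together with $\mathrm{im}(\partial^\ast\Box)$, and since $\Box$ is central one shows $\ker\partial^\ast_k$ meets every nonzero eigenspace only inside $\mathrm{im}\,\Box\subseteq\mathrm{im}\,\partial^\ast+\mathrm{im}\,\partial$, so the inclusion $\ker\Box_k\hookrightarrow\ker\partial^\ast_k$ descends to a map $\ker\Box_k\to H_k(\mn,\mV)$; but we want the arrow the other way, so instead I would argue as follows.

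The cleaner route: define a linear map $H_k(\mn,\mV)\to\ker\Box_k$ by sending a homology class $[z]$ (with $z\in\ker\partial^\ast_k$) to $\pi_0(z)$, where $\pi_0:C^k(\mn,\mV)\to C^k(\mn,\mV)_0$ is the $\ml$-equivariant projection onto the generalised zero eigenspace of $\Box_k$ along the nonzero eigenspaces. First I would check this is well defined: if $z\in\mathrm{im}\,\partial^\ast_{k+1}$, write $z=\partial^\ast_{k+1}w$; since $\pi_0$ commutes with $\partial^\ast$ (both commute with $\Box$), $\pi_0(z)=\partial^\ast_{k+1}\pi_0(w)$, and on the zero generalised eigenspace, which by the hypothesis that $\ker\Box_k$ is completely reducible equals $\ker\Box_k$ by Lemma \ref{compred} — and similarly on $C^{k+1}_0$ — one uses that $\Box=\partial\partial^\ast+\partial^\ast\partial$ is invertible on the orthogonal complement of its kernel in each degree to see $\partial^\ast_{k+1}\pi_0(w)\in\mathrm{im}\,\partial^\ast_{k+1}\cap\ker\Box_k\cap\ker\partial_k^{\perp}\cdots$; more simply, on $C^\bullet_0=\ker\Box$ one has $\partial\partial^\ast+\partial^\ast\partial=0$ so, pairing with the hermitian form of Theorem \ref{formonchains} (for which $\delta^\ast=-\delta^\dagger$), every element of $\ker\Box$ is a $\delta$-cocycle and a $\delta^\ast$-cycle, so $\partial^\ast$ vanishes on $C^{k+1}_0$, giving $\pi_0(z)=0$. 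Hence $[z]\mapsto\pi_0(z)$ is well defined, and it is manifestly $\ml$-equivariant since $\pi_0$ is.

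Next, injectivity: suppose $\pi_0(z)=0$ for some $z\in\ker\partial^\ast_k$. Decompose $z=\sum_{c\ne0}z_c$ with $z_c\in C^k_c$; each $z_c\in\ker\partial^\ast_k$ too. On each nonzero eigenspace $\Box_k$ is invertible, so write $z_c=\Box_k(\Box_k^{-1}z_c)=(\partial\partial^\ast+\partial^\ast\partial)(u_c)$ with $u_c=\Box_k^{-1}z_c\in C^k_c$; since $z_c$ is a $\partial^\ast$-cycle and $\partial^\ast$ commutes with $\Box_k^{-1}$, $u_c$ is also a cycle, so $\partial^\ast u_c=0$ and thus $z_c=\partial^\ast\partial u_c\in\mathrm{im}\,\partial^\ast_{k+1}$. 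Summing, $z\in\mathrm{im}\,\partial^\ast_{k+1}$, i.e. $[z]=0$. Finally I would note that the image indeed lies in $\ker\Box_k$ — this is immediate since $\pi_0$ lands in $C^k_0=\ker\Box_k$ under the complete reducibility hypothesis via Lemma \ref{compred}.

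The step I expect to be the genuine obstacle is making rigorous the assertion that $\Box_k$ is invertible on each nonzero generalised eigenspace in a way compatible with $\partial,\partial^\ast$ — this is automatic for the actual (non-generalised) eigenspaces but requires a little care for generalised ones; the safe fix is to invoke Lemma \ref{compred} to replace the generalised zero eigenspace by $\ker\Box_k$ at the outset, and on the complementary $\ml$-submodule $\bigoplus_{c\ne0}C^k_c$ use that $\Box_k$ restricted there has no zero eigenvalue, hence is invertible, and that the inverse is a polynomial in $\Box_k$ (since $\Box_k\in Z(\ml)$ and we are in finite dimensions), so it commutes with everything that $\Box_k$ does, in particular with $\partial$ and $\partial^\ast$. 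I would present the argument in the $C^\bullet(\mn,\mV)$-language with $\delta,\delta^\ast$ and the hermitian form of Theorem \ref{formonchains} so that "$\Box$ acts as zero implies both $\delta$- and $\delta^\ast$-closed" is literally the Hodge-type identity $\langle\Box x,x\rangle=\|\delta x\|^2+\|\delta^\ast x\|^2$ — except the form need not be positive definite here, so I would instead use directly that $\partial^\ast=0$ on $C^{k+1}_0$ follows from $\partial^\ast\Box^{-1}$-type manipulations above rather than from positivity, and only use the hermitian form, if at all, as a convenient bookkeeping device.
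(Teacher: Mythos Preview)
Your well-definedness argument has a genuine gap. You claim that $\partial^\ast$ vanishes on $C^{k+1}(\overline{\mn},\mV)_0$, first via a Hodge-type identity (which you correctly note fails without positive definiteness) and then via unspecified ``$\partial^\ast\Box^{-1}$-type manipulations''. But $\partial^\ast\big|_{C^\bullet_0}=0$ is precisely condition (3) in Theorem~\ref{Boxcohom2}, which is \emph{equivalent} to disjointness of $\partial$ and $\partial^\ast$. That is not assumed here, and Example~\ref{counter} shows it can fail. Concretely, $\pi_0(\mathrm{im}\,\partial^\ast_{k+1})=\mathrm{im}\,\partial^\ast_{k+1}\cap\ker\Box_k$ need not be zero, so $[z]\mapsto\pi_0(z)$ is not well defined as a map into $\ker\Box_k$. (There is also a minor issue: the hypothesis is only on $\ker\Box_k$, so you cannot invoke Lemma~\ref{compred} in degree $k+1$ to identify $C^{k+1}_0$ with $\ker\Box_{k+1}$.)

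Your injectivity computation, however, is exactly right and is the substantive half of the paper's proof: it shows $\ker\partial^\ast_k\cap A^k=\mathrm{im}\,\partial^\ast_{k+1}\cap A^k$ where $A^k=\bigoplus_{c\ne0}C^k_c$. Combined with the eigenspace splitting this gives
\[
H_k(\mn,\mV)\;\cong\;\bigl(\ker\partial^\ast_k\cap\ker\Box_k\bigr)\big/\bigl(\mathrm{im}\,\partial^\ast_{k+1}\cap\ker\Box_k\bigr).
\]
The role of complete reducibility of $\ker\Box_k$ is then \emph{not} merely to collapse $C^k_0$ to $\ker\Box_k$ via Lemma~\ref{compred}, but to choose an $\ml$-stable complement $B^k$ of $\mathrm{im}\,\partial^\ast_{k+1}\cap\ker\Box_k$ inside $\ker\partial^\ast_k\cap\ker\Box_k$, yielding $H_k\cong B^k\hookrightarrow\ker\Box_k$. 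That is the step your argument is missing, and it is exactly how the paper proceeds.
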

\begin{proof}
Lemma \ref{compred} implies $C^k(\overline{\mn},\mV)_0=\ker\Box_k$. There is a unique module $A^k$ consisting of all generalised eigenspaces with eigenvalue different from zero which satisfies $C^k(\overline{\mn},\mV)=\ker\Box_k\oplus A^k$. This implies that 
\begin{eqnarray*}
\ker\partial_k^\ast&=&\left(\ker\partial_k^\ast\cap \ker\Box\right)\oplus \left(\ker\partial_k^\ast\cap A^k\right)\qquad\mbox{and}\\
\mbox{im}\partial_{k+1}^\ast&=&\left(\mbox{im}\partial_{k+1}^\ast\cap \ker\Box\right)\oplus \left(\mbox{im}\partial_{k+1}^\ast\cap A^k\right)
\end{eqnarray*}
as $\ml$-modules.

The operator $\Box$ is invertible on $A^k$. Since $\Box$ maps $\ker\partial^\ast_k$ to $\mbox{im}\partial^\ast_{k+1}\subset\ker\partial^\ast_k$ and is an element of $Z(\ml)$ it follows that $\left(\ker\partial_k^\ast\cap A^k\right)=\left(\mbox{im}\partial_{k+1}^\ast\cap A^k\right)$ and \[H_k(\mn,\mV)\cong \left(\ker\partial_k^\ast\cap \ker\Box\right)/\left(\mbox{im}\partial_{k+1}^\ast\cap \ker\Box\right)\].

The fact that $\ker\Box_k$ is completely reducible then implies there is an $\ml$-module $B^k\subset \ker\Box_k$ such that $\left(\ker\partial_k^\ast\cap \ker\Box\right)=B^k\oplus\left(\mbox{im}\partial_{k+1}^\ast\cap \ker\Box\right)$ and therefore $H_k(\mn,\mV)\cong B^k\subset \ker\Box_k$.
\end{proof}

The proof of this lemma can easily be adjusted to the case where $C^k(\overline{\mn},\mV)_0$ is not necessarily completely reducible. This yields the following result.

\begin{corollary}
\label{Boxcohom3}
Denote by $C^k(\overline{\mn},\mV)_0$ the generalised eigenspace of $\Box$ with eigenvalue zero. The homology group $H_k(\mn,\mV)$ satisfies {\rm
\begin{eqnarray*}
H_k(\mn,\mV)&=&\left(\mbox{ker}\partial^\ast_k\cap C^k(\overline{\mn},\mV)_0\right)/\left(\mbox{im}\partial_{k+1}^\ast\cap C^k(\overline{\mn},\mV)_0\right).
\end{eqnarray*}}
\end{corollary}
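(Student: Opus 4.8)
The plan is to reproduce the argument of Lemma~\ref{Boxcohom} essentially verbatim: complete reducibility of $\ker\Box_k$ is only invoked in its last sentence, to extract the submodule $B^k$ and obtain the injection into $\ker\Box_k$, so removing that hypothesis costs only that conclusion and leaves everything else valid with $C^k(\overline{\mn},\mV)_0$ in the role of $\ker\Box_k$.

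First I would record two facts about $\Box_k$ on $C^k(\overline{\mn},\mV)$. By \eqref{quablacenter} it is the action of an element of $Z(\ml)$, hence an $\ml$-module endomorphism; and if $\partial^\ast_k x=0$ then $\Box_k x=\partial^\ast_{k+1}(\partial_k x)\in\mbox{im}\,\partial^\ast_{k+1}$. Since $\mbox{im}\,\partial^\ast_{k+1}\subseteq\ker\partial^\ast_k$ (by $\partial^\ast\circ\partial^\ast=0$), both $\ker\partial^\ast_k$ and $\mbox{im}\,\partial^\ast_{k+1}$ are therefore $\Box_k$-invariant, and they are $\ml$-submodules because $\partial^\ast$ is an $\ml$-module morphism. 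Now take the decomposition into generalised eigenspaces $C^k(\overline{\mn},\mV)=C^k(\overline{\mn},\mV)_0\oplus A^k$, where $A^k$ is the sum of the generalised eigenspaces with nonzero eigenvalue and $\Box_k$ acts invertibly on $A^k$. The projections onto the two summands are polynomials in $\Box_k$, so any $\Box_k$-invariant subspace splits compatibly; applied to $\ker\partial^\ast_k$ and $\mbox{im}\,\partial^\ast_{k+1}$ this gives
\begin{eqnarray*}
\ker\partial^\ast_k&=&\left(\ker\partial^\ast_k\cap C^k(\overline{\mn},\mV)_0\right)\oplus\left(\ker\partial^\ast_k\cap A^k\right),\\
\mbox{im}\,\partial^\ast_{k+1}&=&\left(\mbox{im}\,\partial^\ast_{k+1}\cap C^k(\overline{\mn},\mV)_0\right)\oplus\left(\mbox{im}\,\partial^\ast_{k+1}\cap A^k\right).
\end{eqnarray*}
Since $\Box_k$ is invertible on the invariant subspace $\ker\partial^\ast_k\cap A^k$ and maps it into $\mbox{im}\,\partial^\ast_{k+1}\cap A^k$, we conclude $\ker\partial^\ast_k\cap A^k=\mbox{im}\,\partial^\ast_{k+1}\cap A^k$ (the reverse inclusion being trivial); substituting the two displays into $H_k(\mn,\mV)=\ker\partial^\ast_k/\mbox{im}\,\partial^\ast_{k+1}$ cancels the $A^k$-pieces and leaves precisely the asserted formula.

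This is routine bookkeeping and I see no genuine obstacle; the one step that warrants being spelled out — and that the proof of Lemma~\ref{Boxcohom} could afford to gloss over under the reducibility assumption — is that the generalised-eigenspace decomposition of $\Box_k$ restricts compatibly to $\ker\partial^\ast_k$ and $\mbox{im}\,\partial^\ast_{k+1}$, which follows at once because the spectral projectors are polynomials in $\Box_k$.
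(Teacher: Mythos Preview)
Your proposal is correct and follows exactly the approach the paper indicates: it says the proof of Lemma~\ref{Boxcohom} ``can easily be adjusted to the case where $C^k(\overline{\mn},\mV)_0$ is not necessarily completely reducible,'' and your adjustment---replacing $\ker\Box_k$ by $C^k(\overline{\mn},\mV)_0$ throughout and dropping the final complete-reducibility step---is precisely that. Your remark that the spectral projectors are polynomials in $\Box_k$ makes explicit the one step the paper's proof of Lemma~\ref{Boxcohom} takes for granted when passing from the direct-sum decomposition of $C^k(\overline{\mn},\mV)$ to those of $\ker\partial^\ast_k$ and $\mbox{im}\,\partial^\ast_{k+1}$.
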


\subsection{Disjointness of the boundary and coboundary operator}

If $\partial$ and $\partial^\ast$ are disjoint operators, i.e. $\partial(\partial^\ast f)=0$ implies $\partial^\ast f=0$ and the same holds for the roles of $\partial$ and $\partial^\ast$ reversed, we obtain a harmonic theory. The disjointness property can also be expressed as
\begin{eqnarray*}
\ker\partial\cap\mbox{ im}\partial^\ast=0&\mbox{and}& \ker\partial^\ast\cap\mbox{ im}\partial=0.
\end{eqnarray*} 
This property will be essential in the construction of BGG resolutions in Section \ref{BGG}.

The following lemma follows immediately from the general theory in Proposition 2.1 in \cite{MR0142696}.
\begin{lemma}
\label{lemmaKostant}
If $\partial$ and $\partial^\ast$ given in Definition \ref{defder} are disjoint operators, the following decomposition of $\ml$-modules holds: {\rm
\begin{eqnarray}
C^k(\overline{\mn},\mV)&=&\mbox{im}\partial\,\oplus\, \ker \Box\,\oplus\,\mbox{im}\partial^\ast,
\end{eqnarray}}with $\Box$ as defined in equation \eqref{Kosquabla}. Moreover, it holds that {\rm$\ker \partial=\mbox{im}\partial\,\oplus\, \ker \Box$, $\,\ker\partial^\ast=\ker \Box\,\oplus\,\mbox{im}\partial^\ast$} and {\rm $\,\mbox{im}\Box=\mbox{im}\partial\oplus\mbox{im}\partial^\ast$} as $\ml$-submodules of $C^k(\overline{\mn},\mV)$. This implies that the following $\ml$-module isomorphisms exist: {\rm
\begin{eqnarray*}
H_k(\mn,\mV)\,\cong\, \ker\Box\,\cong\, H^k(\overline{\mn},\mV)
\end{eqnarray*}}with {\rm$H_k(\mn,\mV)=\ker\partial^\ast_k/\mbox{im}\partial^\ast_{k+1}$} and {\rm$H^k(\overline{\mn},\mV)=\ker \partial_k/\mbox{im}\partial_{k-1}$}. The same results hold for $\delta$, $\delta^\ast$ and $\Box$ on $C^\bullet(\mn,\mW)$.
\end{lemma}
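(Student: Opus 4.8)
The final statement is Lemma~\ref{lemmaKostant}: assuming $\partial$ and $\partial^\ast$ are disjoint, one gets the Hodge-type decomposition $C^k(\overline{\mn},\mV)=\operatorname{im}\partial\oplus\ker\Box\oplus\operatorname{im}\partial^\ast$ of $\ml$-modules, together with the identifications of $\ker\partial$, $\ker\partial^\ast$, $\operatorname{im}\Box$, and the isomorphisms $H_k(\mn,\mV)\cong\ker\Box\cong H^k(\overline{\mn},\mV)$, with the same for $\delta,\delta^\ast$ on $C^\bullet(\mn,\mW)$.

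\medskip

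The plan is to run the standard abstract Hodge-theory argument, being careful that everything is happening at the level of $\ml$-modules rather than inner product spaces. First I would record the three facts we are allowed to use freely: $\partial^2=0$ and $(\partial^\ast)^2=0$ (Lemma~\ref{coder1} and Lemma~\ref{pactionder}); $\partial$ and $\partial^\ast$ are $\ml$-module morphisms (same lemmas); and $\Box=\partial\partial^\ast+\partial^\ast\partial$ is (equivalent to) an element of $Z(\ml)$ by Theorem~\ref{Thmquabla} and equation~\eqref{quablacenter}, hence in particular an $\ml$-morphism commuting with $\partial$ and $\partial^\ast$. The key input is really Proposition~2.1 of \cite{MR0142696}, which the statement explicitly invokes: once we know $\partial,\partial^\ast$ are disjoint, the purely formal lemma of Kostant gives the direct sum decomposition $C^k=\operatorname{im}\partial_{k-1}\oplus\ker\Box_k\oplus\operatorname{im}\partial^\ast_{k+1}$ and the formulas for $\ker\partial$, $\ker\partial^\ast$, $\operatorname{im}\Box$. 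The only thing to check beyond citing it is that all the subspaces appearing are $\ml$-submodules: $\operatorname{im}\partial$ and $\operatorname{im}\partial^\ast$ are images of $\ml$-morphisms, and $\ker\Box$ is the kernel of an $\ml$-morphism, so each is automatically an $\ml$-submodule, and the decomposition is then a decomposition of $\ml$-modules.

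\medskip

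Next I would extract the cohomology isomorphisms from the decomposition. From $\ker\partial^\ast_k=\ker\Box_k\oplus\operatorname{im}\partial^\ast_{k+1}$ one reads off immediately $H_k(\mn,\mV)=\ker\partial^\ast_k/\operatorname{im}\partial^\ast_{k+1}\cong\ker\Box_k$ as $\ml$-modules, since the quotient map restricted to the $\ml$-submodule $\ker\Box_k$ is an $\ml$-isomorphism onto the quotient. Symmetrically, $\ker\partial_k=\operatorname{im}\partial_{k-1}\oplus\ker\Box_k$ gives $H^k(\overline{\mn},\mV)\cong\ker\Box_k$. Finally, for the parallel statement for $\delta,\delta^\ast,\Box$ on $C^\bullet(\mn,\mW)$: here I would note that exactly the same formal argument applies verbatim, since $\delta^2=0=(\delta^\ast)^2$, both are $\ml$-morphisms (stated just before Lemma~\ref{exprdelta}), $\Box=\delta\delta^\ast+\delta^\ast\delta$ on $C^\bullet(\mn,\mW)$ is again central in $Z(\ml)$ by the same Casimir computation (Theorem~\ref{Thmquabla} applies to any finite dimensional $\mV$, and $C^\bullet(\mn,\mW)=C_\bullet(\overline{\mn},\mW^\ast)$ up to the identifications already set up), and disjointness of $\delta,\delta^\ast$ is either assumed in parallel or — via the hermitian form of Theorem~\ref{formonchains} under which $\delta$ and $-\delta^\ast$ are adjoint — automatic in the unitarisable case; in the general case one simply observes that disjointness of $\partial,\partial^\ast$ transfers to $\delta,\delta^\ast$ through the identification $C^\bullet(\overline{\mn},\mV)=C^\bullet(\mn,\mV^\ast)$ of Lemma~\ref{4groups} combined with equation~\eqref{contravarianceofform}.

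\medskip

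I do not expect a serious obstacle here, since the statement is essentially ``Kostant's Proposition~2.1 holds $\ml$-equivariantly.'' The one genuinely supersymmetry-sensitive point to be careful about is that in the super setting $\partial$ and $\partial^\ast$ need not be adjoint with respect to a \emph{positive definite} form, so the classical ``$\Box x=0\iff\partial x=0=\partial^\ast x$'' argument is not available for free — this is precisely why disjointness must be hypothesized rather than proved, and why Lemma~\ref{compred} (complete reducibility of $\ker\Box$ forces $C^k_0=\ker\Box$) had to be established separately. In the proof of the present lemma, though, disjointness is assumed, and the only care needed is to invoke Kostant's purely algebraic Proposition~2.1 with the weaker bilinear (rather than positive hermitian) pairing between $C^k(\overline{\mn},\mV)$ and $C^k(\mn,\mV^\ast)$ of Definition~\ref{pairingchain} under which $\partial$ and $-\delta^\ast$ are adjoint — equivalently, to check by hand that disjointness plus $\partial^2=(\partial^\ast)^2=0$ formally yields the three-way decomposition without ever using positivity. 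That bookkeeping is the ``hard part,'' but it is routine linear algebra.
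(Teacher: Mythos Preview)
Your proposal is correct and matches the paper's approach exactly: the paper does not give an independent proof but simply states that the lemma ``follows immediately from the general theory in Proposition 2.1 in \cite{MR0142696},'' with the $\ml$-equivariance of the decomposition being the only additional observation needed. Your only slight overreach is in the last paragraph: the clause ``the same results hold for $\delta,\delta^\ast$'' is meant to be read under the parallel hypothesis that $\delta,\delta^\ast$ are disjoint, so there is no need to argue that disjointness transfers (that is the content of the later Proposition~\ref{disjointiff}, proved via the antilinear bijection $\psi$, not via Definition~\ref{pairingchain}).
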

Lemma \ref{4groups} already showed that in general $H_k(\mn,\mV)\cong \left(H^k(\overline{\mn},\mV)\right)^\vee$ holds, so in particular if $\partial^\ast$ and $\partial$ are disjoint the $\ml$-module $H_k(\mn,\mV)$ is its own twisted dual.

An important result is that the disjointness of $\partial$ and $\partial^\ast$ is equivalent with that of $\delta$ and $\delta^\ast$, which we prove next.
\begin{proposition}
\label{disjointiff}
The operators $\partial$ and $\partial^\ast$ on $C^\bullet(\overline{\mn},\mV)$ are disjoint if and only if the operators $\delta$ and $\delta^\ast$ on $C^\bullet({\mn},\mV^\ast)$ are disjoint.
\end{proposition}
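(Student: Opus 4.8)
The plan is to exploit the duality set up in Definition \ref{pairingchain} and equation \eqref{contravarianceofform}, together with the adjointness relations $\delta = (\partial^\ast)^{\dagger'}$ and $\delta^\ast = \partial^{\dagger'}$ (adjoint with respect to the non-degenerate $\ml$-invariant bilinear form $(\cdot,\cdot)$ on $C^\bullet(\mn,\mV^\ast)\times C^\bullet(\overline{\mn},\mV)$). Since the statement is symmetric in the two pairs of operators, it suffices to prove one implication, say: if $\partial$ and $\partial^\ast$ are disjoint on $C^\bullet(\overline{\mn},\mV)$, then $\delta$ and $\delta^\ast$ are disjoint on $C^\bullet(\mn,\mV^\ast)$.

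First I would recall the general fact about adjoint operators on finite dimensional spaces equipped with a non-degenerate bilinear pairing: if $S\colon U\to U'$ and $T\colon U'\to U$ are adjoint (meaning $(Tu',u) = \pm(u',Su)$ for all $u,u'$, with the sign fixed as in \eqref{contravarianceofform}), then $\ker T = (\mathrm{im}\,S)^\perp$ and $\mathrm{im}\,T = (\ker S)^\perp$, where $\perp$ is taken with respect to the pairing. Applying this to $\partial$ and $\delta^\ast$ (and to $\partial^\ast$ and $\delta$), one gets the four identities
\begin{eqnarray*}
\ker\delta^\ast = (\mathrm{im}\,\partial)^\perp,\quad \mathrm{im}\,\delta^\ast = (\ker\partial)^\perp,\quad
\ker\delta = (\mathrm{im}\,\partial^\ast)^\perp,\quad \mathrm{im}\,\delta = (\ker\partial^\ast)^\perp.
\end{eqnarray*}
From these, $\ker\delta\cap\mathrm{im}\,\delta^\ast = (\mathrm{im}\,\partial^\ast)^\perp\cap(\ker\partial)^\perp = (\mathrm{im}\,\partial^\ast + \ker\partial)^\perp$, and similarly $\ker\delta^\ast\cap\mathrm{im}\,\delta = (\mathrm{im}\,\partial + \ker\partial^\ast)^\perp$. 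So disjointness of $\delta,\delta^\ast$ is equivalent to the two sum spaces $\mathrm{im}\,\partial^\ast + \ker\partial$ and $\mathrm{im}\,\partial + \ker\partial^\ast$ each being all of $C^\bullet(\overline{\mn},\mV)$.

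The final step is to check that disjointness of $\partial$ and $\partial^\ast$ forces exactly that surjectivity. Indeed, if $\partial,\partial^\ast$ are disjoint, Lemma \ref{lemmaKostant} gives the decomposition $C^k(\overline{\mn},\mV) = \mathrm{im}\,\partial\oplus\ker\Box\oplus\mathrm{im}\,\partial^\ast$ together with $\ker\partial = \mathrm{im}\,\partial\oplus\ker\Box$ and $\ker\partial^\ast = \ker\Box\oplus\mathrm{im}\,\partial^\ast$; hence $\ker\partial + \mathrm{im}\,\partial^\ast = \mathrm{im}\,\partial\oplus\ker\Box\oplus\mathrm{im}\,\partial^\ast = C^k(\overline{\mn},\mV)$, and likewise $\ker\partial^\ast + \mathrm{im}\,\partial = C^k(\overline{\mn},\mV)$. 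Taking orthogonal complements as above yields $\ker\delta\cap\mathrm{im}\,\delta^\ast = 0$ and $\ker\delta^\ast\cap\mathrm{im}\,\delta = 0$, i.e.\ disjointness of $\delta$ and $\delta^\ast$. The reverse implication follows by the same argument with the roles interchanged (or, alternatively, by observing the setup is fully symmetric once one replaces $\mV$ by $\mV^\ast$ and uses that $\mV$ is its own twisted dual). The only mild subtlety — the place where one must be slightly careful — is bookkeeping the sign conventions in \eqref{contravarianceofform} and the parity factors in Definition \ref{pairingchain} so that the orthogonality identities $\ker T = (\mathrm{im}\,S)^\perp$ genuinely hold in the super setting; but since these are non-degenerate pairings on finite dimensional spaces and the adjointness relations are exactly the ones used to define $\delta,\delta^\ast$, no essential difficulty arises. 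I expect the main obstacle to be purely notational rather than conceptual.
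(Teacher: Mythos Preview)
Your argument is correct and gives a valid proof of the proposition, but it proceeds quite differently from the paper's own proof.

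The paper constructs an explicit antilinear bijection $\psi_k:C^k(\overline{\mn},\mV)\to C^k(\mn,\mV^\ast)$, built from the adjoint operation $\dagger$ on $\mg$ and the map $\psi_0$ of Lemma~\ref{bijection}, and then verifies directly that $\delta^\ast_k\circ\psi_k=-\psi_{k-1}\circ\partial^\ast_k$ and $\psi_{k+1}\circ\partial_k=-\delta_k\circ\psi_k$. Since $\psi_\bullet$ is a bijection intertwining the two pairs of operators, disjointness transfers immediately in both directions, degree by degree, with no appeal to Lemma~\ref{lemmaKostant}. By contrast, you work purely with the bilinear duality of Definition~\ref{pairingchain} that \emph{defines} $\delta$ and $\delta^\ast$, translate disjointness of $(\delta,\delta^\ast)$ into the surjectivity conditions $\ker\partial+\mathrm{im}\,\partial^\ast=C^\bullet$ and $\ker\partial^\ast+\mathrm{im}\,\partial=C^\bullet$ via the standard identities $\ker T=(\mathrm{im}\,S)^\perp$, and then invoke the Hodge-type decomposition of Lemma~\ref{lemmaKostant} to obtain those surjectivities from disjointness of $(\partial,\partial^\ast)$. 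Your route is softer and avoids building any explicit map, at the cost of relying on Lemma~\ref{lemmaKostant} as a black box; the paper's route is more constructive and, because the intertwining is established level by level, it immediately yields the truncated version (disjointness restricted to $\bigoplus_{k=0}^p C^k$) that is used later in the proof of Corollary~\ref{submodVerma}. Your argument can be made to give this too, but one has to check that the decomposition of $C^k$ in Lemma~\ref{lemmaKostant} only uses disjointness in the adjacent degrees.
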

\begin{proof}
First we extend the antilinear bijection $\psi_0:\mV\to\mV^\ast$ from Lemma \ref{bijection} to a bijection $\psi_k:C^k(\overline{\mn},\mV)\to C^k(\mn,\mV^\ast)$ iteratively by
\begin{eqnarray*}
\psi_k(X\wedge f)=(-1)^{|X||f|}X^\dagger\wedge \psi_{k-1}(f),
\end{eqnarray*}
for $X\in\mn$, $f\in C^{k-1}(\overline{\mn},\mV)$ and $\dagger$ the adjoint operation from Lemma \ref{bijection}. From this definition it follows by induction that $\psi_k(A[f])=-(-1)^{|A||f|}A^\dagger[\psi_k(f)]$ holds for $A\in\mg$ and $A[\cdot]$ as defined in equation \eqref{gactionchain} for $C^\bullet(\overline{\mn},\mV)$ and equation \eqref{gactionchain2} for $C^\bullet({\mn},\mV^\ast)$.

Then we calculate, using Lemma \ref{exprdelta} and the definition of $\psi_k$
\begin{eqnarray*}
\delta^\ast_k\circ\psi_k(X\wedge f)&=&-(-1)^{|X||f|}X^\dagger\cdot\psi_{k-1}(f)-(-1)^{|X||f|}X^\dagger\wedge \delta^\ast_{k-1}\circ \psi_{k-1}(f).
\end{eqnarray*}
On the other hand we calculate, using Definition \ref{defder} and the commutation relation between $\psi_k$ and $\mg$-action (which corresponds to the representations of $\Mp$ and $\Mp^\ast$ on $C^\bullet(\overline{\mn},\mV)$ and $C^\bullet({\mn},\mV^\ast)$ respectively when restricted) derived above that
\begin{eqnarray*}
\psi_{k-1}\circ\partial^\ast_k(X\wedge f)&=&(-1)^{|X||f|}X^\dagger\cdot\psi_{k-1}( f)-(-1)^{|X||f|}X^\dagger\wedge \psi_{k-2}\circ\partial_{k-1}^\ast(f).
\end{eqnarray*}
Together with $\delta^\ast_1\circ\psi_1(X\otimes v)=-\psi_0\circ\partial_0^\ast(X\otimes v)$ this yields the conclusion that $\delta^\ast_k\circ\psi_k=-\psi_{k-1}\circ\partial^\ast_k$ holds by induction.

Similarly one can calculate that $\psi_{k+1}\circ\partial_k=-\delta_k\circ\psi_k$ holds by observing that Lemma \ref{starwithKilling} implies that if the basis $\{\xi_a\}$ of $\mn$ has as dual basis $\{\xi^\ddagger_a\}$, then the basis $\{(\xi_a^\ddagger)^\dagger\}$ of $\mn$ has as dual basis $\{\xi_a^\dagger\}$ with respect to the Killing form in Proposition \ref{Killingexist}.

The statement follows immediately from these observations.
\end{proof}

In order to construct BGG resolutions in Section \ref{BGG} we need the condition $\mbox{im}\partial^\ast\cap\ker\Box=0$. We prove that this consequence of the disjointness of $\partial$ and $\partial^\ast$ (see Lemma \ref{lemmaKostant}) is in fact equivalent with this disjointness.

\begin{theorem}
\label{Boxcohom2}
Denote by $C^\bullet(\overline{\mn},\mV)_0$ the generalised eigenspace of $\Box$ with eigenvalue zero. The following statements are equivalent {\rm
\begin{eqnarray*}
(1)\,\quad\qquad \mbox{im}\partial^\ast\cap\ker\Box=0&& (5)\quad \mbox{im}\partial\cap\ker\Box=0\\
(2)\,\,\quad \mbox{im}\partial^\ast\cap C^\bullet(\overline{\mn},\mV)_0=0&& (6)\quad H^\bullet(\overline{\mn},\mV)\cong C^\bullet(\overline{\mn},\mV)_0\\
(3)\,\,\quad\quad C^\bullet(\overline{\mn},\mV)_0 \subset\ker\partial^\ast&& (7)\quad \partial\mbox{ and }\partial^\ast\mbox{ are disjoint}\\
(4)\quad H^\bullet(\overline{\mn},\mV)\cong C^\bullet(\overline{\mn},\mV)_0,
\end{eqnarray*}}where isomorphisms are isomorphisms of $\ml$-modules. The same statements hold for $\delta$, $\delta^\ast$ and $\Box$ on $C^\bullet(\mn,\mW)$.
\end{theorem}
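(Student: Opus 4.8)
The plan is to establish the chain of equivalences by proving a cycle of implications, using as the central technical input the generalised eigenspace decomposition of $\Box$ together with the fact that $\Box$ lies in $Z(\ml)$, so that all eigenspaces (generalised or not) are $\ml$-submodules and all the maps $\partial,\partial^\ast$ are $\ml$-morphisms. First I would record the basic linear algebra: since $\Box$ commutes with $\partial$ and $\partial^\ast$, we have $C^\bullet(\overline{\mn},\mV)=C^\bullet(\overline{\mn},\mV)_0\oplus A$ with $A$ the sum of all generalised eigenspaces with nonzero eigenvalue, $\Box$ invertible on $A$, and both $\partial$ and $\partial^\ast$ preserve this splitting. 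The implication $(1)\Leftrightarrow(2)$ follows because $\ker\Box=C^\bullet(\overline{\mn},\mV)_0$ exactly when the latter is a genuine eigenspace, but one does not even need that: $\mathrm{im}\,\partial^\ast\cap\ker\Box=0$ and $\mathrm{im}\,\partial^\ast\cap C^\bullet(\overline{\mn},\mV)_0=0$ are equivalent because $\partial^\ast$ preserves the decomposition and $\Box^N$ kills any element of $C^\bullet(\overline{\mn},\mV)_0$; I would spell out that if $\partial^\ast f$ lies in $C^\bullet(\overline{\mn},\mV)_0$ and is nonzero, then applying $\Box$ repeatedly and using $\Box\partial^\ast=\partial^\ast\Box$ and $\partial^\ast\partial^\ast=0$, together with $\Box=\partial\partial^\ast+\partial^\ast\partial$, forces a nonzero element of $\mathrm{im}\,\partial^\ast\cap\ker\Box$.

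Next, $(2)\Leftrightarrow(3)$: the direction $(3)\Rightarrow(2)$ is immediate since $\mathrm{im}\,\partial^\ast\subset\ker\partial^\ast$ (as $\partial^\ast\partial^\ast=0$) and $\ker\partial^\ast\cap\mathrm{im}\,\partial^\ast$... wait — more carefully, if $C^\bullet(\overline{\mn},\mV)_0\subset\ker\partial^\ast$ and $x=\partial^\ast g\in C^\bullet(\overline{\mn},\mV)_0$, decompose $g=g_0+g_1$ along $C^\bullet_0\oplus A$; then $\partial^\ast g_1\in A$ and $\partial^\ast g_0=0$, so $x=\partial^\ast g_1\in A\cap C^\bullet_0=0$. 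For $(2)\Rightarrow(3)$: take $x\in C^\bullet(\overline{\mn},\mV)_0$; since $\Box$ is nilpotent on this space, write $x=\Box^{-1}\!\big|_{A}$-type argument — actually use that on $C^\bullet_0$ we have $x=\lim$-style telescoping: there is $N$ with $\Box^N x=0$, and I claim $\partial^\ast x$ lies in $\mathrm{im}\,\partial^\ast\cap C^\bullet_0$; since $\partial^\ast$ preserves $C^\bullet_0$ this intersection is zero by $(2)$, hence $\partial^\ast x=0$. Then $(3)\Leftrightarrow(4)$ (and $=(6)$, the duplicate) is a counting/identification argument: under $C^\bullet_0\subset\ker\partial^\ast$ one shows $C^\bullet_0$ injects into $H^\bullet(\overline{\mn},\mV)$ refining Lemma \ref{Boxcohom}, and conversely using $\Box\in Z(\ml)$ and invertibility on $A$ that every cohomology class has a representative in $C^\bullet_0$; combined with $\partial(C^\bullet_0)\subset C^\bullet_0\cap\mathrm{im}\,\partial$ one gets that $H^\bullet$ and $C^\bullet_0$ have the same (graded) composition factors, hence are isomorphic as $\ml$-modules when one of them is completely reducible — here I should be careful and instead argue the isomorphism directly: $(3)$ forces $\mathrm{im}\,\partial\cap C^\bullet_0=0$ too by a symmetric argument using $\delta,\delta^\ast$ and Proposition \ref{disjointiff}, whence $C^\bullet_0\cap\ker\partial = C^\bullet_0$ maps isomorphically onto $H^\bullet$.

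For the right-hand column, $(1)\Leftrightarrow(5)$ and $(4)\Leftrightarrow(6)$ (these are literally the same statement, so $(4)=(6)$ is trivial) and the link to $(7)$: I would get $(7)\Rightarrow(1)$ and $(7)\Rightarrow(5)$ from Lemma \ref{lemmaKostant} directly, and for the converse $(1)\Rightarrow(7)$ use Proposition \ref{disjointiff} together with the hermitian form of Theorem \ref{formonchains} to pass between $\partial,\partial^\ast$ on $C^\bullet(\overline{\mn},\mV)$ and $\delta,\delta^\ast$ on $C^\bullet(\mn,\mV^\ast)$: disjointness of the latter pair is symmetric in $\delta\leftrightarrow\delta^\ast$ because they are (negative) adjoints, so $\mathrm{im}\,\delta^\ast\cap\ker\Box=0$ already implies $\mathrm{im}\,\delta\cap\ker\Box=0$, i.e. one gets the full disjointness of $\delta,\delta^\ast$ from one half, and transporting back via $\psi_\bullet$ gives disjointness of $\partial,\partial^\ast$ and hence $(5)$. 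This bootstrap — extracting \emph{two-sided} disjointness from the one-sided condition $(1)$ — is the step I expect to be the main obstacle, since naively $\mathrm{im}\,\partial^\ast\cap\ker\Box=0$ says nothing about $\mathrm{im}\,\partial\cap\ker\Box$; the resolution is precisely that the hermitian adjointness in Theorem \ref{formonchains} makes the condition self-dual. Once that is in place, assembling the cycle $(1)\Rightarrow(2)\Rightarrow(3)\Rightarrow(4)\Rightarrow(1)$ and separately $(1)\Leftrightarrow(5)$, $(1)\Leftrightarrow(7)$, $(4)=(6)$ finishes the proof, and the final sentence (``same statements for $\delta,\delta^\ast,\Box$ on $C^\bullet(\mn,\mW)$'') follows because the entire argument only used the formal properties $\Box\in Z(\ml)$, $\Box=\partial\partial^\ast+\partial^\ast\partial$, $(\partial^\ast)^2=(\partial)^2=0$, and the existence of a contravariant hermitian form, all of which hold verbatim for $\delta,\delta^\ast$ by Theorem \ref{formonchains}.
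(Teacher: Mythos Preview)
Your plan follows the paper's strategy, and the equivalences $(1)\Leftrightarrow(2)\Leftrightarrow(3)$ are handled correctly. Two remarks.

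First, your route to $(4)$ is more convoluted than necessary. The paper simply invokes Corollary~\ref{Boxcohom3}: the identity $H_k(\mn,\mV)=\bigl(\ker\partial^\ast_k\cap C^k(\overline{\mn},\mV)_0\bigr)/\bigl(\mbox{im}\,\partial^\ast_{k+1}\cap C^k(\overline{\mn},\mV)_0\bigr)$ holds unconditionally, so under $(2)$ and $(3)$ this quotient is $C^k(\overline{\mn},\mV)_0$, and conversely if a subquotient of the finite-dimensional space $C^k(\overline{\mn},\mV)_0$ is isomorphic to $C^k(\overline{\mn},\mV)_0$ itself then both $(2)$ and $(3)$ must hold. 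You do not need to first establish anything on the $\partial$-side for this step.

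Second, there is a genuine (small) gap in your passage to $(7)$. You correctly use the hermitian form of Theorem~\ref{formonchains} to obtain the symmetry $(1)\Leftrightarrow(5)$ --- the paper does exactly this, arguing that $\Box$ is self-adjoint for $\langle\cdot,\cdot\rangle$, so the form is non-degenerate on $C^\bullet(\overline{\mn},\mV)_0$, and a nonzero $\partial^\ast f$ there pairs nontrivially with some $g$, forcing $\partial g\neq 0$ in $C^\bullet(\overline{\mn},\mV)_0$. But you then write ``i.e.\ one gets the full disjointness of $\delta,\delta^\ast$ from one half'', and that inference is not yet justified: the conditions $\mbox{im}\,\partial^\ast\cap\ker\Box=0$ and $\mbox{im}\,\partial\cap\ker\Box=0$ are not literally the disjointness conditions $\mbox{im}\,\partial^\ast\cap\ker\partial=0$ and $\mbox{im}\,\partial\cap\ker\partial^\ast=0$. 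The missing one-line observation, which the paper supplies explicitly, is that $\mbox{im}\,\partial^\ast\cap\ker\partial\subset\ker\Box$ automatically: if $x=\partial^\ast y$ and $\partial x=0$ then $\Box x=\partial\partial^\ast(\partial^\ast y)+\partial^\ast(\partial x)=0$. Hence $(1)$ forces $\mbox{im}\,\partial^\ast\cap\ker\partial=0$, and symmetrically $(5)$ gives the other half of $(7)$. With this line added your argument is complete, and the detour through Proposition~\ref{disjointiff} is not needed for this step.
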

\begin{proof}
If (2) holds, (1) follows trivially. The reverse implication follows from the fact that if $x\in\mbox{im}\partial^\ast$ satisfies $\Box^lx=0$ with $\Box^{l-1}x\not=0$, then $\Box^{l-1}x\in\mbox{im}\partial^\ast\cap\ker\Box$, therefore $\mbox{im}\partial^\ast\cap C^\bullet(\overline{\mn},\mV)_0\not=0$ implies $\mbox{im}\partial^\ast\cap\ker\Box\not=0$.

Since $\Box\in \cU(\ml)$ and $\partial^\ast$ is an $\ml$-module morphism any $x\in \mbox{im}\partial^\ast\cap C^\bullet(\overline{\mn},\mV)_0$ is of the form $\partial^\ast y$ with $y\in C^\bullet(\overline{\mn},\mV)_0$. This implies that (2) and (3) are equivalent.

Corollary \ref{Boxcohom3} shows that the combination of (2) and (3) is equivalent with (4).

Next we prove that (2) is equivalent with $\mbox{im}\partial\cap C^\bullet(\overline{\mn},\mV)_0=0$, from which the equivalences with $(5)$ and $(6)$ follow similarly to the first four. The operator $\Box$ is symmetric with respect to the non-degenerate contravariant hermitian form $\langle\cdot,\cdot\rangle$ on $C^\bullet(\mn,\mW)$ in Theorem \ref{formonchains}, therefore its different generalised eigenspaces are orthogonal with respect to each other. The form thus induces a non-degenerate form on $C^\bullet(\overline{\mn},\mV)_0$. If $f=\partial^\ast f_1\in\mbox{im}\partial^\ast\cap C^\bullet(\overline{\mn},\mV)_0$ different from zero, there is a $g\in C^\bullet(\overline{\mn},\mV)_0$ such that $\langle \partial^\ast f_1,g\rangle\not=0$ and therefore $\partial g\not=0$, while $\partial g\in\mbox{im}\partial\cap C^\bullet(\overline{\mn},\mV)_0$.

Finally Lemma \ref{lemmaKostant} shows that $(7)$ implies the other statements. The other direction can be proved as follows. Assume that $(1)$ (and therefore also $(5)$) holds. By definition of the quabla operator, $\mbox{im}\partial^\ast\cap \ker\partial\subset \ker\Box$ holds and therefore $\mbox{im}\partial^\ast\cap \ker\partial\subset \mbox{im}\partial^\ast\cap \ker\Box=0$. Since the same holds for the roles of $\partial$ and $\partial^\ast$ reversed, we obtain that $(1)$ and $(5)$ imply the disjointness of $\partial$ and $\partial^\ast$.
\end{proof}
Note that Theorem \ref{Boxcohom2} implies that the disjointness of $\partial$ and $\partial^\ast$ yields the property $\ker\Box=C^\bullet(\overline{\mn},\mV)_0$.

Since the subsequent Theorem \ref{onlyHk} states that proper BGG resolutions can only exist if $H_{\bullet}(\overline{\mn},\mW)$ is completely reducible, the following corollary will be useful.
\begin{corollary}
\label{Boxcohom4}
Assume the homology groups $H_{\bullet}(\overline{\mn},\mW)$ are completely reducible $\ml$-modules. The operators $\delta$ and $\delta^\ast$ are disjoint if and only if $H_{\bullet}(\overline{\mn},\mW)\cong \ker\Box$.
\end{corollary}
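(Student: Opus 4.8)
The plan is to prove the two implications separately, in both cases reducing to results already established for the operators $\delta$, $\delta^\ast$ and $\Box$ on $C^\bullet(\mn,\mW)$; throughout I use that Theorem \ref{Boxcohom2}, Lemma \ref{lemmaKostant}, Lemma \ref{compred} and Corollary \ref{Boxcohom3} apply to these operators, the first two explicitly by their statements and the last two because their proofs only use that $\Box\in Z(\ml)$, that $\delta^\ast$ is an $\ml$-module morphism, and that $\delta^\ast\circ\delta^\ast=0$. For the direction where $\delta$ and $\delta^\ast$ are disjoint, nothing is needed beyond Lemma \ref{lemmaKostant}: disjointness gives $H_k(\overline{\mn},\mW)\cong\ker\Box_k$ as $\ml$-modules for every $k$, even without the complete reducibility assumption.

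For the converse, assume $H_\bullet(\overline{\mn},\mW)\cong\ker\Box$. Since $H_\bullet(\overline{\mn},\mW)$ is completely reducible by hypothesis, so is $\ker\Box$, and Lemma \ref{compred} then gives $C^\bullet(\mn,\mW)_0=\ker\Box$, where $C^\bullet(\mn,\mW)_0$ denotes the generalised zero eigenspace of $\Box$. Substituting this into Corollary \ref{Boxcohom3} yields, for each $k$,
\begin{eqnarray*}
H_k(\overline{\mn},\mW)&=&\left(\ker\delta_k^\ast\cap\ker\Box_k\right)/\left(\mbox{im}\,\delta_{k+1}^\ast\cap\ker\Box_k\right).
\end{eqnarray*}
Because $\delta^\ast\circ\delta^\ast=0$, every element of $\mbox{im}\,\delta_{k+1}^\ast\cap\ker\Box_k$ lies in $\ker\delta_k^\ast$, so both spaces in this quotient are $\ml$-submodules of $\ker\Box_k$; hence $\dim H_k(\overline{\mn},\mW)\le\dim\ker\Box_k$, with equality precisely when $\mbox{im}\,\delta_{k+1}^\ast\cap\ker\Box_k=0$. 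The hypothesis $H_\bullet(\overline{\mn},\mW)\cong\ker\Box$ forces this equality for every $k$, that is $\mbox{im}\,\delta^\ast\cap\ker\Box=0$, which is statement $(1)$ of Theorem \ref{Boxcohom2}; by that theorem it is equivalent to statement $(7)$, the disjointness of $\delta$ and $\delta^\ast$.

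Since the whole argument is essentially bookkeeping around statements that are already proven, I do not anticipate a genuine obstacle. The points deserving a little care are checking that Lemma \ref{compred} and Corollary \ref{Boxcohom3} transfer verbatim to $C^\bullet(\mn,\mW)$, and noting that the dimension comparison is insensitive to the $\mZ_2$-grading carried by the $\ml$-modules in play (an isomorphism of $\ml$-modules preserves total dimension, and a submodule never has larger dimension than the ambient module). As an alternative, the converse can be run through the analogue of Lemma \ref{Boxcohom}, whose injective $\ml$-morphism $H_k(\overline{\mn},\mW)\hookrightarrow\ker\Box_k$ becomes an isomorphism exactly under the stated hypothesis, after which the same structural conclusion $\mbox{im}\,\delta^\ast\cap\ker\Box=0$ follows.
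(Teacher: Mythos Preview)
Your proof is correct and follows essentially the same approach as the paper: the forward direction is Lemma \ref{lemmaKostant}, and for the converse both you and the paper use complete reducibility together with Lemma \ref{compred} to identify $\ker\Box$ with $C^\bullet(\mn,\mW)_0$, then invoke Theorem \ref{Boxcohom2}. The only difference is that the paper reads off condition $(4)$ of Theorem \ref{Boxcohom2} directly from $H_\bullet(\overline{\mn},\mW)\cong\ker\Box=C^\bullet(\mn,\mW)_0$, whereas you pass through Corollary \ref{Boxcohom3} and a dimension count to reach condition $(1)$; this is slightly longer but equivalent (and your phrase ``with equality precisely when'' is a little loose, since equality also forces $\ker\delta^\ast_k\cap\ker\Box_k=\ker\Box_k$, but you only use the direction that actually holds).
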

\begin{proof}
The disjointness of $\delta$ and $\delta^\ast$ always implies $H_{\bullet}(\overline{\mn},\mW)\cong \ker\Box$, see Lemma \ref{lemmaKostant}.

Now we assume that $H_{\bullet}(\overline{\mn},\mW)$ is completely reducible and $H_{\bullet}(\overline{\mn},\mW)\cong \ker\Box$. Lemma \ref{compred} then implies that $\ker\Box=C^\bullet(\mn,\mW)_0$ and therefore Theorem \ref{Boxcohom2} $(4)\leftrightarrow(7)$ can be applied.
\end{proof}

Finally we prove that, under the appropriate disjointness assumption and with the assumption that $H_{\bullet}(\overline{\mn},\mW)$ is completely reducible (as is required in order to have BGG resolutions, see the subsequent Theorem \ref{onlyHk}), the homology groups are easily characterised using the quadratic Casimir operator.
\begin{theorem}
\label{structure cohomgroups}
Assume that $\delta$ and $\delta^\ast$ are disjoint and $H_{\bullet}(\overline{\mn},\mW_\lambda)$ is completely reducible as an $\ml$-module. Then the homology group $H_k(\overline{\mn},\mW_\lambda)$ is isomorphic to the direct sum of the $\ml$-submodules of $C^k(\mn,\mW_\lambda)$ which have a highest weight $\mu$ such that $\cC_2(\mW_\lambda)=\cC_2(\mW_\mu)$ with $\cC_2$ the quadratic Casimir operator on $\mg$. Here $\mW_\mu$ is the (not necessarily finite dimensional) irreducible $\mg$-module with highest weight $\mu$.
\end{theorem}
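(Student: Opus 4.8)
The plan is to transport the statement to the chain complex $C^\bullet(\mn,\mW_\lambda)$ and to realise the relevant subspace as a generalised eigenspace of a central element of $\cU(\ml)$. First I would reduce to a statement about $\ker\Box_k\subset C^k(\mn,\mW_\lambda)$. By Proposition \ref{disjointiff} the operators $\delta,\delta^\ast$ on $C^\bullet(\mn,\mW_\lambda)$ are disjoint, so the $C^\bullet(\mn,\mW)$-version of Lemma \ref{lemmaKostant} gives an $\ml$-isomorphism $H_k(\overline{\mn},\mW_\lambda)\cong\ker\Box_k$, and the remark following Theorem \ref{Boxcohom2} (in its $\delta$-version) gives $\ker\Box_k=C^k(\mn,\mW_\lambda)_0$, the generalised zero eigenspace of $\Box$ on $C^k(\mn,\mW_\lambda)$. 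Since $H_k(\overline{\mn},\mW_\lambda)$ is completely reducible by assumption, $C^k(\mn,\mW_\lambda)_0$ is completely reducible as well. Hence it suffices to prove that $C^k(\mn,\mW_\lambda)_0$ is the sum of those simple $\ml$-submodules of $C^k(\mn,\mW_\lambda)$ whose highest weight $\mu$ satisfies $\cC_2(\mW_\mu)=\cC_2(\mW_\lambda)$.

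Next I would identify $\Box$ with a central element. Running the proof of Theorem \ref{Thmquabla} with $\delta,\delta^\ast$ in place of $\partial,\partial^\ast$ (by Lemma \ref{exprdelta} and the discussion following it, $\delta,\delta^\ast$ are the coboundary and boundary operators obtained by interchanging the roles of $\mn$ and $\overline{\mn}$), the operator $\Box$ on $C^\bullet(\mn,\mW_\lambda)$ is equivalent to the image in $Z(\ml)$ of
\[
-\tfrac{1}{2}\Big(\cC_2(\mW_\lambda)-\sum_a[\xi_a,\xi_a^\ddagger]-\sum_b B_bB_b^\ddagger\Big),
\]
the $C^\bullet(\mn,\mW)$-analogue of \eqref{quablacenter}; here $\{\xi_a\}$ is a basis of $\mn$ with Killing-dual basis $\{\xi_a^\ddagger\}$ of $\overline{\mn}$, the element $\sum_a[\xi_a,\xi_a^\ddagger]$ lies in $\mh$, and $\sum_b B_bB_b^\ddagger$ is the quadratic Casimir of $\ml$ for the restriction of the Killing form, with $\{B_b\}$ a basis of $\ml$. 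Being central in $\cU(\ml)$, this element acts by a scalar $\chi_\mu(\Box)$ on every simple $\ml$-submodule of highest weight $\mu$, and such a submodule is contained in $C^k(\mn,\mW_\lambda)_0$ precisely when $\chi_\mu(\Box)=0$; together with the complete reducibility of $C^k(\mn,\mW_\lambda)_0$ established above, this shows $\ker\Box_k$ is exactly the sum of the simple $\ml$-submodules with $\chi_\mu(\Box)=0$.

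It then remains to evaluate $\chi_\mu(\Box)$. Because $\mu$ is a weight of $\Lambda^k\overline{\mn}\otimes\mW_\lambda$, it is an element of $\mh^\ast$, so the irreducible (possibly infinite dimensional) $\mg$-module $\mW_\mu$ with highest weight $\mu$ is defined and $\cC_2$ acts on it by the scalar $\cC_2(\mW_\mu)$. Writing $\cC_2=\sum_a\xi_a\xi_a^\ddagger+\sum_a(-1)^{|\xi_a|}\xi_a^\ddagger\xi_a+\sum_b B_bB_b^\ddagger$ and evaluating on a highest weight vector of $\mW_\mu$, which is annihilated by $\mn$, the first two sums collapse onto $\sum_a[\xi_a,\xi_a^\ddagger]$, giving $\cC_2(\mW_\mu)=\mu\big(\sum_a[\xi_a,\xi_a^\ddagger]\big)+\chi_\mu\big(\sum_b B_bB_b^\ddagger\big)$. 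Substituting this into the formula for $\chi_\mu(\Box)$, the contributions of $\sum_a[\xi_a,\xi_a^\ddagger]$ cancel and one is left with $\chi_\mu(\Box)=\tfrac{1}{2}\big(\cC_2(\mW_\mu)-\cC_2(\mW_\lambda)\big)$, which vanishes if and only if $\cC_2(\mW_\mu)=\cC_2(\mW_\lambda)$. Combined with the previous step, and with $H_k(\overline{\mn},\mW_\lambda)\cong C^k(\mn,\mW_\lambda)_0$, this yields the theorem.

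The main obstacle is not conceptual — once one notices $\Box\in Z(\ml)$ everything is forced — but lies in the sign and normalisation bookkeeping: transferring the quabla identity of Theorem \ref{Thmquabla} from $C^\bullet(\overline{\mn},\mV)$ to $C^\bullet(\mn,\mW_\lambda)$ while tracking the $(-1)^{|\xi_a|}$ factors produced by the supersymmetry of the Killing form, verifying that $\sum_b B_bB_b^\ddagger$ really is the $\ml$-Casimir of the restricted form, and keeping in mind that $\mu$ need not be $\mg$-dominant, so that $\cC_2(\mW_\mu)$ must be computed on the highest weight vector of $\mW_\mu$ (equivalently on the Verma module $\cU(\mg)\otimes_{\cU(\mb)}\mC_\mu$).
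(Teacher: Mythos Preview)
Your proposal is correct and follows essentially the same route as the paper: reduce to $\ker\Box_k=C^k(\mn,\mW_\lambda)_0$ via Lemma \ref{lemmaKostant} and Theorem \ref{Boxcohom2}, identify $\Box$ with the central element in \eqref{quablacenter}/\eqref{Boxg0}, and compare its eigenvalue on an $\ml$-highest weight vector of weight $\mu$ with the eigenvalue of $-\tfrac{1}{2}(\cC_2(\mW_\lambda)-\cC_2)$ on a $\mg$-highest weight vector of weight $\mu$. One cosmetic point: your appeal to Proposition \ref{disjointiff} is unnecessary, since disjointness of $\delta,\delta^\ast$ is already the hypothesis; and in your computation of $\cC_2(\mW_\mu)$ only the first of the two $\mn$/$\overline{\mn}$ sums contributes (the second vanishes outright on a highest weight vector), though the final formula is unaffected.
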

\begin{proof}
Lemma \ref{lemmaKostant} and Theorem \ref{Boxcohom2} imply that $H_k(\overline{\mn},\mW_\lambda)\cong\ker\Box_k\cong C^k(\mn,\mW_\lambda)_0$. 

Equation \eqref{quablacenter} and the definition of $\delta$ and $\delta^\ast$ through the bilinear form in Definition \ref{pairingchain} imply that
\begin{eqnarray}
\label{Boxg0}
\Box&=&-\frac{1}{2}\left(\cC_2(\mW_\lambda)-\sum_pB_pB_p^\ddagger- \sum_a[\xi_a,\xi_a^\ddagger ]\right)
\end{eqnarray} 
holds for $\{B_p\}$ a basis of $\ml$, where the commutator $\sum_a[\xi_a,\xi_a^\ddagger ]$ is always an element of the Cartan subalgebra of $\mg$. The invariance of the Killing form implies that $\Box$ commutes with the action of $\ml$ and therefore acts as a scalar on an irreducible $\ml$-submodule of $\Lambda^k\overline{\mn}\otimes \mW_\lambda$.

Since $C^k(\mn,\mW_\lambda)_0$ is completely reducible we only need to calculate the eigenvalue of $\Box$ on a highest weight vector of $\ml$ in $C^k(\mn,\mW_\lambda)$ of weight $\mu$ and $H_k(\overline{\mn},\mW_\lambda)$ is isomorphic to the direct sum of the $\ml$-submodules of $C^k(\mn,\mW_\lambda)$ generated by highest weight vectors with eigenvalue zero. This can be calculated as in Proposition 5.6 and Theorem 5.7 in \cite{MR0142696} for Lie algebras, but we take a shortcut here.

The element of $\cU(\mg)$ given by $-\frac{1}{2}\left(\cC_2(\mW_\lambda)-\cC_2 \right)$ can be rewritten as
\begin{eqnarray*}
-\frac{1}{2}\left(\cC_2(\mW_\lambda)-\sum_pB_pB_p^\ddagger-2\sum_a (-1)^{|\xi_a|}\xi_a^\ddagger \xi_a- \sum_a[\xi_a,\xi_a^\ddagger ]\right).
\end{eqnarray*}
Comparing this expression with $\Box$ yields the fact that the eigenvalue of $\Box$ on an $\ml$-highest weight vector of weight $\mu$ is identical to the eigenvalue of $-\frac{1}{2}\left(\cC_2(\mW_\lambda)-\cC_2 \right)$ on a $\mg$-highest weight vector of weight $\mu$. This concludes the proof.
\end{proof}
In the specific case $\ml=\mathfrak{gl}(m)\oplus\mathfrak{gl}(n)$, where the extra condition on complete reducibility is not required (and the radical $\mn$ is abelian), this was proved in Lemma 4.5 in \cite{MR2036954}.

\section{Construction of BGG resolutions}
\label{BGG}
We continue to use the same notations and conventions as in the previous section and the introduction. The goal of this section is to construct resolutions of finite dimensional irreducible modules of Lie superalgebras in terms of generalised Verma modules, as in equation \eqref{resLep} for Lie algebras. 

\subsection{A necessary condition}

We prove in Theorem \ref{onlyHk} and Corollary \ref{onlyHk2} that if such a resolution exists, it is essentially given by a resolution induced by the homology groups. This immediately yields complete reducibility of $H_{\bullet}(\overline{\mn},\mW)$ as a necessary condition for BGG resolutions for $\mW$.
\begin{proposition}
\label{BottfromBGG}
Consider a basic classical Lie superalgebra $\mg$ with parabolic subalgebra $\Mp=\ml+\mn$. If there are $\ml$-modules $\{M_j,j\in\mN\}$ which are $\Mp$-modules with trivial $\mn$-action, for which there is a resolution of the irreducible finite dimensional $\mg$-module $\mW$ of the form
\begin{eqnarray*}
\cdots\rightarrow V^{M_k}\rightarrow V^{M_{k-1}}\rightarrow\cdots \rightarrow V^{M_1}\to V^{M_0}\to\mW\to0
\end{eqnarray*}
with $V^{M_j}=\cU(\mg)\otimes_{\cU(\Mp)}M_j$, then $H_k(\overline{\mn},\mW)\cong N_k/K_k$ with $N_k$ and $K_k$ submodules of $M_k$, where $K_k$ is also a quotient of $M_{k+1}$.
\end{proposition}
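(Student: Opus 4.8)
The plan is to exploit the fact that a generalised Verma module $V^{M}=\cU(\mg)\otimes_{\cU(\Mp)}M$, where $M$ carries trivial $\mn$-action, is a free module over $\cU(\overline{\mn})$: by the PBW theorem for Lie superalgebras, $\cU(\mg)\cong\cU(\overline{\mn})\otimes\cU(\Mp)$ as a right $\cU(\Mp)$-module, so $V^{M}\cong\cU(\overline{\mn})\otimes_{\mC}M$ as an $\overline{\mn}$-module. Hence each $V^{M_j}$ is a free (in particular projective) $\cU(\overline{\mn})$-module, and the hypothesised exact sequence is a projective resolution of $\mW$ in the category of $\cU(\overline{\mn})$-modules. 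I would therefore compute $H_\bullet(\overline{\mn},\mW)$ — which by definition (via $\mathrm{Tor}$, or equivalently the Chevalley--Eilenberg/Koszul complex invoked in Lemma \ref{cohomExt}) is $\mathrm{Tor}^{\cU(\overline{\mn})}_\bullet(\mC,\mW)$ — by applying $\mC\otimes_{\cU(\overline{\mn})}(-)$ to this resolution and taking homology.

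Concretely: applying $\mC\otimes_{\cU(\overline{\mn})}(-)$ to $V^{M_j}\cong\cU(\overline{\mn})\otimes_{\mC}M_j$ gives $\mC\otimes_{\cU(\overline{\mn})}\cU(\overline{\mn})\otimes_{\mC}M_j\cong M_j$, so the complex of coinvariants is
\begin{eqnarray*}
\cdots\rightarrow M_k\xrightarrow{\ \bar d_k\ } M_{k-1}\rightarrow\cdots\rightarrow M_1\xrightarrow{\ \bar d_1\ } M_0\rightarrow0,
\end{eqnarray*}
whose $k$-th homology is $H_k(\overline{\mn},\mW)$. Writing $\bar d_k\colon M_k\to M_{k-1}$ for the induced maps, set $K_{k}=\mathrm{im}\,\bar d_{k}\subseteq M_{k-1}$ and $N_k=\ker\bar d_k\subseteq M_k$. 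Then $H_k(\overline{\mn},\mW)=\ker\bar d_k/\mathrm{im}\,\bar d_{k+1}=N_k/K_{k+1}$; relabelling $K_{k+1}$ as $K_k$ (a submodule of $M_k$, since $K_{k+1}=\mathrm{im}\,\bar d_{k+1}\subseteq M_k$) gives exactly the asserted statement that $H_k(\overline{\mn},\mW)\cong N_k/K_k$ with $N_k,K_k\subseteq M_k$, and $K_k=\mathrm{im}\,\bar d_{k+1}$ is manifestly a quotient of $M_{k+1}$. One should also check $\ml$-equivariance (indeed $\Mp$-equivariance): the maps in the original resolution are $\mg$-module maps and the tensor functor and the identifications $V^{M_j}\otimes_{\cU(\overline{\mn})}\mC\cong M_j$ are compatible with the residual $\Mp$-action coming from $\cU(\mg)\cong\cU(\overline{\mn})\otimes\cU(\Mp)$, so all of $N_k$, $K_k$, $\bar d_k$ are $\Mp$-submodules and $\Mp$-maps; in particular the isomorphism $H_k(\overline{\mn},\mW)\cong N_k/K_k$ is one of $\Mp$-modules, hence of $\ml$-modules.

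The main obstacle — really the only point requiring care rather than bookkeeping — is justifying the $\overline{\mn}$-freeness of $V^{M_j}$ and, relatedly, that homology of $\overline{\mn}$ is correctly computed by any $\cU(\overline{\mn})$-projective resolution of $\mW$. The first is the super PBW theorem applied to the vector-space decomposition $\mg=\overline{\mn}+\ml+\mn$ with $\Mp=\ml+\mn$ acting trivially-on-$\mn$ on $M_j$; the only subtlety is the sign/parity conventions in the super tensor product, but these do not affect the underlying free-module structure over $\cU(\overline{\mn})$. The second is standard homological algebra (balancing of $\mathrm{Tor}$, or the comparison theorem for projective resolutions) once one identifies $H_\bullet(\overline{\mn},-)$ with $\mathrm{Tor}^{\cU(\overline{\mn})}_\bullet(\mC,-)$ via the Koszul/Chevalley--Eilenberg complex, exactly as in the proof of Lemma \ref{cohomExt}; no feature special to Lie superalgebras intervenes here. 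Everything else is the routine identification of the differentials $\bar d_k$ and the naming of the submodules $N_k$, $K_k$.
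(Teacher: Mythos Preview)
Your proof is correct and follows the same homological strategy as the paper: both exploit that $V^{M_j}\cong\cU(\overline{\mn})\otimes_\mC M_j$ is free (hence projective) over $\cU(\overline{\mn})$, so the given resolution is a projective resolution in the category of $\overline{\mn}$-modules and can be used to compute the relevant derived functor. The difference is only in which derived functor is used. You apply the coinvariants functor $\mC\otimes_{\cU(\overline{\mn})}(-)$, obtain directly the chain complex $\cdots\to M_k\to M_{k-1}\to\cdots$, and read off $H_k(\overline{\mn},\mW)\cong\mathrm{Tor}^{\cU(\overline{\mn})}_k(\mC,\mW)$ as $N_k/K_k$. The paper instead applies the contravariant functor $\mathrm{Hom}_{\overline{\mn}}(-,\mC)$, obtains the cochain complex with terms $M_j^\ast$, computes $\mathrm{Ext}^k_{\overline{\mn}}(\mW,\mC)$, and then passes back through the chain of dualities $\mathrm{Ext}^k_{\overline{\mn}}(\mW,\mC)\cong H^k(\overline{\mn},\mW^\ast)\cong\bigl(H_k(\overline{\mn},\mW)\bigr)^\ast$ using Lemma~\ref{4groups}. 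Your route is slightly more direct since it avoids this dualisation step; the paper's route has the mild advantage that it literally invokes Lemma~\ref{cohomExt} as stated (which is about $\mathrm{Ext}$, not $\mathrm{Tor}$), whereas you need the equally standard but unstated $\mathrm{Tor}$ analogue. Either way the content is the same.
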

\begin{proof}
This follows from the definition of Ext functors and is exactly the property that was used in \cite{MR0578996} to derive Bott's result in \cite{Bott}, see also Theorem 6.6 in \cite{MR2428237}. It can also be proven directly as in Lemma 7.6 in \cite{MR0414645}.

Generalised Verma modules are projective modules in the category of finitely generated $\overline{\mn}$-modules, see Section 2.2 in \cite{MR1269324}. Therefore the proposed resolution
\begin{eqnarray*}
\cdots\rightarrow^{m_{k}} V^{M_k}\rightarrow^{m_{k-1}} V^{M_{k-1}}\rightarrow\cdots \rightarrow^{m_1} V^{M_1}\to^{m_0} V^{M_0}\to\mW\to0
\end{eqnarray*}
for certain $\mg$-module morphisms $\{m_j\}$, is a projective resolution and defines right derived functors of left exact contravariant functors, see Section 2.5 in \cite{MR1269324}. The functor $\mbox{Hom}_{\overline{\mn}}(-,\mC)$ is such a functor and maps the resolutions above to the complex
\begin{eqnarray*}
0\to\mbox{Hom}_{\overline{\mn}}(V^{M_0},\mC)\to^{\phi_0}\mbox{Hom}_{\overline{\mn}}(V^{M_1},\mC)\to^{\phi_1}\cdots\to^{\phi_{k-1}}\mbox{Hom}_{\overline{\mn}}(V^{M_k},\mC)\to^{\phi_k}\cdots
\end{eqnarray*}
after scratching $\mbox{Hom}_{\overline{\mn}}(\mW,\mC)$ and where the $\ml$-module morphisms $\{\phi_j\}$ are induced from the $\mg$-module morphisms $\{m_j\}$ as $\phi_j(x)=x\circ m_j$ for $x\in\mbox{Hom}_{\overline{\mn}}(V^{M_j},\mC)$.

Then by definition, the derived functors of $\mbox{Hom}_{\overline{\mn}}(-,\mC)$, which are denoted by $\mbox{Ext}^k_{\overline{\mn}}(-,\mC)$ for $k\ge 1$ evaluated in $\mW$ satisfy
\begin{eqnarray*}
\mbox{Ext}^k_{\overline{\mn}}(\mW,\mC)&=&\ker(\phi_k)/\mbox{im}(\phi_{k-1}),
\end{eqnarray*}
which is independent of the choice of projective resolution, see Lemma 2.4.1 in \cite{MR1269324}. Finally the proposition follows by 
\[\mbox{Ext}^k_{\overline{\mn}}(\mW,\mC)\cong\mbox{Ext}^k_{\overline{\mn}}(\mC,\mW^\ast)\cong H^k(\overline{\mn},\mW^\ast)\cong \left(H_k(\overline{\mn},\mW)\right)^\ast,\]
see Lemma \ref{cohomExt} and Lemma \ref{4groups}, and observing that $\mbox{Hom}_{\overline{\mn}}(V^{M},\mC)\cong {M^\ast}$ holds as $\ml$-modules for any $\ml$-module $M$.
\end{proof}

This leads to the conclusion that any BGG resolution (resolution in terms of direct sums of generalised Verma modules) must be in terms of the (co)homology groups.
\begin{theorem}
\label{onlyHk}
Consider a basic classical Lie superalgebra $\mg$ with parabolic subalgebra $\Mp=\ml+\mn$. If there are completely reducible $\ml$-modules $\{M_j,j\in\mN\}$ which are $\Mp$-modules with trivial $\mn$-action, for which there is a resolution of the irreducible finite dimensional $\mg$-module $\mW$ of the form
\begin{eqnarray*}
\cdots\rightarrow V^{M_k}\rightarrow V^{M_{k-1}}\rightarrow\cdots \rightarrow V^{M_1}\to V^{M_0}\to\mW\to0,
\end{eqnarray*}
then the homology groups $H_k(\overline{\mn},\mW)$ are completely reducible and subsequently $H_k(\overline{\mn},\mW)\cong H^k(\mn,\mW)$. Then $\mW$ has a resolution of the form
\begin{eqnarray*}
\cdots \to V^{H_k(\overline{\mn},\mW)}\to\cdots\to V^{H_1(\overline{\mn},\mW)}\to V^{H_0(\overline{\mn},\mW)}\to\mW\to0.
\end{eqnarray*}
\end{theorem}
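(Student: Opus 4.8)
I would first settle the two structural assertions of the theorem. By Proposition \ref{BottfromBGG}, each $H_k(\overline{\mn},\mW)$ is isomorphic, as an $\ml$-module, to a subquotient $N_k/K_k$ of $M_k$; since $M_k$ is completely reducible every subquotient is, so $H_k(\overline{\mn},\mW)$ is a completely reducible $\ml$-module, and giving it the $\Mp$-structure with trivial $\mn$-action (cf.\ Lemma \ref{Hcompred}) it is completely reducible as a $\Mp$-module. For the isomorphism $H_k(\overline{\mn},\mW)\cong H^k(\mn,\mW)$ I would invoke Lemma \ref{4groups}, which gives $H^k(\mn,\mW)\cong\left(H_k(\overline{\mn},\mW)\right)^\vee$; a completely reducible module is a direct sum of finite dimensional irreducibles, each its own twisted dual by the observation following Definition \ref{twisteddual}, so its twisted dual is isomorphic to itself.

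For the existence of the resolution by the modules $V^{H_k(\overline{\mn},\mW)}$ the plan is to minimise the given resolution. Restricting scalars, $\cdots\to V^{M_k}\to\cdots\to V^{M_0}\to\mW\to0$ is a resolution of $\mW$ by free $\cU(\overline{\mn})$-modules, since $V^{M}\cong\cU(\overline{\mn})\otimes_{\mC}M$ by PBW. The adjoint action of the grading element of $\Mp$ moreover equips $\mg$, $\cU(\overline{\mn})$, each $V^{M_k}$ and $\mW$ with compatible $\mZ$-gradings --- this element acts semisimply on every finite dimensional $\mg$-module and is central in $\ml$, hence preserved by all the morphisms --- and with respect to it $\cU(\overline{\mn})$ is connected graded, concentrated in non-positive degrees with one-dimensional degree-zero part. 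Applying $\mathrm{Hom}_{\overline{\mn}}(-,\mC)$ exactly as in the proof of Proposition \ref{BottfromBGG} produces the complex $0\to M_0^\ast\to M_1^\ast\to\cdots$ of semisimple $\ml$-modules with cohomology $\left(H_\bullet(\overline{\mn},\mW)\right)^\ast$; a complex of semisimple modules splits, non-canonically, as its cohomology (with zero differentials) together with two-term contractible complexes $0\to A\xrightarrow{\ \cong\ }A\to0$. Dualising, one gets $\Mp$-module decompositions $M_k\cong D_{k-1}\oplus H_k(\overline{\mn},\mW)\oplus E_k$ with $D_k\cong E_k$ and $D_{-1}=0$, the reduction of the differential modulo $\overline{\mn}$ being the isomorphism $D_k\to E_k$ and zero elsewhere. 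I would then lift this, one homological degree at a time, to a splitting of complexes of $\mg$-modules
\[ V^{M_\bullet}\ \cong\ V^{H_\bullet(\overline{\mn},\mW)}\ \oplus\ (\text{contractible complexes of }\mg\text{-modules}), \]
with $V^{H_\bullet(\overline{\mn},\mW)}$ the subcomplex carrying $V^{H_k(\overline{\mn},\mW)}$ in homological degree $k$.

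Granting this, the rest is formal. Contractible complexes are exact and (co)homology is additive over direct sums, while the augmentation $V^{M_\bullet}\to\mW$ kills the image of the first contractible differential (a complement of $V^{H_0}$ in $V^{M_0}$) and hence descends to $V^{H_\bullet(\overline{\mn},\mW)}$; exactness of $V^{M_\bullet}\to\mW$ therefore forces exactness of $V^{H_\bullet(\overline{\mn},\mW)}\to\mW$. Since $H_k(\overline{\mn},\mW)$ is a completely reducible $\Mp$-module with trivial $\mn$-action, $V^{H_k(\overline{\mn},\mW)}$ is a direct sum of generalised Verma modules, and since $H_k(\overline{\mn},\mW)=0$ for $k>\dim\overline{\mn}$ by the Chevalley--Eilenberg complex of Lemma \ref{cohomExt} the resolution is finite, which also makes the reduction terminate.

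The hard step is the $\mg$-equivariant lifting in the second paragraph. It is elementary over $\ml$, but the $\mg$-submodule of $V^{M_k}$ realising a contractible summand is not the naive $V^{E_k}$: it is the image of $V^{D_k}\subset V^{M_{k+1}}$ under the differential, a ``graph'' over the lower-degree components of that map, so the contractible pieces must be peeled off inductively, from degree $0$ upward. At each step the essential input is a Nakayama-type rigidity: a morphism of generalised Verma modules whose reduction modulo $\overline{\mn}$ is injective, respectively bijective, is itself a split injection, respectively an isomorphism. This follows from the graded structure above by the graded version of Nakayama's lemma (equivalently, a geometric-series argument), $\cU(\overline{\mn})$ being connected graded and the modules bounded above; combined with the semisimplicity of the $M_k$ it lets the minimal direct summand of $V^{M_\bullet}$ be identified with $V^{H_\bullet(\overline{\mn},\mW)}$.
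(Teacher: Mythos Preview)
Your approach coincides with the paper's: both reduce the given resolution by peeling off contractible direct summands until only the $V^{H_k(\overline{\mn},\mW)}$ remain, the essential input being that a $\mg$-morphism of induced modules which is an isomorphism modulo $\overline{\mn}$ is itself an isomorphism (graded Nakayama over the connected graded algebra $\cU(\overline{\mn})$). The paper removes one irreducible pair $(S_j,S_{j-1})$ at a time while you remove the whole block $D_k\cong E_k$ in each degree; the mechanism is otherwise identical, and your treatment of the Nakayama step is in fact more explicit than the paper's.

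There is one genuine error, however. Your claim that $H_k(\overline{\mn},\mW)=0$ for $k>\dim\overline{\mn}$ is false for Lie superalgebras: super-exterior powers $\Lambda^k\overline{\mn}$ need not vanish for large $k$ when $\overline{\mn}$ has a nonzero odd part, and the homology groups --- hence the BGG resolution --- are typically infinite; the paper makes this point explicitly after Corollary~\ref{finalthmCR}. Your appeal to this vanishing to force global termination of the peeling procedure is therefore unjustified. The repair is immediate: your induction already proceeds from degree $0$ upward, and after step $k$ the terms in degrees $0,\dots,k-1$ are already $V^{H_j(\overline{\mn},\mW)}$ and are untouched by later steps. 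The process thus stabilises degreewise, and the stabilised complex is the desired (possibly infinite) resolution; no global termination is required.
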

\begin{proof}
The complete reducibility of $H_k(\overline{\mn},\mW)$ follows immediately from the complete reducibility of $M_k$ by Proposition \ref{BottfromBGG}. The equivalence of the homology group and homology group then follow immediately from Lemma \ref{4groups} since twisted dual representations of completely reducible representations are identical to the original representations.

Now assume that $H_\bullet(\overline{\mn},\mW)\not=M_\bullet$. With notations of the proof of Proposition \ref{BottfromBGG} this implies that at least one of the morphisms $\phi_j$ is not identically zero. This implies that im$(m_j)\not\subset$ $\overline{n}\cdot V^{M_{j-1}}$. Therefore there are isomorphic irreducible submodules of $M_j$ and $M_{j-1}$, denoted by $S_j$ and $S_{j-1}$ respectively, such that the highest weight vector of $V^{S_j}$ is mapped to the highest weight vector of $V^{S_{j-1}}$ by $m_j$. This implies that $V^{S_j}$ has trivial intersection with the kernel of $m_j$ and $V^{S_{j-1}}$ is in the image of $m_j$. Since we considered a resolution of $\mW$ this implies that $V^{S_j}$ has trivial intersection with the image of $m_{j+1}$ while $V^{S_{j-1}}$ is contained in the kernel of $m_{j-1}$. These two representations can therefore be removed from the resolution. By iterating this procedure we end up at a situation where $\phi_j\equiv0$ for all $j\in\mN$ and therefore $H_\bullet(\overline{\mn},\mW)\cong M_\bullet$.
\end{proof}

The resolutions in terms of direct sums of generalised Verma modules in equation \eqref{resLep} of \cite{MR0578996, lepowsky} possess the additional property that no highest weight vector is inside the kernel of the morphisms. This is equivalent to the fact that the complex does not decompose into a direct sum of complexes. We show that resolutions for basic classical Lie superalgebras in terms of modules induced by the homology groups automatically satisfy this property and moreover are singled out by this property.

\begin{corollary}
\label{onlyHk2}
Consider $\mg$, $\Mp=\ml+\mn$ as in Theorem \ref{onlyHk}. If the irreducible finite dimensional $\mg$-module $\mW$ has a resolution in terms of direct sums of generalised Verma modules, there is a resolution of the form
\begin{eqnarray*}
\cdots \to V^{H_k(\overline{\mn},\mW)}\to\cdots\to V^{H_1(\overline{\mn},\mW)}\to V^{H_0(\overline{\mn},\mW)}\to\mW\to0.
\end{eqnarray*}
This resolution does not decompose into a non-trivial direct sum of complexes and every resolution of $\mW$ with this property is of the form above.\end{corollary}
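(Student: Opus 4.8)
First, a resolution of the displayed form exists: this is precisely Theorem~\ref{onlyHk}, since the hypothesis provides, for each $j$, a completely reducible $\ml$-module $M_j$ on which $\mn$ acts trivially (it being a nilpotent ideal of $\Mp$) with the $V^{M_j}$ resolving $\mW$. It remains to show that among resolutions $\mathcal{R}\colon\cdots\to V^{M_k}\to\cdots\to V^{M_0}\to\mW\to0$ of $\mW$ by direct sums of generalised Verma modules, the ones admitting no non-trivial decomposition into a direct sum of complexes are exactly those of the displayed form. The plan is to show that both properties are equivalent to the vanishing of all the maps $\phi_j$ in the complex $0\to\mbox{Hom}_{\overline{\mn}}(V^{M_0},\mC)\xrightarrow{\phi_0}\mbox{Hom}_{\overline{\mn}}(V^{M_1},\mC)\xrightarrow{\phi_1}\cdots$ associated with $\mathcal{R}$ in the proofs of Proposition~\ref{BottfromBGG} and Theorem~\ref{onlyHk}, using that $\mbox{Hom}_{\overline{\mn}}(V^{M_k},\mC)\cong M_k^\ast$ as $\ml$-modules and that the $k$-th cohomology of this complex is $\mbox{Ext}^k_{\overline{\mn}}(\mW,\mC)\cong\big(H_k(\overline{\mn},\mW)\big)^\ast$.

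The equivalence ``all $\phi_j=0$'' $\Leftrightarrow$ ``$\mathcal{R}$ has the displayed form'' is straightforward: if all $\phi_j=0$ then $M_k^\ast\cong\ker\phi_k/\mbox{im}\,\phi_{k-1}\cong\big(H_k(\overline{\mn},\mW)\big)^\ast$, whence $M_k\cong H_k(\overline{\mn},\mW)$ as $\Mp$-modules; conversely, if the terms are $V^{H_k(\overline{\mn},\mW)}$, a dimension count --- the $k$-th term and the $k$-th cohomology of the complex both have dimension $\dim H_k(\overline{\mn},\mW)$ --- forces all $\phi_j=0$.

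The substantial part is the equivalence with indecomposability, which I would obtain through a splitting lemma sharpening the reduction used to prove Theorem~\ref{onlyHk}. If some $\phi_j\neq0$, then, exactly as there, one finds isomorphic irreducible summands $S$ occurring in two consecutive $M$'s whose induced highest weight vectors are matched by the differential of $\mathcal{R}$; the corresponding component $\alpha$ of that differential --- a $\mg$-module map between two induced modules of the same $\ml$-type sending highest weight vector to highest weight vector --- is then surjective (its image is a $\mg$-submodule meeting the top, hence all of the target by nilpotence of $\overline{\mn}$) and, generalised Verma modules having finite-dimensional weight spaces, an isomorphism. Conjugating the differential in that spot by block-unitriangular $\mg$-module automorphisms of the two adjacent terms brings it into the block-diagonal form $\mathrm{diag}(\alpha,\ast)$, and exactness of $\mathcal{R}$ at those two terms forces the two neighbouring differentials to respect this decomposition as well, so the two-term acyclic complex $V^{S}\xrightarrow{\ \alpha\ }V^{S}$ splits off $\mathcal{R}$ --- a non-trivial decomposition. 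Conversely, any non-trivial decomposition $\mathcal{R}=\mathcal{R}'\oplus\mathcal{R}''$ forces, $\mW$ being irreducible, one summand --- say $\mathcal{R}''$ --- to carry $0$ in the $\mW$-slot, so that $\mathcal{R}''\colon\cdots\to C_k\to\cdots\to C_0\to0$ is acyclic with each $C_k$ a direct summand of $V^{M_k}$ (hence finitely generated over $\overline{\mn}$) and with connecting maps that are direct summands of the $\phi_\bullet$. Acyclicity makes $C_1\to C_0$ surjective, hence $\mbox{Hom}_{\overline{\mn}}(C_0,\mC)\to\mbox{Hom}_{\overline{\mn}}(C_1,\mC)$ injective; if all $\phi_j=0$, this forces $\mbox{Hom}_{\overline{\mn}}(C_0,\mC)=0$, whence $C_0=0$ by Nakayama for the nilpotent $\overline{\mn}$, and iterating upward kills every $C_k$. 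Thus ``all $\phi_j=0$'' is equivalent to ``$\mathcal{R}$ admits no non-trivial decomposition'', and combining this with the previous paragraph completes the proof.

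I expect the splitting lemma of the last paragraph --- the block-diagonalisation of the differential by automorphisms and the verification, via exactness, that the two neighbouring differentials respect the splitting --- to be the main obstacle; it runs as in the classical case, the only super-specific inputs being the Poincar\'e--Birkhoff--Witt description of $V^{S}$ and its standard consequences (finite-dimensional weight spaces; Nakayama's lemma for the nilpotent algebra $\overline{\mn}$) invoked above.
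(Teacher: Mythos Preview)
Your proof is correct and follows essentially the same route as the paper's: both identify the situation in which a pair of isomorphic Verma summands in consecutive degrees can be cancelled, and characterise the homology-form resolution as the one where no such cancellation is possible. The paper's two-sentence proof simply invokes the removal procedure from Theorem~\ref{onlyHk} and observes (via Proposition~\ref{BottfromBGG}) that the homology form is terminal; you make the same mechanism explicit by introducing the clean intermediate criterion ``all $\phi_j=0$'' and carrying out the block-diagonalisation that the paper leaves implicit.

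Two small points of phrasing. First, in the splitting step you write that \emph{exactness} at the two terms forces the neighbouring differentials to respect the decomposition; in fact only the complex condition $m_{j-1}m_j=0=m_jm_{j+1}$ is used (after conjugating so that $m_j=\mathrm{diag}(\alpha,\ast)$ with $\alpha$ invertible, the off-block components of $m_{j\pm1}$ are killed by composing with $\alpha$). Second, for surjectivity of $\alpha\colon V^{S'}\to V^{S}$ the cleanest argument is not ``image meets the top, hence all by nilpotence'': once you know $\alpha$ sends the highest weight vector $v^+_{S'}$ to $v^+_S$ (which you do, since the induced map on tops $S'\to S$ is the chosen isomorphism and $(V^S)_\lambda=\mathbb{C}\,v^+_S$), surjectivity is immediate because $v^+_S$ generates $V^S$ as a $\mathfrak{g}$-module. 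Neither point affects the validity of your argument.
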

\begin{proof}
The procedure in the proof of Theorem \ref{onlyHk} immediately implies that every resolution can be made smaller until it is of the proposed form.

By Proposition \ref{BottfromBGG} a resolution by modules induced by the homology groups can not be made smaller.
\end{proof}

\subsection{A sufficient condition}

We have obtained that any proper BGG resolution is given in terms of modules induced by the (co)homology groups. The purpose of the remainder of this section is to construct resolutions in terms of these homology groups. If the homology group is completely reducible this yields the only BGG resolution. If the homology group is not completely reducible, the obtained resolution is the closest one can get to a proper BGG resolution.

\begin{definition}
\label{defjet}
\rm For a $\Mp$-module $\mF$, the coinduced $\mg$-module $\cJ(\mF)$ is defined as a vector space by 
\begin{eqnarray*}
\mbox{Hom}_{\cU(\Mp)}(\cU(\mg),\mF)&=&\{\alpha\in \mbox{Hom}(\cU(\mg),\mF)|\alpha(U Z)=-(-1)^{|U||Z|}Z\left(\alpha(U)\right)\quad\mbox{for }Z\in\Mp\mbox{ and }U\in\cU(\mg)\}.
\end{eqnarray*}
The action of $\mg$ on $\cJ(\mF)$ is defined as $(A\alpha)(U)=-(-1)^{|A||U|}\alpha(AU)$ for $\alpha\in \cJ(\mF)$, $A\in\mg$ and $U\in\cU(\mg)$, which makes $\cJ(\mF)$ a $\mg$-submodule of Hom$(\cU(\mg),\mF)$.
 \end{definition}

We will need these spaces for the case $\mF=C^k(\overline{\mn},\mV)$ and submodules and subquotients. Note that it is important to write the brackets, for instance, with $Z\in\Mp$, $U\in\cU(\mg)$ and $\alpha\in\cJ\left(\mF\right)$, 
\[(Z\alpha)(U)=-(-1)^{|Z||U|}\alpha(ZU)\,\mbox{ is not the same as }\, Z(\alpha(U))=-(-1)^{|Z||U|}\alpha(UZ).\]

Since the operator $\partial^\ast$ is $\Mp$-invariant, it immediately extends to a $\mg$-invariant operator $\cJ\left(C^k(\overline{\mn},\mV)\right)\to \cJ\left(C^{k-1}(\overline{\mn},\mV)\right)$ which we denote by the same symbol. The operator $\partial$ is not $\Mp$-invariant, see Lemma \ref{pactionder}, but it can be modified to a $\mg$-invariant operator $d:\cJ\left(C^k(\overline{\mn},\mV)\right)\to \cJ\left(C^{k+1}(\overline{\mn},\mV)\right)$. This is the subject of the following theorem. This corresponds to constructing a morphism between the first jet extension of $C^k(\overline{\mn},\mV)$ and $C^{k+1}(\overline{\mn},\mV)$, with scalar part equal to $\partial$. In the language of differential operators on parabolic geometries this first order operator is a twisted exterior derivative, as will be discussed in the Appendix.

\begin{theorem}
\label{thmdefop}
The operators
\begin{eqnarray*}
\partial^\ast:\cJ\left(C^k(\overline{\mn},\mV)\right)\to \cJ\left(C^{k-1}(\overline{\mn},\mV)\right)&& \left(\partial^\ast\alpha\right)(U)=\partial^\ast\left(\alpha(U)\right)\\
d:\cJ\left(C^k(\overline{\mn},\mV)\right)\to \cJ\left(C^{k+1}(\overline{\mn},\mV)\right)&&\left(d\alpha\right)(U)=\partial \left(\alpha(U)\right)+\sum_a(-1)^{|U||\xi_a|} \xi_a\wedge \alpha(U\xi_a^\ddagger)
\end{eqnarray*}
for a basis $\{\xi_a\}$ of $\mn$ with dual basis $\{\xi^\ddagger_a\}$ of $\overline{\mn}$ with respect to the Killing form in Proposition \ref{Killingexist}, for $\alpha\in \cJ\left(C^k(\overline{\mn},\mV)\right)$ and $U\in\cU(\mg)$, with action of $\partial$ and $\partial^\ast$ on $C^k(\overline{\mn},\mV)$ as defined in Definition \ref{defder} are $\mg$-module morphisms.
\end{theorem}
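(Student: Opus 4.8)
The plan is to handle $\partial^\ast$ and the $\mg$-equivariance of $d$ first (both are immediate) and then to put the real work into verifying that $d$ indeed maps into $\cJ\left(C^{k+1}(\overline{\mn},\mV)\right)$.

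For $\partial^\ast$ nothing is needed beyond the $\Mp$-equivariance of $\partial^\ast$ (Lemma \ref{coder1}) together with the fact that $\partial^\ast$ acts on $\alpha\in\cJ\left(C^k(\overline{\mn},\mV)\right)$ pointwise in the argument: for $Z\in\Mp$ one computes $(\partial^\ast\alpha)(UZ)=\partial^\ast(\alpha(UZ))=-(-1)^{|U||Z|}\partial^\ast\bigl(Z\cdot\alpha(U)\bigr)=-(-1)^{|U||Z|}Z\cdot(\partial^\ast\alpha)(U)$, so $\partial^\ast\alpha\in\cJ\left(C^{k-1}(\overline{\mn},\mV)\right)$, and for $A\in\mg$ one gets $(A(\partial^\ast\alpha))(U)=-(-1)^{|A||U|}\partial^\ast(\alpha(AU))=(\partial^\ast(A\alpha))(U)$. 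The $\mg$-equivariance of $d$ is equally cheap: the $\mg$-action on $\cJ$ is by left multiplication in $\cU(\mg)$ (with the sign twist of Definition \ref{defjet}), while $(d\alpha)(U)$ is assembled from $\alpha$ evaluated at $U$ and at $U\xi_a^\ddagger$, i.e.\ from operations commuting with left multiplication; expanding $(A(d\alpha))(U)=-(-1)^{|A||U|}(d\alpha)(AU)$ and $(d(A\alpha))(U)$ via the formula for $d$ and collecting the parity exponents (which combine into $(-1)^{(|A|+|U|)|\xi_a|}$ in the second summand) shows the two coincide. It is also worth recording that the sum in the definition of $d$ does not depend on the chosen basis $\{\xi_a\}$, being the evaluation of the canonical element $\sum_a\xi_a\otimes\xi_a^\ddagger\in\mn\otimes\overline{\mn}$; this is a useful sanity check.

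The substantive step is to show $(d\alpha)(UZ)=-(-1)^{|U||Z|}Z\cdot(d\alpha)(U)$ for all $Z\in\Mp$, $U\in\cU(\mg)$ (since $[\xi_a^\ddagger,Z]\in\overline{\mn}$ when $Z\in\ml$, the content really lies in the case $Z\in\mn$). First I would expand $(d\alpha)(UZ)$ from the definition. In the summand $\partial(\alpha(UZ))$, use $\alpha(UZ)=-(-1)^{|U||Z|}Z\cdot\alpha(U)$ and Lemma \ref{pactionder} to commute $\partial$ past $Z$, picking up the correction $-(-1)^{|U||Z|}\sum_b\xi_b\wedge[\xi_b^\ddagger,Z]_\Mp\cdot\alpha(U)$. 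In the summand $\sum_a(-1)^{|UZ||\xi_a|}\xi_a\wedge\alpha(UZ\xi_a^\ddagger)$, reorder $Z$ past $\xi_a^\ddagger$ inside $\cU(\mg)$ via $Z\xi_a^\ddagger=[Z,\xi_a^\ddagger]+(-1)^{|Z||\xi_a|}\xi_a^\ddagger Z$, split $[Z,\xi_a^\ddagger]$ into its $\Mp$- and $\overline{\mn}$-components, and apply the defining relation of $\cJ$ (Definition \ref{defjet}) to the two terms carrying a trailing element of $\Mp$ (namely $Z$ and $[Z,\xi_a^\ddagger]_\Mp$). Independently, expand the target $-(-1)^{|U||Z|}Z\cdot(d\alpha)(U)$, treating the $\partial$-part directly and applying the derivation rule $Z\cdot(\xi_a\wedge g)=[Z,\xi_a]_\mn\wedge g+(-1)^{|Z||\xi_a|}\xi_a\wedge(Z\cdot g)$ to the second part. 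All but a handful of terms cancel in pairs, and what remains is, up to the precise Koszul signs, an identity of the shape
\begin{eqnarray*}
\sum_b\xi_b\wedge[\xi_b^\ddagger,Z]_\Mp\cdot\alpha(U)+\sum_a(-1)^{|\xi_a|}\xi_a\wedge\alpha\bigl(U[Z,\xi_a^\ddagger]_{\overline{\mn}}\bigr)&=&-\sum_a(-1)^{|U||\xi_a|}[Z,\xi_a]_\mn\wedge\alpha(U\xi_a^\ddagger).
\end{eqnarray*}
This closes by re-expanding $[Z,\xi_a^\ddagger]_{\overline{\mn}}$ in the basis $\{\xi_b^\ddagger\}$ and invoking the $\mg$-invariance of the Casimir $2$-tensor $\sum_j A_j\otimes A_j^\ddagger$, equivalently the invariance of the Killing form — precisely the relation $\sum_j\bigl((-1)^{|A_j||X|}[A_j,X]\otimes A_j^\ddagger+A_j\otimes[A_j^\ddagger,X]\bigr)=0$ already exploited in the proofs of Lemma \ref{pactionder} and Theorem \ref{Thmquabla}, here projected onto $\mn\otimes\mg$ in the first tensor factor.

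The only genuine difficulty is the bookkeeping: keeping straight which projection ($\ml$-, $\mn$- or $\overline{\mn}$-component) of each bracket $[Z,\xi_a^\ddagger]$ or $[\xi_b^\ddagger,Z]$ occurs where, and carrying the Koszul signs correctly through the PBW reordering in $\cU(\mg)$ and through the repeated use of the defining relation of $\cJ$. There is no conceptual obstacle beyond Lemma \ref{pactionder} and the invariance of the Killing form, both already established.
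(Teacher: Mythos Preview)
Your proposal is correct and follows essentially the same approach as the paper: handle $\partial^\ast$ by pointwise $\Mp$-equivariance, observe that $\mg$-equivariance of $d$ is automatic because the $\mg$-action is by left multiplication, and reduce the well-definedness of $d$ to a cancellation between the correction term of Lemma \ref{pactionder} and the terms arising from reordering $Z\xi_a^\ddagger$ inside $\cU(\mg)$, closed by invariance of the Killing form.

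One small remark on the bookkeeping in your final displayed identity: the $\Mp$-correction $\sum_b\xi_b\wedge[\xi_b^\ddagger,Z]_\Mp\cdot\alpha(U)$ from Lemma \ref{pactionder} in fact cancels directly against the $[Z,\xi_a^\ddagger]_\Mp$-contribution from the $T$-part (once you use $[Z,\xi_a^\ddagger]_\Mp=-(-1)^{|Z||\xi_a|}[\xi_a^\ddagger,Z]_\Mp$ and the defining relation of $\cJ$), so the residual identity is really a two-term statement relating $\sum_a\xi_a\wedge\alpha(U[Z,\xi_a^\ddagger]_{\overline{\mn}})$ to $\sum_a[Z,\xi_a]\wedge\alpha(U\xi_a^\ddagger)$, which is exactly the Killing-form invariance you invoke. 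The paper organises the computation slightly differently (it writes $d=\partial+T$ and computes $(T\alpha)(UZ)$ in one step, absorbing the $\overline{\mn}$-reindexing via invariance at the outset), but the content is the same.
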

\begin{proof}
The operator $\partial^\ast:\cJ\left(C^k(\overline{\mn},\mV)\right)\to \mbox{Hom}(\cU(\mg),C^{k-1}(\overline{\mn},\mV))$ is clearly a $\mg$-module morphism. It remains to be checked that $\mbox{im}\partial^\ast\subset \cJ\left(C^{k-1}(\overline{\mn},\mV)\right)$. This follows from the fact that for $Z\in\Mp$ and $U\in\cU(\mg)$ it holds that
\begin{eqnarray*}&&(\partial^\ast\alpha)(UZ)=\partial^\ast\left(\alpha(UZ)\right)=-(-1)^{|U||Z|}\partial^\ast\left(Z\cdot\alpha(U)\right)\\
&=&-(-1)^{|U||Z|}Z\cdot\partial^\ast\left(\alpha(U)\right)=-(-1)^{|U||Z|}Z\cdot\left(\left(\partial^\ast\alpha\right)(U)\right).
\end{eqnarray*}

The fact that the operator $d:\cJ\left(C^k(\overline{\mn},\mV)\right)\to \mbox{Hom}\left(\cU(\mg),C^{k+1}(\overline{\mn},\mV)\right)$ is $\mg$-invariant is also trivial, it is the sum of two $\mg$-invariant operators acting between $\cJ\left(C^k(\overline{\mn},\mV)\right)$ and $\mbox{Hom}(\cU(\mg),C^{k+1}(\overline{\mn},\mV))$. We define the operator $T$ as the second term in the definition of $d$, so $d=\partial+T$.

We need to compute that the image of $d$ is contained in $ \cJ\left(C^{k+1}(\overline{\mn},\mV)\right)$. It follows from Definition \ref{defjet} and the invariance of the Killing form that
\begin{eqnarray*}
(T\alpha)(UZ)&=&\sum_a(-1)^{|Z||U|} \xi_a\wedge[\xi_a^\ddagger,Z]_\Mp\cdot \left(\alpha(U)\right)-\sum_a (-1)^{(|\xi_a|+|Z|)|U|}[Z,\xi_a]\wedge \alpha(U\xi_a^\ddagger)\\
&&-\sum_a(-1)^{|U|(|\xi_a|+|Z|)+|Z||\xi_a|} \xi_a\wedge Z\cdot(\alpha(U\xi_a^\ddagger))\\
&=&\sum_a(-1)^{|Z||U|} \xi_a\wedge[\xi_a^\ddagger,Z]_\Mp\cdot \left(\alpha(U)\right)- (-1)^{|U||Z|}Z\left((T\alpha)(U)\right)
\end{eqnarray*}
holds. Comparison with Lemma \ref{pactionder} then yields the proof.
\end{proof}
Note that $\cJ(\mbox{ker}\partial^\ast_{k})$ is identical to the kernel of $\partial^\ast_{k}$ on $\cJ(C^{k}(\overline{\mn},\mV))$ and likewise for the image.

The operator $d$ generates a coresolution of the module $\mV$ in terms of coinduced modules. This is the subject of the following theorem.
\begin{theorem}
\label{Rhamcomplex}
The sequence
\begin{eqnarray*}
0\to\mV\to^{\epsilon} \cJ(\mV)\to^{d_0}\cJ(C^1(\overline{\mn},\mV))\to^{d_1}\cdots\to^{d_k}\cJ(C^{k+1}(\overline{\mn},\mV))\to^{d_{k+1}}\cdots
\end{eqnarray*}
with $\epsilon$ defined by $\left(\epsilon(v)\right)(U)=S(U)v$ for $U\in\cU(\mg)$ with $S$ the principal anti-automorphism of $\cU(\mg)$ (the antipode as described in the Appendix) is a coresolution of $\mV$.
\end{theorem}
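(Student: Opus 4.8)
Our strategy is to forget the $\mg$-module structure and, by a Poincar\'e--Birkhoff--Witt change of variables, recognise the complex of Theorem \ref{Rhamcomplex} as a twisted de~Rham complex of formal differential forms on (the formal group of) $\overline{\mn}$; exactness is then the formal Poincar\'e lemma, for which there is an explicit contracting homotopy onto the constants. Note that exactness is a statement about the underlying complex of super vector spaces, so the $\mg$-equivariance of Theorem \ref{thmdefop} is needed only to know that the maps exist.

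First I would carry out the PBW reduction. Since $\mg=\overline{\mn}\oplus\Mp$ as a vector space, $\cU(\mg)$ is free as a right $\cU(\Mp)$-module with the ordered monomials in a basis of $\overline{\mn}$ as a basis; hence, for any $\Mp$-module $\mF$, restriction to $\cU(\overline{\mn})\subseteq\cU(\mg)$ is a linear isomorphism $\cJ(\mF)\xrightarrow{\ \sim\ }\mathrm{Hom}_{\mC}(\cU(\overline{\mn}),\mF)$, the defining relation in Definition \ref{defjet} reconstructing $\alpha$ on all of $\cU(\mg)$. Taking $\mF=C^{k}(\overline{\mn},\mV)=\mathrm{Hom}(\Lambda^{k}\overline{\mn},\mV)$ identifies the complex with $\mathrm{Hom}_{\mC}\bigl(\cU(\overline{\mn}),\Lambda^{\bullet}\overline{\mn}^{\ast}\otimes\mV\bigr)$, the space of formal $\mV$-valued differential forms on $\overline{\mn}$; under this identification $\epsilon$ becomes the inclusion of $\mV$ as the formal functions supported on the counit of $\cU(\overline{\mn})$ (the ``constants''), while $\partial^{\ast}$ acts only on the exterior factor.

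Next I would invoke the Appendix, which shows that $d$ is a twisted exterior derivative: there is an invertible operator $\Phi$ on $\mathrm{Hom}_{\mC}(\cU(\overline{\mn}),\Lambda^{\bullet}\overline{\mn}^{\ast}\otimes\mV)$, built from the Hopf structure of $\cU(\mg)$ — morally the Baker--Campbell--Hausdorff discrepancy between the products of $\cU(\overline{\mn})$ and of $S(\overline{\mn})$, expressed through the antipode — which fixes the constants and conjugates $d$ into the covariant exterior derivative $d_{\mathrm{dR}}+\nabla^{\mV}$ on formal $\mV$-valued forms, the connection $\nabla^{\mV}$ encoding the $\overline{\mn}$-action on $\mV$; its flatness is exactly $d\circ d=0$, which I would take from the Appendix. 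A flat bundle on the formal affine space underlying the group of $\overline{\mn}$ admits the usual Poincar\'e-lemma homotopy: in the flat trivialization one contracts with the Euler vector field and integrates along rays, producing $h_{k}$ with $dh+hd=\mathrm{id}-p$ on $\mathrm{Hom}_{\mC}(\cU(\overline{\mn}),\Lambda^{\bullet}\overline{\mn}^{\ast}\otimes\mV)$, where $p$ is the projection onto the constant forms, vanishing in positive degree and with image $\cong\mV$ equal to $\mathrm{im}\,\epsilon$. If one prefers not to write the homotopy by hand when $\overline{\mn}$ is noncommutative, one can instead filter by the order of vanishing of the formal functions — equivalently by the PBW degree on $\cU(\overline{\mn})$ — so that the associated graded complex is the abelian Koszul (de~Rham) complex $\widehat{S(\overline{\mn}^{\ast})}\otimes\Lambda^{\bullet}\overline{\mn}^{\ast}\otimes\mV$, whose acyclicity away from degree $0$ is classical; completeness and exhaustiveness of the filtration then lift the conclusion. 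Either route gives $\ker d_{k}=\mathrm{im}\,d_{k-1}$ for $k\ge1$, $\ker d_{0}=\mathrm{im}\,\epsilon$, and the evident injectivity of $\epsilon$ (since $(\epsilon(v))(1)=v$), i.e. the asserted coresolution.

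I expect the main obstacle to be the third step — pinning down the twist precisely. Right multiplication by $\xi_{a}^{\ddagger}$ on $\cU(\overline{\mn})$ is not literally a partial derivative because $\cU(\overline{\mn})$ is noncommutative, so the correction terms relating $\cU(\overline{\mn})$ to $S(\overline{\mn})$ must be absorbed exactly into $\Phi$, which must simultaneously be shown to fix $\epsilon$ and to be well defined (convergent) as an operator on this infinite-dimensional ``formal power series'' space; when $\overline{\mn}$ is non-abelian and $\mV$ nontrivial one must also check that after conjugation the first-order part is $\sum_{a}\xi_{a}\wedge\partial_{\xi_{a}^{\ddagger}}$ and the zeroth-order part is precisely $\partial$ of Definition \ref{defder}. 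These are exactly the points settled by the Hopf-algebraic computation in the Appendix, and granting that input the argument above is routine.
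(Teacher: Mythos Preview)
Your proposal is correct and follows essentially the same route as the paper: PBW reduction to $\mathrm{Hom}(\cU(\overline{\mn}),\Lambda^{\bullet}\mn\otimes\mV)$, the Hopf-algebraic twist of the Appendix (your $\Phi$ is the paper's $\chi$ in Theorem~\ref{twisted}, which strips off $\mV$ rather than leaving a flat connection, but this is the same gauge transformation), and then the formal Poincar\'e lemma via Theorem~\ref{extder}. The only cosmetic differences are that the paper verifies $\ker d_0=\mathrm{im}\,\epsilon$ and $d\circ d=0$ by direct computation before invoking the Appendix, and cites \cite{MR0672426} for the acyclicity of the formal de~Rham complex on a supermanifold instead of writing down the Euler homotopy or filtration argument you sketch.
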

\begin{proof}
It needs to be proven that the sequence is a complex, $d_k\circ d_{k-1}=0$ and moreover, that this complex is exact, ker$d_k=$im$d_{k-1}$.

First we prove that im$\epsilon=\ker d_0$. From the definition of $\epsilon(v)$ and Lemma \ref{propstandder} it follows that $d_0(\epsilon(v))=0$. The relation $\mbox{im}\epsilon\cong \mV$ is immediate. Now if $\alpha\in \cJ(\mV)$ is in the kernel of $d_0$, then it follows quickly that $\alpha(UY)=-(-1)^{|U||Y|}Y( \alpha(U))$ for $Y\in \overline{\mn}$. Together with Definition \ref{defjet} this implies that $\alpha(UA)=-(-1)^{|U||A|}A( \alpha(U))$ for all $A\in\mg$ or $\alpha(U)=S(U)\left(\alpha(1)\right)$ for all $U\in\cU(\mg)$, so $ \ker d_0\cong\mV$.

The fact that $d_k\circ d_{k-1}=0$ follows from a direct calculation using Lemma \ref{propstandder}. Because of Theorem \ref{twisted} in the Appendix the exactness can be reduced to the case where $\mV=0$. Through the vector space isomorphism $\cJ(\Lambda^k\mn)\cong$ Hom$(\cU(\overline{\mn}),\Lambda^k\mn)$, obtained from the Poincar\'e-Birkhoff-Witt property, the exactness of $d$ corresponds to the exactness of its induced operator Hom$(\cU(\overline{\mn}),\Lambda^k\mn)\to$ Hom$(\cU(\overline{\mn}),\Lambda^{k+1}\mn)$. Theorem \ref{extder} shows that the homology of the operator $d$ then becomes equivalent with that of the formal exterior derivative on a flat supermanifold. This is known to have trivial homology, see \cite{MR0672426}.
\end{proof}

At this stage we need to point out that $C^k(\overline{\mn},\mV)$ has a gradation and corresponding finite filtration as a $\Mp$-module. The gradation follows from giving each simple root vector in $\mn$ degree one, the elements of $\ml$ degree zero and the corresponding finite filtration of $\mV$. This gradation is naturally inherited by Hom$(\cU(\mg),C^k(\overline{\mn},\mV))$ where the gradation of a homomorphism is given by the gradation of its values. The corresponding filtration on $\cJ(C^k(\overline{\mn},\mV))$ is defined by restriction. It will be very useful for the sequel that the operators $\partial^\ast$ and $\partial$ have degree zero, while the operator $T$ raises the degree by one. The $\mg$-invariant extension of the Kostant quabla operator $\Box$ is given by
\begin{eqnarray*}
\widetilde{\Box}&=&d\partial^\ast+\partial^\ast d=\Box+T\partial^\ast+\partial^\ast T \,:\,\, \cJ\left(C^k(\overline{\mn},\mV)\right)\to \cJ\left(C^k(\overline{\mn},\mV)\right)
\end{eqnarray*}
Since the operator $\Box_T=\Box-\widetilde{\Box}=-T\partial^\ast-\partial^\ast T$ raises degree on $\mbox{Hom}(\cU(\mg),C^k(\overline{\mn},\mV))$ it is nilpotent, this fact will be essential. Obviously also compositions of $\Box_T$ with operators of degree zero are nilpotent.


\begin{lemma}
\label{inversequabla}
If the condition {\rm $\mbox{im}\partial^\ast\cap\ker\Box=0$} holds on $C^\bullet(\overline{\mn},\mV)$, the operator $\widetilde{\Box}$ is invertible on {\rm $\cJ\left(\mbox{im}\partial^\ast\right)$}.
\end{lemma}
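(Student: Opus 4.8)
The plan is to reduce the claim to two facts: that $\Box$ restricts to a \emph{linear} bijection of the finite dimensional space $\mbox{im}\partial^\ast\subset C^\bullet(\overline{\mn},\mV)$, and that on $\cJ(\mbox{im}\partial^\ast)$ the operator $\widetilde{\Box}$ differs from the value-wise action of $\Box$ only by an operator that strictly raises the filtration degree. The conclusion will then follow either from a standard filtration argument or, equivalently, from a terminating Neumann series using the nilpotence of $\Box_T$ noted before the lemma.

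First I would observe that $\mbox{im}\partial^\ast$ is a graded, finite dimensional $\ml$-submodule of $C^\bullet(\overline{\mn},\mV)$ which is $\Box$-stable, because $\Box\partial^\ast=\partial^\ast\partial\partial^\ast$ and $\partial^\ast$ has degree zero. The hypothesis $\mbox{im}\partial^\ast\cap\ker\Box=0$ says precisely that $\Box$ is injective on $\mbox{im}\partial^\ast$; being an injective degree-zero endomorphism of a finite dimensional graded space, $\Box$ restricts there to a bijection $\Box_0:\mbox{im}\partial^\ast\to\mbox{im}\partial^\ast$ that preserves the grading and commutes with $\ml$. (Alternatively one may invoke Lemma~\ref{lemmaKostant}, since by Theorem~\ref{Boxcohom2} the hypothesis is equivalent to disjointness of $\partial$ and $\partial^\ast$, and $\Box$ is then invertible on $\mbox{im}\Box=\mbox{im}\partial\oplus\mbox{im}\partial^\ast$.)

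Next I would check that $\cJ(\mbox{im}\partial^\ast)$ is genuinely $\widetilde{\Box}$-stable and run the filtration argument. For $\alpha\in\cJ(\mbox{im}\partial^\ast)$ every value $\alpha(U)$ lies in $\mbox{im}\partial^\ast$, so $\partial^\ast\alpha=0$, whence $\widetilde{\Box}\alpha=d\partial^\ast\alpha+\partial^\ast d\alpha=\partial^\ast(d\alpha)$, again with values in $\mbox{im}\partial^\ast$; thus $\widetilde{\Box}$ restricts to $\cJ(\mbox{im}\partial^\ast)$. Writing $d=\partial+T$ with $\partial$ the value-wise coboundary (degree zero) and $T$ of degree $+1$, one has $\widetilde{\Box}=\partial^\ast\partial+\partial^\ast T$ on $\cJ(\mbox{im}\partial^\ast)$, where $\partial^\ast\partial$ preserves and $\partial^\ast T$ strictly raises the finite, exhaustive filtration $\{F^p\}$ that $\cJ(\mbox{im}\partial^\ast)$ inherits by values from the grading of $C^\bullet(\overline{\mn},\mV)$. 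Hence $\widetilde{\Box}$ preserves $\{F^p\}$, and on each quotient $F^p/F^{p+1}$ the induced operator is the one induced by $\partial^\ast\partial$. Since $\partial\partial^\ast=0$ on $\mbox{im}\partial^\ast$, the value-wise operator $\partial^\ast\partial$ coincides with the value-wise action of $\Box=\Box_0$, a grading-preserving linear bijection, so it is injective and therefore (finite dimension) bijective on each graded piece. An operator preserving a finite exhaustive filtration and inducing isomorphisms on all successive quotients is invertible, so $\widetilde{\Box}$ is invertible on $\cJ(\mbox{im}\partial^\ast)$, with inverse the terminating series $\sum_{j\ge 0}(-1)^{j}\Box_0^{-1}(\partial^\ast T\,\Box_0^{-1})^{j}$.

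The step I expect to be the main obstacle is the bookkeeping with the coinduced-module covariance: one must verify that the value-wise operators restrict to the subspace $\cJ(\mbox{im}\partial^\ast)$ and not merely to the ambient $\mbox{Hom}(\cU(\mg),\mbox{im}\partial^\ast)$, and that passing to the associated graded legitimately suppresses the degree-raising part $T$ while leaving the value-wise $\Box$ acting by the bijection $\Box_0$. The simplification that makes this clean is the identity $\partial^\ast=0$ on $\cJ(\mbox{im}\partial^\ast)$, which collapses $\widetilde{\Box}$ to $\partial^\ast d$ there; without it one would have to track cancellations between $d\partial^\ast$ and $\partial^\ast d$.
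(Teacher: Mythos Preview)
Your proposal is correct and follows essentially the same route as the paper: show that $\Box$ is invertible on the finite dimensional space $\mbox{im}\partial^\ast$, then use that $\widetilde{\Box}$ differs from the value-wise $\Box$ by a degree-raising (hence nilpotent) perturbation to conclude invertibility on $\cJ(\mbox{im}\partial^\ast)$. The paper states the last step more tersely as ``invertible plus nilpotent is invertible'', while you unpack it via the filtration and a terminating Neumann series; your extra observation that $\partial^\ast$ vanishes identically on $\cJ(\mbox{im}\partial^\ast)$, collapsing $\widetilde{\Box}$ to $\partial^\ast d$ there, is a pleasant simplification but not a different strategy.
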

\begin{proof}
The operator $\Box$ on $C^\bullet(\overline{\mn},\mV)$ can be restricted to $\mbox{im}\partial^\ast$ since $\partial^\ast$ and $\Box$ commute. The condition {\rm $\mbox{im}\partial^\ast\cap\ker\Box=0$} then implies that $\Box$ is invertible on im$\partial^\ast$. Similarly, the operator $\widetilde{\Box}$ on $\cJ(C^\bullet(\overline{\mn},\mV))$ can be restricted to $\mbox{im}\partial^\ast$ since $\partial^\ast$ and $\widetilde{\Box}$ commute.The operator $\widetilde{\Box}$ is then invertible as the sum of an invertible operator with a nilpotent operator.
\end{proof}

\begin{definition}{\rm
\label{defPi}
If the condition {\rm $\mbox{im}\partial^\ast\cap\ker\Box=0$} holds, the $\mg$-invariant operators $\Pi_k:\cJ(C^k(\overline{\mn},\mV))\to \cJ(C^k(\overline{\mn},\mV))$ are defined as
\begin{eqnarray*}
\Pi_k=1\,-\,\,d_{k-1}\circ \widetilde{\Box}^{-1}\circ \partial^\ast_k\,\,-\,\,\widetilde{\Box}^{-1}\circ \partial^\ast_{k+1}\circ d_k.
\end{eqnarray*}}
\end{definition}
This is well-defined by Lemma \ref{inversequabla}. We introduce the notation $p_k$ for the $\mg$-invariant projection $\cJ(\mbox{ker}\partial^\ast_k)\to \cJ(H_k(\mn,\mV))$ corresponding to the $\Mp$-invariant projection $\mbox{ker}\partial^\ast_k\to H_k(\mn,\mV)$ which we denote by the same symbol $p_k$. 

The following properties of the operator $\Pi$ in Definition \ref{defPi} on $\cJ\left(C^\bullet(\overline{\mn},\mV)\right)$ are now straightforward to derive, see also Proposition 5.5 in \cite{MR1856258}.
\begin{lemma}
\label{propsplit}
The $\mg$-invariant operator $\Pi_k:\cJ(C^k(\overline{\mn},\mV))\to \cJ(C^k(\overline{\mn},\mV))$ satisfies
\begin{eqnarray*}
(1)\quad \Pi_k\circ\partial^\ast_{k+1}=0=\partial^\ast_k\circ\Pi_k&&(3)\quad \Pi_{k+1}\circ d_{k}=d_k\circ\Pi_k \\
(2)\quad p_k\circ \Pi_k=p_k \mbox{ on }\ker\partial^\ast_k&&(4)\quad \Pi_k^2=\Pi_k.
\end{eqnarray*}
\end{lemma}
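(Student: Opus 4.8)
The plan is to verify the four identities in sequence, each by a short direct computation using only the definitions of $\Pi_k$ (Definition \ref{defPi}), of $\widetilde{\Box}$ and $\Box_T$ (the paragraph preceding Lemma \ref{inversequabla}), of the projection $p_k$, together with the relations $\partial^\ast\circ\partial^\ast=0$ (Lemma \ref{coder1}), $d\circ d=0$ (Theorem \ref{Rhamcomplex}), the $\mg$-invariance of all operators involved, and the commuting relations $\partial^\ast\widetilde{\Box}=\widetilde{\Box}\partial^\ast$ and $d\widetilde{\Box}=\widetilde{\Box}d$ which follow from $\widetilde\Box=d\partial^\ast+\partial^\ast d$. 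The key algebraic fact I will use repeatedly is that $\widetilde{\Box}$ restricts to an invertible operator on $\cJ(\mathrm{im}\,\partial^\ast)$ by Lemma \ref{inversequabla}, and that $\widetilde\Box^{-1}$ therefore commutes with $\partial^\ast$ and with $d$ on the relevant subspaces.

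\textbf{Proof of (1).} First I compute $\partial^\ast_k\circ\Pi_k$. Since $\partial^\ast\circ\partial^\ast=0$, the middle term $\partial^\ast_k d_{k-1}\widetilde{\Box}^{-1}\partial^\ast_k$ simplifies once we write $d_{k-1} = \widetilde\Box - \partial^\ast d_{k-1}$ wait — more directly, using $\widetilde\Box = d\partial^\ast+\partial^\ast d$ one gets $\partial^\ast_k d_{k-1}\widetilde\Box^{-1}\partial^\ast_k = (\widetilde\Box_{k-1} - \partial^\ast_{k-1} d_{k-2})\widetilde\Box^{-1}\partial^\ast_k = \partial^\ast_k$ because $\partial^\ast_{k-1}\partial^\ast_k=0$ kills the second piece after one notes $\partial^\ast$ commutes with $\widetilde\Box^{-1}$. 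The last term $\partial^\ast_k\widetilde\Box^{-1}\partial^\ast_{k+1}d_k$ vanishes since $\partial^\ast_k\partial^\ast_{k+1}=0$ and $\partial^\ast$ commutes with $\widetilde\Box^{-1}$. Hence $\partial^\ast_k\Pi_k = \partial^\ast_k - \partial^\ast_k = 0$. The identity $\Pi_k\circ\partial^\ast_{k+1}=0$ is dual: in $\Pi_k\partial^\ast_{k+1}$ the term $d_{k-1}\widetilde\Box^{-1}\partial^\ast_k\partial^\ast_{k+1}$ vanishes, while $\widetilde\Box^{-1}\partial^\ast_{k+1}d_k\partial^\ast_{k+1} = \widetilde\Box^{-1}\partial^\ast_{k+1}(\widetilde\Box_{k+1} - \partial^\ast_{k+1}d_k) = \partial^\ast_{k+1}$, so $\Pi_k\partial^\ast_{k+1} = \partial^\ast_{k+1} - \partial^\ast_{k+1} = 0$.

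\textbf{Proof of (2), (3) and (4).} For (2), restrict to $\ker\partial^\ast_k$: there the term $\widetilde\Box^{-1}\partial^\ast_{k+1}d_k$ acts, but $p_k$ vanishes on $\mathrm{im}\,\partial^\ast_{k+1} \supset \mathrm{im}(\widetilde\Box^{-1}\partial^\ast_{k+1}\,\cdot\,)$ — more precisely $\widetilde\Box^{-1}\partial^\ast_{k+1}d_k$ has image in $\cJ(\mathrm{im}\,\partial^\ast_{k+1})$ on which $p_k=0$ by definition of $p_k$; and the term $d_{k-1}\widetilde\Box^{-1}\partial^\ast_k$ vanishes on $\ker\partial^\ast_k$ outright. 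So $p_k\Pi_k = p_k$ on $\ker\partial^\ast_k$. For (3), I compute $\Pi_{k+1}d_k - d_k\Pi_k$. Using $d\circ d=0$, the term $d_k\widetilde\Box^{-1}\partial^\ast_{k+1}d_k$ appears in $\Pi_{k+1}d_k$ as $\widetilde\Box^{-1}\partial^\ast_{k+2}d_{k+1}d_k = 0$ and in $d_k\Pi_k$ as $d_k d_{k-1}\widetilde\Box^{-1}\partial^\ast_k = 0$; what remains is $d_k - d_k\widetilde\Box^{-1}\partial^\ast_{k+1}d_k$ from the first and $d_k - d_k d_{k-1}\widetilde\Box^{-1}\partial^\ast_k - d_k\widetilde\Box^{-1}\partial^\ast_{k+1}d_k$ from the second, and using $d_kd_{k-1}=0$ the two agree. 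Finally (4): $\Pi_k^2 = \Pi_k$ follows because $\Pi_k = 1 - (d\widetilde\Box^{-1}\partial^\ast + \widetilde\Box^{-1}\partial^\ast d)$ and the bracketed operator $Q_k := d_{k-1}\widetilde\Box^{-1}\partial^\ast_k + \widetilde\Box^{-1}\partial^\ast_{k+1}d_k$ is idempotent; indeed $Q_k^2 = Q_k$ is checked by expanding the four cross terms, using $\partial^\ast\partial^\ast = 0$, $dd=0$, $\partial^\ast d + d\partial^\ast = \widetilde\Box$, and that $\widetilde\Box^{-1}$ commutes with $\partial^\ast$ and $d$ on the images involved; one finds $d\widetilde\Box^{-1}\partial^\ast d\widetilde\Box^{-1}\partial^\ast + \widetilde\Box^{-1}\partial^\ast d\widetilde\Box^{-1}\partial^\ast d = d\widetilde\Box^{-1}\partial^\ast + \widetilde\Box^{-1}\partial^\ast d - d\widetilde\Box^{-1}\partial^\ast\partial^\ast d(\dots)$ collapsing to $Q_k$.

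\textbf{Main obstacle.} The only genuinely delicate point is the legitimacy of cancelling $\widetilde\Box^{-1}$ against $\widetilde\Box$: these identities must be read as identities of operators on $\cJ(C^\bullet(\overline{\mn},\mV))$, but $\widetilde\Box^{-1}$ is a priori only defined on $\cJ(\mathrm{im}\,\partial^\ast)$. So I must check at each step that the argument of $\widetilde\Box^{-1}$ indeed lies in $\cJ(\mathrm{im}\,\partial^\ast)$ — which it does, since $\partial^\ast$ has image in (the coinduced module over) $\mathrm{im}\,\partial^\ast$ — and that the substitution $\widetilde\Box = \partial^\ast d + d\partial^\ast$ is applied only after this check, so that $\widetilde\Box^{-1}\widetilde\Box = \mathrm{id}$ genuinely holds on the subspace in question. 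Once this bookkeeping is in place, all four identities are immediate, and the sign subtleties of the super setting play no role because every operator here is already a $\mg$-module morphism and the computations involve no reordering of elements of $\mn$. I would remark, as the excerpt itself does, that this mirrors Proposition 5.5 in \cite{MR1856258}.
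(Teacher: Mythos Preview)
Your approach is correct and is exactly what the paper has in mind: it declares the identities ``straightforward to derive'' and refers to Proposition~5.5 in \cite{MR1856258}, which is the same direct computation you carry out using $\partial^\ast\partial^\ast=0$, $dd=0$, $\widetilde\Box=d\partial^\ast+\partial^\ast d$, and the invertibility of $\widetilde\Box$ on $\cJ(\mathrm{im}\,\partial^\ast)$. Apart from a couple of index slips (e.g.\ in (1) you write $d_k\partial^\ast_{k+1}=\widetilde\Box_{k+1}-\partial^\ast_{k+1}d_k$ when you mean $\widetilde\Box_{k+1}-\partial^\ast_{k+2}d_{k+1}$) and the leftover ``wait --- more directly'' which should be cleaned out of a final write-up, the argument is sound and matches the intended one.
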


Lemma \ref{propsplit} $(1)$ shows that $\Pi_k$ is in fact a $\mg$-module morphism from $\cJ(C^k(\overline{\mn},\mV))$ to  $\cJ(\ker\partial^\ast_k)$, Lemma \ref{propsplit} $(1)$ also implies that when $\Pi_k$ is restricted to a morphism from $\cJ(\ker\partial_k^\ast)$ to  $\cJ(\ker\partial^\ast_k)$, this naturally descends to a morphism from $\cJ(H_k(\mn,\mV))$ to  $\cJ(\ker\partial^\ast_k)$. This $\mg$-module morphism is denoted by
\[L_k\,:\,\, \cJ(H_k(\mn,\mV))\to\cJ(\ker\partial^\ast_k).\]
It follows from lemma \ref{propsplit} $(2)$ that $p_k\circ L_k=1$, so in particular $L_k$ is injective.

Now we come to the definition of the operator that will be responsible for the desired coresolution of $\mV$.
\begin{definition}{\rm
\label{defD}
The $\mg$-invariant operator $D_k: \cJ(H_k(\mn,\mV))\to  \cJ(H_{k+1}({\mn},\mV))$ is defined as
\begin{eqnarray*}
D_k&=&p_{k+1}\circ d_k\circ L_k.
\end{eqnarray*}}
\end{definition}
This is well defined since by Lemma \ref{propsplit} $(1)$ and $(3)$ $\mbox{im} (d_k\circ L_k)\subset\ker \partial^\ast_{k+1}$. From its definition and the fact that $\Pi_{k+1}=L_{k+1}\circ p_{k+1}$, Lemma \ref{propsplit} (3) and (4), it follows that $L_{k+1}\circ D_k=d_k\circ L_k$ holds.

Using the defined operators it is now possible to construct a smaller coresolution of $\mV$ out of the coresolution in Theorem \ref{Rhamcomplex}, that corresponds to the infinitesimal version of the classical dual BGG sequences, mentioned in Section \ref{classical}.
\begin{theorem}
\label{BGGseq}
If the condition {\rm $\mbox{im}\partial^\ast\cap\ker\Box=0$} holds on $C^\bullet(\overline{\mn},\mV)$, the sequence
\begin{eqnarray*}
0\to\mV\to^{\epsilon'} \cJ(H_0({\mn},\mV))\to^{D_0}\cJ(H_1({\mn},\mV))\to^{D_1}\cdots\to^{D_k}\cJ(H_{k+1}({\mn},\mV))\to^{D_{k+1}}\cdots,
\end{eqnarray*}
with $\epsilon'=p_0\circ\epsilon$ for {\rm
\[p_0:\cJ(\mV)=\ker\partial_0^\ast\to\ker\partial_0^\ast/\mbox{im}\partial_1^\ast=\cJ\left(H_0({\mn},\mV)\right)\]} and $\epsilon$ given in Theorem \ref{Rhamcomplex}, is exact.
\end{theorem}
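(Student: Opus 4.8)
The plan is to exhibit the operator $\Pi_\bullet$ of Definition~\ref{defPi} as an idempotent $\mg$-module endomorphism of the coresolution of Theorem~\ref{Rhamcomplex}, so that it splits that coresolution into a direct sum of two subcomplexes, and then to identify the ``$\Pi$-image'' summand with the sequence in the statement.

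First I would record the formal bookkeeping. Write $C^k:=C^k(\overline{\mn},\mV)$ (with $C^0=\mV$), let $\mathcal C^\bullet$ denote the complex $0\to\mV\xrightarrow{\epsilon}\cJ(\mV)\xrightarrow{d_0}\cJ(C^1)\to\cdots$ of Theorem~\ref{Rhamcomplex}, and set $\Pi_{-1}:=\mathrm{id}_\mV$. By Lemma~\ref{propsplit}(3),(4) the operator $\Pi_\bullet$ commutes with $d$ and satisfies $\Pi_k^2=\Pi_k$. Since $\partial^\ast_0=0$, Definition~\ref{defPi} gives $\Pi_0=1-\widetilde\Box^{-1}\partial^\ast_1 d_0$, so $\Pi_0$ acts as the identity on $\ker d_0=\mathrm{im}\,\epsilon$ (exactness of $\mathcal C^\bullet$ at $\cJ(\mV)$); in particular $\mathrm{im}\,\epsilon\subset\mathrm{im}\,\Pi_0$ and $\Pi_0\circ\epsilon=\epsilon=\epsilon\circ\Pi_{-1}$. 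Hence $\Pi_\bullet$ is an idempotent endomorphism of the complex $\mathcal C^\bullet$. Because each $\Pi_k$ is idempotent, $\cJ(C^k)=\mathrm{im}\,\Pi_k\oplus\ker\Pi_k$ in every degree, and since $\Pi_\bullet$ is a chain map both summands are stable under $d$ (if $\Pi_kx=0$ then $\Pi_{k+1}d_kx=d_k\Pi_kx=0$, and $d_k\Pi_kx=\Pi_{k+1}d_kx\in\mathrm{im}\,\Pi_{k+1}$); the map $\epsilon$ respects the decomposition by the previous remark. Thus $\mathcal C^\bullet=(\mathrm{im}\,\Pi)^\bullet\oplus(\ker\Pi)^\bullet$ as complexes, with $(\mathrm{im}\,\Pi)^{-1}=\mV$. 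As $\mathcal C^\bullet$ is exact (Theorem~\ref{Rhamcomplex}) and cohomology is additive over direct sums of complexes, the subcomplex
\[0\to\mV\xrightarrow{\epsilon}\mathrm{im}\,\Pi_0\xrightarrow{d_0}\mathrm{im}\,\Pi_1\to\cdots\]
is exact.

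Next I would identify this subcomplex with the sequence in the statement. By Lemma~\ref{propsplit}(1) each $\Pi_k$ takes values in $\cJ(\ker\partial^\ast_k)$, on which $L_k\circ p_k=\Pi_k$ and $p_k\circ L_k=1$. Consequently $\mathrm{im}\,L_k\subset\mathrm{im}\,\Pi_k$ (write any element of $\cJ(H_k(\mn,\mV))$ as $p_k(w)$, then $L_kp_k(w)=\Pi_k(w)$), and the restriction $p_k\colon\mathrm{im}\,\Pi_k\to\cJ(H_k(\mn,\mV))$ is a bijection with inverse $L_k$: it is onto since $p_kL_k=1$, and if $x\in\mathrm{im}\,\Pi_k$ with $p_kx=0$ then $x=\Pi_kx=L_kp_kx=0$. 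The relation $L_{k+1}\circ D_k=d_k\circ L_k$ (noted right after Definition~\ref{defD}) says precisely that $L_\bullet$, together with $\mathrm{id}_\mV$, is a chain map from the sequence of the statement to $(\mathrm{im}\,\Pi)^\bullet$; at the bottom, $L_0\circ\epsilon'=L_0\circ p_0\circ\epsilon=\Pi_0\circ\epsilon=\epsilon$. Being bijective in every degree, $L_\bullet$ is an isomorphism of complexes. In particular the sequence of the statement is a complex ($D_{k+1}\circ D_k=0$ and $D_0\circ\epsilon'=0$ follow after applying the injective maps $L_{k+2}$, resp.\ $L_1$, to $d_{k+1}\circ d_k=0$, resp.\ $d_0\circ\epsilon=0$), and the exactness of $(\mathrm{im}\,\Pi)^\bullet$ transports to exactness of
\[0\to\mV\xrightarrow{\epsilon'}\cJ(H_0(\mn,\mV))\xrightarrow{D_0}\cJ(H_1(\mn,\mV))\to\cdots,\]
which is the assertion.

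The only delicate points are the low-degree bookkeeping---checking $\Pi_0|_{\mathrm{im}\,\epsilon}=\mathrm{id}$ and $L_0\circ\epsilon'=\epsilon$ so that $\mV$ genuinely lies in the $\mathrm{im}\,\Pi$ summand---and the fact that $\Pi_\bullet$ is a chain map, which is exactly Lemma~\ref{propsplit}(3); everything else is formal homological algebra on top of Theorem~\ref{Rhamcomplex} and Lemma~\ref{propsplit}. Note that neither the hermitian form of Theorem~\ref{formonchains} nor complete reducibility of the homology enters; the hypothesis $\mathrm{im}\,\partial^\ast\cap\ker\Box=0$ is used only to make $\widetilde\Box^{-1}$, and hence $\Pi_\bullet$, well defined via Lemma~\ref{inversequabla}.
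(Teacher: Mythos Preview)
Your argument is correct and is a genuinely different route from the paper's. The paper proves exactness by a direct element chase: given $f\in\ker D_{k+1}$, it uses exactness of the $d$-complex to write $L_{k+1}(f)=d_kg$, modifies $g$ to $g'=g-d_{k-1}\widetilde\Box^{-1}\partial^\ast_k g$ so that $g'\in\cJ(\ker\partial^\ast_k)$, and then has to argue that $g'=L_kp_k(g')$. That last step is the technical heart of the paper's proof and requires the filtration/degree argument (``$\Box$ is of degree zero while its action above strictly raises degree''). Your proof sidesteps this entirely by observing that $\Pi_\bullet$ is an idempotent chain endomorphism of the exact complex of Theorem~\ref{Rhamcomplex}, hence splits it as a direct sum of complexes, both necessarily exact; the identification $\mathrm{im}\,\Pi_k\cong\cJ(H_k(\mn,\mV))$ via $L_k$ and $p_k$ then transports exactness to the BGG sequence. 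This is cleaner and more conceptual, and it makes transparent why the hypothesis is only used to define $\Pi$. The paper's hands-on approach, on the other hand, is more explicit about where each hypothesis enters and adapts more visibly to the partial situation of Corollary~\ref{submodVerma}, where one only has $\Pi_k$ and $L_k$ available in a range $k<p$.
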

\begin{proof}
First we prove that this forms a complex,
\begin{eqnarray*}
D_{k+1}\circ D_{k}&=&p_{k+2}\circ d_{k+1}\circ L_{k+1}\circ p_{k+1}\circ d_k\circ L_k=p_{k+2}\circ  d_{k+1}\circ \Pi_{k+1}\circ d_k\circ L_k\\
&=&p_{k+2}\circ  d_{k+1}\circ d_k\circ \Pi_{k}\circ L_k=p_{k+2}\circ d_{k+1}\circ d_k\circ L_k=0,
\end{eqnarray*}
where we have used Lemma \ref{propsplit} consecutively and Theorem \ref{Rhamcomplex}. The fact $D_0\circ\epsilon'=0$ follows similarly.

Now we prove the exactness of the complex. The fact that im$\epsilon'=\ker D_0$ follows from the property that $\epsilon'$ is injective (since $\mV$ is irreducible) and the fact that $L_0$ maps the kernel of $D_0$ injectively into the kernel of $d_0$ which is isomorphic to $\mV$, see Theorem \ref{Rhamcomplex}.

We consider an $f\in \cJ(H_{k+1}({\mn},\mV))$ that satisfies $D_{k+1}f=0$. Since $L_{k+2}\circ D_{k+1}=d_{k+1}\circ L_{k+1}$ it follows that $L_{k+1}(f)= d_{k}g$ for some $g\in \cJ(C^{k}(\overline{\mn},\mV))$, by Theorem \ref{Rhamcomplex}.

Define $g'=g-d_{k-1}\circ\widetilde{\Box}^{-1}\circ\partial^\ast_k(g)$, then $L_{k+1}(f)= d_{k}g=d_kg'$ and 
\[\partial^\ast_k (g')=\partial_k^\ast (g)-\partial^\ast_k\circ d_{k-1}\circ\widetilde{\Box}^{-1}\circ\partial^\ast_k(g)=0,\]
so $g'\in\cJ(\ker\partial^\ast)$. Since $d_{k}g'=L_{k+1}(f)$, $d_kg'$ is also inside $\cJ(\ker\partial^\ast)$ by Lemma \ref{propsplit} (1). We can then prove that the relation $g'=L_{k}\circ p_{k}(g')$ holds as follows. The element $g'-L_{k}\circ p_{k}(g')$ of $\cJ(\ker\partial^\ast)$ is inside $\cJ($im$\partial^\ast)$ since its projection onto $\cJ(H_{k}({\mn},\mV))$ is zero by the relation $p_{k}\circ L_k=1$. Because $\Box$ is invertible on $\mbox{im}\partial^\ast$ by Lemma \ref{lemmaKostant}, we can prove $g'-L_{k}\circ p_{k}(g')=0$ by calculating
\begin{eqnarray*}
\Box(g'-L_{k} p_{k}(g'))&=&(\partial^\ast d_k-\partial^\ast T_k)(g'-L_{k} p_{k}(g'))\\
&=&\partial^\ast L_{k+1}(f)-\partial^\ast L_{k+1} D_k p_{k}(g')-\partial^\ast T_k(g'-L_{k} p_{k}(g'))\\
&=&-\partial^\ast T_k(g'-L_{k}\circ p_{k}(g')),
\end{eqnarray*}
since $\partial^\ast L_{k+1}=0$ holds.

The operator $\Box$ is of degree zero while its action above strictly raises the degree, since this operator acts invertible this shows that $g'=L_{k}\circ p_{k}(g')$ holds.

Therefore we obtain that if $D_{k+1}f=0$ holds for $f\in \cJ(H_{k+1}({\mn},\mV))$, then $L_{k+1}(f)=L_{k+1}\circ D_k \circ p_k(g')$ holds. Since $L_{k+1}$ is injective, it follows that $f\in$ im$D_{k}$ and the theorem is proven.
\end{proof}

In the following central theorem we use the notation $V^\mF=\cU(\mg)\otimes_{\cU(\Mp)}\mF $ for any such $\mg$-module induced from a $\Mp$-module $\mF$. We only use the terminology (generalised) Verma module in case $\mF$ is irreducible. If $\mF$ is completely reducible $V^\mF$ corresponds to the direct sum of generalised Verma modules. If $\mF$ is an irreducible $\ml$-representation, in this context it is silently assumed to be a $\Mp$-module with trivial action of $\mn$.
\begin{theorem}
\label{finalthm}
Consider a basic classical Lie superalgebra $\mg$ with irreducible finite dimensional representation $\mW$ and parabolic subalgebra $\Mp=\ml+\mn$. If the operators $\delta$ and $\delta^\ast$ on $C^\bullet(\mn,\mW)$ of Lemma \ref{exprdelta} are disjoint, the module $\mW$ has a resolution in terms of induced modules
\begin{eqnarray*}
\cdots \to V^{H^k(\mn,\mW)}\to\cdots\to V^{H^1(\mn,\mW)}\to V^{H^0(\mn,\mW)}\to\mW\to0
\end{eqnarray*}
with homology group {\rm $H^k(\mn,\mW)=\ker\delta_{k}/\mbox{im}\delta_{k-1}$}. If the $\ml$-modules $H^k(\mn,\mW)$ are completely reducible, this is a resolutions in terms of generalised Verma modules.
\end{theorem}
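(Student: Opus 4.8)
The plan is to obtain this theorem by dualising the coresolution produced in Theorem~\ref{BGGseq}. First I would set up the hypotheses. Put $\mV=\mW^\ast$, so that $C^\bullet(\mn,\mV^\ast)=C^\bullet(\mn,\mW)$ and the assumption of the theorem says exactly that $\delta,\delta^\ast$ are disjoint on $C^\bullet(\mn,\mV^\ast)$. By Proposition~\ref{disjointiff} this is equivalent to disjointness of $\partial,\partial^\ast$ on $C^\bullet(\overline{\mn},\mV)$, and by Theorem~\ref{Boxcohom2} this in turn gives the condition $\mbox{im}\,\partial^\ast\cap\ker\Box=0$ on $C^\bullet(\overline{\mn},\mV)$ required by Theorem~\ref{BGGseq}. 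Applying that theorem to the irreducible finite dimensional module $\mV$ yields an exact coresolution
\begin{eqnarray*}
0\to\mV\to^{\epsilon'}\cJ(H_0(\mn,\mV))\to^{D_0}\cJ(H_1(\mn,\mV))\to^{D_1}\cdots.
\end{eqnarray*}

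The second and main step is to identify this as the linear dual of a complex of induced modules. For a finite dimensional $\Mp$-module $\mF$, Poincar\'e--Birkhoff--Witt gives a vector space isomorphism $\cJ(\mF)=\mbox{Hom}_{\cU(\Mp)}(\cU(\mg),\mF)\cong\mbox{Hom}(\cU(\overline{\mn}),\mF)\cong\left(\cU(\overline{\mn})\otimes\mF^\ast\right)^\ast=(V^{\mF^\ast})^\ast$, exhibiting the coinduced module as the \emph{full} linear dual of the induced module $V^{\mF^\ast}$; a check against Definition~\ref{defjet} (up to the antipode used in Theorem~\ref{Rhamcomplex}) shows the $\mg$-action on $\cJ(\mF)$ is the one contragredient to the induced action on $V^{\mF^\ast}$. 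I would then note that each operator occurring in Theorem~\ref{BGGseq} is given by an \emph{explicit finite formula} in $\cU(\mg)$: $\partial^\ast$ and $d$ by Theorem~\ref{thmdefop}, and $\widetilde{\Box}^{-1}$, $\Pi_k$, $L_k$, $p_k$, $D_k$, $\epsilon'$ are finite polynomial expressions in these — crucially $\widetilde{\Box}^{-1}$ is a finite sum because $\Box_T$ is nilpotent. Consequently $(D_k\alpha)(U)$ is, for fixed $U$, a finite $\mC$-linear combination of values $\alpha(U_j)$ at fixed $U_j\in\cU(\mg)$, so under the identification above $D_k$ is the transpose of an explicit $\mg$-module morphism $V^{H_{k+1}(\mn,\mV)^\ast}\to V^{H_k(\mn,\mV)^\ast}$, and similarly for $\epsilon'$. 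Thus the exact coresolution above is the linear dual of the complex $\cdots\to V^{H_1(\mn,\mV)^\ast}\to V^{H_0(\mn,\mV)^\ast}\to\mV^\ast\to0$.

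To conclude, since taking linear duals of a complex of vector spaces commutes with passing to homology, exactness of the coresolution forces the last complex to be exact, i.e.\ a resolution of $\mV^\ast$. It remains to rewrite the terms: $\mV^\ast=\mW^{\ast\ast}=\mW$, and Lemma~\ref{4groups} gives $H_k(\mn,\mV)^\ast=H_k(\mn,\mW^\ast)^\ast\cong H^k(\mn,\mW)$ as $\ml$-modules, which carries trivial $\mn$-action by the analogue of Lemma~\ref{Hcompred} and is regarded as a $\Mp$-module in the usual way. This gives the asserted resolution $\cdots\to V^{H^1(\mn,\mW)}\to V^{H^0(\mn,\mW)}\to\mW\to0$. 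The final sentence is then immediate: decomposing a completely reducible $\ml$-module $H^k(\mn,\mW)$ into irreducible summands writes $V^{H^k(\mn,\mW)}$ as a direct sum of generalised Verma modules.

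The step I expect to be the main obstacle is making the dualisation rigorous. Because $\cJ(\mF)$ is infinite dimensional, it is genuinely only the full linear dual of $V^{\mF^\ast}$, and not every functional on it is represented by an element of the induced module; the real content is checking that the morphisms $D_k$ are of finite type, i.e.\ arise as transposes of honest $\mg$-module morphisms between the (much smaller) induced modules — this is exactly where the nilpotence of $\Box_T$ and the $\cU(\mg)$-formulae of Theorem~\ref{thmdefop} enter. A more routine but still delicate point is the bookkeeping of signs and of the antipode when matching the $\mg$-action and the pairing, and the identification of the homology groups through Lemma~\ref{4groups}.
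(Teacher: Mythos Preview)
Your approach is essentially the same as the paper's: set $\mV=\mW^\ast$, use Proposition~\ref{disjointiff} and Theorem~\ref{Boxcohom2} to put yourself in the hypotheses of Theorem~\ref{BGGseq}, and then dualise the resulting coresolution, identifying the terms via Lemma~\ref{4groups}. The paper packages the dualisation slightly differently: rather than identifying $\cJ(\mF)$ with the full linear dual $(V^{\mF^\ast})^\ast$ and invoking ``linear dual commutes with homology'', it writes down the explicit non-degenerate $\mg$-invariant pairing
\[
V^{\mF^\ast}\times\cJ(\mF)\to\mC,\qquad (U\otimes\alpha,\phi)=(-1)^{|U||\alpha|}\alpha(\phi(U)),
\]
defines $D_k^\ast$ as the adjoint of $D_k$, and deduces exactness from the observation that two subspaces of $V^{\mF^\ast}$ with the same orthogonal in $\cJ(\mF)$ must coincide (which is exactly the double-annihilator property you are implicitly using). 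Your concern about whether $D_k^\ast$ actually lands in the induced module rather than in an abstract dual is well taken, and your finite-order argument (via nilpotence of $\Box_T$) is the honest justification for this step; the paper is terser here and simply asserts that $D_k^\ast$ is a well-defined map between induced modules. So your write-up is, if anything, more careful on this point than the published proof.
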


\begin{proof}
First we show that there exists a non-degenerate bilinear $\mg$-invariant pairing between 
\[V^{\mF^\ast}=\cU(\mg)\otimes_{\cU(\Mp)}\mF^\ast \mbox{ and } \cJ(\mF)=\mbox{Hom}_{\cU(\Mp)}(\cU(\mg),\mF)\]
for any $\Mp$-module $\mF$. This pairing is induced by the $\mg$-invariant pairing between $\cU(\mg)\otimes\mF^\ast$ and $\mbox{Hom}(\cU(\mg),\mF)$ given by
\begin{eqnarray*}
(U\otimes\alpha,\phi)=(-1)^{|U||\alpha|}\alpha\left(\phi(U)\right)&\mbox{for}&U\in\cU(\mg), \,\alpha\in\mF^\ast\mbox{ and }\phi\in\mbox{Hom}(\cU(\mg),\mF).
\end{eqnarray*}
This pairing is clearly non-degenerate and in this context $\mg$-invariant means $(Au,\phi)=-(-1)^{|A||u|}(u,A\phi)$ for $A\in\mg$, $u\in\cU(\mg)\otimes\mF^\ast$ and $\phi\in\mbox{Hom}(\cU(\mg),\mF)$. Now we restrict the pairing to the submodule $\cJ(\mF)\subset\mbox{Hom}(\cU(\mg),\mF)$ in Definition \ref{defjet}. Since the invariance of the bilinear form implies that the elements in the submodule $K\subset \cU(\mg)\otimes\mF^\ast$ such that $\cU(\mg)\otimes_{\cU(\Mp)}\mF^\ast=\left(\cU(\mg)\otimes\mF^\ast\right)/K$ are orthogonal on $\cJ(\mF)$, the pairing then descends to $V^{\mF^\ast}\times \cJ(\mF)$. By construction, for each element of $\cJ(\mF)$ there is an element of $V^{\mF^\ast}$ which gives a non-zero pairing. The other way can be proved from the restriction of the original pairing to the subspaces $\cU(\overline{\mn})\otimes\mF^\ast$ and $\mbox{Hom}(\cU(\overline{\mn}),\mF)$.

 From this non-degenerate pairing it follows that if two subspaces of $V^{\mF^\ast}$ have the same space of orthogonal vectors inside $\cJ(\mF)$ they must coincide. 

Because of Proposition \ref{disjointiff} and Theorem \ref{Boxcohom2} we can apply Theorem \ref{BGGseq} for $\mV=\mW^\ast$. The $\mg$-invariant operators 
\[D^\ast_k:V^{H_k(\mn,\mV)^\ast}\to V^{H_{k-1}({\mn},\mV)^\ast}\]
defined by $(D_k^\ast u,\phi)=(u,D_k\phi)$ with $D_k$ the operator from Definition \ref{defD} and Theorem \ref{BGGseq}, for all $u\in\mV^{H_k(\mn,\mV)^\ast}$ and $\phi\in \cJ(H_k(\mn,\mV))$ then form an exact complex by the considerations above.

The proof then follows from the identification $H^j(\mn,\mW)\cong \left(H_j(\mn,\mV)\right)^\ast$ in Lemma \ref{4groups}.
\end{proof}

Now we restrict to the case where $H_{\bullet}(\overline{\mn},\mW)$ is completely reducible, which is a necessary condition in order to have BGG resolutions, according to Theorem \ref{onlyHk}.
\begin{corollary}
\label{finalthmCR} 
Consider a basic classical Lie superalgebra $\mg$ with irreducible finite dimensional representation $\mW$ and parabolic subalgebra $\Mp=\ml+\mn$. If the $\ml$-module $H_{\bullet}(\overline{\mn},\mW)$ is completely reducible and is isomorphic to $\ker\Box$, then $\mW$ can be resolved in terms of generalised Verma modules and the resolution is of the form
\begin{eqnarray*}
\cdots \to V^{H_k(\overline{\mn},\mW)}\to\cdots\to V^{H_1(\overline{\mn},\mW)}\to V^{H_0(\overline{\mn},\mW)}\to\mW\to0.
\end{eqnarray*}
\end{corollary}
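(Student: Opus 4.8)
The plan is to obtain Corollary~\ref{finalthmCR} as a direct assembly of Theorem~\ref{finalthm}, Corollary~\ref{Boxcohom4} and the duality relations in Lemma~\ref{4groups}; no new computation should be required. First I would feed the two hypotheses into Corollary~\ref{Boxcohom4}: since $H_{\bullet}(\overline{\mn},\mW)$ is assumed completely reducible as an $\ml$-module and isomorphic to $\ker\Box$, Corollary~\ref{Boxcohom4} yields that the operators $\delta$ and $\delta^\ast$ on $C^\bullet(\mn,\mW)$ are disjoint. (Under the hood this runs through Lemma~\ref{compred}, which promotes complete reducibility of $\ker\Box\cong H_{\bullet}(\overline{\mn},\mW)$ to the equality $\ker\Box=C^\bullet(\mn,\mW)_0$, and then through the equivalence $(4)\Leftrightarrow(7)$ of Theorem~\ref{Boxcohom2}.)

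With disjointness of $\delta$ and $\delta^\ast$ established, Theorem~\ref{finalthm} immediately produces an exact complex
\begin{eqnarray*}
\cdots \to V^{H^k(\mn,\mW)}\to\cdots\to V^{H^1(\mn,\mW)}\to V^{H^0(\mn,\mW)}\to\mW\to0,
\end{eqnarray*}
with $H^k(\mn,\mW)=\ker\delta_k/\mbox{im}\,\delta_{k-1}$. It then remains only to rewrite the $\ml$-modules $H^k(\mn,\mW)$ as the homology groups $H_k(\overline{\mn},\mW)$ appearing in the statement. Taking $\mV^\ast=\mW$ in Lemma~\ref{4groups} gives $H^k(\mn,\mW)\cong\left(H_k(\overline{\mn},\mW)\right)^\vee$, the twisted dual. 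Since $H_k(\overline{\mn},\mW)$ is completely reducible it is a finite direct sum of irreducible $\ml$-modules, each of which coincides with its own twisted dual (the observation following Definition~\ref{twisteddual}), and twisted dualisation is additive over direct sums, so $\left(H_k(\overline{\mn},\mW)\right)^\vee\cong H_k(\overline{\mn},\mW)$ as $\ml$-modules. Hence $H^k(\mn,\mW)\cong H_k(\overline{\mn},\mW)$, and the complex above is exactly the asserted one.

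Finally, regarding the completely reducible $\ml$-module $H_k(\overline{\mn},\mW)$ as a $\Mp$-module with trivial $\mn$-action (the convention fixed before Theorem~\ref{finalthm}), the isomorphism of $\ml$-modules extends to $\Mp$-modules and hence to an isomorphism of the induced $\mg$-modules $V^{H^k(\mn,\mW)}\cong V^{H_k(\overline{\mn},\mW)}$; and complete reducibility of $H_k(\overline{\mn},\mW)$ means each $V^{H_k(\overline{\mn},\mW)}=\cU(\mg)\otimes_{\cU(\Mp)}H_k(\overline{\mn},\mW)$ is the direct sum of the generalised Verma modules $V^{M}$ over the irreducible constituents $M$. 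This yields the claimed resolution of $\mW$ in terms of direct sums of generalised Verma modules.

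I do not anticipate a genuine obstacle: the substantive work has already been done in Theorem~\ref{finalthm} (whose own hard input is the exactness of the twisted exterior derivative $d$, proved via the Appendix) and in the chain Lemma~\ref{compred}$\to$Theorem~\ref{Boxcohom2}$\to$Corollary~\ref{Boxcohom4}. The only point needing a moment's care is the twisted-dual bookkeeping in the second paragraph — one must check that \emph{complete reducibility}, not merely irreducibility, of $H_k(\overline{\mn},\mW)$ suffices for it to equal its twisted dual, which it does because twisted dualisation respects direct sums.
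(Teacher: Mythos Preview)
Your proposal is correct and follows essentially the same route as the paper: invoke Corollary~\ref{Boxcohom4} to obtain disjointness of $\delta$ and $\delta^\ast$, apply Theorem~\ref{finalthm}, and then use Lemma~\ref{4groups} together with complete reducibility to identify $H^\bullet(\mn,\mW)$ with $H_\bullet(\overline{\mn},\mW)$. Your extra care with the twisted-dual bookkeeping and the passage from $\ml$-modules to $\Mp$-modules is sound and simply makes explicit what the paper leaves implicit.
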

\begin{proof}
Corollary \ref{Boxcohom4} implies that Theorem \ref{finalthm} can be applied. Lemma \ref{4groups} implies that $H^\bullet(\mn,\mW)\cong H_{\bullet}(\overline{\mn},\mW)$, therefore the exact complex of Theorem \ref{finalthm} is a resolution in terms of generalised Verma modules.
\end{proof}

Contrary to the classical case in \cite{MR0578996, lepowsky}, the resolution is not finite since the homolgy groups do not necessarily vanish (because there is no top exterior power for super vector spaces). For $\mathfrak{gl}(m|n)$, the method of super-duality $\mathfrak{gl}(m|n)\hookrightarrow\mathfrak{gl}(m|\infty)\leftrightarrow \mathfrak{gl}(m+\infty)$ gives a nice interpretation of this infinite behavior by identifying the weights of these highest weight vectors in $H_{j}(\overline{\mn},\mW)$ with orbits of the quotient of the Weyl group $W^1$ of the Lie algebra $\mathfrak{gl}(m+\infty)$, see e.g. \cite{Cheng}. If one ignores the superduality and only looks at the Lie superalgebra $\mathfrak{gl}(m|n)$, the highest weights come from both reflections of the Weyl group of $\mathfrak{g}$ or $\mathfrak{g}_{\overline{0}}$ as well as from so-called odd reflections, this also appears in the explicit example for $\osp$ in the subsequent Theorem \ref{BGGtaut}.

If a module has a BGG resolution, then the corresponding generalised Verma module has the property that its maximal submodule is generated by highest weight vectors, see also Corollary 5.2 in \cite{Cheng}. Since it only requires part of the BGG resolution we can formulate it more broadly.
\begin{corollary}
\label{submodVerma}
Consider a basic classical Lie superalgebra $\mg$, an irreducible finite dimensional representation $\mW$ and a parabolic subalgebra $\Mp=\ml+\mn$. If {\rm$\mbox{im}\delta^\ast_j\cap\ker\Box=0$} holds for $j\in\{1,2,\cdots,p\}$, then there is an exact complex of the form
\begin{eqnarray*}
 V^{H^p(\mn,\mW)}\to\cdots\to V^{H^1(\mn,\mW)}\to V^{H^0(\mn,\mW)}\to\mW\to0
\end{eqnarray*}
where $H^j(\mn,\mW)\cong \ker\Box_j$ for $j\in\{ 0,\cdots, p-1\}$.

Define $M(\lambda)$ as the irreducible $\ml$-module with highest weight $\lambda$ which is a $\Mp$-module with trivial $\mn$-action. If {\rm$\mbox{im}\delta^\ast_1\cap\ker\Box=0$} holds, which is identical to {\rm
\[\left(\overline{\mn}\cdot\mW_\lambda\right)\cap \ker\left(\sum_a(-1)^{|\xi_a|}\xi^\ddagger_a\xi_a\right)=0\]
}on $\mW_\lambda$ and if $H^1(\mn,\mW_\lambda)$ is completely reducible as an $\ml$-module, the unique maximal submodule of the generalised Verma module
\begin{eqnarray*}
V^{M(\lambda)}&=&\cU(\mg)\otimes_{\cU(\Mp)}M(\lambda)
\end{eqnarray*}
is generated by highest weight vectors in $V^{M(\lambda)}$.
\end{corollary}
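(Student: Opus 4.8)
The plan is to obtain the first assertion by running the construction of Section~\ref{BGG} through only finitely many degrees, and then to read off the second assertion from the case $p=1$ together with the elementary structure of $\mg$-module morphisms between generalised Verma modules. For the first assertion, put $\mV=\mW^\ast$ and recall from the proof of Proposition~\ref{disjointiff} the antilinear bijections $\psi_k:C^k(\overline{\mn},\mV)\to C^k(\mn,\mW)$; they intertwine $\partial^\ast$ with $\delta^\ast$ and $\Box$ with $\Box$, so they carry $\mbox{im}\,\partial^\ast_j$ onto $\mbox{im}\,\delta^\ast_j$, and the hypothesis ``$\mbox{im}\,\delta^\ast_j\cap\ker\Box=0$ for $1\le j\le p$'' on $C^\bullet(\mn,\mW)$ is thereby equivalent to ``$\mbox{im}\,\partial^\ast_j\cap\ker\Box=0$ for $1\le j\le p$'' on $C^\bullet(\overline{\mn},\mV)$. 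Under this, the argument of Lemma~\ref{inversequabla} shows that $\widetilde{\Box}=\Box+\Box_T$ is invertible on $\cJ(\mbox{im}\,\partial^\ast_j)$ for each such $j$, being the sum of an operator invertible on $\mbox{im}\,\partial^\ast_j$ with the nilpotent $\Box_T$. This is exactly what is needed to carry the proofs of Theorems~\ref{Rhamcomplex}, \ref{BGGseq} and~\ref{finalthm} through the initial degrees: the operators $\Pi_0,\dots,\Pi_{p-1}$ of Definition~\ref{defPi} and the maps $L_0,\dots,L_{p-1}$, $D_0,\dots,D_{p-1}$ of Definition~\ref{defD} are well defined, and the verification of exactness of the coresolution $0\to\mV\to\cJ(H_0({\mn},\mV))\to\cJ(H_1({\mn},\mV))\to\cdots$ at its first spots proceeds verbatim. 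Dualising as in the proof of Theorem~\ref{finalthm}, through the non-degenerate $\mg$-invariant pairing of $V^{\mF^\ast}$ with $\cJ(\mF)$, and using $H_k(\mn,\mV)^\ast\cong H^k(\mn,\mW)$ from Lemma~\ref{4groups}, yields the asserted exact complex $V^{H^p(\mn,\mW)}\to\cdots\to V^{H^0(\mn,\mW)}\to\mW\to0$. Finally $H^j(\mn,\mW)\cong\ker\Box_j$ for $j\le p-1$ follows, as in Theorem~\ref{Boxcohom2} and Corollary~\ref{Boxcohom3}, from the disjointness-type conditions available in degrees $j$ and $j+1$.

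For the second assertion, the equality of the two conditions is immediate from Lemma~\ref{exprdelta}: $\delta^\ast_1(Y\otimes w)=-Y\cdot w$ gives $\mbox{im}\,\delta^\ast_1=\overline{\mn}\cdot\mW_\lambda$, while $\Box_0=\delta^\ast_1\circ\delta_0$ acts on $C^0(\mn,\mW_\lambda)=\mW_\lambda$ as $-\sum_a(-1)^{|\xi_a|}\xi^\ddagger_a\xi_a$. Applying the first assertion with $p=1$ produces an exact sequence $V^{H^1(\mn,\mW_\lambda)}\xrightarrow{\;D^\ast_1\;}V^{H^0(\mn,\mW_\lambda)}\to\mW_\lambda\to0$. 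Here $H^0(\mn,\mW_\lambda)=\ker\delta_0=\mW_\lambda^{\mn}$ is the irreducible $\ml$-module $M(\lambda)$, so $V^{H^0(\mn,\mW_\lambda)}=V^{M(\lambda)}$ and the last arrow is the canonical surjection onto the unique irreducible quotient $\mW_\lambda$ of $V^{M(\lambda)}$; hence its kernel is precisely the unique maximal submodule of $V^{M(\lambda)}$, and by exactness this kernel equals $\mbox{im}\,D^\ast_1$. Since $H^1(\mn,\mW_\lambda)$ is assumed completely reducible, $V^{H^1(\mn,\mW_\lambda)}=\bigoplus_i V^{M(\mu_i)}$ is a direct sum of generalised Verma modules. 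For each $i$ the generating subspace $1\otimes M(\mu_i)\subset V^{M(\mu_i)}$ is annihilated by $\mn$, and if $m_{\mu_i}$ is its highest weight vector then $1\otimes m_{\mu_i}$ is a highest weight vector of $V^{M(\mu_i)}$; by $\mg$-equivariance, $D^\ast_1(1\otimes m_{\mu_i})$ is a highest weight vector of $V^{M(\lambda)}$ (or $0$) and $\mbox{im}(D^\ast_1|_{V^{M(\mu_i)}})=\cU(\mg)\cdot D^\ast_1(1\otimes m_{\mu_i})$ is the submodule it generates. Therefore the unique maximal submodule $\mbox{im}\,D^\ast_1=\sum_i\cU(\mg)\cdot D^\ast_1(1\otimes m_{\mu_i})$ of $V^{M(\lambda)}$ is generated by highest weight vectors, as claimed.

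The part of the second assertion that is genuinely new --- splitting $V^{H^1(\mn,\mW_\lambda)}$ via complete reducibility and observing that a $\mg$-morphism between generalised Verma modules carries the generating $\ml$-module to the submodule generated by a highest weight vector --- is standard and transfers unchanged from the Lie algebra case, as does the identification $\mW_\lambda^{\mn}\cong M(\lambda)$. The step requiring care is the truncation in the first assertion: one must pin down exactly which invertibility statements for $\widetilde{\Box}$, equivalently which conditions $\mbox{im}\,\delta^\ast_j\cap\ker\Box=0$, are consumed in the construction of $D_0,\dots,D_{p-1}$ and in the exactness of the first spots of the (co)resolution, so as to confirm that the finite-range hypothesis indeed suffices; once this degree bookkeeping is settled, everything else is a direct specialisation of the machinery of Section~\ref{BGG}.
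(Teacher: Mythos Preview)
Your proof is correct and follows essentially the same route as the paper: truncate the construction of Section~\ref{BGG} to degrees $\le p$ (using the bijections from Proposition~\ref{disjointiff} to transfer the hypothesis on $\delta^\ast$ to $\partial^\ast$, then defining $\Pi_j$, $L_j$, $D_j$ only where $\widetilde{\Box}$ is invertible on $\cJ(\mbox{im}\,\partial^\ast)$), dualise as in Theorem~\ref{finalthm}, and for the second assertion specialise to $p=1$ and use that the image of a $\mg$-morphism out of a direct sum of generalised Verma modules is generated by highest weight vectors. Your write-up of the second assertion is in fact more explicit than the paper's, which simply remarks that the maximal submodule $N$ is a quotient of $V^{H^1(\mn,\mW_\lambda)}$ and hence generated by highest weight vectors when $H^1$ is completely reducible.
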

\begin{proof}
We start from the exact complex, provided by Theorem \ref{Rhamcomplex}:
\begin{eqnarray*}
0\to\mW_\lambda^\ast\to^{\epsilon} \cJ(\mW_\lambda^\ast)\to^{d_0}\cJ(C^1(\overline{\mn},\mW_\lambda^\ast))\to^{d_1}\cdots\to^{d_{p-1}}\cJ(C^p(\overline{\mn},\mW_\lambda^\ast)).
\end{eqnarray*}
The proof of Proposition \ref{disjointiff} shows that the equivalence of the disjointness of the operators on $C^\bullet(\overline{\mn},\mV)$ and $C^\bullet(\mn,\mV^\ast)$ also follows from one another if they are restricted to act inside $\oplus_{k=0}^pC^k(\overline{\mn},\mV)$ and $\oplus_{k=0}^pC^k(\mn,\mV^\ast)$. These results can then be used to define $L_j:\cJ(H_j(\mn,\mW_\lambda^\ast))\to\cJ(\ker\partial^\ast_{j})$ for $j <p$ as underneath Lemma \ref{propsplit}, which leads to an exact complex as in Theorem \ref{BGGseq}
\begin{eqnarray*}
0\to\mW_\lambda^\ast\to^{\epsilon'} \cJ(H_0({\mn},\mW_\lambda^\ast))\to^{D_0}\cJ(H_1({\mn},\mW_\lambda^\ast)) \to^{D_1}\cdots\to^{D_{p-1}}\cJ(H_p({\mn},\mW_\lambda^\ast)).
\end{eqnarray*}
As in the proof of Theorem \ref{finalthm}, this leads through dualization to the exact complex
\begin{eqnarray*}
V^{H^p(\mn,\mW_\lambda)}\to \cdots\to V^{H^1(\mn,\mW_\lambda)}\to V^{H^0(\mn,\mW_\lambda)}\to\mW_\lambda\to0.
\end{eqnarray*}

The ideas in the proof of Theorem \ref{Boxcohom2}, Theorem \ref{finalthm} and the result in Corollary \ref{Boxcohom3} then show that for $j<p$, $H^j(\mn,\mW)\cong \ker\Box_j\cong H_{j}(\overline{\mn},\mW) $ holds.

The second part of the corollary follows from the first one with $p=1$ and the observation that $M(\lambda)\cong H^0(\mn,\mW_\lambda)$. This implies that $\mW_\lambda\cong  V^{M(\lambda)}/N$ with $N$ a subquotient of $V^{H^1(\mn,\mW_\lambda)}$, which is always a module generated by highest weight vectors if $H^1(\mn,\mW_\lambda)$ is completely reducible.
\end{proof}

In Example 5.1 in \cite{Cheng} it was proved that for $\mg=\mathfrak{gl}(1|2)$ and $\ml=\mh$ the Cartan subalgebra, the natural representation $\mC^{1|2}$ does not have a resolution in terms of Verma modules. This follows from the fact that the Verma module corresponding to $\mC^{1|2}$ does not satisfy the property that its maximal submodule is generated by highest weight vectors. The combination of Corollary \ref{submodVerma} with Theorem \ref{Boxcohom2} shows that for this case $H^0(\mn,\mC^{1|2})\not\cong\ker\Box_0$, since all modules are $\mh$-completely reducible. This is confirmed by the following example, which follows from a quick calculation.
\begin{example}
For $\mg=\mathfrak{gl}(m|n)$, $\ml=\mh$ and $\mW=\mC^{m|n}$ 
\[H^0(\mn,\mW)\cong \ker\Box_0\]
holds if and only if $m \ge n$.
\end{example} 

Finally we provide an example where the boundary and coboundary operator are not disjoint but where there exist BGG resolutions, showing that the sufficient condition obtained in Corollary \ref{finalthmCR} is not a necessary condition.
\begin{example}
\label{counter}
For $\mg=\mathfrak{osp}(1|2)$ and $\Mp=\mb$ the standard Borel subalgebra every finite dimensional module $\mW$ has a resolution in terms of Verma modules, while in general $\ker\Box\not\cong H_{\bullet}(\overline{\mn},\mW)$.
\end{example}
The resolution can be derived from the very simple structure of Verma modules of $\mathfrak{osp}(1|2)$ and is of the form
\begin{eqnarray*}
0\to V^{-\lambda-1}\to V^{\lambda}\to \mW_\lambda\to0
\end{eqnarray*}
for $\lambda\in\mN$. The fact $\ker\Box\not\cong H_{\bullet}(\overline{\mn},\mW)$ follows easily from considering $H_1(\overline{\mn},\mW)$, which is given by the product of the odd negative root vector of $\mathfrak{osp}(1|2)$ and the lowest weight vector of $\mW$. This shows that the product of the even negative root vector and the odd positive odd root vector acting on the lowest weight vector is also inside $\ker\Box_1$, by Theorem \ref{Thmquabla}.

Note that the BGG resolutions of $\mathfrak{osp}(1|2n)$ for arbitrary $n$ are derived in \cite{osp12n}.

\section{Type I Lie superalgebras and star representations}
\label{ExamGL}

In this section we prove that there is an extensive class of representations and parabolic subalgebras of $\mathfrak{gl}(m|n)$ and $\mathfrak{osp}(2|2n)$ that satisfy the sufficient conditions in Corollary \ref{finalthmCR} to construct BGG resolutions. This is done by studying their star representations (unitarisable representations), see Section \ref{KillingStar}. The only basic classical Lie superalgebras (excluding Lie algebras) which have finite dimensional star representations are $\mathfrak{sl}(m|n)$ ($\mathfrak{psl}(n|n)$ if $m=n$) and $\mathfrak{osp}(2|2n)$, see \cite{MR0424886, MR0787332}, these two are the ones of type $I$, the others are of type $II$, see \cite{MR0486011}. In \cite{MR1075732} and \cite{MR1067631} all unitarisable representations of these algebras were classified.

First we prove the following sufficient condition for disjointness of the boundary and coboundary operator.
\begin{theorem}
\label{thmstardisjoint}
Consider $\mg$ a basic classical Lie superlagebra of type $I$, with parabolic subalgebra $\Mp=\ml+\mn$ and finite dimensional irreducible representation $\mW$, which is a star representation with adjoint operation $\dagger$. If a basis $\{\xi_a\}$ of ${\mn}$ satisfies the property that $\xi_a^\dagger=(-1)^{|\xi_a|}\xi_a^\ddagger$ with $\{\xi_a^\ddagger\}$ the dual basis of $\mn$ with respect to a normalisation of the Killing form in Proposition \ref{Killingexist}, then the operators $\delta$ and $\delta^\ast$ are disjoint and the $\ml$-module $H_{\bullet}(\overline{\mn},\mW)$ is completely reducible.
\end{theorem}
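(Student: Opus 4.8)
The plan is to reduce the statement to the usual harmonic-theory argument by proving that, under the stated hypothesis on the basis $\{\xi_a\}$, the $\ml$-contravariant hermitian form $\langle\cdot,\cdot\rangle$ on $C^\bullet(\mn,\mW)$ constructed in Theorem \ref{formonchains} is positive definite. Recall that this form is the combination of a contravariant form on $\mW$, which is positive definite because $\mW$ is a star representation, with the form $\langle Y_1,Y_2\rangle=(Y_1^\dagger,Y_2)$ on $\overline{\mn}$, extended to the exterior powers by the rule $\langle Y_1\wedge f,Y_2\wedge g\rangle=\langle Y_1,Y_2\rangle\langle f,g\rangle$.

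First I would evaluate the form on $\overline{\mn}$ in the basis $\{\xi_a^\ddagger\}$. Applying $\dagger$ to $\xi_a^\dagger=(-1)^{|\xi_a|}\xi_a^\ddagger$ and using $(A^\dagger)^\dagger=A$ gives $(\xi_a^\ddagger)^\dagger=(-1)^{|\xi_a|}\xi_a$; then supersymmetry of the Killing form together with $(\xi_b^\ddagger,\xi_a)=\delta_{ab}$ yields
\[\langle\xi_a^\ddagger,\xi_b^\ddagger\rangle=\big((\xi_a^\ddagger)^\dagger,\xi_b^\ddagger\big)=(-1)^{|\xi_a|}(\xi_a,\xi_b^\ddagger)=(-1)^{|\xi_a|}(-1)^{|\xi_a||\xi_b|}\delta_{ab}=\delta_{ab}.\]
So the hypothesis is precisely the condition making $\{\xi_a^\ddagger\}$ an orthonormal basis of $\overline{\mn}$; together with the positive definiteness of the contravariant form on $\mW$ this makes the induced form on each (finite-dimensional) space $C^k(\mn,\mW)$ positive definite, the monomials $\xi_{a_1}^\ddagger\wedge\cdots\wedge\xi_{a_k}^\ddagger\otimes w_i$ (for $\{w_i\}$ an orthonormal basis of $\mW$) providing an orthonormal spanning family.

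Once positivity is in hand the two conclusions follow quickly. Complete reducibility of $H_\bullet(\overline{\mn},\mW)$: for any $\ml$-submodule $N\subseteq C^\bullet(\mn,\mW)$ the orthogonal complement $N^\perp$ is again an $\ml$-submodule (by contravariance) with $N\cap N^\perp=0$ (by positivity) and $N\oplus N^\perp=C^\bullet(\mn,\mW)$ (by finite-dimensionality), so $C^\bullet(\mn,\mW)$ is a completely reducible $\ml$-module and hence so is its subquotient $H_k(\overline{\mn},\mW)=\ker\delta^\ast_k/\operatorname{im}\delta^\ast_{k+1}$. Disjointness: by Theorem \ref{formonchains}, $-\delta^\ast$ is the adjoint of $\delta$, so if $f=\delta^\ast g\in\ker\delta$ then $\langle f,f\rangle=\langle\delta^\ast g,f\rangle=\overline{\langle f,\delta^\ast g\rangle}=-\overline{\langle\delta f,g\rangle}=0$, and if $f=\delta g\in\ker\delta^\ast$ then $\langle f,f\rangle=\langle\delta g,f\rangle=-\langle g,\delta^\ast f\rangle=0$; in both cases positivity forces $f=0$, i.e. $\ker\delta\cap\operatorname{im}\delta^\ast=0=\ker\delta^\ast\cap\operatorname{im}\delta$.

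The one step requiring genuine care is the passage from positivity on $\overline{\mn}$ and $\mW$ to positivity on the super exterior powers $\Lambda^k\overline{\mn}\otimes\mW$: because $\overline{\mn}$ may contain odd directions, $\Lambda^\bullet\overline{\mn}$ is the exterior algebra on the even part tensored with the symmetric algebra on the odd part, so one must check that the inductive rule $\langle Y_1\wedge f,Y_2\wedge g\rangle=\langle Y_1,Y_2\rangle\langle f,g\rangle$ is compatible with these (anti)symmetry relations and keeps the monomials built from an orthonormal basis of $\overline{\mn}$ orthonormal — which it does, as in the proof of Theorem \ref{formonchains}. Finally, note that the type $I$ hypothesis is used only to guarantee that a finite-dimensional star representation $\mW$ exists; the argument itself needs nothing beyond the star structure and the assumed property of the basis.
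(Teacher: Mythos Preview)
Your proof is correct and follows the same strategy as the paper: the hypothesis makes the hermitian form of Theorem \ref{formonchains} positive definite on $\overline{\mn}$ (the paper checks this on the basis $\{\xi_a^\dagger\}$ rather than $\{\xi_a^\ddagger\}$, which differs only by the sign $(-1)^{|\xi_a|}$), and positivity on $C^\bullet(\mn,\mW)$ then yields both disjointness and complete reducibility. The only stylistic difference is that the paper phrases complete reducibility by observing that $C^\bullet(\mn,\mW)$ is a star representation of $\ml$ (invoking the multiplicity-free root system of type I so that $\dagger$ restricts to an adjoint operation on $\ml$), whereas you argue directly via orthogonal complements from the positive definite $\ml$-contravariant form.
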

\begin{proof}
Because of the assumed correspondence between the adjoint operation and the Killing form, the hermitian form on $\overline{\mn}$ in the proof of Theorem \ref{formonchains} is definite, since for a basis $\{\xi_a\}$ of $\mn$ it holds that
\begin{eqnarray*}
\langle \xi_a^\dagger,\xi_b^\dagger\rangle=(\xi_a,\xi_b^\dagger)=(-1)^{|\xi_b|}(\xi_a,\xi_b^\ddagger)=\delta_{ab}.
\end{eqnarray*}
Since $\mW$ is a star representation for $\dagger$, the contravariant hermitian form on $\mW$ from Theorem \ref{formonchains} is also positive definite. Therefore, the inner product $\langle\cdot,\cdot\rangle$ on $C^\bullet(\mn,\mW)$ from Theorem \ref{formonchains}, for which $\delta$ and $-\delta^\ast$ are adjoint operators, is positive definite. Because of this property it follows immediately that $\delta$ and $\delta^\ast$ are disjoint, since the relation $\langle\delta\delta^\ast f,f\rangle=-\langle \delta^\ast f,\delta^\ast f\rangle$ proves that $\delta\delta^\ast f=0$ implies $\delta^\ast f=0$ and similarly for the roles of $\delta^\ast$ and $\delta$ reversed.

Since the root systems of the basic classical Lie superalgebras of type I are multiplicity free, the adjoint operation can be restricted to an adjoint operation on $\ml$. This implies that $C^k(\mn,\mV)$ are star representations for $\ml$. Therefore $C^k(\mn,\mV)$ is a completely reducible $\ml$-module, so in particular $H_k(\overline{\mn},\mV)$ is.
\end{proof}

Now we will use this result to directly construct a class of BGG sequences for $\mathfrak{gl}(m|n)$, which extends the ones that were obtained in Remark 5.14 in \cite{MR2670923} (from tensor modules to general star representations) through the equivalence of categories related to super duality. Working with $\mathfrak{gl}(m|n)$ or its simple subalgebra $\mathfrak{sl}(m|n)$ makes no difference here. 

The natural module $\mC^{m|n}$ of $\mathfrak{gl}(m|n)$ or $\mathfrak{sl}(m|n)$ is a star representation, see \cite{MR1075732}. Therefore all its tensor powers are completely reducible, in fact the complete reducibility was already established in \cite{MR0884183} before the unitarisability was observed. All irreducible modules appearing as submodules of these tensor powers are called tensor modules. The highest weights of these modules are easily described in terms of Hook Young diagrams, see \cite{MR0884183}. Since the exterior powers do not have a top form for $\mC^{m|n}$ the module $\left(\mC^{m|n}\right)^\ast$ does not appear as a tensor module contrary to the classical case. This module also corresponds to a star representation but with different adjoint operation of $\mathfrak{gl}(m|n)$, see Proposition \ref{stardual}. All duals of tensor modules appear as submodules of the tensor powers of $\left(\mC^{m|n}\right)^\ast$.

These two types of star-representations are included in two strictly larger classes of star-representations of $\mathfrak{gl}(m|n)$ (corresponding to these two adjoint operations) which together constitute all star representations of $\mathfrak{gl}(m|n)$. In \cite{MR1075732} the explicit condition on the highest weight of an irreducible representation of $\mathfrak{gl}(m|n)$ to constitute a unitarisable representation was derived. We call these two classes the star representations of type $(1)$ (which include the tensor modules) and type $(2)$ respectively, corresponding to the terminology in \cite{MR1075732}.

\begin{theorem}
\label{mainthmgl}
For $\mg=\mathfrak{gl}(m|n)$ with a parabolic subalgebra $\Mp$, the $\mathfrak{g}$-module $\mV$ can be resolved in terms of direct sums of $\Mp$-Verma modules if
\begin{itemize}
\item $\mV$ is a star representation of type $(1)$ and $\Mp$ contains $\mathfrak{gl}(n)$
\item $\mV$ is a star representation of type $(2)$ and $\Mp$ contains $\mathfrak{gl}(m)$.
\end{itemize}
The explicit form of the resolution is given in Corollary \ref{onlyHk2}, with completely reducible homology groups $H_k(\overline{\mn},\mV)$ as described in Theorem \ref{structure cohomgroups}.
\end{theorem}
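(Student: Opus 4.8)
The whole statement reduces to verifying, in each of the two cases, the single hypothesis of Theorem~\ref{thmstardisjoint}. Since $\mV$ is a star representation there is, by the classification in \cite{MR0424886, MR1075732}, an adjoint operation $\dagger$ on $\mg=\mathfrak{gl}(m|n)$ for which $\mV$ is unitarisable; ``type $(1)$'' and ``type $(2)$'' correspond to the two adjoint operations of $\mathfrak{gl}(m|n)$, interchanged by $A\mapsto(-1)^{|A|}A^\dagger$ as in Proposition~\ref{stardual}. The plan is to exhibit, for the parabolic $\Mp=\ml+\mn$ prescribed in each case, a basis $\{\xi_a\}$ of $\mn$ with $\xi_a^\dagger=(-1)^{|\xi_a|}\xi_a^\ddagger$, where $\{\xi_a^\ddagger\}\subset\overline{\mn}$ is the dual basis with respect to a suitable normalisation of the Killing form of Proposition~\ref{Killingexist}. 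This is exactly the condition making the canonical hermitian form $\langle Y_1,Y_2\rangle=(Y_1^\dagger,Y_2)$ on $\overline{\mn}$ from the proof of Theorem~\ref{formonchains} positive definite.

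To produce such a basis I would work with the standard root vectors $E_{ij}$ of $\mathfrak{gl}(m|n)$ and the explicit formula for $\dagger$ from \cite{MR0424886, MR1075732}. In the first case ($\mV$ of type $(1)$, $\Mp\supseteq\mathfrak{gl}(n)$) one may conjugate $\Mp$ into the position in which the odd block $\mathfrak{gl}(n)$ occurs as a Levi block of the distinguished parabolic; then every even root vector of $\mn$ lies inside the $\mathfrak{gl}(m)$ summand of $\mg_{\overline{0}}$, where $\dagger$ restricts to the compact structure and the corresponding part of $\langle\cdot,\cdot\rangle$ is automatically definite, while every odd root vector of $\mn$ lies in $\mg_1$, which $\dagger$ carries onto $\mg_{-1}\subset\overline{\mn}$, dual under the Killing form (which pairs $E_{ij}$ with $E_{ji}$). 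A root-by-root check of signs then yields, after normalising the Killing form, a basis with the required property. The second case ($\mV$ of type $(2)$, $\Mp\supseteq\mathfrak{gl}(m)$) is the mirror image: one uses the second adjoint operation $A\mapsto(-1)^{|A|}A^\dagger$ together with the opposite placement ($\mathfrak{gl}(m)$ in the distinguished position), or equivalently passes to $\mV^\ast$. In both cases the hypothesis on $\Mp$ is precisely what forces all odd root vectors of $\mn$ into the ``positive'' odd part relative to $\dagger$, so the odd contributions to $\langle\cdot,\cdot\rangle$ on $\overline{\mn}$ are positive rather than negative; pinning down which adjoint operation is which, and that the sign comes out as $(-1)^{|\xi_a|}$, is the genuinely technical point of the argument.

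Once Theorem~\ref{thmstardisjoint} applies, it gives at once that $\delta$ and $\delta^\ast$ on $C^\bullet(\mn,\mV)$ are disjoint and that the $\ml$-modules $H_\bullet(\overline{\mn},\mV)$ are completely reducible. By Lemma~\ref{lemmaKostant} disjointness already gives $H_\bullet(\overline{\mn},\mV)\cong\ker\Box$, so, together with the complete reducibility, Corollary~\ref{finalthmCR} produces a resolution of $\mV$ by direct sums of $\Mp$-Verma modules, necessarily of the form $\cdots\to V^{H_k(\overline{\mn},\mV)}\to\cdots\to V^{H_0(\overline{\mn},\mV)}\to\mV\to0$. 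Corollary~\ref{onlyHk2} identifies this with the (essentially unique) non-decomposing such resolution, and Theorem~\ref{structure cohomgroups}, whose two hypotheses are exactly the disjointness and complete reducibility just obtained, describes the terms: $H_k(\overline{\mn},\mV)$ is the direct sum of the $\ml$-submodules of $C^k(\mn,\mV)$ with a highest weight $\mu$ satisfying $\cC_2(\mV)=\cC_2(\mW_\mu)$. Everything apart from the root-vector computation of the middle paragraph is a mechanical chain of results already established in Sections~\ref{Kostant} and \ref{BGG}.
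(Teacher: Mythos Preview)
Your approach is correct and essentially the same as the paper's: both reduce to verifying the hypothesis of Theorem~\ref{thmstardisjoint} by a direct sign check on the root vectors $E_{ij}$ of $\mathfrak{gl}(m|n)$, and then invoke the machinery of Sections~\ref{Kostant}--\ref{BGG}. The paper makes two points slightly more explicit than you do: first, it observes at the outset that the condition $\xi_a^\dagger=(-1)^{|\xi_a|}\xi_a^\ddagger$ depends only on the adjoint operation and not on $\mV$, so it suffices to carry out the computation for a single type~$(1)$ representation (it takes $\mC^{m|n}$ in the realisation $E_{ij}=x_i\partial_{x_j}$) and a single type~$(2)$ one; second, it writes down the normalised Killing form $(x_k\partial_{x_l},x_i\partial_{x_j})=C(-1)^{[k]}\delta_{kj}\delta_{li}$ and checks directly that $C=1$ (respectively $C=-1$) gives the required identity on $\mn$ exactly when $\Mp\supset\mathfrak{gl}(n)$ (respectively $\Mp\supset\mathfrak{gl}(m)$), so no conjugation of $\Mp$ is needed. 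Your remark that the condition on $\Mp$ is precisely what forces all odd root vectors of $\mn$ into $\mg_1$ (respectively $\mg_{-1}$) is the geometric content of that same computation.
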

\begin{proof}
First we prove that the requirements in Theorem \ref{thmstardisjoint} are met. It suffices to prove this for one star representation of type $(1)$ and one of type $(2)$ (which we take as $\mC^{m|2n}$ and $\left(\mC^{m|2n}\right)^\ast$), since the conditions in Theorem \ref{thmstardisjoint} only depend on the adjoint operation.

The easiest realization of the $\mathfrak{gl}(m|n)$-representation on $\mC^{m|n}$ is as $m$ commuting variables and $n$ anti-commuting ones. These are denoted by $\{x_j|j=1,\cdots,m+n\}$ and satisfy commutation relations
\begin{eqnarray*}
x_ix_j=(-1)^{[i][j]}x_jx_i&\mbox{with}&[k]=0 \mbox{ (respectively $1$) if $k\le m$ (respectively $k>m$)}.
\end{eqnarray*}
The corresponding partial derivatives are then defined by the Leibniz rule $\partial_{x_i}x_j=\delta_{ij}+(-1)^{[i][j]}x_j\partial_{x_i}$. The differential operators $\{x_i\partial_{x_j}|i,j=1,\cdots,m+n\}$ generate a Lie superalgebra isomorphic to $\mathfrak{gl}(m|n)$ for the Lie bracket given by the super commutator. The inner product on $\mC^{m|n}$ is given by $\langle x_i,x_j\rangle=\delta_{ij}$, which implies $\mC^{m|n}$ is a star representation with adjoint operation
\begin{eqnarray*}
\left(x_i\partial_{x_j}\right)^\dagger=x_j\partial_{x_i}.
\end{eqnarray*}

The normalized Killing form on $\mathfrak{gl}(m|n)$ is given by 
\[(x_k\partial_{x_l},x_i\partial_{x_j})=C(-1)^{[k]}\delta_{kj}\delta_{li}\]
for arbitrary $x_k\partial_{x_l}$ and $x_i\partial_{x_j}$ and some constant $C$, which leads to $(x_b\partial_{x_a})^\ddagger=\frac{(-1)^{[b]}}{C}x_b\partial_{x_a}$.

The Borel subalgebra of $\mathfrak{gl}(m|n)$ is spanned by $\{x_i\partial_{x_j}|i\le j\}$. Saying that a parabolic subalgebra $\Mp$ contains $\mathfrak{gl}(n)$ is equivalent to saying that its radical $\mn$ is contained in the subalgebra $N=\{x_i\partial_{x_j}|i< j,i<m\}\subset\mathfrak{gl}(m|n)$. 

It then follows that for $C=1$, it holds that $(x_a\partial_{x_b})^\dagger=(-1)^{[a]+[b]}(x_a\partial_{x_b})^\ddagger$ for $x_a\partial_{x_b}\in N$. Theorem \ref{thmstardisjoint} therefore implies that $\delta$ and $\delta^\ast$ are disjoint operators for the considered cases, which implies that Theorem \ref{finalthm} can be applied. 

The reasoning for $V=\left(\mC^{m|n}\right)^\ast$ is similar. The adjoint operation on $\mathfrak{gl}(m|n)$ is given by $\left(x_i\partial_{x_j}\right)^\dagger=(-1)^{[i]+[j]}x_j\partial_{x_i}$, see Proposition \ref{stardual}, so normalization $C=-1$ has to be chosen for the Killing form.
\end{proof}
In particular if we take $\mV$ to be a tensor module and $\Mp$ the parabolic subalgebra with Levi algebra $\ml=\mg_{\overline{0}}=\mathfrak{gl}(m)\oplus\mathfrak{gl}(n)$, so $\mn\cong\mC^{0|mn}$, the necessary requirements are met. This corresponds to the main result in \cite{Cheng}. The case when $\mV$ is a tensor module and $\Mp$ contains $\mathfrak{gl}(n)$ can be derived from the equivalence of categories corresponding to the super duality
\[\mathfrak{gl}(m|n)\hookrightarrow \mathfrak{gl}(m|\infty)\leftrightarrow \mathfrak{gl}(m+\infty)\]
as is stated in Remark 5.14 in \cite{MR2670923}. 

We also note that in case $\mg=\mathfrak{sl}(4|n)$ and $\ml\cong \mathfrak{so}(4)\oplus\mathfrak{sl}(n)\oplus \mC\mathfrak{z}$ with $\mathfrak{z}=diag(N,N,N,N,4,\cdots,4)$, the structure corresponds to a complexification of the notion of super conformal geometry for which invariant differential operators and corresponding generalised Verma module morphisms have been thoroughly investigated in \cite{MR0905398}. In particular in \cite{MR0796630} the multiplets of generalised Verma modules were classified. The generalised Verma modules appearing in one BGG resolution must be contained inside one multiplet.

Also the Lie superalgebra $\mathfrak{osp}(2|2n)$ has two types of star representations, see \cite{MR0424886}, which have been classified in \cite{MR1067631}. These lead again to two classes for which we can construct BGG resolutions. Since the proof of this is so similar to the reasoning leading to Theorem \ref{mainthmgl}, we omit it.

\begin{theorem}
\label{mainthmC}
For $\mg=\mathfrak{osp}(2|2n)$ with a parabolic subalgebra $\Mp$, the $\mathfrak{g}$-module $\mV$ can be resolved in terms of direct sums of $\Mp$-Verma modules if
\begin{itemize}
\item $\mV$ is a star representation of type $(1)$ and $\Mp$ contains $\mathfrak{sp}(2n)$
\item $\mV$ is a star representation of type $(2)$.
\end{itemize}
The explicit form of the resolution is given in Corollary \ref{onlyHk2}, with completely reducible homology groups $H_k(\overline{\mn},\mV)$ as described in Theorem \ref{structure cohomgroups}.
\end{theorem}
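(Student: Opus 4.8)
The plan is to run the argument of Theorem \ref{mainthmgl} verbatim, only replacing the oscillator realization of $\mathfrak{gl}(m|n)$ by one of $\mathfrak{osp}(2|2n)$. Since the hypotheses of Theorem \ref{thmstardisjoint} involve nothing but the adjoint operation $\dagger$ and (a normalization of) the Killing form, it suffices to verify them for a single star representation of each type; I would take $\mV=\mC^{2|2n}$ for type $(1)$ and $\mV=\left(\mC^{2|2n}\right)^\ast$ for type $(2)$, whose adjoint operations are related by Proposition \ref{stardual}. The goal in both cases is to exhibit a basis $\{\xi_a\}$ of $\mn$ with $\xi_a^\dagger=(-1)^{|\xi_a|}\xi_a^\ddagger$, where $\{\xi_a^\ddagger\}$ is the dual basis of $\mn$ with respect to a suitably normalized Killing form; then Theorem \ref{thmstardisjoint} yields disjointness of $\delta$ and $\delta^\ast$ together with complete reducibility of the $\ml$-module $H_{\bullet}(\overline{\mn},\mV)$.

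Concretely, I would realize $\mathfrak{osp}(2|2n)$ inside the Weyl superalgebra on two commuting variables $x_1,x_2$ and $2n$ anticommuting variables $x_3,\dots,x_{2n+2}$ as the span of the degree-two differential operators preserving the standard orthosymplectic bilinear form; the inner product $\langle x_i,x_j\rangle=\delta_{ij}$ makes $\mC^{2|2n}$ a star representation with an explicitly describable adjoint operation $\dagger$ on these generators (following, up to the orthosymplectic constraint, the pattern $\bigl(x_i\partial_{x_j}\bigr)^\dagger=\pm\, x_j\partial_{x_i}$ used for $\mathfrak{gl}$). Here the Killing form is nonzero — unlike the case $\mathfrak{osp}(2n+2|2n)$ — so a genuine normalization constant $C$ exists, and I would compute $\xi_a^\ddagger$ for the root vectors spanning the relevant nilpotent subalgebra and match signs. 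For the dual module $\left(\mC^{2|2n}\right)^\ast$ the adjoint operation is twisted as in Proposition \ref{stardual}, forcing the opposite choice of normalization, exactly as in the $\mathfrak{gl}(m|n)$ computation.

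The point that produces the asymmetry between the two types is that the even part is $\mathfrak{so}(2)\oplus\mathfrak{sp}(2n)$, and the $\mathfrak{so}(2)$-summand is one-dimensional, hence already contained in every Cartan subalgebra and so in every parabolic $\Mp$. Consequently, for star representations of type $(2)$ the sign-matching on $\mn$ holds for an arbitrary parabolic, whereas for type $(1)$ it is valid precisely when $\mn$ avoids the positive symplectic root vectors, which is exactly the condition $\mathfrak{sp}(2n)\subset\Mp$. Once the hypotheses of Theorem \ref{thmstardisjoint} are checked, Corollary \ref{finalthmCR} delivers the resolution of $\mV$ in terms of generalized Verma modules, Corollary \ref{onlyHk2} gives its precise non-decomposing form in terms of $H_k(\overline{\mn},\mV)$, and Theorem \ref{structure cohomgroups} identifies these homology groups through the eigenvalue condition on the quadratic Casimir operator.

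I expect the main obstacle to be purely bookkeeping: pinning down the correct normalization of the (non-degenerate) Killing form on $\mathfrak{osp}(2|2n)$ and tracking the parity signs so that $\xi_a^\dagger=(-1)^{|\xi_a|}\xi_a^\ddagger$ holds on a basis of $\mn$ and not merely up to a scalar; and the combinatorial verification that the two stated conditions on $\Mp$ — ``$\Mp$ contains $\mathfrak{sp}(2n)$'' for type $(1)$ and ``no condition'' for type $(2)$ — are exactly the parabolics whose nilradical lies in the subalgebra on which the sign-matching is valid. Given the close parallel with the $\mathfrak{gl}(m|n)$ case, none of this presents a conceptual difficulty, which is why the detailed computation is omitted.
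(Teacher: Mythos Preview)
Your proposal is correct and matches the paper's approach exactly: the paper explicitly omits the proof of Theorem~\ref{mainthmC}, stating only that it is ``so similar to the reasoning leading to Theorem~\ref{mainthmgl}'' that it is left out. Your identification of the key asymmetry --- that $\mathfrak{so}(2)$ is one-dimensional and hence automatically contained in any parabolic, making the type~$(2)$ condition vacuous --- is precisely the point that distinguishes the $\mathfrak{osp}(2|2n)$ case from the $\mathfrak{gl}(m|n)$ case.
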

Note that for $\mathfrak{osp}(2|2)\cong\mathfrak{sl}(1|2)$, this theorem agrees with Theorem \ref{mainthmgl}.

\begin{remark} The first statement in Theorem 6.3 is rather trivial since, as will be explained in Section
8, all finite dimensional representations for $\mathfrak{osp}(2|2n)$ possess such a resolution in terms of Kac modules.\end{remark}

\section{Orthosymplectic superalgebras and partial BGG resolutions}
\label{ExamOSp}

In this section we derive a simple criterion with weaker assumptions than Theorem \ref{thmstardisjoint}, which ensures the existence of BGG resolutions. In particular we derive some specific examples of BGG resolutions for $\mathfrak{osp}(m|2n)$. For some applications, see \cite{CSS, MR2180410}, only the last part of the BGG resolutions is required. We also focus on this feature for $\mathfrak{osp}(m|2n)$.

\begin{theorem}
\label{criterion}
\label{partialBGG}
Consider a basic classical Lie superalgebra $\mg$, with parabolic subalgebra $\Mp=\ml+\mn$ and a finite dimensional irreducible representation $\mW$. If for each $k\le p$ it holds that $\ker\Box_k$ is completely reducible and
\begin{eqnarray*}
[\ker\Box_{k-1}:M][\ker\Box_{k}:M]&\le &1
\end{eqnarray*}
for all irreducible $\ml$-modules $M$, then there are exact complexes of the form
\begin{eqnarray*}
&&0\to\mW^\ast\to^{\epsilon'} \cJ(H_0({\mn},\mW^\ast))\to^{D_0}\cJ(H_1({\mn},\mW^\ast))\to^{D_1}\cdots\to^{D_{p-1}}\cJ(H_{p}({\mn},\mW^\ast))\qquad\mbox{and}\\
&&\qquad\quad V^{H^p(\mn,\mW)}\to\cdots\to V^{H^1(\mn,\mW)}\to V^{H^0(\mn,\mW)}\to\mW\to0
\end{eqnarray*}
where the (co)homology groups with $H^k(\mn,\mW)\cong H_k(\overline{\mn},\mW)$ are completely reducible for $k\le p$ and isomorphic to $\ker\Box_k$ for $k<p$ and $H^p(\mn,\mW)$ is a quotient of $\ker\Box_p$. In particular if $p=\infty$ then $\mW$ has a resolution in terms of direct sums of generalised Verma modules, given in Theorem \ref{finalthm} with $H^\bullet(\mn,\mW)\cong H_{\bullet}(\overline{\mn},\mW)\cong \ker\Box$.
\end{theorem}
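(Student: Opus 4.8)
The plan is to show that the completely-reducible, multiplicity-at-most-one hypothesis forces the disjointness condition $\mathrm{im}\,\partial^\ast_k\cap\ker\Box=0$ of Theorem \ref{Boxcohom2}, and then to feed this into the machinery already built in Section \ref{BGG}. First I would reduce everything to the $C^\bullet(\mn,\mW)$-side: by Proposition \ref{disjointiff} and Theorem \ref{Boxcohom2}, disjointness of $\delta,\delta^\ast$ on $C^\bullet(\mn,\mW)$ in the range $k\le p$ is equivalent to $\mathrm{im}\,\delta^\ast_k\cap\ker\Box=0$ there, and I would aim to prove this latter statement directly from the multiplicity hypothesis. The starting observation is Lemma \ref{compred}: since each $\ker\Box_k$ is completely reducible, we have $C^k(\mn,\mW)_0=\ker\Box_k$, so the generalised zero-eigenspace is honest and $\delta^\ast$ restricts to a map $\ker\Box_k\to\ker\Box_{k-1}$ (because $\delta^\ast$ commutes with $\Box\in Z(\ml)$).

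The key step is then the following: consider the complex $(\ker\Box_\bullet,\delta^\ast)$ for $k\le p$. By Corollary \ref{Boxcohom3}, its homology at degree $k$ is exactly $H_k(\overline{\mn},\mW)$ for $k<p$. Now suppose, for contradiction, that $\delta^\ast_k$ restricted to $\ker\Box_k$ is not zero for some $k\le p$. Then some irreducible $\ml$-submodule $M\subset\ker\Box_k$ maps isomorphically onto an irreducible submodule of $\ker\Box_{k-1}$; in particular $[\ker\Box_k:M]\ge 1$ and $[\ker\Box_{k-1}:M]\ge 1$, which is allowed by the hypothesis. The point is to show more: that $M$ appears as a \emph{summand} of $\mathrm{im}\,\delta^\ast_k$ \emph{and also} $M$ (or an isomorphic copy) must be killed by $\delta^\ast_{k-1}$, i.e. must sit in $\ker\Box_{k-1}\cap\ker\delta^\ast_{k-1}$, in such a way that one is forced to have $[\ker\Box_{k-1}:M]\ge 2$, contradicting the hypothesis. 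Concretely, I would use the $\ml$-contravariant hermitian form of Theorem \ref{formonchains}, with respect to which $\delta$ and $-\delta^\ast$ are adjoint: on $\ker\Box_\bullet$ this form is non-degenerate (different $\Box$-eigenspaces are orthogonal), so $\ker\Box_k=\ker\delta^\ast_k|_{\ker\Box}\oplus\mathrm{im}\,\delta_{k-1}|_{\ker\Box}$ and dually $\ker\Box_{k-1}=\ker\delta_{k-1}|_{\ker\Box}\oplus\mathrm{im}\,\delta^\ast_k|_{\ker\Box}$; since $\delta\delta=0=\delta^\ast\delta^\ast$, a nonzero image of $\delta^\ast_k$ inside $\ker\Box_{k-1}$ lies in $\ker\delta^\ast_{k-1}$, hence $\mathrm{im}\,\delta^\ast_k$ contributes to $H_{k-1}$ as well as to the non-harmonic part, so the irreducible constituent $M$ of $\mathrm{im}\,\delta^\ast_k\subset\ker\Box_{k-1}$ must be matched by a constituent of $\mathrm{im}\,\delta_{k-2}$ (to be cancelled in homology), giving $[\ker\Box_{k-1}:M]\ge 2$. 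This is the contradiction. Therefore $\delta^\ast_k|_{\ker\Box}=0$ for $k\le p$, which is exactly $\mathrm{im}\,\delta^\ast_k\cap\ker\Box=0$, and also $H_k(\overline{\mn},\mW)\cong\ker\Box_k$ for $k<p$ with $H^p$ a quotient of $\ker\Box_p$.

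Once disjointness (in the range $k\le p$) is established, the two exact complexes are produced exactly as in Corollary \ref{submodVerma}: the first from Theorem \ref{Rhamcomplex} together with the splitting operators $L_j$, $D_j$ of Section \ref{BGG} (which only need $\mathrm{im}\,\delta^\ast_j\cap\ker\Box=0$ for $j\le p$), and the second by the dualisation argument of Theorem \ref{finalthm} using the non-degenerate $\mg$-invariant pairing between $V^{\mF^\ast}$ and $\cJ(\mF)$. The identifications $H^k(\mn,\mW)\cong H_k(\overline{\mn},\mW)$ and $\cong\ker\Box_k$ for $k<p$ follow from Lemma \ref{4groups} (twisted duals of completely reducible modules are themselves) and Corollary \ref{Boxcohom3}. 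For $p=\infty$ one gets the full resolution of Theorem \ref{finalthm}, which is in terms of generalised Verma modules because each $\ker\Box$ is completely reducible. I expect the main obstacle to be the contradiction step: carefully book-keeping multiplicities between $\ker\delta^\ast$, $\mathrm{im}\,\delta$, $\ker\delta$, $\mathrm{im}\,\delta^\ast$ inside $\ker\Box_{k-1}$ and $\ker\Box_k$ so that a single nonzero arrow genuinely forces a multiplicity-two occurrence — one has to be sure that the constituent being cancelled in homology is isomorphic (as an $\ml$-module) to the one produced by $\delta^\ast_k$, which is where the hermitian-form orthogonality and the fact that $\delta,\delta^\ast$ are strictly adjoint are essential.
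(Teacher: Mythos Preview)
Your overall architecture is right: reduce to showing $\delta^\ast$ (and $\delta$) vanish on $\ker\Box$ in the relevant range, then invoke Corollary~\ref{submodVerma} and the dualisation of Theorem~\ref{finalthm}. The gap is in your contradiction step.

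The two decompositions you write down,
\[
\ker\Box_k=\ker\delta^\ast_k|_{\ker\Box}\oplus\mathrm{im}\,\delta_{k-1}|_{\ker\Box}
\quad\text{and}\quad
\ker\Box_{k-1}=\ker\delta_{k-1}|_{\ker\Box}\oplus\mathrm{im}\,\delta^\ast_k|_{\ker\Box},
\]
are not available to you. For an indefinite non-degenerate contravariant form you only have $(\mathrm{im}\,\delta)^\perp=\ker\delta^\ast$; to upgrade this to a direct-sum decomposition you need the form to be non-degenerate on $\mathrm{im}\,\delta$ (equivalently on $\mathrm{im}\,\delta^\ast$), and that is precisely what is in question. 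The subsequent sentence is then a non sequitur: $\mathrm{im}\,\delta^\ast_k$ is what is \emph{quotiented out} in $H_{k-1}(\overline{\mn},\mW)=\ker\delta^\ast_{k-1}/\mathrm{im}\,\delta^\ast_k$, so it does not ``contribute to $H_{k-1}$'', and there is no mechanism forcing a second copy of $M$ to appear in $\ker\Box_{k-1}$ via $\mathrm{im}\,\delta_{k-2}$. Your multiplicity bookkeeping never actually produces $[\ker\Box_{k-1}:M]\ge 2$.

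The paper's proof uses the multiplicity hypothesis differently, and more directly. If $M$ occurs in $\mathrm{im}\,\delta^\ast_k|_{\ker\Box}\subset\ker\Box_{k-1}$, then $M$ occurs in both $\ker\Box_k$ and $\ker\Box_{k-1}$, so by hypothesis $[\ker\Box_{k-1}:M]=1$: the copy of $M$ in $\mathrm{im}\,\delta^\ast_k$ \emph{is} the full $M$-isotypic component of $\ker\Box_{k-1}$. Since non-isomorphic irreducibles are orthogonal for an $\ml$-contravariant form, and the form is non-degenerate on $\ker\Box_{k-1}$ (orthogonality of generalised $\Box$-eigenspaces), it is non-degenerate on each isotypic component, hence non-degenerate on $\mathrm{im}\,\delta^\ast_k|_{\ker\Box}$. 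Now the adjoint relation finishes the job: if $0\ne\delta^\ast f\in\ker\Box_{k-1}$, non-degeneracy on the image gives $\delta^\ast g$ with $\langle\delta^\ast f,\delta^\ast g\rangle\ne 0$, so $\langle\delta\delta^\ast f,g\rangle\ne 0$ and $\delta\delta^\ast f\ne 0$. The same argument with $\delta$ in place of $\delta^\ast$ gives disjointness of $\delta,\delta^\ast$ on $\ker\Box$; applying Lemma~\ref{lemmaKostant} to the complex $\ker\Box_\bullet$ (on which $\Box=0$) forces $\mathrm{im}\,\delta|_{\ker\Box}=\mathrm{im}\,\delta^\ast|_{\ker\Box}=0$. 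This is the missing step; once you have it, the rest of your outline goes through as you describe.
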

\begin{proof}
We focus on the case $p=\infty$, the case $p\in\mN$ can be proven similarly by using Corollary \ref{submodVerma} instead of Theorem \ref{finalthm}. The generalised zero eigenspace $C^\bullet(\mn,\mW)_0$ is equal to $\ker\Box$, see Lemma \ref{compred}.

Since the operators $\delta$ and $\delta^\ast$ commute with $\Box$ they act inside $\ker\Box=C^\bullet(\mn,\mW)_0\subset C^\bullet(\mn,\mW)$ and do not mix up different generalised eigenspaces. We will prove that $\delta$ and $\delta^\ast$ are disjoint operators when restricted to $\ker\Box$. Lemma \ref{lemmaKostant} applied to the space $\ker\Box\subset C^\bullet(\mn,\mW)$ then immediately implies that they are both identically zero on $\ker\Box$.

Since $\Box$ is symmetric with respect to the contravariant hermitian form in Theorem \ref{formonchains}, different generalised eigenspaces of $\Box$ are orthogonal with respect to each other.  the form is non-degenerate on $\ker\Box$. The operators $\delta$ and $\delta^\ast$ are both $\ml$-module morphism, so by the assumption on the multiplicities of $\ker\Box$, their image always consists of representations which appear with multiplicity one inside $\ker\Box$. Since non-isomorphic representations are orthogonal with respect to each other it then follows that the image of $\delta$ and $\delta^\ast$ in $\ker\Box$ are non-degenerate subspaces. 

Therefore, for $\delta^\ast f\in\ker\Box$ different from zero, there is an $\delta^\ast g\in\ker\Box$ such that $\langle\delta^\ast f,\delta^\ast g\rangle\not=0$. Since $-\delta$ and $\delta^\ast$ are adjoint operators, this implies $\delta\delta^\ast f\not=0$. The disjointness of $\delta$ and $\delta^\ast$ follows from this and therefore they are both identically zero on $\ker\Box$.

The existence of a resolution in terms of modules induced by the homology groups then follows from Theorem \ref{finalthm} and Theorem \ref{Boxcohom2} implies that the homology groups are completely reducible, so this is a resolution in terms of generalised Verma modules.
\end{proof}

With notations as in \cite{Tensor, MR2395482} we consider the positive root system of $\osp$ corresponding to the positive simple roots
\begin{eqnarray}
\label{choicepr}
\epsilon_1-\epsilon_2,\cdots, \epsilon_{d-1}-\epsilon_{d},\epsilon_d-\delta_1,\delta_1-\delta_2,\cdots,\delta_{n-1}-\delta_n,\begin{cases} \delta_n&m=2d+1\\ 2\delta_n& m=2d.\end{cases}
\end{eqnarray}
This positive root system has been used in e.g. \cite{Tensor, OSpHarm, CSS, MR2395482}. A finite dimensional irreducible representation, which has highest weight $\lambda$ with respect to this choice, will be denoted by $K_\lambda^{m|2n}$.

We choose the maximal parabolic subalgebra that corresponds to the first node $\epsilon_1-\epsilon_2$ of the Dynkin diagram. This implies that we must assume $m \ge 4$. For convenience we also assume $n>1$. This is the parabolic subalgebra considered in \cite{CSS, MR2395482}. In particular, it is clear from \cite{CSS} that a real form of this construction corresponds to a form of superconformal geometry, alternative to the one in \cite{MR0905398}.

The simple subalgebra corresponding to all simple roots in equation \eqref{choicepr} except the first one, is isomorphic to $\mathfrak{osp}(m-2|2n)$. The subalgebra $\ml$ which is the direct sum of $\mathfrak{osp}(m-2|2n)$ with the element of the Cartan subalgebra of $\osp$ corresponding to the first root is denoted by 
\[\mathfrak{cosp}(m-2|2n)\cong\mC\oplus\mathfrak{osp}(m-2|2n)\cong\{H_{\epsilon_1-\epsilon_2}\}\oplus\mathfrak{osp}(m-2|2n).\]

\begin{theorem}
\label{BGGtaut}
Consider $\mg=\osp$ and $\ml=\mathfrak{cosp}(m-2|2n)$, so $\mn\cong\mC^{m-2|2n}$. The natural representation $\mC^{m|2n}\cong K^{m|2n}_{\epsilon_1}$ has a resolution in terms of generalised Verma modules if $m-2n\le 1$. The explicit form of the resolution is given by
\begin{eqnarray*}
&&\cdots\to V^{M(-(d+j)\epsilon_1+2\epsilon_2+\epsilon_3+\cdots +\epsilon_d+(j+1)\delta_1)}\to\cdots\to V^{M(-(d+1)\epsilon_1+2\epsilon_2+\epsilon_3+\cdots +\epsilon_d+2\delta_1)}\to\\
&& V^{M(-d\epsilon_1+2\epsilon_2+\epsilon_3+\cdots +\epsilon_d+\delta_1)}\to V^{M(-(d-1)\epsilon_1+2\epsilon_2+\epsilon_3+\cdots +\epsilon_d)}\to\cdots\to V^{M(-(d-i)\epsilon_1+2\epsilon_2+\epsilon_3+\cdots +\epsilon_{d+1-i})}\\
&&\to\cdots\to V^{M(-2\epsilon_1+2\epsilon_2+\epsilon_3)}\to V^{M(-\epsilon_1+2\epsilon_2)}\to V^{M(\epsilon_1)}\to K^{m|2n}_{\epsilon_1}\to 0
\end{eqnarray*}
with $d=\lfloor m/2\rfloor$ and with $M(\lambda)$ the $\mathfrak{cosp}(m-2|2n)$-module with highest weight $\lambda$ which is a $\Mp$-module with trivial $\mn$-action.
\end{theorem}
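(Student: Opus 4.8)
The plan is to verify the hypotheses of Theorem~\ref{criterion} (case $p=\infty$) for $\mW=\mC^{m|2n}$. The chosen parabolic gives the grading $\mg=\mg_{-1}\oplus\mg_0\oplus\mg_1$ with $\mg_0=\ml=\mathfrak{cosp}(m-2|2n)$ and $\mg_{\pm1}=\mn,\overline{\mn}\cong\mC^{m-2|2n}$ the natural $\mathfrak{osp}(m-2|2n)$-module (in degree $\pm1$), so $\mn$ and $\overline{\mn}$ are abelian. I will show that, when $m-2n\le1$, each $\ml$-module $\ker\Box_k\subset C^k(\mn,\mC^{m|2n})$ is an irreducible module $M(\mu_k)$ with $\mu_k$ as listed; complete reducibility of $\ker\Box_k$ is then automatic and the bound $[\ker\Box_{k-1}:M][\ker\Box_k:M]\le1$ is vacuous, so Theorem~\ref{criterion} produces the exact complex with $H^k(\mn,\mC^{m|2n})\cong\ker\Box_k$, which is the asserted resolution.

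First I would describe $C^k(\mn,\mC^{m|2n})=\Lambda^k\overline{\mn}\otimes\mC^{m|2n}$ as an $\ml$-module. The restriction $\mC^{m|2n}|_\ml$ splits (distinct eigenvalues of the grading element) as $\mC_{\epsilon_1}\oplus(\mC^{m-2|2n})_0\oplus\mC_{-\epsilon_1}$, and $\Lambda^k\overline{\mn}\cong\Lambda^k\mC^{m-2|2n}\otimes\mC_{-k\epsilon_1}$, so $C^k$ is the sum of the three families $\Lambda^k\mC^{m-2|2n}\otimes\mC_{(1-k)\epsilon_1}$, $\Lambda^k\mC^{m-2|2n}\otimes\mC^{m-2|2n}\otimes\mC_{-k\epsilon_1}$ and $\Lambda^k\mC^{m-2|2n}\otimes\mC_{-(1+k)\epsilon_1}$. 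Here I would use that the super exterior power $\Lambda^k\mC^{m-2|2n}$ is irreducible (the invariant super-symmetric form admits no nonzero contraction $\Lambda^k\to\Lambda^{k-2}$, unlike the purely symplectic case), with highest weight $\varpi_k=\epsilon_2+\cdots+\epsilon_{k+1}$ for $k\le d-1$ and $\varpi_k=\epsilon_2+\cdots+\epsilon_d+(k-d+1)\delta_1$ for $k\ge d-1$; for the middle family I need the $\mathfrak{osp}(m-2|2n)$ Pieri decomposition of $\Lambda^k\mC^{m-2|2n}\otimes\mC^{m-2|2n}$ (highest weights $\varpi_k\pm e$, $e\in\{\epsilon_j,\delta_l\}$ dominant, together with $\varpi_k$ itself in certain parity cases), and I would also check complete reducibility of these tensor modules.

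Since $\mn$ is abelian, the remark after Theorem~\ref{Thmquabla} together with the eigenvalue computation in the proof of Theorem~\ref{structure cohomgroups} shows that $\Box$ on $C^\bullet(\mn,\mC^{m|2n})$ lies in $Z(\ml)$ and acts on a constituent of highest weight $\mu$ (viewed as a $\mg$-weight) by the scalar $-\tfrac12\big[(\epsilon_1+\rho,\epsilon_1+\rho)-(\mu+\rho,\mu+\rho)\big]$, with $\rho$ the Weyl vector of $\osp$ for the positive system~\eqref{choicepr}; hence $\ker\Box_k$ is the sum of the $\ml$-constituents of $C^k$ with $(\mu+\rho,\mu+\rho)=(\epsilon_1+\rho,\epsilon_1+\rho)$. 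The weights $\mu_k$ in the statement all satisfy this (each $\mu_k+\rho$ being obtained from $\epsilon_1+\rho$ by a chain of even and, for $k\ge d$, odd reflections, which preserve this quadratic form) and they occur in $C^k$: $\mu_0=\epsilon_1$ from the first family, and for $k\ge1$, $\mu_k=\varpi_k+\epsilon_2-k\epsilon_1$ from the middle family. The content of the theorem is that under $m-2n\le1$ these are the only solutions, exactly one per degree, so that $\ker\Box_k\cong M(\mu_k)$ is irreducible.

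The main obstacle is establishing this uniqueness. Besides pinning down the super-specific multiplicities in the Pieri rule and the complete reducibility above, it amounts to analysing the equation $(\mu+\rho,\mu+\rho)=(\epsilon_1+\rho,\epsilon_1+\rho)$ over the three families; since the bilinear form on the weight lattice of $\osp$ is indefinite — the $\delta_i$-directions carry the opposite sign — this equation has spurious solutions in general, and the hypothesis $m-2n\le1$ is exactly what removes them and leaves one $\ml$-constituent per degree (for $m-2n>1$ an extra matching constituent appears and breaks complete reducibility or the multiplicity bound, so the method fails). One should also verify $H^k(\mn,\mC^{m|2n})\cong H_k(\overline{\mn},\mC^{m|2n})$, that no constituent is shared between consecutive degrees, and the base case $H^0(\mn,\mC^{m|2n})=M(\epsilon_1)$ matching the tail $V^{M(\epsilon_1)}\to K^{m|2n}_{\epsilon_1}\to0$; the passage from $\epsilon$'s to $\delta_1$ at $k=d-1$ is the odd-reflection phenomenon noted after Corollary~\ref{finalthmCR}.
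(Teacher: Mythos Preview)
Your plan is correct and coincides with the paper's own argument: decompose $C^k(\mn,K^{m|2n}_{\epsilon_1})$ via the three-piece branching $K^{m|2n}_{\epsilon_1}|_\ml\cong M(\epsilon_1)\oplus M(\epsilon_2)\oplus M(-\epsilon_1)$ and the irreducibility of $\Lambda^k\mC^{m-2|2n}$, then apply Theorem~\ref{criterion} and read off $\ker\Box_k$ from the Casimir condition of Theorem~\ref{structure cohomgroups}. The paper dispatches precisely the ``obstacles'' you flag by citing Theorem~3 of \cite{Tensor} for the tensor decomposition (which gives that $C^k$ is completely reducible into five distinct irreducibles when $m-2n\le1$) and \cite{MR0546778} for the Casimir eigenvalues, which singles out $M(-k\epsilon_1+\mu_k)$ as the unique constituent with the right eigenvalue; your observation that irreducibility of each $\ker\Box_k$ makes the multiplicity bound in Theorem~\ref{criterion} automatic is a clean way to phrase what the paper does implicitly.
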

\begin{proof}
The natural representation $\mC^{m|2n}\cong K^{m|2n}_{\epsilon_1}$ clearly decomposes a an $\mathfrak{osp}(m-2|2n)$-module as $K^{m-2|2n}_{0}\oplus K^{m-2|2n}_{\epsilon_2}\oplus K^{m-2|2n}_{0}$, see also Theorem 6.2 in \cite{OSpHarm}. As an $\ml=\mathfrak{cosp}(m-2|2n)$-module the decomposition is given by
\begin{eqnarray*}
K^{m|2n}_{\epsilon_1}&\cong & M(\epsilon_1)\,\oplus\,M(\epsilon_2)\,\oplus\, M(-\epsilon_1).
\end{eqnarray*}
As an $\ml$-module, $\overline{\mn}$ is isomorphic to $M(-\epsilon_1+\epsilon_2)$. From standard calculations of highest weight vectors, the fact that all of them have a different eigenvalue with respect to the quadratic Casimir operator, see \cite{MR0546778}, and Theorem 3 in \cite{Tensor} the decomposition of $C^k(\mn,K^{m|2n}_{\epsilon_1})$ can be obtained. With 
\begin{eqnarray*}
\kappa_k&=&\begin{cases}\epsilon_2+\cdots+\epsilon_{k+1} & \mbox{ if }k+1 \le d=\lfloor m/2\rfloor\\ \epsilon_2+\cdots+\epsilon_d+(k-d+1)\delta_1& \mbox{ if }k>d, \end{cases}\\
\mu_k&=&\begin{cases}2\epsilon_2+\epsilon_3+\cdots+\epsilon_{k+1} & \mbox{ if }k+1 \le d=\lfloor m/2\rfloor\\ 2\epsilon_2+\epsilon_3+\cdots+\epsilon_d+(k-d+1)\delta_1& \mbox{ if }k>d, \end{cases}
\end{eqnarray*}
it holds that
\begin{eqnarray*}
C^k(\mn,K^{m|2n}_{\epsilon_1})&\cong& M(-k\epsilon_1+\kappa_k)\otimes \left(M(\epsilon_1)\oplus M(\epsilon_2)\oplus M(-\epsilon_1)\right)\\
&=&M((1-k)\epsilon_1+\kappa_k)\oplus M(-(1+k)\epsilon_1+\kappa_k)\oplus M(-k\epsilon_1+\kappa_{k+1})\\
&&\oplus M(-k\epsilon_1+\mu_k)\oplus M(-k\epsilon_1+\kappa_{k-1})
\end{eqnarray*}
as $\ml$-representations. In particular $C^k(\mn,K^{m|2n}_{\epsilon_1})$ is completely reducible and multiplicity free for every $k\in\mN$. Therefore Theorem \ref{criterion} states that the BGG resolution exists and Corollary \ref{onlyHk2} determines the form of the resolution.

In order to calculate the homology groups $H_k(\overline{\mn},K^{m|2n}_{\epsilon_1})$ we apply Theorem \ref{structure cohomgroups}. Using once again the expression of the eigenvalues of the quadratic Casimir operator in \cite{MR0546778}, it follows that $H_k(\overline{\mn},K^{m|2n}_{\epsilon_1})\cong M(-k\epsilon_1+\mu_k)$.
\end{proof}

The following corollary is a generalisation of Example 3.5 in \cite{MR1856258}.
\begin{corollary}
With notations as in Theorem \ref{BGGtaut} and for $m-2n\le 1$ it holds that $H_0(\mn,K^{m|2n}_{\epsilon_1})={M(\epsilon_1)}$,
\begin{eqnarray*}
H^k(\mn,K^{m|2n}_{\epsilon_1})&=&{M(-k\epsilon_1+2\epsilon_2+\epsilon_3+\cdots+\epsilon_{k+1})} \quad\mbox{ for }k=1,\cdots,d-1\quad\mbox{ and}\\
H^{d+i}(\mn,K^{m|2n}_{\epsilon_1})&=&{M(-d\epsilon_1+2\epsilon_2+\epsilon_3+\cdots+\epsilon_{d}+(i+1)\delta_1)} \quad\mbox{ for }i\ge 0
\end{eqnarray*}
and $H_k(\overline{\mn},K^{m|2n}_{\epsilon_1})\cong H^k(\mn,K^{m|2n}_{\epsilon_1})$.
\end{corollary}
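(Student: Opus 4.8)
The corollary is essentially a restatement of the last part of Theorem~\ref{BGGtaut}, so the plan is to reuse that theorem's proof and then evaluate the weights it produces, under the standing hypotheses there ($m-2n\le 1$, $m\ge 4$, $n>1$). First I would recall that in the proof of Theorem~\ref{BGGtaut} one shows that $C^k(\mn,K^{m|2n}_{\epsilon_1})$ is completely reducible and multiplicity free for every $k\in\mN$; hence Theorem~\ref{criterion} applies and yields $H^k(\mn,K^{m|2n}_{\epsilon_1})\cong H_k(\overline{\mn},K^{m|2n}_{\epsilon_1})\cong\ker\Box_k$ as $\ml$-modules for all $k$ (complete reducibility makes the twisted dual in Lemma~\ref{4groups} act trivially, giving the isomorphism $H^k(\mn,\cdot)\cong H_k(\overline{\mn},\cdot)$ asserted in the last line of the corollary). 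The same proof, via Theorem~\ref{structure cohomgroups} together with the Casimir eigenvalue formula of \cite{MR0546778}, identifies in each degree $k\ge 1$ the unique surviving summand of $C^k(\mn,K^{m|2n}_{\epsilon_1})$ and gives $H_k(\overline{\mn},K^{m|2n}_{\epsilon_1})\cong M(-k\epsilon_1+\mu_k)$ with $\mu_k$ the weight defined there.

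It then only remains to substitute. For $1\le k\le d-1$ we have $k+1\le d$, so $\mu_k=2\epsilon_2+\epsilon_3+\cdots+\epsilon_{k+1}$ and therefore $H^k(\mn,K^{m|2n}_{\epsilon_1})=M(-k\epsilon_1+2\epsilon_2+\epsilon_3+\cdots+\epsilon_{k+1})$. For $k=d+i$ with $i\ge 0$ we have $k\ge d$, so the second branch gives $\mu_k=2\epsilon_2+\epsilon_3+\cdots+\epsilon_d+(k-d+1)\delta_1=2\epsilon_2+\epsilon_3+\cdots+\epsilon_d+(i+1)\delta_1$, which produces the stated expression. Finally, the case $k=0$ is the standard fact that $H^0(\mn,\mW)\cong H_0(\overline{\mn},\mW)$ is the $\ml$-submodule of $\mW$ generated by a highest weight vector: from the $\ml$-decomposition $K^{m|2n}_{\epsilon_1}\cong M(\epsilon_1)\oplus M(\epsilon_2)\oplus M(-\epsilon_1)$ obtained in the proof of Theorem~\ref{BGGtaut}, this top piece is $M(\epsilon_1)$, which also matches the last arrow $V^{M(\epsilon_1)}\to K^{m|2n}_{\epsilon_1}\to 0$ of the resolution there.

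Since everything needed is already produced in Theorem~\ref{BGGtaut}, there is no substantial obstacle; the work is purely bookkeeping. The only points requiring a little care are that the piecewise definition of $\mu_k$ in Theorem~\ref{BGGtaut} has a gap at $k=d$ which must be filled by the second branch (equivalently, one checks directly, exactly as in that proof, that the degree-$d$ homology is the summand $M(-d\epsilon_1+2\epsilon_2+\epsilon_3+\cdots+\epsilon_d+\delta_1)$ of $C^d(\mn,K^{m|2n}_{\epsilon_1})$), and that the degree-zero term must be treated separately because the formula for $\mu_k$ degenerates there. Complete reducibility of $C^\bullet(\mn,K^{m|2n}_{\epsilon_1})$ is precisely what lets one move freely between $H_k(\overline{\mn},\cdot)$, $H^k(\mn,\cdot)$ and $\ker\Box_k$ throughout.
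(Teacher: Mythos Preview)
Your approach is correct and is precisely the paper's: the corollary is stated there without proof, being an immediate restatement of the computation $H_k(\overline{\mn},K^{m|2n}_{\epsilon_1})\cong M(-k\epsilon_1+\mu_k)$ carried out in the proof of Theorem~\ref{BGGtaut}, and you have simply substituted the two branches of $\mu_k$ and handled $k=0$ separately. (One small point worth flagging: your substitution for $k=d+i$ actually yields the coefficient $-(d+i)\epsilon_1$, in agreement with the resolution displayed in Theorem~\ref{BGGtaut}, whereas the corollary as printed has $-d\epsilon_1$; this is a typographical slip in the statement rather than a flaw in your argument.)
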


Another interesting case for $\osp$ is the adjoint representation, which is self-dual,
\[\mV=\osp=L^{m|2n}_{2\delta_1}=K^{m|2n}_{\epsilon_1+\epsilon_2}=\mV^\ast,\]
and its Cartan powers, see \cite{CSS}. In order to study the algebra of symmetries of the superconformal Laplace operator (see \cite{OSpHarm, MR2852289, CSS, MR2395482}) as an intertwining operator in Section 7 in \cite{CSS}, the first part of the BGG sequence in Theorem \ref{BGGseq} is needed for $\mV=L^{m|2n}_{2r\delta_1}=K^{m|2n}_{r\epsilon_1+r\epsilon_2}=\mV^\ast$.

\begin{theorem}
\label{forLapl}
Consider $\mg=\osp$ and $\ml=\mathfrak{cosp}(m-2|2n)$, with choice of positive simple roots as in equation \eqref{choicepr} for $m-2n\not=2$. The maximal submodule of the generalised Verma module with highest weight $\epsilon_1+\epsilon_2$ $V^{M(\epsilon_1+\epsilon_2)}$ is generated by a highest weight vector of weight and there is an exact complex
\begin{eqnarray*}
0\to K^{m|2n}_{\epsilon_1+\epsilon_2} \to \cJ(M(\epsilon_1+\epsilon_2))\to^{D_0} \cJ(M(2\epsilon_2))
\end{eqnarray*}
for some operator $\mg$-invariant operator $D_0$, which implies $K^{m|2n}_{\epsilon_1+\epsilon_2}\cong \ker(D_0)$.
\end{theorem}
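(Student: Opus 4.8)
The strategy is to apply the partial-resolution machinery of Theorem~\ref{partialBGG} with $p=1$, for the module $\mW=K^{m|2n}_{\epsilon_1+\epsilon_2}=\osp$, which by self-duality equals $\mW^\ast$. What must be checked is that $\ker\Box_0$ and $\ker\Box_1$ on $C^\bullet(\mn,\mW)$ are completely reducible $\ml$-modules and satisfy the multiplicity bound $[\ker\Box_0:M][\ker\Box_1:M]\le1$ for every irreducible $\ml$-module $M$. As in the proof of Theorem~\ref{BGGtaut}, complete reducibility will follow once we know the relevant $C^k(\mn,\mW)$ decompose with multiplicity one, using Theorem~3 in \cite{Tensor} together with the fact (from \cite{MR0546778}) that the $\ml=\mathfrak{cosp}(m-2|2n)$-irreducibles appearing have pairwise distinct quadratic Casimir eigenvalues; the hypothesis $m-2n\ne2$ ensures no accidental coincidences occur at this low level.

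The concrete computation is modest. First decompose $\mW=\osp$ as an $\ml$-module: since $\mn\cong\overline{\mn}^\ast\cong M(\epsilon_1)$ (up to the central $\epsilon_1$-grading, $\overline{\mn}\cong M(-\epsilon_1+\epsilon_2)$ as in Theorem~\ref{BGGtaut}) and $\osp$ as an $\mathfrak{osp}(m-2|2n)$-module is the adjoint plus a vector plus scalars, one gets $\mW\cong M(\epsilon_1+\epsilon_2)\oplus M(2\epsilon_2)\oplus M(\epsilon_2)\oplus M(0)\oplus M(-\epsilon_2)\oplus\cdots$ with the $\epsilon_1$-grading recorded. Then $C^0(\mn,\mW)=\mW$ and $C^1(\mn,\mW)=\overline{\mn}^\ast\otimes\mW\cong M(\epsilon_1-\epsilon_2)\otimes\mW$; expanding this tensor product via \cite{Tensor} gives an explicit multiplicity-free list. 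By Theorem~\ref{Thmquabla}/equation~\eqref{quablacenter} the operator $\Box$ acts as a scalar on each $\ml$-isotypic component, computable from the Casimir formula, and $\ker\Box_0$ is the sub-sum of components of $C^0$ whose scalar vanishes, similarly for $\ker\Box_1$. One identifies $H^0(\mn,\mW)=\ker\Box_0=M(\epsilon_1+\epsilon_2)$ (the cosocle of the highest-weight component) and checks that the next cohomology $H^1(\mn,\mW)\cong\ker\Box_1=M(2\epsilon_2)$, each occurring with multiplicity one, so the hypothesis of Theorem~\ref{partialBGG} holds with $p=1$. This yields the partial coresolution $0\to\mW^\ast\to\cJ(H_0(\mn,\mW^\ast))\to^{D_0}\cJ(H_1(\mn,\mW^\ast))$, i.e.\ $0\to K^{m|2n}_{\epsilon_1+\epsilon_2}\to\cJ(M(\epsilon_1+\epsilon_2))\to^{D_0}\cJ(M(2\epsilon_2))$, using $\mW^\ast\cong\mW$ and $H_0(\mn,\mW^\ast)\cong M(\epsilon_1+\epsilon_2)$, $H_1(\mn,\mW^\ast)\cong M(2\epsilon_2)$. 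Exactness at the first two spots is exactly the content of the (truncated) Theorem~\ref{BGGseq}, and gives $K^{m|2n}_{\epsilon_1+\epsilon_2}\cong\ker(D_0)$. The dual statement on generalised Verma modules, namely that the maximal submodule of $V^{M(\epsilon_1+\epsilon_2)}$ is generated by a single highest weight vector (of weight $2\epsilon_2$), follows from the second part of Corollary~\ref{submodVerma} with $p=1$, since $H^1(\mn,\mW)\cong M(2\epsilon_2)$ is irreducible, hence completely reducible, and the disjointness condition $\mathrm{im}\,\delta^\ast_1\cap\ker\Box=0$ is the $k=1$ instance of what we verified.

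\textbf{Main obstacle.} The only real work is the bookkeeping in decomposing $C^1(\mn,\mW)=M(\epsilon_1-\epsilon_2)\otimes\mW$ as an $\mathfrak{cosp}(m-2|2n)$-module and confirming that every constituent appears with multiplicity one and that, among all constituents of $C^0\oplus C^1$ sharing a given $\ml$-highest weight, the $\Box$-eigenvalue vanishes on at most one in each degree; this is where the branching rules of \cite{Tensor} and the Casimir eigenvalue formula of \cite{MR0546778} are needed, and where the excluded value $m-2n=2$ would otherwise cause a degeneracy. Everything after that is a direct application of Theorem~\ref{partialBGG} and Corollary~\ref{submodVerma}. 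I do not expect any conceptual difficulty beyond this finite representation-theoretic calculation, which parallels the one already carried out in Theorem~\ref{BGGtaut}.
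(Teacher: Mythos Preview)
Your overall strategy---apply Theorem~\ref{partialBGG} with $p=1$, verify the multiplicity hypothesis on $\ker\Box_0$ and $\ker\Box_1$, then invoke Corollary~\ref{submodVerma}---is exactly the paper's approach. However, your concrete decompositions are wrong, and you miss a case distinction that the paper must treat separately.

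First, the $\ml$-decomposition of $\mW=\osp$ is simply the triangular decomposition $\overline{\mn}\oplus\ml\oplus\mn$, which as $\mathfrak{cosp}(m-2|2n)$-modules reads
\[
K^{m|2n}_{\epsilon_1+\epsilon_2}\;\cong\; M(-\epsilon_1+\epsilon_2)\,\oplus\,\bigl(M(0)\oplus M(\epsilon_2+\epsilon_3)\bigr)\,\oplus\,M(\epsilon_1+\epsilon_2);
\]
there is no $M(2\epsilon_2)$ or $M(\epsilon_2)$ summand, and $\mn\cong M(\epsilon_1+\epsilon_2)$, not $M(\epsilon_1)$. Second, $C^1(\mn,\mW)=\Lambda^1\overline{\mn}\otimes\mW\cong M(-\epsilon_1+\epsilon_2)\otimes\mW$, not $\overline{\mn}^\ast\otimes\mW$. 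With the correct pieces, the paper uses Lemma~3.2 of \cite{CSS} and Corollary~1 of \cite{Tensor} to show that $C^1(\mn,\mW)$ is completely reducible and multiplicity-free provided $m-2n\notin\{2,3\}$, and then indeed $\ker\Box_1=M(2\epsilon_2)$.

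The value $m-2n=3$ needs a separate argument you do not anticipate: there $M(-\epsilon_1+\epsilon_2)\otimes M(\epsilon_2+\epsilon_3)$ contains an indecomposable piece $R$, so $C^1$ is \emph{not} completely reducible and your stated justification (``multiplicity one forces semisimplicity'') fails. The paper salvages this by checking that the highest-weight vectors of $R$ have nonzero $\Box$-eigenvalue, whence $R$ has trivial intersection with the generalised zero eigenspace and $\ker\Box_1$ itself remains completely reducible---which is all Theorem~\ref{partialBGG} actually requires. Once these corrections are made, your plan goes through and coincides with the paper's proof.
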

\begin{proof}
In this proof we assume $m\ge 8$, only some unimportant notations of highest weights change when we would consider $4\le m\le 7$. The decomposition of $\osp=\overline{\mn}\oplus\ml\oplus \mn$ as an $\ml=\mathfrak{cosp}(m-2|2n)$-module is given by
\begin{eqnarray*}
K^{m|2n}_{\epsilon_1+\epsilon_2}&\cong&M(-\epsilon_1+\epsilon_2)\oplus \left(M(0)\oplus M(\epsilon_2+\epsilon_3) \right)\oplus M(\epsilon_1+\epsilon_2).
\end{eqnarray*}
Then it follows from Lemma 3.2 in \cite{CSS} that if $m\not=2+2n$
\begin{eqnarray*}
C^1(\mn,K^{m|2n}_{\epsilon_1+\epsilon_2})&=&M(-2\epsilon_1+2\epsilon_2)\oplus M(-2\epsilon_1+\epsilon_2+\epsilon_3)\oplus M(-2\epsilon_1)\oplus M(-\epsilon_1+\epsilon_2)\oplus M(2\epsilon_2)\\
&&\oplus M(\epsilon_2+\epsilon_3)\oplus M(0)\oplus\, \left(M(-\epsilon_1+\epsilon_2)\otimes M(\epsilon_2+\epsilon_3)\right).
\end{eqnarray*}
By calculating the highest weight vectors and the corresponding eigenvalues of the quadratic Casimir operator it follows from Corollary 1 in \cite{Tensor} that 
\begin{eqnarray*}
M(-\epsilon_1+\epsilon_2)\otimes M(\epsilon_2+\epsilon_3)&\cong&M(-\epsilon_1+2\epsilon_2+\epsilon_3)\oplus M(-\epsilon_1+\epsilon_2+\epsilon_3+\epsilon_4)\oplus M(-\epsilon_1+\epsilon_2).
\end{eqnarray*}
if $M\not=3$. Thus $C^1(\mn,K^{m|2n}_{\epsilon_1+\epsilon_2})$ is completely reducible and multiplicity free as a $\mathfrak{cosp}(m-2n|2n)$-module if $m-2n\not\in \{2,3\}$ and Theorem \ref{partialBGG} can be applied. This states that the homology group $H^1(\mn,K^{m|2n}_{\epsilon_1+\epsilon_2})$ is a quotient of $\ker\Box_1= M(2\epsilon_2)$, so either zero or isomorphic to $M(2\epsilon_2)$. Theorem \ref{partialBGG} also sates that the proposed part of the BGG resolution exists, which immediately implies that $H^1(\mn,K^{m|2n}_{\epsilon_1+\epsilon_2})$ can not be zero since otherwise $K^{m|2n}_{\epsilon_1+\epsilon_2} \cong \cJ(M(\epsilon_1+\epsilon_2))$ would hold.

Finally we focus on the case $m-2n=3$. The difference is that $C^1(\mn,K^{m|2n}_{\epsilon_1+\epsilon_2})$ might be not completely reducible. If $m-2n=3$,
\[M(-\epsilon_1+\epsilon_2)\otimes M(\epsilon_2+\epsilon_3)=M(-\epsilon_1+\epsilon_2+\epsilon_3+\epsilon_4)\oplus R\]
with $R$ a module with two highest weight vectors. The highest weight vectors are eigenvectors of $\Box$ by equation \eqref{Boxg0}. If there was a vector in $R$ belonging to generalised eigenspace with eigenvalue zero, this would generate a submodule of $R$ which must contain a highest weight vector, which is impossible this vector would have eigenvalue zero. Therefore $R$ has trivial intersection with the generalised eigenspace of $\Box$ with eigenvalue zero and Theorem \ref{partialBGG} can be applied.
\end{proof}

\begin{corollary}
\label{forLapl2}
Consider $\mg=\mathfrak{osp}(2n+2|2n)$ and $\ml=\mathfrak{cosp}(2n|2n)$, with choice of positive simple roots as in equation \eqref{choicepr}. There is an exact complex
\begin{eqnarray*}
0\to K^{2n+2|2n}_{\epsilon_1+\epsilon_2} \to \cJ(M(\epsilon_1+\epsilon_2))\to^{D_0} \cJ(Q)
\end{eqnarray*}
with $Q$ a quotient of $M(\epsilon_2)\odot M(\epsilon_2)$, which implies $K^{2n+2|2n}_{\epsilon_1+\epsilon_2}\cong \ker(D_0)$.
\end{corollary}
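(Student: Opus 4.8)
The plan is to treat this as the case $m-2n=2$ that was deliberately excluded from Theorem~\ref{forLapl}: here $\mg=\mathfrak{osp}(2n+2|2n)$, $\ml=\cosp(2n|2n)$, and $\mW=K^{2n+2|2n}_{\epsilon_1+\epsilon_2}=\osp$ is the adjoint representation, so that $\mW$ is self-dual, $\mW\cong\mW^\vee$, and $\overline{\mn}\cong\mn\cong M(\epsilon_2)$ (up to grading shift) while $\ml\cong M(0)\oplus M(\epsilon_2+\epsilon_3)$. Following the proof of Theorem~\ref{forLapl} I would first write out $C^0(\mn,\mW)=\mW\cong M(-\epsilon_1+\epsilon_2)\oplus M(0)\oplus M(\epsilon_2+\epsilon_3)\oplus M(\epsilon_1+\epsilon_2)$ and the $\ml$-decomposition of $C^1(\mn,\mW)=\overline{\mn}\otimes\mW$, using the decompositions of $M(\epsilon_2)\otimes M(\epsilon_2)$ and $M(\epsilon_2)\otimes M(\epsilon_2+\epsilon_3)$ for $\mathfrak{osp}(2n|2n)$ together with the Casimir eigenvalues of \cite{MR0546778}. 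In particular $H_0(\overline{\mn},\mW)=M(\epsilon_1+\epsilon_2)$, which accounts for the occurrence of $\cJ(M(\epsilon_1+\epsilon_2))$.

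The new phenomenon, and the reason $m-2n=2$ had to be excluded, is that the symmetric square $M(\epsilon_2)\odot M(\epsilon_2)$ of the natural $\mathfrak{osp}(2n|2n)$-module — which appears as a summand in grading weight zero of $C^1(\mn,\mW)$ — is not completely reducible: since $\mathrm{sdim}\,\mathbb{C}^{2n|2n}=0$, the invariant metric spans a trivial submodule on which the trace form degenerates, so $M(\epsilon_2)\odot M(\epsilon_2)$ is an indecomposable non-split extension of the traceless part $M(2\epsilon_2)$ by $M(0)$. Moreover the dual Coxeter number of $\mathfrak{osp}(2n+2|2n)$ vanishes (equivalently, the Killing form is identically zero, see Section~\ref{KillingStar}), so $\cC_2$ takes the same value $0$ on the adjoint and on the trivial $\mg$-module; by equation~\eqref{Boxg0} both composition factors of this indecomposable summand then lie in the generalised zero eigenspace $C^1(\mn,\mW)_0$ of $\Box_1$. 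Hence $C^1(\mn,\mW)_0$ fails to be completely reducible, and one checks directly that $\mathrm{im}\,\delta^\ast_1\cap\ker\Box\neq 0$ (the obstruction being exactly the trace direction $M(0)\subseteq\ml$), so neither Theorem~\ref{criterion} nor Corollary~\ref{submodVerma} can be invoked off the shelf.

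To get around this I would use only the first two terms of the coresolution of Theorem~\ref{Rhamcomplex} for $\mV=\mW^\vee\cong\mW$, i.e. the exact sequence $0\to\mW\to^{\epsilon}\cJ(\mW)\to^{d_0}\cJ(C^1(\overline{\mn},\mW))$, and the $\mg$-invariant map $\pi:\cJ(C^1(\overline{\mn},\mW))\tto\cJ(Q)$ induced by a $\Mp$-module quotient $C^1(\overline{\mn},\mW)\tto Q$ in which every summand except (a quotient of) the indecomposable $M(\epsilon_2)\odot M(\epsilon_2)$ is collapsed. The step to carry out is that $\ker(\pi\circ d_0)=\ker d_0=\mathrm{im}\,\epsilon$, equivalently that $\mathrm{im}\,d_0$ meets $\cJ\bigl(\ker(C^1\tto Q)\bigr)$ only in $0$: here one uses the $\ml$-decomposition above together with the fact that each summand of $C^1$ other than $M(\epsilon_2)\odot M(\epsilon_2)$ carries a nonzero $\cC_2$-shift (by the eigenvalue computation in the proof of Theorem~\ref{structure cohomgroups}), so none of those summands can absorb a part of $\mathrm{im}\,d_0$ not already coming from $\mathrm{im}\,\epsilon$. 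Precomposing with the Borel--Weil-type embedding $\mW\hookrightarrow\cJ(M(\epsilon_1+\epsilon_2))$ (the map $\epsilon'$ of Theorem~\ref{BGGseq}) then yields $D_0:\cJ(M(\epsilon_1+\epsilon_2))\to\cJ(Q)$ with $\ker D_0\cong\mW$. The same argument can be phrased on the induced side: the maximal submodule of the generalised Verma module $V^{M(\epsilon_1+\epsilon_2)}$ is a quotient of $\cU(\mg)\otimes_{\cU(\Mp)}\bigl(M(\epsilon_2)\odot M(\epsilon_2)\bigr)$, and dualising this exact sequence of $\mg$-modules produces the complex; the identification $K^{2n+2|2n}_{\epsilon_1+\epsilon_2}\cong\ker D_0$ is then immediate from exactness at $\cJ(M(\epsilon_1+\epsilon_2))$.

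The main obstacle is precisely this non-semisimplicity of $C^1(\overline{\mn},\mW)$ inside the zero eigenspace of $\Box$: the harmonic-theoretic splitting of Section~\ref{BGG} (the operators $\Pi$, $L$, $D$) is not available, so the exactness of the first arrow and the control on the target module $Q$ must be extracted by hand from the $\cosp(2n|2n)$-module structure of $\osp$ and the structure of $V^{M(\epsilon_1+\epsilon_2)}$, rather than from the general theorems. The delicate point within that is pinning down exactly which quotient $Q$ of $M(\epsilon_2)\odot M(\epsilon_2)$ actually occurs — the full indecomposable module or only its traceless quotient $M(2\epsilon_2)$ — which is why the statement is left at the level of ``a quotient of $M(\epsilon_2)\odot M(\epsilon_2)$''.
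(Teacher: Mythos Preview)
Your diagnosis of the degeneracy at $m-2n=2$ is sharp and your structural picture of $M(\epsilon_2)\odot M(\epsilon_2)$ is correct, but the approach diverges from the paper's at exactly the point you flag. The paper does \emph{not} abandon the splitting machinery of Section~\ref{BGG}: its argument is that, even though $C^1(\overline{\mn},\mW)_0$ is indecomposable, the condition $\mbox{im}\,\partial^\ast\cap\ker\Box_0=0$ still holds in degree~$0$. The reason given is that $\partial^\ast_1$ restricted to the zero eigenspace is an $\ml$-morphism $M(\epsilon_2)\odot M(\epsilon_2)\to C^0_0$, and the paper argues no such nonzero morphism exists with the required target. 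Granting this, one may still form $\Pi_0$, $L_0$ and hence $D_0=p_1\circ d_0\circ L_0$ exactly as in Definition~\ref{defD}; then Corollary~\ref{Boxcohom3} combined with $\ker\partial^\ast\cap C^1(\overline{\mn},\mV)_0=M(\epsilon_2)\odot M(\epsilon_2)$ yields $H_1(\mn,\mV)$ as a quotient $Q$ of $M(\epsilon_2)\odot M(\epsilon_2)$, and the exactness follows from the proof of Theorem~\ref{BGGseq}. So the paper's route is: check the hypothesis of Lemma~\ref{inversequabla} survives, then run the standard construction.

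Your alternative route has a genuine gap at the step ``precomposing with the Borel--Weil-type embedding $\mW\hookrightarrow\cJ(M(\epsilon_1+\epsilon_2))$ then yields $D_0$''. What you have built is $\pi\circ d_0:\cJ(\mW)\to\cJ(Q)$; what you need is $D_0:\cJ(H_0(\mn,\mW))\to\cJ(Q)$. The only canonical map relating these is the \emph{quotient} $p_0:\cJ(\mW)\to\cJ(H_0)$, which goes the wrong direction for precomposition. To produce $D_0$ you must either (i) exhibit a section $L_0:\cJ(H_0)\to\cJ(\mW)$ --- which is precisely the splitting operator you have declared unavailable --- or (ii) show that $\pi\circ d_0$ factors through $p_0$, i.e.\ that $\pi\circ d_0$ kills $\cJ(\mbox{im}\,\partial^\ast_1)$. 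Neither is addressed, and (ii) is essentially the same verification the paper performs: one has to understand how $d_0$ interacts with $\mbox{im}\,\partial^\ast_1$, which via $\widetilde\Box=d\partial^\ast+\partial^\ast d$ again comes down to the behaviour of $\Box$ on $\mbox{im}\,\partial^\ast$. So the obstruction you identify does not disappear by passing to the twisted de~Rham picture; it has to be confronted, and the paper's strategy is to confront it directly rather than to route around it.
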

\begin{proof}
The non-complete reducibility of $C^1(\mn,K^{2n+2|2n}_{\epsilon_1+\epsilon_1})$ is located in the generalised eigenspace with eigenvalue zero, therefore Theorem \ref{partialBGG} is not applicable. Since we are mainly interested in the sequence on the co-induced modules we immediately work with $C^1(\overline{\mn}, K^{2n+2|2n}_{\epsilon_1+\epsilon_1})$.

The result in the proof of Theorem \ref{forLapl} show that
\begin{eqnarray*}
C^1(\overline{\mn},K^{2n+2|2n}_{\epsilon_1+\epsilon_2})&=&M(2\epsilon_1)\otimes\left(M(\epsilon_2)\odot M(\epsilon_2)\right)\oplus M(2\epsilon_1+\epsilon_2+\epsilon_3)\oplus \left(M(\epsilon_2)\odot M(\epsilon_2)\right)\oplus M(\epsilon_2+\epsilon_3)\\
&&\oplus M(\epsilon_1+2\epsilon_2+\epsilon_3)\oplus M(\epsilon_1+\epsilon_2+\epsilon_3+\epsilon_4)\oplus M(\epsilon_1+\epsilon_2),
\end{eqnarray*}
where $M(\epsilon_2)\odot M(\epsilon_2)$ is an indecomposable module, see \cite{OSpHarm} having two submodules: 
\[M(\epsilon_2)\odot M(\epsilon_2)\supset L\supset M(0).\] 
Here, $L$ is an indecomposable highest weight module, such that $M(\epsilon_2)\odot M(\epsilon_2)/L\cong M(0)$ and $L/M(0)\cong M(2\epsilon_2)$. The fact that $\mbox{im}\partial^\ast\cap\ker\Box_0=0$ can still be proven as follows. If $\mbox{im}\partial^\ast\cap\ker\Box_0\not=0$ there must be a vector in $M(-\epsilon_1+\epsilon_2)$ which is the image of a vector in $C^1(\overline{\mn},K^{2n+2|2n}_{\epsilon_1+\epsilon_2})_0=M(\epsilon_2)\odot M(\epsilon_2)$ under an $\ml$-morphism, which is impossible

The result then follows from the reasoning leading to Theorem \ref{BGGseq} and the combination of Corollary \ref{Boxcohom3} with $\mbox{ker}\partial^\ast\cap C^1(\overline{\mn},\mV)_0=M(\epsilon_2)\odot M(\epsilon_2)$.
\end{proof}

The results in Theorem \ref{forLapl} and Corollary \ref{forLapl2} perfectly agree with the conjectured $\osp$-representation structure of the space of conformal Killing vector fields in Section 7 of \cite{CSS}.

\section{Type I Lie superalgebras: typical and singly atypical modules}
\label{sectyp}
The basic classical Lie superalgebras of type $I$ have a $\mZ$-gradation of the form $\mg=\mg_{-1}+\mg_0+\mg_1$, with $\mg_{\overline{0}}=\mg_0$ and $\mg_{\overline{1}}=\mg_{-1}+\mg_1$. The set of highest weights of irreducible finite dimensional $\mg$-representations is the same as the corresponding set for $\mg_0$. Denote by $\mV^0_\lambda$ the irreducible $\mg_0$-representation with highest weight $\lambda$, the corresponding Kac module is defined by
\[K_\lambda=\cU(\mg)\otimes_{\cU(\mg_0+\mg_1)}\mV^0_\lambda\]
where trivial action $\mg_1\mV^0_\lambda=0$ is assumed. If this module is irreducible, it is equal to the corresponding irreducible module $\mV_\lambda$, the weight $\lambda$ and the module $\mV_\lambda$ are then called typical. Otherwise $K_\lambda$ has a unique maximal submodule and the corresponding quotient is isomorphic to $\mV_\lambda$. Such representations and weights are known as atypical, see \cite{MR0486011}.

\begin{lemma}
\label{KacBGG}
Consider $\mg$ a basic classical Lie superalgebra of type I and a parabolic subalgebra $\Mp$ such that $\ml\subset \mg_0$. For each integral dominant weight $\lambda$ there is a resolution of $K_\lambda$ of the form
\begin{eqnarray*}
&&0\rightarrow\bigoplus_{w\in W^1(\dim\mn_0)}\cU(\mg)\otimes_{\cU(\Mp)}{M(w\cdot\lambda)} \to\cdots\to \bigoplus_{w\in W^1(j)}\cU(\mg)\otimes_{\cU(\Mp)}{M(w\cdot\lambda)}\to\cdots\\
&&\to \bigoplus_{w\in W^1(1)}\cU(\mg)\otimes_{\cU(\Mp)}{M(w\cdot\lambda)}\to \cU(\mg)\otimes_{\cU(\Mp)}{M(\lambda)} \to K_\lambda\to0,
\end{eqnarray*}
with $M(\mu)$ the irreducible $\ml$-module with highest weight $\mu$ and $W^1$ the coset of the Weyl groups corresponding to $\mg_0$, $\Mp_0=\ml+\mn_0$, where $\mn=\mn_0+\mg_1$ and $w\cdot\lambda=w(\lambda+\rho_0)-\rho_0=w(\lambda+\rho)-\rho$ the corresponding action.
\end{lemma}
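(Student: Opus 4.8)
The plan is to bootstrap the resolution from Lepowsky's classical BGG resolution \eqref{resLep} for the (reductive) Lie algebra $\mg_0$, exploiting that a type I superalgebra $\mg=\mg_{-1}+\mg_0+\mg_1$ has $\mg_{\pm1}$ abelian. First I would fix the dictionary between the two parabolics. Since $\ml\subset\mg_0$, the subspace $\Mp_0=\ml+\mn_0$ is a genuine parabolic subalgebra of $\mg_0$ with Levi $\ml$ and nilradical $\mn_0$, and $\Mp=\Mp_0+\mg_1$ with complement $\overline{\mn}=\overline{\mn}_0+\mg_{-1}$, where $\mg_0=\overline{\mn}_0+\ml+\mn_0$. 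Extending any $\mg_0$-module trivially on $\mg_1$ turns it into a $(\mg_0+\mg_1)$-module; this is consistent precisely because $\mg_1$ is abelian, so for $X,Y\in\mg_1$ the super-Jacobi relation $X(Yv)-(-1)^{|X||Y|}Y(Xv)=[X,Y]v$ reads $0=0$, and $\mg_0$-linear maps between such trivial extensions are automatically $(\mg_0+\mg_1)$-linear. In particular $K_\lambda=\cU(\mg)\otimes_{\cU(\mg_0+\mg_1)}\mV^0_\lambda$ with $\mV^0_\lambda$ so extended.

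The steps would be: (i) apply Lepowsky's theorem to $(\mg_0,\Mp_0,\mV^0_\lambda)$, giving a finite exact complex of $\mg_0$-modules
\begin{eqnarray*}
0\to\bigoplus_{w\in W^1(\dim\mn_0)}V_0^{M(w\cdot\lambda)}\to\cdots\to V_0^{M(\lambda)}\to\mV^0_\lambda\to0
\end{eqnarray*}
with $V_0^{N}=\cU(\mg_0)\otimes_{\cU(\Mp_0)}N$ and $W^1$ the coset of the Weyl group of $\mg_0$ by that of $\ml$; (ii) check that the $\rho_0$-shifted and $\rho$-shifted dot actions agree on $W^1$, because $\rho-\rho_0=-\frac12\sum_{\alpha\in\Delta_1^+}\alpha$ is half the sum of the weights of the $\mg_0$-module $\mg_1$ and hence $W^1$-invariant (as $W^1\subseteq W_{\mg_0}$ permutes the weights of any $\mg_0$-module); (iii) extend the whole complex trivially on $\mg_1$ as above; (iv) apply $\cU(\mg)\otimes_{\cU(\mg_0+\mg_1)}(-)$, which is exact because $\cU(\mg)$ is free as a right $\cU(\mg_0+\mg_1)$-module by PBW ($\cU(\mg)\cong\cU(\mg_{-1})\otimes\cU(\mg_0+\mg_1)$), obtaining an exact complex of $\mg$-modules resolving $K_\lambda$; (v) identify each term $\cU(\mg)\otimes_{\cU(\mg_0+\mg_1)}\widetilde{V_0^{M}}$ (tilde denoting trivial $\mg_1$-action) with the $\Mp$-Verma module $\cU(\mg)\otimes_{\cU(\Mp)}M$, where $M$ is viewed as an $\ml$-module with trivial $\mn=\mn_0+\mg_1$ action.

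The only non-routine step is (v). By transitivity of induction along $\Mp\subset\mg_0+\mg_1\subset\mg$ it reduces to showing $\cU(\mg_0+\mg_1)\otimes_{\cU(\Mp)}M\cong\widetilde{V_0^{M}}$ as $(\mg_0+\mg_1)$-modules. Using PBW for $\mg_0+\mg_1=\mg_0\ltimes\mg_1$, both sides are $\cU(\overline{\mn}_0)\otimes M$ as vector spaces with matching $\mg_0$-action, so it remains to verify that $\mg_1$ acts by zero on $\cU(\mg_0+\mg_1)\otimes_{\cU(\Mp)}M$: for $X\in\mg_1$ and $u\in\cU(\overline{\mn}_0)$ one writes $Xu=\sum_i a_ib_i$ with $a_i\in\cU(\mg_0)$ and $b_i$ in the augmentation ideal of $\cU(\mg_1)$ — possible since $\overline{\mn}_0\oplus\mg_1$ is a subalgebra with $\mg_1$ an abelian ideal — whence $Xu\otimes m=\sum_i a_i\otimes b_im=0$. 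I expect this bookkeeping of which subalgebra acts how, rather than any conceptual difficulty, to be the main obstacle; everything else is an assembly of Lepowsky's resolution, flatness of a free extension of scalars, and the abelianness of $\mg_{\pm1}$. (One could instead try to derive this from the general machinery of Theorem \ref{finalthm}, but $K_\lambda$ is reducible for atypical $\lambda$ whereas that theorem assumes $\mW$ irreducible, so the direct route above is cleaner.)
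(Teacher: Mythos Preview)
Your proposal is correct and follows essentially the same route as the paper: start from Lepowsky's classical resolution for $(\mg_0,\Mp_0,\mV^0_\lambda)$, then apply the exact induction functor $\cU(\mg)\otimes_{\cU(\mg_0+\mg_1)}(-)$ (exactness coming from PBW freeness, which the paper instead cites from \cite{MR2059616}). Your steps (ii) and (v), the $\rho$-shift compatibility and the identification $\cU(\mg)\otimes_{\cU(\mg_0+\mg_1)}\widetilde{V_0^{M}}\cong \cU(\mg)\otimes_{\cU(\Mp)}M$, are exactly the bookkeeping the paper leaves implicit; your $\mZ$-grading argument that $Xu$ lands in $\cU(\overline{\mn}_0)\cdot\mg_1$ is the right way to see $\mg_1$ acts trivially on the intermediate induced module.
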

\begin{proof}
The functor $M\to \cU(\mg)\otimes_{\cU(\mg_0+\mg_1)}M$ from $\cC(\mg_0+\mg_1,\ml)$ to $\cC(\mg,\ml)$ is exact, where $\cC(\ma,\mc)$ stands for the category of $\ma$-modules which are locally $\cU(\mc)$-finite and semisimple as $\mc$-modules, see Lemma 5.2 in \cite{MR2059616}. Applying this exact induction functor on the BGG resolution in \cite{lepowsky} for $\mV_\lambda^0$ then yields the desired resolution.
\end{proof}

\begin{corollary}
Consider $\mg$ a basic classical Lie superalgebra of type I and a parabolic subalgebra $\Mp$ such that $\ml\subset \mg_0$. Each typical irreducible finite dimensional $\mg$-module has a resolution in terms of generalised Verma modules of finite length, given in Lemma \ref{KacBGG}.

For a general Kac module, the property
\[H_k(\overline{\mn},K_\lambda)=\bigoplus_{w\in W^1(k)}\mC_{w\cdot \lambda}\]
holds for $k\le\dim\mn_0$ and $H_k(\overline{\mn},K_\lambda)=0$ for $k>\dim\mn_0$.
\end{corollary}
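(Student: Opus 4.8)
The plan is to treat the two assertions in turn, obtaining the first essentially for free from Lemma \ref{KacBGG} and reducing the second to the classical Kostant theorem for the reductive Lie algebra $\mg_0$. For the first assertion, recall that a typical weight $\lambda$ is by definition one for which the Kac module $K_\lambda$ is already irreducible, so $K_\lambda=\mV_\lambda$. Lemma \ref{KacBGG} then resolves $\mV_\lambda$ by a complex whose $k$-th term is $\bigoplus_{w\in W^1(k)}\cU(\mg)\otimes_{\cU(\Mp)}M(w\cdot\lambda)$, a direct sum of modules induced from the irreducible $\ml$-modules $M(w\cdot\lambda)$ and hence a direct sum of generalised Verma modules; since $W^1$ is finite and every $w\in W^1$ has length at most $\dim\mn_0$, this complex is concentrated in degrees $0,\dots,\dim\mn_0$ and the resolution has finite length. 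Nothing beyond Lemma \ref{KacBGG} is needed here.

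For the second assertion I would compute $H_\bullet(\overline{\mn},K_\lambda)$ for arbitrary $\lambda$ by reducing to the homology of $\overline{\mn}_0$ in $\mg_0$. Write $\mg=\overline{\mn}_0+\mg_{-1}+\ml+\mn_0+\mg_1$ with $\overline{\mn}=\overline{\mn}_0+\mg_{-1}$, $\mn=\mn_0+\mg_1$ and $\Mp=\ml+\mn_0+\mg_1$, where $\overline{\mn}_0$ is the nilradical of the opposite parabolic of $\Mp_0=\ml+\mn_0$ in $\mg_0$. Because $[\mg_0,\mg_{-1}]\subseteq\mg_{-1}$ and $[\mg_{-1},\mg_{-1}]\subseteq\mg_{-2}=0$, the purely odd abelian subalgebra $\mg_{-1}$ is an ideal of $\overline{\mn}$ with $\overline{\mn}/\mg_{-1}\cong\overline{\mn}_0$, so the Hochschild--Serre spectral sequence for Lie superalgebra homology gives
\begin{eqnarray*}
E^2_{pq}=H_p\!\left(\overline{\mn}_0,\,H_q(\mg_{-1},K_\lambda)\right)\ \Longrightarrow\ H_{p+q}(\overline{\mn},K_\lambda).
\end{eqnarray*}
By Poincar\'e--Birkhoff--Witt, $K_\lambda=\cU(\mg)\otimes_{\cU(\mg_0+\mg_1)}\mV^0_\lambda\cong\cU(\mg_{-1})\otimes\mV^0_\lambda$ is a free $\cU(\mg_{-1})$-module, so $H_q(\mg_{-1},K_\lambda)=0$ for $q>0$ and $H_0(\mg_{-1},K_\lambda)=K_\lambda/\mg_{-1}K_\lambda\cong\mV^0_\lambda$ as a $\mg_0$-module, hence as an $\overline{\mn}_0$-module. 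The spectral sequence collapses onto its bottom row, yielding $\ml$-module isomorphisms $H_k(\overline{\mn},K_\lambda)\cong H_k(\overline{\mn}_0,\mV^0_\lambda)$. Kostant's theorem for $(\mg_0,\Mp_0)$, recalled in Section \ref{classical}, identifies the right-hand side with $\bigoplus_{w\in W^1(k)}M(w\cdot\lambda)$, which vanishes once $k>\dim\mn_0=\dim\overline{\mn}_0$; this is the asserted formula.

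The step I expect to be the real content is the vanishing $H_k(\overline{\mn},K_\lambda)=0$ for $k>\dim\mn_0$. Since $\overline{\mn}$ contains the large purely odd abelian piece $\mg_{-1}$, the $\overline{\mn}$-homology of a generic module is nonzero in arbitrarily high degree, so this vanishing genuinely uses that $K_\lambda$ is induced from $\mg_0+\mg_1$ and is therefore free over $\cU(\mg_{-1})$ --- precisely what makes the higher rows of the spectral sequence disappear. An equivalent packaging avoiding spectral sequences is to note that, again by Poincar\'e--Birkhoff--Witt, the resolution of Lemma \ref{KacBGG} consists of free $\cU(\overline{\mn})$-modules, hence is a projective $\overline{\mn}$-resolution of $K_\lambda$ of length $\dim\mn_0$; applying $\mC\otimes_{\cU(\overline{\mn})}(-)$ turns it into the complex with $k$-th term $\bigoplus_{w\in W^1(k)}M(w\cdot\lambda)$ computing $H_\bullet(\overline{\mn},K_\lambda)$, and one then only has to check that its $\ml$-equivariant differentials vanish --- which is again Kostant's vanishing. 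Either way, once the reduction to $\mg_0$ is in place the conclusion is immediate from the classical theorem transported along the exact induction functor used in Lemma \ref{KacBGG}.
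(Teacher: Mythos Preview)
Your proof is correct. The paper states this corollary without proof, treating it as an immediate consequence of Lemma~\ref{KacBGG} together with the general machinery of Section~\ref{BGG}: the resolution of Lemma~\ref{KacBGG} is a projective $\overline{\mn}$-resolution, and since the irreducible $\ml$-modules $M(w\cdot\lambda)$ for distinct $w\in W^1$ are pairwise non-isomorphic, the argument of Proposition~\ref{BottfromBGG} (whose proof does not actually use irreducibility of the module being resolved) forces the induced maps $\phi_j$ to vanish, giving $H_k(\overline{\mn},K_\lambda)\cong\bigoplus_{w\in W^1(k)}M(w\cdot\lambda)$ directly. This is precisely your ``equivalent packaging avoiding spectral sequences''.

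Your primary route via the Hochschild--Serre spectral sequence for the abelian ideal $\mg_{-1}\trianglelefteq\overline{\mn}$ is a genuinely different and arguably cleaner argument: it isolates the key structural fact (freeness of $K_\lambda$ over $\cU(\mg_{-1})$) and reduces immediately to the classical Kostant theorem for $(\mg_0,\Mp_0)$, without ever invoking the resolution of Lemma~\ref{KacBGG} or the homological bookkeeping of Section~\ref{BGG}. The paper's implicit approach has the advantage of staying entirely within the framework already set up; yours has the advantage of explaining transparently \emph{why} the homology is purely even, and would work equally well in settings where no BGG resolution is yet in hand.
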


\begin{remark}
It can be proved that the typical modules for basic classical Lie superalgebras $\mg$ of type I are the only simple modules that possess BGG resolutions (weak or strong) that have finite length. This is a consequence of the fact that only typical modules possess the property that the $\overline{\mn}$-homology groups vanish after a certain degree. This also implies that typical simple $\mg$-modules are the only simple $\mg$-modules that have finite projective dimension as $\overline{\mn}$-modules.
\end{remark}

Finally we state some related results for singly atypical modules of Lie superalgebras of type I. These
are simple modules for which the highest weight $\lambda$ satisfies $\langle \lambda+\rho, \gamma\rangle = 0$ for exactly one positive isotropic
root $\gamma$. Then $\lambda$ and $\mV(\lambda)$ are said to be singly atypical of type $\gamma$. For $\mg = \mathfrak{osp}(2|2n)$ all non-typical simple
modules are singly atypical. It follows from the results in [42, 43] that singly atypical modules possess
BGG resolutions for parabolic subalgebra equal to $\mg_0+\mg_1$. Typical modules are equal to the Kac module
and therefore possess a trivial BGG resolution of this type.

\begin{lemma} 
For $\mg$ a basic classical Lie superalgebra of type I and $\mV(\lambda)$ singly atypical of type $\gamma$, $\mV(\lambda)$ has
an infinite BGG resolution in terms of Kac modules of the form
\[\cdots \to K_{\lambda^{(j)}} \to\cdots\to K_{\lambda^{(1)}} \to K_{\lambda}\to \mV(\lambda) \to 0.\]
Here the weights $\lambda^{(j)}$ can be explicitly obtained by the procedure described in [42, 43]. For the particular case where $\gamma$ is the simple isotropic root, $\lambda^{(j)}=\lambda-j\gamma$ holds.
\end{lemma}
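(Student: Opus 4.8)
The plan is to realise the claimed resolution as an instance of the general construction of Sections \ref{Kostant}--\ref{BGG}, specialised to the maximal parabolic $\Mp = \mg_0 + \mg_1$, with the combinatorial input describing singly atypical modules taken from [42,43]. For this choice one has $\ml = \mg_0$, $\mn = \mg_1$ and $\overline{\mn} = \mg_{-1}$, and the first point to record is purely formal: the generalised Verma module $V^{M(\mu)} = \cU(\mg)\otimes_{\cU(\Mp)}M(\mu)$ attached to $M(\mu) = \mV^0_\mu$ (with trivial $\mg_1$-action) is precisely the Kac module $K_\mu$, so that a resolution by direct sums of $\Mp$-Verma modules is exactly a resolution by Kac modules, and the list of weights appearing is the list of homology weights.

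Next I would observe that, since $\ml = \mg_0$ is reductive, every finite dimensional $\ml$-module is completely reducible; in particular each $C^k(\mg_1,\mV(\lambda))$ and each $H_k(\overline{\mn},\mV(\lambda))$ is completely reducible, so the necessary condition of Theorem \ref{onlyHk} is automatically fulfilled, Lemma \ref{compred} gives $\ker\Box = C^\bullet(\mg_1,\mV(\lambda))_0$, and Theorem \ref{onlyHk} together with Corollary \ref{onlyHk2} forces any BGG resolution of $\mV(\lambda)$ to be of the form $\cdots \to V^{H_k(\overline{\mn},\mV(\lambda))} \to \cdots \to V^{H_0(\overline{\mn},\mV(\lambda))} \to \mV(\lambda) \to 0$. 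To establish existence, by Corollary \ref{finalthmCR} (through Corollary \ref{Boxcohom4}) it suffices to verify $H_\bullet(\overline{\mn},\mV(\lambda)) \cong \ker\Box$, equivalently the disjointness of $\delta$ and $\delta^\ast$ on the Koszul complex $C^\bullet(\mg_1,\mV(\lambda)^\ast)$. This is where the singly atypical hypothesis does the real work: the results of [42,43] describe the $\mg_{-1}$-homology of $\mV(\lambda)$ — in each degree $k$ it is a single irreducible $\mg_0$-module $\mV^0_{\lambda^{(k)}}$, with $\lambda^{(0)} = \lambda$, and these occur with multiplicity one inside $\Lambda^k\mg_{-1}\otimes\mV(\lambda)$. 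Combined with the Casimir expression for $\Box$ in Theorem \ref{Thmquabla}, this multiplicity-one property is exactly the hypothesis of the criterion of Theorem \ref{partialBGG} with $p = \infty$, which then yields disjointness of $\delta$ and $\delta^\ast$ and hence the resolution $\cdots \to K_{\lambda^{(j)}} \to \cdots \to K_{\lambda^{(1)}} \to K_\lambda \to \mV(\lambda) \to 0$.

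It remains to identify the weights $\lambda^{(j)}$ and to treat the special case. For the former, Theorem \ref{structure cohomgroups} says that $H_k(\overline{\mn},\mV(\lambda))$ collects precisely those $\ml$-constituents of $\Lambda^k\mg_{-1}\otimes\mV(\lambda)$ of highest weight $\mu$ with $\cC_2(\mV(\lambda)) = \cC_2(\mV_\mu)$, and matching this against the explicit list produced by the procedure of [42,43] gives the $\lambda^{(j)}$. When $\gamma$ is the simple isotropic root, $K_{\lambda - j\gamma}$ is the generalised Verma module of the weight $\lambda - j\gamma$ for this parabolic, the chain of atypicality relations of [42,43] degenerates to repeated subtraction of $\gamma$, and one reads off $\lambda^{(j)} = \lambda - j\gamma$. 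The main obstacle is the middle step: extracting from [42,43] the precise statement that the $\mg_{-1}$-homology is one irreducible $\mg_0$-module per degree \emph{with multiplicity one} in the Koszul complex, since this is exactly what is needed to invoke Theorem \ref{partialBGG}; without it one would have to argue the disjointness of $\delta$ and $\delta^\ast$ directly from the (well-understood but combinatorially involved) composition structure of singly atypical Kac modules. A minor auxiliary point is to confirm that none of the non-completely-reducible phenomena encountered in Section \ref{ExamOSp} can intervene, which is guaranteed here by the reductivity of $\ml = \mg_0$.
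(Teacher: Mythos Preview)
Your approach is substantially more involved than the paper's and contains a genuine gap. The paper's proof is a two-line splicing argument: references [42,43] establish that for $\lambda$ singly atypical the Kac module $K_\lambda$ has exactly two composition factors, fitting into a short exact sequence $0 \to \mV(\lambda^{(1)}) \hookrightarrow K_\lambda \twoheadrightarrow \mV(\lambda) \to 0$ with $\lambda^{(1)}$ again singly atypical of the same type; iterating and splicing these short exact sequences immediately produces the claimed resolution. No Kostant cohomology, no disjointness of $\delta$ and $\delta^\ast$, and no computation of $\ker\Box$ is needed --- the machinery of Sections \ref{Kostant}--\ref{BGG} is bypassed entirely.

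Your route through Theorem \ref{partialBGG} has a real gap at the step you flag as the ``main obstacle'', and the difficulty is worse than you state. The hypothesis of Theorem \ref{partialBGG} is a multiplicity condition on $\ker\Box_k$, not on the homology groups and not on the full chain space $C^k$. Even granting that $H_k(\overline{\mn},\mV(\lambda))$ is a single irreducible with multiplicity one in $C^k$ (which, incidentally, is most naturally a \emph{consequence} of the resolution via Proposition \ref{BottfromBGG}, not a direct input from [42,43], which describe composition series of Kac modules rather than $\mg_{-1}$-homology of simples), the subspace $\ker\Box_k$ can strictly contain the homology; Example \ref{counter} exhibits precisely this phenomenon, and nothing in your argument rules it out here. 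To verify the criterion you would have to decompose each $\Lambda^k\mg_{-1}\otimes\mV(\lambda)$ as a $\mg_0$-module and check the Casimir condition on every constituent, which is not supplied by [42,43] and is considerably harder than the direct splicing argument. In short, you are trying to deduce the resolution from homological data that is itself most easily obtained \emph{from} the resolution.
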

\begin{proof} In [42, 43] the structure of the Kac modules for singly atypical weights was studied, which led to
the conclusion that there is a short exact sequence of the form $\mV(\lambda^{(1)}) \hookrightarrow K_\lambda \tto \mV(\lambda)$, for a certain weight $\lambda^{(1)}$ which is again singly atypical. The BGG resolution
follows immediately from this observation.
\end{proof}
Since all modules for $\mathfrak{osp}(2|2n)$ are typical or singly atypical this result includes the first statement in
Theorem 6.3.

\begin{appendix}
\section{Appendix: The Hopf superalgebra $\cU(\mg)$ and the twisted de Rham operator}

In this appendix we show how the operator $d:\cJ(C^k(\overline{\mn},\mV))\to \cJ(C^{k+1}(\overline{\mn},\mV))$ from Theorem \ref{thmdefop} is obtained from the same operator for the case $\mV=0$ by a $\mg$-module isomorphism between $\cJ(C^k(\overline{\mn},\mV))$ in Definition \ref{defjet} and the tensor product representation $\cJ(\wedge^k{\mn})\otimes\mV$. In the second part of the appendix we prove that the operator $d$ for the case $\mV=0$ can be rewritten as a standard exterior derivative. Therefore we can interpret the operator $d:\cJ(C^k(\overline{\mn},\mV))\to \cJ(C^{k+1}(\overline{\mn},\mV))$ as a twisted de Rham operator. These results are necessary to prove the exactness of the operator $d$, which is equivalent with the exactness of the BGG complex if $\partial$ and $\partial^\ast$ are disjoint, as is proved in Section \ref{BGG}.

The universal enveloping algebra of a Lie superalgebra has the structure of a supercocommutative Hopf superalgebra, see \cite{MR0252485}. The comultiplication $\Delta:\cU(\mg)\to\cU(\mg)\otimes\cU(\mg)$, antipode $S:\cU(\mg)\to\cU(\mg)$, multiplication $m:\cU(\mg)\otimes\cU(\mg)\to\cU(\mg)$ and co-unit $\varepsilon:\cU(\mg)\to\mC$ are generated by
\begin{eqnarray*}
\Delta(A)=A\otimes 1+1\otimes A&&S(A)=-A\\
m(A\otimes B)=AB&&\varepsilon(A)=0
\end{eqnarray*}
for $A,B\in\mg$ and are superalgebra morphisms, where the multiplication on $\cU(\mg)\otimes\cU(\mg)$ is defined as $(u\otimes v)(y\otimes z)=(-1)^{|y||v|}uy\otimes vz$. Basic properties that we will need are
\begin{eqnarray*}
m\circ\left(S\otimes 1\right)\circ\Delta&=&\varepsilon\qquad\mbox{and}\\
(\Delta\otimes 1)\circ\Delta&=&(1\otimes \Delta)\circ\Delta.
\end{eqnarray*}
We use the Sweedler notation $\Delta (U)=\sum_j U^j_1\otimes U^j_2$ for $U\in\cU(\mg)$.

In order to describe the morphism structure between $\cJ(C^k(\overline{\mn},\mV))$ and $\cJ(\wedge^k{\mn})\otimes\mV$ properly we need some extra notations. For every $A\in\mg$ the action of $A^\mV$ on $C^k(\overline{\mn},\mV)$ is given by the tensor product action on $\mV$ and trivial action on $\wedge^k\mn$, so only the $\mZ_2$-gradation of $\mn$ needs to be taken into account. This notation extends to $U^\mV$ for $U\in\cU(\mg)$. Likewise, for $Z\in \Mp$, we define the action $Z^\cJ$ on $C^k(\overline{\mn},\mV)$ as the tensor product action of the adjoint action on $\wedge^k\mn$ and regarding $\mV$ as a sum of trivial representations. In particular $Z^\mV+Z^\cJ$ gives the ordinary representation structure of $\Mp$ on $C^k(\overline{\mn},\mV)$. Note that the $\mg$-module structure of $\cJ(\wedge^k{\mn})\otimes\mV$ corresponds to the tensor product, i.e.
\begin{eqnarray*}
\left(A\beta\right)(U)&=&-(-1)^{|A||U|}\beta(AU)+(-1)^{|A||U|}A^\mV\beta(U)\qquad \mbox{for }A\in\mg,\quad \beta\in\cJ(\wedge^k{\mn})\otimes\mV\mbox{ and }U\in\cU(\mg).
\end{eqnarray*}

We define the $\mg$-module morphism $d^\cJ:\cJ(\wedge^k{\mn})\otimes\mV\to \cJ(\wedge^{k+1}{\mn})\otimes\mV$ to be the operator $d:\cJ(\wedge^k{\mn})\to \cJ(\wedge^{k+1}{\mn})$ as defined in Theorem \ref{thmdefop} in case $\mV=0$ which is extended trivially to $\cJ(\wedge^k{\mn})\otimes\mV$.

\begin{theorem}
\label{twisted}
The $\mg$-module morphism $\chi$ between $\cJ(C^k(\overline{\mn},\mV))$ and $\cJ(\wedge^k{\mn})\otimes\mV$ which sends $\alpha\in\cJ(C^k(\overline{\mn},\mV))$ to $\widetilde{\alpha}\in\cJ(\wedge^k{\mn})\otimes\mV$ defined as
\begin{eqnarray*}
\widetilde{\alpha}(U)=\sum_j \left(U^j_1\right)^\mV\left( \alpha(U^j_2)\right)
\end{eqnarray*}
is an isomorphism. Moreover it satisfies the property $d=\chi^{-1}\circ d^\cJ\circ\chi$ with $d$ given in Theorem \ref{thmdefop} and $d^\cJ$ as defined above.
\end{theorem}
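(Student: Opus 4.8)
The statement has two parts: (i) the map $\chi:\cJ(C^k(\overline{\mn},\mV))\to\cJ(\wedge^k{\mn})\otimes\mV$, $\alpha\mapsto\widetilde\alpha$ with $\widetilde\alpha(U)=\sum_j (U^j_1)^\mV(\alpha(U^j_2))$, is a $\mg$-module isomorphism; (ii) it intertwines $d$ with $d^\cJ$, i.e. $d=\chi^{-1}\circ d^\cJ\circ\chi$. I would organize the proof around the Hopf-algebraic identities listed just before the theorem, treating the map $\chi$ as a standard "untwisting" isomorphism (this is the co-induced analogue of the familiar tensor identity $\mathrm{Hom}_P(\cU(\mg),\mF)\cong\mathrm{Hom}_P(\cU(\mg),\mC)\otimes\mF$ when $\mF$ restricts appropriately).

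\textbf{Step 1: $\chi$ is well-defined and lands in $\cJ(\wedge^k\mn)\otimes\mV$.} I must check that $\widetilde\alpha(UZ)=-(-1)^{|U||Z|}Z\cdot\widetilde\alpha(U)$ for $Z\in\Mp$, where the action on the right is the tensor-product action $Z^\mV+Z^\cJ$. Using coassociativity $(\Delta\otimes1)\circ\Delta=(1\otimes\Delta)\circ\Delta$ to expand $\Delta(UZ)=\Delta(U)\Delta(Z)=\Delta(U)(Z\otimes1+1\otimes Z)$, and the defining $\Mp$-equivariance $\alpha(U^j_2 Z)=-(-1)^{|U^j_2||Z|}Z\cdot(\alpha(U^j_2))$, one collects terms; the piece where $Z$ hits the first tensor slot produces $Z^\cJ$ (adjoint part) and the piece where it hits $\alpha$ produces $Z^\mV$, matching the tensor action. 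This is a short but sign-careful bookkeeping computation.

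\textbf{Step 2: $\chi$ is a $\mg$-module morphism and a bijection.} For the morphism property I would compute $\widetilde{A\alpha}(U)$ using the formula $(A\alpha)(V)=-(-1)^{|A||V|}\alpha(AV)$, expand $\Delta(AU)=(A\otimes1+1\otimes A)\Delta(U)$, and compare with $(A\widetilde\alpha)(U)=-(-1)^{|A||U|}\widetilde\alpha(AU)+(-1)^{|A||U|}A^\mV\widetilde\alpha(U)$; the two contributions of $\Delta(AU)$ give exactly these two terms. For bijectivity the cleanest route is to write down the inverse explicitly: $\chi^{-1}(\widetilde\alpha)(U)=\sum_j S(U^j_1)^\mV(\widetilde\alpha(U^j_2))$, and verify $\chi^{-1}\circ\chi=\mathrm{id}$ and $\chi\circ\chi^{-1}=\mathrm{id}$ using the antipode identity $m\circ(S\otimes1)\circ\Delta=\varepsilon$ together with coassociativity. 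Since $S$ and $\Delta$ are algebra (anti)morphisms and $U\mapsto U^\mV$ is a representation of $\cU(\mg)$, these reduce to the Hopf axioms.

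\textbf{Step 3: intertwining $d$ and $d^\cJ$.} Recall $d=\partial+T$ with $(T\alpha)(U)=\sum_a(-1)^{|U||\xi_a|}\xi_a\wedge\alpha(U\xi_a^\ddagger)$, and $d^\cJ$ acts the same way but with the $\partial$-part acting trivially on the $\mV$-factor and $T^\cJ$ of the same shape on $\cJ(\wedge^k\mn)\otimes\mV$. I would check $\chi\circ T=T^\cJ\circ\chi$ and $\chi\circ\partial=(\text{the }\mV\text{-trivial }\partial)\circ\chi+(\text{correction})$, where the correction is precisely the extra term in $d^\cJ$ versus a naive trivial extension; equivalently, verify $d^\cJ\circ\chi=\chi\circ d$ directly on $\widetilde\alpha(U)$. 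The key computational input is that $\partial$ on $C^k(\overline{\mn},\mV)$, written as in the proof of Lemma \ref{propstandder} / Theorem \ref{Thmquabla}, has the "scalar" part $\tfrac12\sum_a\xi_a\wedge[\xi_a^\ddagger,-]_\mn$ plus the "$\mV$-acting" part $(-1)^k\sum_a(-\,)\wedge\xi_a\otimes\xi_a^\ddagger\cdot v$; when one conjugates by $\chi$, the $\xi_a^\ddagger\cdot v$ from the $\partial$-term and the $\alpha(U\xi_a^\ddagger)$ inside $T$ recombine, and the $\mV$-twist $(U^j_1)^\mV$ slides past using $\Delta$ and the invariance relation $\sum_j((-1)^{|A_j||X|}[A_j,X]\otimes A_j^\ddagger\cdot v + A_j\otimes[A_j^\ddagger,X]\cdot v)=0$ already established in the proof of Theorem \ref{Thmquabla}. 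This last relation, together with $\sum_a\xi_a\wedge[\xi_a^\ddagger,Z]_{\Mp^\ast}=\sum_\kappa T_\kappa\wedge[T_\kappa^\ddagger,Z]$, is what makes the twist terms cancel correctly.

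\textbf{Main obstacle.} The conceptual content is entirely the Hopf-algebra formalism, so nothing is genuinely deep; the real difficulty is purely bookkeeping, namely keeping track of the Koszul signs $(-1)^{|U^j_1||U^j_2|}$, $(-1)^{|U||\xi_a|}$, $(-1)^{|A||U|}$ through the two applications of coassociativity needed in Step 3, and making sure the adjoint-action part of the twist (the $(U^j_1)^\mV$ being trivial on $\mn$ but the comultiplication splitting $U$ across both an $\mn$-factor and a $\mV$-factor) is handled consistently with the conventions $[A,X]_\mn$ versus $[A,X]$. I would isolate Step 2's inverse formula first (it is forced and its verification is the cleanest), then do Step 1, and leave the sign-heavy Step 3 for last, reducing it wherever possible to the already-proven invariance identity from Theorem \ref{Thmquabla} rather than re-deriving cancellations by hand.
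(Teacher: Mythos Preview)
Your proposal is correct and follows the same approach as the paper. Steps 1 and 2 match the paper's argument almost exactly (including the explicit inverse $\chi^{-1}(\beta)(U)=\sum_j S(U^j_1)^\mV\beta(U^j_2)$ verified via $m\circ(S\otimes1)\circ\Delta=\varepsilon$).

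For Step 3 you have overcomplicated matters. The paper's entire input is the splitting you already identified, namely $\partial f=\partial^\cJ f+\sum_a\xi_a\wedge(\xi_a^\ddagger)^\mV f$ from Lemma \ref{propstandder}, together with the comultiplication identity $\Delta(U\xi_a^\ddagger)=\Delta(U)(\xi_a^\ddagger\otimes1+1\otimes\xi_a^\ddagger)$. Once you write out $(d^\cJ\circ\chi)(\alpha)(U)$ and expand $\widetilde\alpha(U\xi_a^\ddagger)$ using this comultiplication, the two resulting terms match precisely the $\chi$-image of the $\partial$-part (via the splitting) and of the $T$-part of $d\alpha$. Neither the invariance identity from the proof of Theorem \ref{Thmquabla} nor the relation $\sum_a\xi_a\wedge[\xi_a^\ddagger,Z]_{\Mp^\ast}=\sum_\kappa T_\kappa\wedge[T_\kappa^\ddagger,Z]$ is needed here; invoking them would only obscure what is a direct two-line match of terms.
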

\begin{proof}
The calculation
\begin{eqnarray*}
(A\widetilde{\alpha})(U)&=&(-1)^{|A||U|}A^\mV \widetilde{\alpha}(U)-\sum_j(-1)^{|A||U|}A^\mV \left(U_1^j\right)^\mV{\alpha}(U^j_2)-\sum_j(-1)^{|A||U|+|A||U^j_1|} \left(U_1^j\right)^\mV{\alpha}(AU^j_2)\\
&=&-\sum_j(-1)^{|U^j_2||A|}\left(U^j_1\right)^{\mV}\alpha(A U^j_2)=\widetilde{A\alpha}(U)
\end{eqnarray*}
shows that the linear map $\chi$ is a $\mg$-module morphism. The calculation
\begin{eqnarray*}
\widetilde{\alpha}(UZ)&=&\sum_j(-1)^{|U_2^j||Z|}\left(U_1^j\right)^\mV Z^\mV\alpha(U_2^j)-\sum_j(-1)^{|U_2^j||Z|}\left(U_1^j\right)^\mV Z \left(\alpha(U_2^j)\right)\\
&=&-\sum_j(-1)^{|U||Z|}Z^\cJ\left(U_1^j\right)^\mV  \left(\alpha(U_2^j)\right)=-(-1)^{|U||Z|}Z^\cJ\left(\widetilde{\alpha}(U)\right)
\end{eqnarray*}
for $Z\in\Mp$ shows that the image of $\chi$ is inside $\cJ(\wedge^k{\mn})\otimes\mV$.

The inverse of $\chi$ is defined as $\chi^{-1}(\beta)(U)=\sum_j S(U^j_1)^\mV\beta(U^j_2)$ for $\beta\in\cJ(\wedge^k{\mn})\otimes\mV$ and $U\in\cU(\mg)$. The proof that this is the inverse follows immediately from the relation
\begin{eqnarray*}
(m\otimes 1)\circ(S\otimes \Delta)\circ\Delta=(m\otimes 1)\circ((S\otimes1)\circ \Delta\otimes 1)\circ\Delta=(\varepsilon\otimes 1)\circ\Delta=1.
\end{eqnarray*}

Also the fact that $d=\chi^{-1}\circ d^\cJ\circ\chi$ holds follows from a direct calculation and the relation
\begin{eqnarray*}
\partial f&=&\partial^\cJ f+\xi_a\wedge (\xi_a^\ddagger)^\mV f
\end{eqnarray*}
for $f\in C^k(\overline{\mn},\mV)$ and $\partial^\cJ$ the coboundary operator on $C^k(\overline{\mn},0)=\Lambda^k\mn$ trivially extended to $C^k(\overline{\mn},\mV)$, which follows from the proof of Lemma \ref{propstandder}.
\end{proof}

The vector spaces $\cJ(\Lambda^k\mn)=\mbox{Hom}_{\cU(\Mp)}(\cU(\mg),\Lambda^k\mn)$ are naturally isomorphic to Hom$(\cU(\overline{\mn}),\Lambda^k\mn)$ by the Poincar\'e-Birkhoff-Witt theorem. The operator $d$ from Theorem \ref{thmdefop} for $\mV=0$ remains identically defined under this identification. The space Hom$(\cU(\overline{\mn}),\mC)$ becomes an algebra with multiplication defined by $(\alpha\beta)(U)=\sum_j\alpha(U^j_1)\beta(U^j_2)$ with $\alpha,\beta\in$Hom$(\cU(\overline{\mn}),\mC)$ and $U\in\cU(\overline{\mn})$, where now we consider the Hopf superalgebra $\cU(\overline{\mn})$. This multiplication extends trivially to the case where $\alpha\in$Hom$(\cU(\overline{\mn}),\mC)$ and $\beta\in$Hom$(\cU(\overline{\mn}),\Lambda^k\mn)$. Then the operator $d$ can be rewritten as in the following theorem.
\begin{theorem}
\label{extder}
Consider $d_k:$ {\rm Hom$(\cU(\overline{\mn}),\Lambda^k\mn)\to$ Hom$(\cU(\overline{\mn}),\Lambda^{k+1}\mn)$} induced from the operator $d$ in Theorem \ref{thmdefop} with $\mV=0$. There are elements $\theta_a\in$ {\rm Hom}$(\cU(\overline{\mn}),\mn)$ such that $\theta_a(1)=\xi_a$ for $\{\xi_a\}$ a basis of $\mn$ and 
\begin{eqnarray*}
d_k \circ\left(\theta_a\wedge\right)=\left(\theta_a\wedge\right)\circ d_{k-1}&\mbox{and}& d_0=\sum_a\theta_a\partial_{x_a}
\end{eqnarray*}
hold with $\partial_{x_a}$ supercommuting endomorphisms on {\rm Hom}$(\cU(\overline{\mn}),\mC)$ satisfying the Leibniz rule.
\end{theorem}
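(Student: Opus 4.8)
The plan is to transport everything through the Poincar\'e--Birkhoff--Witt identification and recognise $d$ as the (bracket-free) de Rham differential on the flat affine superspace $\overline{\mn}$. First I would fix a PBW basis of $\cU(\overline{\mn})$ adapted to a basis $\{\xi_a^\ddagger\}$ of $\overline{\mn}$; this identifies, as coalgebras, $\cU(\overline{\mn})\cong S(\overline{\mn})$, hence identifies $\mbox{Hom}(\cU(\overline{\mn}),\mC)$ with the completed symmetric superalgebra $\hat S(\mn)$ (using the Killing form pairing $\overline{\mn}^\ast\cong\mn$) and $\mbox{Hom}(\cU(\overline{\mn}),\Lambda^k\mn)$ with the space $\hat S(\mn)\otimes\Lambda^k\mn$ of formal $k$-forms. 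The convolution product $(\alpha\beta)(U)=\sum_j\alpha(U_1^j)\beta(U_2^j)$ is supercommutative because $\cU(\overline{\mn})$ is supercocommutative (see the Appendix), and under the PBW identification it is precisely the product of $\hat S(\mn)$; thus $\mbox{Hom}(\cU(\overline{\mn}),\mC)$ is topologically generated by the ``coordinates'' $x_a\in\mbox{Hom}(\cU(\overline{\mn}),\mC)$ dual to $\xi_a^\ddagger$. I would then define the $\partial_{x_a}$ as the unique super-derivations of $\hat S(\mn)$ with $\partial_{x_a}x_b=\delta_{ab}$ (equivalently, the transposes of multiplication by $\xi_a^\ddagger$ in $S(\overline{\mn})$); supercommutativity and the Leibniz rule are then tautological.

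Next I would set $\theta_a:=d_0(x_a)\in\mbox{Hom}(\cU(\overline{\mn}),\mn)$. Evaluating the formula of Theorem \ref{thmdefop} with $\mV=0$ at $U=1$, using $x_a(1)=0$, $x_a(\xi_b^\ddagger)=\delta_{ab}$ and the fact that $\partial$ restricts to the Chevalley--Eilenberg differential on $\Lambda^\bullet\mn$ (so vanishes on $\Lambda^0\mn$), gives $\theta_a(1)=\sum_b\xi_b\,x_a(\xi_b^\ddagger)=\xi_a$. The structural fact I would then establish is that $d=\bigoplus_k d_k$ is a degree $+1$ super-derivation of the form algebra $\hat S(\mn)\otimes\Lambda^\bullet\mn$ over the convolution product, i.e. $d(\alpha\wedge\beta)=d\alpha\wedge\beta+(-1)^{\deg\alpha}\alpha\wedge d\beta$; this is proved by splitting $d$ into its two pieces (namely $\partial$, extended trivially to coefficients, and the contraction term $\alpha\mapsto\sum_a(-1)^{|\xi_a|\cdot}\xi_a\wedge\alpha(-\,\xi_a^\ddagger)$), and using co-associativity of $\Delta$, invariance of the Killing form and the completeness relation $\sum_a\xi_a\otimes\xi_a^\ddagger$. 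Granting this, both $d_0$ and $\sum_a\theta_a\partial_{x_a}$ are super-derivations $\hat S(\mn)\to\hat S(\mn)\otimes\Lambda^1\mn$ that annihilate $1$ and send $x_b\mapsto\theta_b$; since a derivation of a complete free graded-commutative algebra into a symmetric bimodule is determined by its values on topological generators, they coincide, which is the identity $d_0=\sum_a\theta_a\partial_{x_a}$.

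For the intertwining relation I would use $d\circ d=0$ (the complex property of Theorem \ref{Rhamcomplex} specialised to $\mV=0$), so $d_1(\theta_a)=d_1d_0(x_a)=0$, i.e. each $\theta_a$ is a $d$-closed $1$-form. Then for $\omega\in\mbox{Hom}(\cU(\overline{\mn}),\Lambda^{k-1}\mn)$ the Leibniz rule gives $d_k(\theta_a\wedge\omega)=d_1(\theta_a)\wedge\omega-\theta_a\wedge d_{k-1}\omega=-\theta_a\wedge d_{k-1}\omega$, so up to the sign convention built into $d$ on the $\Lambda^\bullet$-degrees this is exactly $d_k\circ(\theta_a\wedge)=(\theta_a\wedge)\circ d_{k-1}$. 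Together with $d_0=\sum_a\theta_a\partial_{x_a}$ and the Leibniz rule for the $\partial_{x_a}$, this exhibits $d$ as the flat de Rham operator $\sum_a\theta_a\partial_{x_a}$ on all of $\hat S(\mn)\otimes\Lambda^\bullet\mn$, which is what is needed (via the supermanifold Poincar\'e lemma, \cite{MR0672426}) to conclude exactness in Theorem \ref{Rhamcomplex}.

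The main obstacle is the derivation property of $d$ over the convolution product. The operator $\partial$ in Theorem \ref{thmdefop} carries the Lie bracket $[\cdot,\cdot]_\mn$ (through its action on $\Lambda^\bullet\mn$), whereas the target $\hat S(\mn)\otimes\Lambda^\bullet\mn$ is the \emph{untwisted} exterior algebra with its standard de Rham $d$; so the real content is that, after the PBW identification, all bracket terms and PBW-reordering corrections cancel exactly against the contraction term. This cancellation is precisely the statement that the Lie-algebraic twist built into $d$ is the twist that straightens the coinduced complex into a flat exterior complex --- the reason $d$ is called a \emph{twisted} de Rham operator --- and it is the only genuinely non-formal step; everything else is bookkeeping with $\Delta$, the antipode $S$ and the Killing form.
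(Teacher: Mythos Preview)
Your proposal is correct and lands on exactly the same two ingredients as the paper --- defining $\theta_a=d_0(x_a)$, using $d\circ d=0$ to get $d_1(\theta_a)=0$, and then invoking a Leibniz rule for $d$ with respect to the convolution product to obtain $d_k(\theta_a\wedge\omega)=d_1(\theta_a)\wedge\omega-\theta_a\wedge d_{k-1}(\omega)$. The sign you flag is the same one that appears in the paper's argument and is absorbed by the sign already present in $\partial(X\wedge f)=\tfrac12\sum_a\xi_a\wedge[\xi_a^\ddagger,X]_{\mn}\wedge f - X\wedge\partial f$.

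The one genuine difference in organisation is how $d_0=\sum_a\theta_a\partial_{x_a}$ is obtained. You first prove the full derivation property of $d$ and then identify $d_0$ with $\sum_a\theta_a\partial_{x_a}$ by the ``equal on generators'' trick. The paper instead observes that the right-translation operators $(\partial_{\xi_a^\ddagger}\alpha)(U)=(-1)^{|U||\xi_a|}\alpha(U\xi_a^\ddagger)$ are themselves super-derivations of the convolution algebra (because $\xi_a^\ddagger$ is primitive), expands them as $\partial_{\xi_a^\ddagger}=\sum_b f_{ab}\partial_{x_b}$ with $f_{ab}\in\mathrm{Hom}(\cU(\overline{\mn}),\mC)$, and \emph{defines} $\theta_b=\sum_a\xi_a f_{ab}$; since for $\mV=0$ the $\partial$-part of $d_0$ vanishes, $d_0=\sum_a\xi_a\partial_{\xi_a^\ddagger}=\sum_b\theta_b\partial_{x_b}$ falls out immediately, and the identification $\theta_a=d_0(x_a)$ is a consequence rather than a definition. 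The paper's route thus avoids checking the full graded Leibniz rule for $d$ upfront and only needs the one instance $d_k(\theta_a\wedge\alpha)=d_1(\theta_a)\wedge\alpha-\theta_a\wedge d_{k-1}(\alpha)$, which it asserts ``follows easily''; your proposal is more explicit about why that step holds, which is a virtue, but the underlying mechanism (co-associativity of $\Delta$, primitivity of $\overline{\mn}$, and the completeness relation for the Killing pairing) is the same in both arguments.
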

It is clear that for each case where the radical $\mn$ is abelian, for example $\mg=\mathfrak{gl}(m|n)$ with $\mg_{\overline{0}}=\mathfrak{gl}(m)\oplus\mathfrak{gl}(n)$ contained in the parabolic subalgebra $\Mp$, this property is immediate. \begin{proof}
As a vector space the isomorphism Hom$(\cU(\overline{\mn}),\mC)\cong S(\mn)$ holds, with $S(\mn)=\oplus_{j=0}^\infty S^j(\mn)$ the supersymmetric tensor powers of $\mn$. It then follows that all endomorphisms on Hom$(\cU(\overline{\mn}),\mC)$ satisfying the Leibniz rule can be written in terms of a commuting basis $\{\partial_{x_b}\}$, in an expansion with Hom$(\cU(\overline{\mn}),\mC)$-valued coefficients. Therefore the operations $\partial_{\xi_a^\ddagger}$ defined as
\begin{eqnarray*}
\left(\partial_{\xi_a^\ddagger}\alpha\right)(U)&=&(-1)^{|U||\xi_a|}\alpha(U\xi_a^\ddagger)
\end{eqnarray*}
can be expanded as $\partial_{\xi_a^\ddagger}=\sum_b f_{ab}\partial_{x_b}$ for $f_{ab}\in$ Hom$(\cU(\overline{\mn}),\mC)$. Then the elements $\theta_b$ of Hom$(\cU(\overline{\mn}),\mn)$ are defined by $\theta_b=\sum_a \xi_a f_{ab}$. By the fact that $d_1\circ d_0=0$, see proof of Theorem \ref{Rhamcomplex}, it follows that $d_1(\theta_a)=0$ since $\theta_a=d_0(x_a)$ with $x_a\in$Hom$(\cU(\overline{\mn}),\mC)$ canonically defined by the operators $\{\partial_{x_b}\}$. Then it follows easily that $d_k(\theta_a\wedge\alpha)=d_1(\theta_a)\wedge\alpha-\theta_a\wedge d_{k-1}(\alpha)$ for $\alpha\in$Hom$(\cU(\overline{\mn}),\Lambda^{k-1}\mn)$, which concludes the proof.
\end{proof} 

\end{appendix}


\end{document}